\newtheorem{Theorem}{Theorem}[section]
\newtheorem{Lemma}[Theorem]{Lemma}
\newtheorem{Construction}[Theorem]{Construction}
\newtheorem{Definition}[Theorem]{Definition}
\newtheorem{Corollary}[Theorem]{Corollary}
\newtheorem{Proposition}[Theorem]{Proposition}
\newtheorem{Example}[Theorem]{Example}
\newtheorem{Remark}[Theorem]{Remark}
\newtheorem{Conjecture}[Theorem]{Conjecture}
\newtheorem{Observation}[Theorem]{Observation}
\newtheorem{Fact}[Theorem]{Fact}
\title [Unfolding  of acyclic sign-skew-symmetric cluster algebras  ]
 {Unfolding of acyclic sign-skew-symmetric cluster algebras and applications to positivity and $F$-polynomials }
\author{Min Huang $\;\;\;\;\;\;$ Fang Li $\;\;\;\;\;\;$}
\address{Min Huang
\newline Department
of Mathematics, Zhejiang University (Yuquan Campus), Hangzhou, Zhejiang
310027,  P.R.China}
\email{minhuang1989@hotmail.com}
\address{Fang Li
\newline Department
of Mathematics, Zhejiang University (Yuquan Campus), Hangzhou, Zhejiang
310027, P.R.China}
\email{fangli@zju.edu.cn}
\date{version of \today}
\newcommand{\lra}{\longrightarrow}
\newcommand{\ra}{\rightarrow}
\newcommand{\sdp}{\times\kern-.2em\vrule height1.1ex depth-.05ex}
\newcommand{\epi}{\lra \kern-.8em\ra}
\begin{document}

\renewcommand{\thefootnote}{\alph{footnote}}
\setcounter{footnote}{-1} \footnote{\emph{ Mathematics Subject
Classification(2010)}:  13F60, 05E15. 05E40}
\renewcommand{\thefootnote}{\alph{footnote}}
\setcounter{footnote}{-1} \footnote{ \emph{Keywords}: cluster algebra, acyclic sign-skew-symmetric matrix, unfolding method, positivity  conjecture, $F$-polynomial.}

\begin{abstract} In this paper, we build the unfolding approach from acyclic sign-skew-symmetric matrices of finite rank to skew-symmetric matrices of infinite rank, which can be regard as an improvement of that in the skew-symmetrizable case.  Using this approach,  we  give a positive answer to the problem by Berenstein, Fomin and Zelevinsky in \cite{fz3} which asks  whether an acyclic sign-skew-symmetric matrix is always totally sign-skew-symmetric. As applications,  the positivity for cluster algebras in acyclic sign-skew-symmetric case is given;  further, the $F$-polynomials of cluster algebras are proved to have constant term 1 in acyclic sign-skew-symmetric case.

\end{abstract}

\maketitle
\bigskip

\tableofcontents

\section{Introduction and preliminaries }

Cluster algebras are commutative algebras that were introduced by Fomin and Zelevinsky \cite{fz1} in order to give a
combinatorial characterization of total positivity and canonical bases in algebraic groups. The theory of cluster
algebras is related to numerous other fields including Lie theory, representation theory of algebras, the periodicity issue, Teichm$\ddot{u}$ller theory and mathematics physics.

A cluster algebra is defined starting from a totally sign-skew-symmetric matrix. Skew-symmetric and skew-symmetrizable matrices are always totally sign-skew-symmetric, but other examples of totally sign-skew-symmetric matrices are rarely known. A class of non-skew-symmetric and non-skew-symmetrizable $3\times 3$ totally sign-skew-symmetric matrices is given in Proposition 4.7 of \cite{fz1}.

A cluster algebra is a subalgebra of rational function field with a distinguish set of generators, called {\bf cluster variables}. Different cluster variables are related by a iterated procedure called mutations. By construction, cluster variables are rational functions. In \cite{fz1}, Fomin and Zelevinsky in fact proved that they are Laurent polynomials of initial cluster variables, where it was conjectured the coefficients of these Laurent polynomials are non-negative, which is called the positivity conjecture. Since the introduction of cluster algebras by Fomin and Zelevinsky in \cite{fz1}, positivity conjecture is a hot topic. Positivity was proved in the acyclic skew-symmetric case in \cite{KQ}, in the case for the cluster algebras arising from surfaces in \cite{MSW1}, in the skew-symmetric case in \cite{LS} and in the skew-symmetrizable case in \cite{GHKK}. However, the positivity for the totally sign-skew-symmetric case never be studied so far before this work.

The concept of unfolding of a skew-symmetrizable matrix was introduced by Zelevinsky and the authors of \cite{FST} and appeared in \cite{FST} firstly. Demonet \cite{D} proved that any acyclic skew-symmetrizable matrix admits an unfolding. The idea based on unfolding is to use skew-symmetric cluster algebras to characterize skew-symmetrizable cluster algebras. Note that in \cite D and \cite{FST}, all cluster algebras are of finite rank.

Galois covering theory was introduced by Bongartz and Gabriel for skeletal linear categories in \cite{BG}. This theory was extended to   arbitrary linear categories by  Bautista and Liu in \cite{BL}. The basic idea of Galois covering theory is to use infinite dimensional algebras to characterize finite dimensional algebras.

In this paper, we combine the ideas of unfolding of a skew-symmetrizable matrix and Galois covering theory successfully to  introduce the unfolding theory of a sign-skew-symmetric matrix, see Definition \ref{unfolding}. For this, we have to deal with skew-symmetric matrices of {\em infinite rank}. Moreover, given an acyclic sign-skew-symmetric matrix $B$, we introduce a general construction of the unfolding $(Q(B), \Gamma)$. Our main result about unfolding theory is that
\\
\\
{\bf Theorem \ref{mainlemma}. }
{\em If $B\in Mat_{n\times n}(\mathbb Z)$ is an acyclic sign-skew-symmetric matrix, then $(Q(B),\Gamma)$ constructed from $B$ in Construction \ref{construction} is an unfolding of $B$.}
\\

Following this result, we will prove or answer affirmatively some conjectures and problems on cluster algebras given by Fomin and Zelevinsky,etc, as we list as follows.

(I) Berenstein, Fomin and Zelevinsky give an open problem in \cite{fz3} in 2005 that.
\\
{\bf Problem (1.28, \cite{fz3}).}
{\em Is any acyclic seed totally mutable? In other words, is any acyclic sign-skew-symmetric
integer matrix totally sign-skew-symmetric?}

We will give a positive answer to this problem:\\
{\bf Theorem \ref{sign-skew}.}
{\em The acyclic sign-skew-symmetric matrices are always totally sign-skew-symmetric.}

(II) Fomin and Zelevinsky conjectured in \cite{fz1} that,
\\
{\bf Positivity Conjecture, \cite{fz1}.}
{\em Let $\mathcal A(\Sigma)$ be a cluster algebra over $\mathbb {ZP}$. For any cluster $X$, and any
cluster variable $x$, the Laurent polynomial expansion of $x$ in the cluster $X$ has coefficients
which are nonnegative integer linear combinations of elements in $\mathbb P$.}

Combing the positivity theorem on skew-symmetric cluster algebras in \cite{LS} by Lee and Schiffler, we will prove that,
\\
{\bf Theorem \ref{positivity}.}
{\em The positivity conjecture is true for acyclic sign-skew-symmetric cluster algebras.}

(III) Fomin and Zelevinsky conjectured that,
\\
{\bf Conjecture 5.4, \cite{fz4}.}
{\em In a cluster algebra, each $F$-polynomial has constant term $1$.}

Using Theorem \ref{mainlemma}, we will verify this conjecture affirmatively for all acyclic sign-skew-symmetric cluster algebras.
\\
{\bf Theorem \ref{poly}.}
{\em In an acyclic sign-skew-symmetric cluster algebra, each $F$-polynomial has constant term $1$.}

Here we give the outline of our technique for the major target.

 We prove the main theorem (Theorem \ref{mainlemma}) about unfolding following the strategy of \cite{D}. Precisely, for an arbitrary acyclic matrix $B$, we construct an infinite quiver $Q(B)$, briefly written as $Q$, with a group $\Gamma$ action. To check  $(Q,\Gamma)$ as an unfolding of $B$, it suffices to prove that $Q$ has no $\Gamma$-$2$-cycles and $\Gamma$-loops, and then we can do orbit mutations on $Q$ (see Definition \ref{orbitmu}), moreover, to show the preservation of the property  ``{\em no $\Gamma$-$2$-cycles and $\Gamma$-loops}".

  In order to prove this above fact, we build a cluster-tilting subcategory $\mathcal T_0$ of a Frobienus $2$-Calabi-Yau category $\mathcal C^{Q}$ (Corollary \ref{2-cy}) such that the Gabriel quiver of $\underline{\mathcal T}_0$ is $Q$, where $\underline{\mathcal T}_0$ is the subcategory of the stable category $\underline {\mathcal C^Q}$ of $\mathcal C^Q$ corresponding to $\mathcal T_0$. By Proposition \ref{quivermu},  proving this fact can be replaced by proving that there are no $\Gamma$-$2$-cycles and $\Gamma$-loops in the cluster-tilting subcategories obtained by a sequence of mutations from $\mathcal T_0$, which can be verified by using (2)(4)(5) in Theorem \ref{cluster tilting} step by step.

By Theorem \ref{mainlemma}, using Lemma \ref{basiclemma}, we will prove that acyclic sign-skew-symmetric matrices are always totally sign-skew-symmetric (Theorem \ref{sign-skew}), as the positive answer of a problem asked by Berenstein, Fomin and Zelevinsky.

  Due to ($Q, \Gamma)$ as the unfolding of $B$, we can deduce a surjective algebra homomorphism $\pi:\mathcal A(\widetilde \Sigma)\rightarrow \mathcal A(\Sigma)$, where $\widetilde\Sigma$ is the seed associated with $Q$ and $\Sigma$ is the seed associated with $B$, see Theorem \ref{sur} (3).

 The key of our method is that proving the preservation of the positivity conjecture and the conjecture on $F$-polynomials under this surjective homomorphism. Then using the known conclusions for $\mathcal A(\widetilde \Sigma)$ in the skew-symmetric case by Lee and Schiffler \cite{LS} and Zelevinsky, etc \cite{DWZ} respectively, we obtain that the positivity conjecture holds in the acyclic sign-skew-symmetric case (Theorem \ref{positivity}) and the $F$-polynomials have constant terms $1$ in the same case (Theorem \ref{poly}).

The paper is organized as follows. In Section 2, we firstly give the definition of covering and unfolding of a sign-skew-symmetric matrix (Definition \ref{unfolding}), and then construct a strongly finite quiver $Q$ with a group $\Gamma$ action from an acyclic sign-skew-symmetric matrix $B$ (Construction \ref{construction}). We give the statement of the unfolding theorem, see Theorem \ref{mainlemma}. Based on this result, we prove that acyclic sign-skew-symmetric matrix is always totally sign-skew-symmetric (Theorem \ref{sign-skew}). In Section 3, 4, 5, the preparations are given for the proof of Theorem \ref{mainlemma}. Precisely, in Section 3, we study the global dimension of $K$-linear categories and prove that a functorially finite rigid subcategory of a strongly almost finite category, which has global dimension less than 3 and satisfies certain conditions, is a cluster tilting subcategory, see Theorem \ref{keylemma}. Lenzing's Theorem (\cite{L}, Satz 5) is generalized for locally bounded categories (Theorem \ref{nil}). In Section 4, using the theory of colimits of additive categories (Proposition \ref{2CY-Fro}), we improve on the construction in \cite{GLS1} for a strongly almost finite quiver $Q$ to construct a strongly almost finite $2$-Calabi-Yau Frobenius category $\mathcal C^{Q}$ (Corollary \ref{2-cy}). For a strongly almost finite $2$-Calabi-Yau Frobenius category, we discuss the mutation of maximal $\Gamma$-rigid subcategories and their global dimensions in Section 5 (Proposition \ref{globaldim'} and \ref{globaldim}). We prove Theorem \ref{mainlemma} in Section 6. In Section 7, using the Theorem \ref{mainlemma}, we prove that there is a surjective algebra homomorphism $\pi$ from the cluster algebra associated with $Q$ to that associated with $B$ (Theorem \ref{sur}). Relying on the surjective map $\pi$, we prove the positivity (Theorem \ref{positivity}) and the $F$-polynomials have constant $1$ (Theorem \ref{poly}) for acyclic sign-skew-symmetric cluster algebras.

In this paper, we will always assume the background field $K$ is an algebraic closure field with characteristic $0$.
\\

The original definition of cluster algebra given in \cite{fz1} is in terms of exchange pattern. We recall the equivalent definition in terms of seed mutation in \cite{fz2}; for more details, refer to \cite{fz1, fz2}.

An $n\times n$ integer matrix $A=(a_{ij})$ is called {\bf
sign-skew-symmetric} if either $a_{ij}=a_{ji}=0$ or $a_{ij}a_{ji}<0$ for any $1\leq i,j\leq
n$.

An $n\times n$ integer matrix $A=(a_{ij})$ is called {\bf
skew-symmetric} if $a_{ij}=-a_{ji}$ for all $1\leq i,j\leq n$.

An $n\times n$ integer matrix $A=(a_{ij})$ is called {\bf
$D$-skew-symmetrizable} if $d_ia_{ij}=-d_ja_{ji}$ for all $1\leq
i,j\leq n$, where $D$=diag$(d_i)$ is a diagonal matrix with all
$d_i\in \mathbb{Z}_{\geq 1}$.

Let $\widetilde{A}$ be an $(n+m)\times n$ integer matrix whose
principal part, denoted by $A$, is the $n\times n$ submatrix formed
by the first $n$-rows and the first $n$-columns. The entries of
$\widetilde A$ are written by $a_{xy}$, $x\in \widetilde{X}$ and $y\in
X$. We say $\widetilde A$ to be {\bf
sign-skew-symmetric} (respectively, {\bf skew-symmetric}, $D$-{\bf
skew-symmetrizable}) whenever $A$ possesses this property.

For two $(n+m)\times n$ integer
 matrices $A=(a_{ij})$ and
$A'=(a'_{ij})$, we say that $A'$ is
obtained from $A$ by a {\bf matrix mutation $\mu_i$} in direction $i, 1\leq i\leq n$, represented as $A'=\mu_i(A)$, if
\begin{equation}\label{matrixmutation}
a'_{jk}=\left\{\begin{array}{lll} -a_{jk},& \text{if}
~~j=i \ \text{or}\;
k=i;\\a_{jk}+\frac{|a_{ji}|a_{ik}+a_{ji}|a_{ik}|}{2},&
\text{otherwise}.\end{array}\right.\end{equation}

We say $A$ and $A'$ are {\bf mutation equivalent} if $A'$ can be obtained from $A$ by a sequence of matrix mutations.

It is easy to verify that $\mu_i(\mu_i(A))=A$. The
skew-symmetric/symmetrizable property of matrices is invariant under
mutations. However, the sign-skew-symmetric property is not so. For
this reason, a sign-skew-symmetric matrix $A$ is called {\bf totally
sign-skew-symmetric} if any matrix, that is mutation equivalent to
$A$, is sign-skew-symmetric.

 Give a field $\mathds F$ as an extension of the rational number field $\mathds Q$, assume that $u_1,\cdots,u_n,x_{n+1},\cdots,x_{n+m}\in \mathds F$ are $n+m$ algebraically independent over $\mathds Q$ for a positive integer $n$ and a non-negative integer $m$ such that $\mathds F=\mathds{Q}(u_1,\cdots,u_n,x_{n+1},\cdots,x_{n+m})$, the field of rational functions in the  set $\widetilde X=\{u_1,\cdots,u_n,x_{n+1},\cdots,x_{n+m}\}$ with coefficients in $\mathds{Q}$.

A {\bf seed} in $\mathds F$ is a triple $\Sigma=(X, X_{fr},\widetilde B)$, where\\
(a)~$X=\{x_1,\cdots x_n\}$ is a transcendence basis of $\mathds F$ over the fraction field of $\mathds Z[x_{n+1},\cdots,x_{n+m}]$, which is called a {\bf cluster}, whose each $x\in X$ is called a {\bf cluster variable} (see \cite{fz2});\\
(b)~$X_{fr}=\{x_{n+1},\cdots,x_{n+m}\}$ are called the {\bf frozen cluster} or, say, the {\bf frozen part} of $\Sigma$ in $\mathds F$, where all $x\in X_{fr}$ are called  {\bf stable (cluster) variables} or {\bf frozen (cluster) variables};\\
(c)~$\widetilde{X}=X\cup X_{fr}$ is called a {\bf extended cluster};\\
(d)~$\widetilde{B}=(b_{xy})_{x\in \widetilde{X},y\in X}=(B^T\;B_1^T)^T$ is a $(n+m)\times n$ matrix over $\mathds{Z}$ with rows and columns indexed
by $X$ and $\widetilde{X}$, which is totally sign-skew-symmetric. The $n\times n$ matrix $B$ is called the {\bf exchange matrix} and
$\widetilde B$ the {\bf extended exchange matrix} corresponding to the seed $\Sigma$.

Let $\Sigma=(X,X_{fr},\widetilde B)$ be a seed in $\mathds F$ with $x\in X$, the {\bf mutation} $\mu_x$ of $\Sigma$ at $x$ is defined satisfying $\mu_x(\Sigma)=(\mu_x(X), X_{fr}, \mu_{x}(\widetilde B))$ such that

(a)~ The {\bf adjacent cluster} $\mu_{x}(X)=\{\mu_{x}(y)\mid y\in X\} $, where  $\mu_{x}(y)$ is given by the {\bf exchange relation}
\begin{equation}\label{exchangerelation}
\mu_{x}(y)=\left\{\begin{array}{lll} \frac{\prod\limits_{t\in \widetilde{X}, b_{tx}>0}t^{b_{tx}}+\prod\limits_{t\in \widetilde{X}, b_{tx}<0}t^{-b_{tx}}}{x},& \text{if}
~~y=x;\\y,& \text{if}~~ y\neq x. \end{array}\right.\end{equation}
This new variable $\mu_{x}(x)$ is also called a {\em new} {\bf cluster
variable}.

(b)~ $\mu_{x}(\widetilde B)$ is obtained from B by applying the matrix mutation in direction
$x$ and then relabeling one row and one column by replacing $x$ with $\mu_x(x)$.

It is easy to see that the mutation $\mu_x$ is an involution, i.e., $\mu_{\mu_x(x)}(\mu_x(\Sigma))=\Sigma$.

Two seeds $\Sigma'$ and $\Sigma''$ in $\mathds F$ are called {\bf mutation equivalent} if there exists a sequence of mutations $\mu_{y_1},\cdots,\mu_{y_s}$ such that $\Sigma''=\mu_{y_s}\cdots\mu_{y_1}(\Sigma')$.
 Trivially, the mutation equivalence gives an equivalence relation on the set of seeds in $\mathds F$.

Let $\Sigma$ be a seed in $\mathds F$. Denote by $\mathcal S$ the set of all seeds mutation equivalent to $\Sigma$. In particular, $\Sigma\in \mathcal S$. For any $\bar \Sigma\in\mathcal S$, we have $\bar \Sigma=(\bar X,X_{fr},\widetilde {\bar B})$. Denote $\mathcal X=\cup_{\bar\Sigma\in\mathcal S}\bar X$.

\begin{Definition}\label{clusteralgebra}
 Let $\Sigma$ be a seed in $\mathds F$. The {\bf cluster algebra} $\mathcal A=\mathcal A(\Sigma)$, associated with $\Sigma$, is defined to be the $\mathds Z[x_{n+1},\cdots,x_{n+m}]$-subalgebra of $\mathds F$ generated by $\mathcal X$. $\Sigma$ is called the {\bf initial seed} of $\mathcal A$.
\end{Definition}

This notion of cluster algebra was given in \cite{fz1}\cite{fz2}, where it is called the {\bf cluster algebra of geometric type} as a special case of general cluster algebras.

\section{Unfolding and totality of acyclic sign-skew-symmetric matrices}\label{quiver}

Firstly, we recall some terminologies of (infinite) quivers in \cite{BLP}. Let $Q$ be an (infinite) quiver. We denote by $Q_0$ the set of vertices of $Q$; by $Q_1$ the set of arrows of $Q$. For $i\in Q_0$, denote by $i_{+}$ (respectively, $i_{-}$) the set of arrows starting (respectively, ending) in $i$. Say $Q$ to be {\bf locally finite} if $i_{+}$ and $i_{-}$ are finite for any $i\in Q_0$. For $i,j\in Q_0$, let $Q(i, j)$ stands for the set of paths from $i$ to $j$ in $Q$. We say that $Q$ is {\bf interval-finite} if $Q(i, j)$ is finite for any $i,j\in Q_0$. One calls $Q$  {\bf strongly locally finite} if it is locally finite and interval-finite. In this paper, we call $Q$ {\bf strongly almost finite} if it is strongly locally finite and has no infinity paths.

Note that a strongly almost finite quiver $Q$ is always acyclic since it is interval finite.

\subsection{Notions of covering and unfolding}.

For a locally finite quiver $Q$, we can associate an (infinite) skew-symmetric row and column finite matrix $\widetilde B_Q=(b_{ij})_{i,j\in Q_0}$ as follows.

Assume $Q$ has no 2-cycles and no loops. So, if there are arrows from $i$ to $j$ for $i,j\in Q_0$, there are no arrows from $j$ to $i$. Thus, we define $b_{ij}$ to be the number of arrows from $i$ to $j$ and let $b_{ji}=-b_{ij}$. Trivially, $b_{ii}=0$ for $i\in Q_0$. Then $\widetilde B_Q=(b_{ij})_{i,j\in Q_0}$ is obtained. It is easy to see that $\widetilde B_Q$ is a skew-symmetric row and column finite matrix, $Q$ and $\widetilde B_Q$ determinate uniquely each other.

 In case of no confusion, for convenient, we also denote the quiver $Q$ as $(b_{ij})_{i,j\in Q_0}$, or say, directly replace $Q$ by the matrix $\widetilde B_Q$.

A (rank infinite) seed $\Sigma(Q)=(\widetilde X,\widetilde B_Q)$ is associated to $Q$, with cluster $\widetilde X=\{x_i\;|\;i\in Q_0\}$ and exchange matrix $\widetilde B_Q=(b_{ij})_{i,j\in Q_0}$.

Let $Q$ be a locally finite quiver $Q$ with an action of a group $\Gamma$ (maybe infinite). For a vertex $i\in Q_0$, as in \cite{D}, a {\bf $\Gamma$-loop} at $i$ is an arrow from $i\rightarrow h\cdot i$ for some $h\in \Gamma$, a {\bf $\Gamma$-$2$-cycle} at $i$ is a pair of arrows $i\rightarrow j$ and $j\rightarrow h\cdot i$ for some $j\notin \{h'\cdot i\;|\;h'\in \Gamma\}$ and $h\in \Gamma$.
 Denote by $[i]$ the orbit set of $i$ under the action of $\Gamma$. Say {\bf $Q$ has no $\Gamma$-loops} ({\bf $\Gamma$-$2$-cycles}, respectively) {\bf at} $[i]$ if $Q$ has no $\Gamma$-loops ($\Gamma$-$2$-cycles, respectively) at any $i'\in [i]$.

\begin{Definition}\label{orbitmu}
Let $Q=(b_{ij})$ be a locally finite quiver with an group $\Gamma$ action on it. Denote $[i]=\{h\cdot i\;|\;h\in \Gamma\}$ the orbit of vertex $i\in Q_0$. Assume that $Q$ admits no $\Gamma$-loops and no $\Gamma$-$2$-cycles at $[i]$, we define an {\bf adjacent quiver}  $Q'=(b'_{i'j'})_{i',j'\in Q_0}$ from $Q$ to be the quiver by following:

(1)~ The vertices are the same as $Q$,

(2)~ The arrows are defined as
 \[\begin{array}{ccl} b'_{jk} &=&
         \left\{\begin{array}{ll}
             -b_{jk}, &\mbox{if $j\in [i]$ or $k\in [i]$}, \\
              b_{jk}+\sum\limits_{i'\in[i]}\frac{|b_{ji'}|b_{i'k}+b_{ji'}|b_{i'k}|}{2}, &\mbox{otherwise.}
         \end{array}\right.
 \end{array}\]
Denote $Q'$ as $\widetilde\mu_{[i]}(Q)$ and call $\widetilde\mu_{[i]}$  the {\bf orbit mutation} at direction $[i]$ or at $i$ under the action $\Gamma$. In this case, we say that $Q$ can {\bf do orbit mutation at} $[i]$.

\end{Definition}

Since $b_{jk}=-b_{kj}$ for all $j,k\in Q_0$, it is easy to see $b'_{jk}=-b'_{kj}$ for all $j,k\in Q'_0$, that is, $Q'=(b'_{i'j'})_{i',j'\in Q_0}$ as matrix is skew-symmetric.

\begin{Fact}\label{fact}
We can write that $\widetilde \mu_{[i]}(Q)=(\prod\limits_{i'\in [i]}\mu_{i'})(Q)$.
\end{Fact}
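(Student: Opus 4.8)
The plan is to show that the single orbit mutation $\widetilde\mu_{[i]}$ defined by the combined formula in Definition \ref{orbitmu} coincides with the composite $\prod_{i'\in[i]}\mu_{i'}$ of ordinary matrix mutations taken over all vertices in the orbit $[i]$. The first thing I would check is that this composite is \emph{well-defined}, i.e.\ that the individual mutations $\mu_{i'}$ for $i'\in[i]$ commute with one another, so that the product does not depend on the order in which the vertices of the orbit are processed. The key input here is that $Q$ has no $\Gamma$-$2$-cycles and no $\Gamma$-loops at $[i]$: a $\Gamma$-loop would mean some arrow $i'\to h\cdot i'$ inside the orbit, and a $\Gamma$-$2$-cycle would create arrows between two vertices of the orbit through an intermediate vertex; the absence of both guarantees that there are no arrows between any two vertices $i',i''\in[i]$, so that in the matrix $(b_{jk})$ we have $b_{i'i''}=0$ for all $i',i''\in[i]$. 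Two ordinary mutations $\mu_{i'}$ and $\mu_{i''}$ at vertices joined by no arrow commute, which is the standard fact that makes the product $\prod_{i'\in[i]}\mu_{i'}$ unambiguous.

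Next I would compute the composite explicitly and compare it entry-by-entry with the orbit-mutation formula. Writing $[i]=\{i_1,i_2,\dots\}$ (finite by local finiteness of $Q$, since only finitely many vertices of the orbit can be adjacent to any given vertex, and we only ever see finitely many relevant arrows), I would track a general entry $b_{jk}$ with $j,k\notin[i]$ through the successive mutations. At each step $\mu_{i_s}$ the entry $b_{jk}$ receives a correction term $\tfrac{|b_{j i_s}|b_{i_s k}+b_{j i_s}|b_{i_s k}|}{2}$. The crucial point is that because there are no arrows \emph{within} the orbit, mutating at $i_s$ does not alter any entry $b_{j i_t}$ or $b_{i_t k}$ for a different orbit vertex $i_t$ (these would only change if $i_s$ and $i_t$ were connected). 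Hence the ``paths through the orbit'' contributed by different $i_s$ are independent and simply add up, producing exactly $b_{jk}+\sum_{i'\in[i]}\tfrac{|b_{ji'}|b_{i'k}+b_{ji'}|b_{i'k}|}{2}$, matching case two of the definition. For entries $b_{jk}$ with $j\in[i]$ or $k\in[i]$, each such entry lies in the row/column of exactly one orbit vertex (again using that the orbit is internally disconnected), so it gets negated exactly once, yielding $-b_{jk}$ as in case one.

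The main obstacle I anticipate is the bookkeeping for entries touching the orbit together with confirming that the sign-change and the correction terms do not interfere across orbit members; this is precisely where the no-$\Gamma$-loop/no-$\Gamma$-$2$-cycle hypothesis does the real work, since without internal disconnectedness the mutations would neither commute nor act independently, and an entry $b_{i' i''}$ between two orbit vertices would be flipped twice (returning to itself) while also feeding spurious cross terms into the sum. I would make this rigorous by an induction on the number of orbit vertices $|[i]|$, peeling off one mutation at a time and using the commutativity established in the first step to reduce to the inductive hypothesis. A final remark to include is that the row-and-column finiteness of $\widetilde B_Q$ ensures all the finite sums above are genuinely finite, so the infinite-rank setting causes no convergence issue and the composite is well-defined as a matrix operation.
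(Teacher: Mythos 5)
The paper states this as a \emph{Fact} with no proof at all, so there is nothing to compare against; your argument supplies exactly the justification the authors leave implicit, and it is essentially correct. The key points --- that the absence of $\Gamma$-loops at $[i]$ forces $b_{i'i''}=0$ for all $i',i''\in[i]$, that mutations at pairwise non-adjacent vertices commute and do not perturb each other's row/column data, and that the correction terms therefore accumulate additively into $\sum_{i'\in[i]}\frac{|b_{ji'}|b_{i'k}+b_{ji'}|b_{i'k}|}{2}$ --- are all right. Two small corrections. First, the claim in your parenthetical that $[i]$ is finite is false in general: $Q$ and $\Gamma$ may be infinite and the orbit of a vertex can be infinite (this is the typical situation for the quivers $Q(B)$ of Construction \ref{construction}), so the proposed induction on $|[i]|$ does not literally cover the general case. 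What saves you is the observation you already make at the end: by local finiteness, for fixed $j,k$ only finitely many $i'\in[i]$ satisfy $b_{ji'}\neq 0$, so each matrix entry is altered by only finitely many of the $\mu_{i'}$ and the (possibly infinite) commuting product is well defined entrywise; you should let that argument carry the whole proof rather than the induction. Second, the internal disconnectedness of the orbit follows from the no-$\Gamma$-loop hypothesis alone --- a $\Gamma$-$2$-cycle by definition passes through a vertex $j\notin[i]$, so that hypothesis is not what rules out arrows between orbit vertices (it is needed elsewhere, e.g.\ in Lemma \ref{basiclemma}). Neither point damages the substance of the argument.
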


\begin{Remark}\label{rem2.3}

(1)~ Since $Q$ is locally finite, the sum in the definition of $b'_{jk}$ is finite and so is well-defined.

(2)~ $\widetilde\mu_{[i]}(Q)$ is also locally finite, since the finiteness of valency of any vertex does not changed under orbit mutation by definition.

(3)~ The conditions no $\Gamma$-loops and no $\Gamma$-$2$-cycles become to no loops and no $2$-cycles when $\Gamma$ is a trivial group, which is necessary in the definition of cluster algebras. Such conditions are crucial in the sequel, e.g. for Lemma \ref{basiclemma} and Theorem \ref{sur} which is the key for the following applications.

(4)~ If $Q$ has no $\Gamma$-loops and $\Gamma$-$2$-cycles at $[i]$, it is easy to see that $\widetilde\mu_{[i]}(Q)$ has no $\Gamma$-loops at $[i]$.
\end{Remark}

Note that if $\Gamma$ is the trivial group $\{e\}$, then the definition of orbit mutation of a quiver is the same as that of quiver mutation (see \cite{fz1}\cite{fz2}) or say $\widetilde\mu_{[i]}=\mu_i$ in this case.

\begin{Definition}\label{unfolding} (i)~
For a locally finite quiver $Q=(b_{ij})_{i,j\in Q_0}$ with a group $\Gamma$ (maybe infinite) action, let $\overline Q_0$ be the orbit sets of the vertex set $Q_0$ under the $\Gamma$-action. Assume that $n=|\overline Q_0|<+\infty$ and $Q$ has no $\Gamma$-loops and $\Gamma$-$2$-cycles.

Define a sign-skew-symmetric matrix $B(Q)=(b_{[i][j]})$ to $Q$ satisfying (1)~ the size of the matrix $B(Q)$ is $n\times n$; (2)~ $b_{[i][j]}=\sum\limits_{i'\in [i]}b_{i'j}$ for $[i],[j]\in \overline Q_0$.

(ii)~  For an $n\times n$ sign-skew-symmetric matrix $B$, if there is a locally finite quiver $Q$ with a group $\Gamma$ such that $B=B(Q)$ as constructed in (i), then we call $(Q,\Gamma)$  a {\bf covering} of $B$.

(iii)~ For an $n\times n$ sign-skew-symmetric matrix  $B$, if there is a locally finite quiver $Q$ with an action of group $\Gamma$ such that $(Q,\Gamma)$  is a covering of $B$ and $Q$ can do arbitrary steps of orbit mutations, then $(Q,\Gamma)$ is called an {\bf unfolding} of $B$; or equivalently, $B$ is called the {\bf folding} of $(Q,\Gamma)$.
\end{Definition}

In the above definition, the part (iii) is a generalization of the notion of unfolding in the skew-symmetrizable case (see \cite{FST}) to the sign-skew-symmetric case.

For $j'\in [j]$, there is $\sigma\in \Gamma$ with $\sigma(j)=j'$ and then $b_{i'j}=b_{\sigma(i')j'}$. Since the restriction of $\sigma$ on $[i]$ is a bijection, we have also $b_{[i][j]}=\sum\limits_{i'\in [i]}b_{\sigma(i')j'}$ for $[i],[j]\in \overline Q_0$. It means in (i), the expression (2) is well-defined.

Because $Q$ has no $\Gamma$-$2$-cycles, fixed $j$, all $b_{i'j}$ have the same sign or equal 0 when $i'$ runs over the orbit $[i]$. In the case $b_{i'j}=0$ for all $i'\in [i]$, then also $b_{j'i}=-b_{ij'}=0$ for all $j'\in [j]$ via $\sigma(j)=j'$. Thus, $b_{[i][j]}=0=b_{[j][i]}$. Otherwise, there is $i_0\in [i]$ with $b_{i_0j}\not=0$, then $b_{ji_0}=-b_{i_0j}\not=0$. Thus, $b_{[j][i]}=\sum\limits_{j'\in [j]}b_{j'i_0}\not=0$ and $b_{[i][j]}=\sum\limits_{i'\in [i]}b_{i'j}\not=0$ have the opposite sign.
 It follows that in (i),  $B=B(Q)=(b_{[i][j]})$ is indeed sign-skew-symmetric.

\begin{Lemma}\label{basiclemma}
If $(Q, \Gamma)$ is a covering of $B$ and $Q$ can do orbit mutation at $[i]$  such that $\widetilde\mu_{[i]}(Q)$ has no $\Gamma$-$2$-cycles for some vertex $i$, then $(\widetilde\mu_{[i]}(Q), \Gamma)$ is a covering of $\mu_{[i]}(B)$.
\end{Lemma}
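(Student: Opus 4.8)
The plan is to verify directly that $(\widetilde\mu_{[i]}(Q), \Gamma)$ satisfies all the requirements of Definition \ref{unfolding}(i)--(ii) for being a covering of the mutated matrix $\mu_{[i]}(B)$. Write $Q' = \widetilde\mu_{[i]}(Q) = (b'_{jk})$. Three things must be checked: (1) $Q'$ is locally finite, carries the same $\Gamma$-action with the same (finite) orbit set, and has no $\Gamma$-loops and no $\Gamma$-$2$-cycles; (2) $B(Q')$ is well-defined and sign-skew-symmetric; and (3) the matrix identity $B(Q') = \mu_{[i]}(B)$ holds.

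For (1), local finiteness is Remark \ref{rem2.3}(2), and the defining formula for orbit mutation is manifestly $\Gamma$-equivariant (the summation index $i'$ runs over a $\Gamma$-invariant orbit and $[i]$ is $\Gamma$-stable), so $\Gamma$ acts on $Q'$ with exactly the same orbits; in particular $|\overline{Q_0}| = n$ is unchanged. No $\Gamma$-$2$-cycles is the hypothesis, which then yields sign-skew-symmetry of $B(Q')$ by the discussion preceding the lemma, settling (2). The remaining point of (1) is that $Q'$ has no $\Gamma$-loops. At $[i]$ this is Remark \ref{rem2.3}(4). For a vertex $p \notin [i]$ and $h \in \Gamma$ with $h\cdot p \notin [i]$, I would expand $b'_{p,h\cdot p}$ by the orbit-mutation formula; since $Q$ has no $\Gamma$-loops we have $b_{p,h\cdot p} = 0$, so only the path-counting sum over $i' \in [i]$ survives. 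Here I would use two facts: the no-$\Gamma$-$2$-cycle property of $Q$ forces the numbers $b_{p i'}$ ($i'\in[i]$) to share one sign, while $\Gamma$-invariance together with skew-symmetry gives $b_{i',h\cdot p} = -b_{p,h^{-1}\cdot i'}$, so the numbers $b_{i',h\cdot p}$ share the opposite sign. A term-by-term inspection of $|b_{pi'}|b_{i',h\cdot p} + b_{pi'}|b_{i',h\cdot p}|$ then shows every summand vanishes, whence $b'_{p,h\cdot p} = 0$ and no $\Gamma$-loop is created.

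The heart of the argument is (3). Writing the matrix mutation \eqref{matrixmutation} and the orbit-mutation formula side by side, the case where $[j]=[i]$ or $[k]=[i]$ is immediate from the sign flip $b'_{j'k} = -b_{j'k}$ summed over $j'\in[j]$. In the generic case $[j]\neq[i]\neq[k]$ I would compute
\[
B(Q')_{[j][k]} = \sum_{j'\in[j]} b'_{j'k} = b_{[j][k]} + \sum_{j'\in[j]}\sum_{i'\in[i]} \frac{|b_{j'i'}|b_{i'k} + b_{j'i'}|b_{i'k}|}{2},
\]
and the task is to identify the double sum with $\tfrac12\bigl(|b_{[j][i]}|b_{[i][k]} + b_{[j][i]}|b_{[i][k]}|\bigr)$. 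The two key inputs are: the well-definedness of orbit sums (independence of the chosen representative), giving $\sum_{j'\in[j]} b_{j'i'} = b_{[j][i]}$ for each fixed $i'$; and the no-$\Gamma$-$2$-cycle property, which makes all $b_{i'k}$ ($i'\in[i]$) of one sign and hence yields $\sum_{i'}|b_{i'k}| = |\sum_{i'} b_{i'k}| = |b_{[i][k]}|$, and similarly $\sum_{j'}|b_{j'i'}| = |b_{[j][i]}|$. Splitting the double sum into its two halves and applying these identities to each factor collapses it to the desired expression.

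The main obstacle is the bookkeeping in (3): the absolute values prevent naive interchange of summation and $|\cdot|$, and it is precisely the no-$\Gamma$-$2$-cycle hypothesis (sign-consistency across each orbit, with the opposite endpoint held fixed) that licenses pulling the absolute values outside the orbit sums. Once the sign-consistency is in hand, the computation is routine, so the real content lies in organizing the signs correctly and making sure each absolute-value identity is applied with the correct endpoint fixed.
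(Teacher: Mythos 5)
Your proposal is correct and follows essentially the same route as the paper: the core is the verification $\sum_{j'\in[j]} b'_{j'k} = \mu_{[i]}(B)_{[j][k]}$, using the no-$\Gamma$-$2$-cycle property of $Q$ to pull absolute values outside the orbit sums, together with the hypothesis on $\widetilde\mu_{[i]}(Q)$ to get sign-skew-symmetry of the folded matrix. Your extra detail on why no $\Gamma$-loops are created at vertices outside $[i]$ (which the paper dismisses as easy) and your delegation of the sign-skew-symmetry check to the discussion following Definition \ref{unfolding} are only presentational differences.
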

\begin{proof}
Denote the matrix $\mu_{[i]}(B)$ by $(b'_{[j][k]})$. We have \[\begin{array}{ccl} b'_{[j][k]} &=&
         \left\{\begin{array}{ll}
             -b_{[j][k]}, &\mbox{if $[j]= [i]$ or $[k]=[i]$}, \\
              b_{[j][k]}+\frac{|b_{[j][i]}|b_{[i][k]}+b_{[j][i]}|b_{[i][k]}|}{2}, &\mbox{otherwise.}
         \end{array}\right.
 \end{array}\] If $[j]= [i]$ or $[k]=[i]$, then we have $b'_{[j][k]}=-b_{[j][k]}=-\sum\limits_{j'\in [j]}b_{j'k}=\sum\limits_{j'\in [j]}b'_{j'k};$ otherwise, for any $i'\in [i]$, we have $|\sum\limits_{j'\in [j]}b_{j'i}|=|\sum\limits_{j'\in [j]}b_{j'i}|$. Since $Q$ has no $\Gamma$-$2$-cycles at $[i]$, we have $|\sum\limits_{i'\in [i]}b_{i'k}|=\sum\limits_{i'\in [i]}|b_{i'k}|$ and $|\sum\limits_{j'\in [j]}b_{j'i'}|=\sum\limits_{j'\in [j]}|b_{j'i'}|$, then
 \[\begin{array}{ccl}
b'_{[j][k]} & = &
b_{[j][k]}+\frac{|b_{[j][i]}|b_{[i][k]}+b_{[j][i]}|b_{[i][k]}|}{2}
  =  \sum\limits_{j'\in [j]}b_{j'k}+\frac{|\sum\limits_{j'\in [j]}b_{j'i}|\sum\limits_{i'\in [i]}b_{i'k}+\sum\limits_{j'\in [j]}b_{j'i}|\sum\limits_{i'\in [i]}b_{i'k}|}{2}\\
 & = & \sum\limits_{j'\in [j]}(b_{j'k}+\sum\limits_{i'\in [i]}\frac{|b_{j'i'}|b_{i'k}+b_{j'i'}|b_{i'k}|}{2})
  =  \sum\limits_{j'\in [j]}b'_{j'k}.
\end{array}\]

Therefore, we have $b'_{[j][k]}=\sum\limits_{j'\in [j]}b'_{j'k}$ for $[j],[k]$.

In addition, since $Q$ has no $\Gamma$-$2$-cycles, it is easy to see that $\widetilde\mu_{[i]}(Q)$ admits no $\Gamma$-loops.

Since $\widetilde\mu_{[i]}(Q)$ has no $\Gamma$-$2$-cycles, we have $b'_{j'k}b'_{j''k}\geq 0$ and $b'_{jk'}b'_{jk''}\geq 0$ for all $j',j''\in [j]$, $k',k''\in [k]$. Moreover, since $b_{j'k'}=-b_{k'j'}$ for all $j'\in [j]$, $k'\in [k]$, we have that  $b'_{[j][k]}b'_{[k][j]}=(\sum\limits_{j'\in [j]}b'_{j'k})(\sum\limits_{k'\in [k]}b'_{k'j})\leq 0$. Then it is easy to see that  $b'_{[j][k]}b'_{[k][j]} = 0$ if and only if $b'_{[j][k]}=b'_{[k][j]}=0$. This means $\mu_{[i]}(B)$ is sign-skew-symmetric. It follows that $(\widetilde\mu_{[i]}(Q), \Gamma)$ is a covering of $\mu_{[i]}(B)$.
\end{proof}

This result means the preservability of covering under orbit mutations.

\subsection{Strongly almost finite quiver from an acyclic sign-skew-symmetric matrix}\label{strongly-finite}.

Let $P, Q$ be two quivers.

(1) If there is a full subqiver $P'$ of $P$, a full subquiver $Q'$ of $Q$ and a quiver isomorphism $\varphi: P'\rightarrow Q'$, then we say $P$ and $Q$ to be {\bf glueable} along $\varphi$.

(2)
Let $P$ and $Q$ respectively be corresponding to the skew-symmetric matrices $\left(\begin{array}{cc}
B_{11} & B_{12}\\
B_{21} & B_{22}
\end{array}\right)$ and $\left(\begin{array}{cc}
B'_{11} & B'_{12}\\
B'_{21} & B'_{22}
\end{array}\right)$, where the row/column indices of $B_{22}$ and $B'_{22}$ be respectively corresponding to the vertices of $P'$ and  $Q'$. Since $\varphi$ is an isomorphism, we have $B_{22}=B'_{22}$.

(3) Denote by $P\coprod_{\varphi}Q$ the quiver corresponding to the skew-symmetric matrix $\left(\begin{array}{ccc}
B_{11} & 0 & B_{12}\\
0 & B'_{11} & B'_{12}\\
B_{21} & B'_{21} & B_{22}
\end{array}\right)$.  We call $P\coprod_{\varphi}Q$ the {\bf gluing quiver} of $P$ and $Q$  along $\varphi$.

Roughly speaking, $P\coprod_{\varphi}Q$ is obtained through gluing $P$ and $Q$ along the common full subquiver (up to isomorphism).

For example, for $P'=Q'$ the arrow $\alpha: 1\overset{}{\rightarrow}2$ in both $P$ and $Q$, we can say $P\coprod_{\varphi}Q$ to be the  gluing quiver of $P$ and $Q$ along $\varphi=Id_{\alpha}$.

For any $n\times n$ sign-skew-symmetric matrix $B$, we associate a (simple) quiver $\Delta(B)$
with vertices $1,\cdots,n$ such that for each pair $(i, j)$ with $b_{ij}> 0$, there is exactly one arrow
from vertex $i$ to vertex $j$.
  Trivially, $\Delta(B)$ has no loops and no 2-cycles.

 The matrix $B$ is said in \cite{LLM} to be {\bf acyclic} if $\Delta(B)$ is acyclic and to be {\bf connected} if $\Delta(B)$ is connected.

In the sequel, we will always assume that $B$ is connected and acyclic.

\begin{Construction} \label{construction}
Let $B=(b_{ij})\in Mat_{n\times n}(\mathbb Z)$ be an acyclic matrix. We construct the (infinite) quiver $Q(B)$ (written as $Q$ briefly if $B$ is fixed) via the following steps inductively.
\begin{itemize}
 \item  For each $i=1,\cdots,n$, define a quiver $Q^i$ like this: $Q^i$ has $\sum\limits_{j=1}^n|b_{ji}|+1$ vertices with one vertex labeled by $i$ and $|b_{ji}| (j\neq i)$ various vertices labeled by $j$. If $b_{ji}>0$, there is an arrow from each vertex labeled by $j$ to the unique vertex labeled by $i$; if $b_{ji}<0$, there is an arrow from the unique vertex labeled by $i$ to each vertex labeled by $j$.  Note that if $b_{ji}=0$,  then there is no vertex labelled by $j$.
  \item Let $Q_{(1)}=Q^1$. Call the unique vertex label by $1$ the ``old" vertex, and the other vertices the ``new" vertices.
  \item For a ``new" vertex in $Q_{(1)}$ which is labelled by $i_1$, $Q^{i_1}$ and $Q_{(1)}$ share a common arrow, denoted as $\alpha_1$, which is the unique arrow incident with the ``new'' vertex labelled by $i_1$. Then the gluing quiver $Q^{i_1}\coprod_{Id_{\alpha_1}}Q_{(1)}$ is obtained along the common arrow $\alpha_1$. For another ``new" vertex labelled by $i_2$ in $Q_{(1)}$, we get similarly $Q^{i_2}\coprod_{Id_{\alpha_2}}(Q^{i_1}\coprod_{Id_{\alpha_1}}Q_{(1)})$, where $\alpha_2$ is the unique arrow incident with the ``new" arrow labelled by $i_2$. Using the above procedure step by step for all ``new" vertices in $Q_{(1)}$, we obtain finally a quiver, denoted as $Q_{(2)}$. Clearly,  $Q_{(1)}$ is a subquiver of $Q_{(2)}$. We call the vertices of $Q_{(1)}$ as the ``old" vertices of  $Q_{(2)}$ and the other vertices of  $Q_{(2)}$  as its ``new" vertices.
  \item Inductively, by the same procedure for obtaining $Q_{(2)}$ from $Q_{(1)}$, we can construct $Q_{(m+1)}$ from $Q_{(m)}$ via a series of gluing for any $m\geq 1$.
       Similarly, we call the vertices of $Q_{(m)}$ as the ``old" vertices of  $Q_{(m+1)}$ and the other vertices of  $Q_{(m+1)}$  as its ``new" vertices.
        \item Finally, we define the (infinite) quiver $Q=Q(B)=\bigcup_{m=1}^{+\infty}Q_m$, since $Q_m$ is always a full subquiver of $Q_{m+1}$ for any $m$.
\end{itemize}
\end{Construction}

Define $\Gamma=\{h\in AutQ:\; \text{if}\; h\cdot a_s=a_t \;\text{for}\; a_s,a_t\in Q_0,\; \text{then}\; a_s,a_t\; \text{have the same label} \}$. Trivially, $\Gamma$ is the {\em maximal } subgroup of $AutQ$ whose action on $Q$ preserves the labels of vertices of $Q$.

For $h\in \Gamma$, if there is $v\in Q_0$ such that $h\cdot v=v$, we call $v$ a {\bf fixed point} under $h$. In this case we say $h$ to have fixed points.

\begin{Example}
Let $B=\left(\begin{array}{ccc}
0  & 2  & 2\\
-2 & 0  & 1\\
-1 & -3 & 0
\end{array}\right)$. Then $Q^1=Q_{(1)}$, $Q^2$, $Q^3$ and $Q_{(2)}$ are shown in the Figure 1.
\begin{figure}[h] \centering
  \includegraphics*[0,0][253,140]{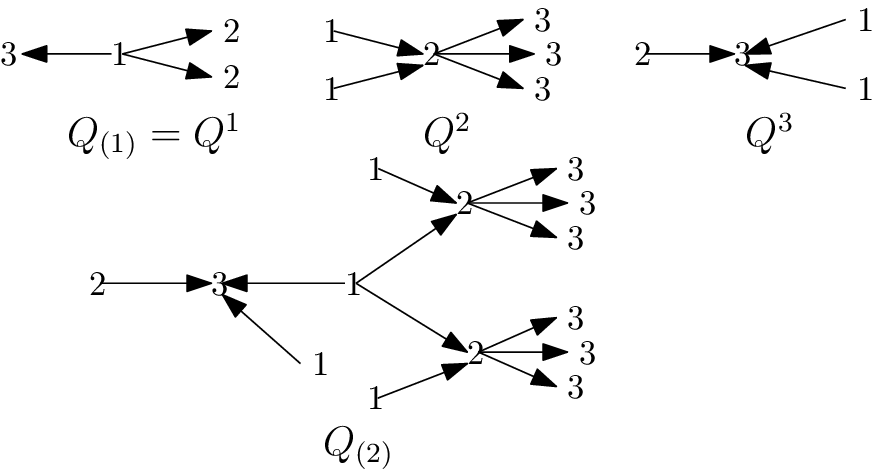}

 {\rm Figure 1 }
\end{figure}
\end{Example}

By the construction of $Q(B)$ as above, we have that
\begin{Observation}\label{extend}
(1)~ As starting of this program, the quiver $Q_{(1)}$ can be chosen to be any quiver $Q^{i}$ for any $i$ in this construction.

(2)~ The underlying graph of $Q=Q(B)$ is a tree, thus is acyclic.

(3)~ For each vertex $a\in Q_0$ labeled by $i$, the subquiver of $Q$ formed by all arrows incident with $a$ is isomorphic to $Q^i$.

(4)~ For any $m\in \mathbb N$, and any automorphism $\sigma$ of $Q_{(m)}$ which preserve the labels of vertices, then there exists $h\in \Gamma$ such that $h|_{Q_{(m)}}=\sigma$ and $h$ has the same order with $\sigma$, where $h|_{Q_{(m)}}$ means the restriction of $h$ to $Q_{(m)}$.

(5)~ In most of cases, $Q$ are infinite quivers with infinite number of vertices. However, in some special cases, $Q$ may be finite quivers. For example, when $B$ is a skew-symmetric matrix corresponding to a finite quiver $Q'$ of type $A$,  we have $Q(B)=Q'$ a finite quiver.

\end{Observation}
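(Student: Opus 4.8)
The plan is to treat the five assertions in the logical order (2), (3), (1), (4), (5), since the later claims rest on the combinatorial structure set up by the first two. First I would prove part (2). Each building block $Q^i$ is a star, hence a tree, and every gluing step $P\coprod_{\varphi}Q$ used in Construction \ref{construction} identifies exactly one arrow $\alpha$ together with its two endpoints; the block form of the matrix in the definition of the gluing quiver (the two zero blocks between $B_{11}$ and $B'_{11}$) shows that no arrows are created between the non-shared parts of $P$ and $Q$. Gluing two finite trees along a single shared edge yields a connected graph whose edge count equals its vertex count minus one, hence a tree again. By induction every $Q_{(m)}$ is a tree, and the increasing union $Q=\bigcup_m Q_{(m)}$ is therefore a tree; in particular it is acyclic. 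For part (3) I would use the same block form: a gluing step changes the set of arrows incident with an existing vertex only at the single shared vertex. Hence, once a vertex $a$ labeled $i$ becomes ``old'' — at the stage where $Q^i$ is glued onto it, or at stage $1$ if $a$ is the initial center — its incident arrows are exactly those of the glued copy of $Q^i$ (the shared arrow $\alpha$ being one of them), and no later step touches $a$. Since $Q=\bigcup_m Q_{(m)}$, every vertex eventually becomes old, so the incident-arrow subquiver at each vertex labeled $i$ is isomorphic to $Q^i$.

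Part (1) then follows from a uniqueness statement: parts (2) and (3) exhibit $Q(B)$ as a connected tree that is \emph{saturated}, meaning every vertex labeled $i$ has complete neighborhood $Q^i$. I would show that any two saturated connected label-trees sharing a vertex of equal label are isomorphic by a root-preserving, label-preserving isomorphism, constructed by extending outward one neighborhood at a time (at each step the two neighborhoods agree, being copies of the same $Q^i$). Since this characterization makes no reference to which $Q^i$ served as $Q_{(1)}$, starting the construction from any $Q^i$ produces the same $Q$ up to label-preserving isomorphism, which is (1).

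The main work is part (4), and I expect the condition that $h$ have \emph{the same order} as $\sigma$ to be the crux. Given a label-preserving automorphism $\sigma$ of $Q_{(m)}$, I would extend it layer by layer: passing from $Q_{(p)}$ to $Q_{(p+1)}$, $\sigma$ permutes the new vertices preserving labels, and by (3) the copy of $Q^i$ glued at a new vertex $v$ and the copy glued at $\sigma(v)$ are isomorphic, so $\sigma$ can be transported across. The difficulty is that when $Q^i$ has repeated labels the transport involves choices, and arbitrary choices need not satisfy $h^{\mathrm{ord}(\sigma)}=\mathrm{id}$. To force the order I would organize the new vertices into $\langle\sigma\rangle$-orbits: on an orbit with trivial stabilizer I fix the extension freely at one representative $v$ and define it at $\sigma^j v$ by conjugation, $h|_{\text{branch at }\sigma^j v}=\sigma^j\circ\bigl(h|_{\text{branch at }v}\bigr)\circ\sigma^{-j}$, which is automatically $\sigma$-compatible; on an orbit where $\sigma^k$ fixes a representative $v$ with $k<\mathrm{ord}(\sigma)$, I first extend the induced (order $\mathrm{ord}(\sigma)/k$) label-preserving action of $\sigma^k$ on the star $Q^i$ at $v$, then propagate by the same conjugation rule. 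This keeps the extension at each layer of order dividing $\mathrm{ord}(\sigma)$; passing to the union $h=\bigcup_p\sigma_p$ produces $h\in\Gamma$ with $h|_{Q_{(m)}}=\sigma$, and since $h$ restricts to $\sigma$ the orders coincide.

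Finally, part (5) has precise content only in its finite case. When the construction never terminates — the generic situation, e.g. as soon as the underlying graph of $\Delta(B)$ carries a cycle or some entry satisfies $|b_{ij}|\ge 2$, forcing unbounded branching — $Q$ acquires infinitely many vertices. For the stated example, $B$ skew-symmetric of type $A$ means its quiver $Q'$ has underlying graph a path, hence is a finite connected tree whose neighborhood at each vertex $i$ is exactly $Q^i$ (all multiplicities being $1$). Thus $Q'$ is already a saturated connected label-tree, and the uniqueness established for (1) yields $Q(B)=Q'$, a finite quiver.
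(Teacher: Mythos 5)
The paper gives no proof of this Observation: it is asserted to follow directly from Construction \ref{construction} and is then used without further comment (notably, your parts (3) and (4) are exactly what Lemma \ref{ext} and Lemma \ref{finiteorder} later invoke). Your verification is correct in substance and fills a real gap: the tree property via the zero blocks of the gluing matrix plus an edge count, the stabilization of a vertex's neighborhood once its star has been glued, the ``saturated label-tree'' uniqueness argument for (1), and the orbit-by-orbit transport for (4) are all the right arguments; in particular the order control in (4) --- closing up each $\langle\sigma\rangle$-orbit of branch points so that $h^{\mathrm{ord}(\sigma)}$ is trivial on the new layer --- is precisely the point that needs care and that the paper never spells out.

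Two places need repair. In (4), the formula $h|_{\text{branch at }\sigma^j v}=\sigma^j\circ\bigl(h|_{\text{branch at }v}\bigr)\circ\sigma^{-j}$ does not typecheck: $\sigma$ is defined only on $Q_{(m)}$, not on the newly attached branch vertices. What you mean, and what works, is: choose label-preserving bijections $\phi_j:L_v\to L_{\sigma^j v}$ between the sets of new leaves along one orbit of size $k$ (with $\phi_0=\mathrm{id}$), set $h|_{L_{\sigma^j v}}:=\phi_{(j+1)\bmod k}\circ\phi_j^{-1}$, and check that $h^k=\mathrm{id}$ on each $L_{\sigma^j v}$; this forces $h^{\mathrm{ord}(\sigma)}=\mathrm{id}$ on the new layer whether or not the stabilizer of $v$ is trivial. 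In (5), your aside that any entry with $|b_{ij}|\ge 2$ forces $Q$ to be infinite is false: for $B=\left(\begin{smallmatrix}0&-1\\ 2&0\end{smallmatrix}\right)$ one has $Q^1=(2\to 1\leftarrow 2)$ and $Q^2=(2\to 1)$, the gluing at each leaf labelled $2$ adds nothing, and $Q(B)=Q_{(1)}$ is a finite three-vertex quiver. (A cycle in the underlying graph of $\Delta(B)$, or a pair with both $|b_{ij}|\ge 2$ and $|b_{ji}|\ge 2$, does force $Q$ infinite.) Neither issue affects the one precise claim in (5), the type-$A$ example, which your saturation/uniqueness argument from (1) handles correctly.
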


In the sequel of this section, all $Q$ are just the quiver $Q(B)$ defined in Construction \ref{construction}. Here we give some results on $Q$ which will be useful in the following context.

\begin{Lemma}\label{ext}
 For $a\in Q_0$, define $\Gamma_{a}:=\{h\in \Gamma\;|\;h\cdot a=a\}$. For any finite subquiver $Q'$ of $Q$, there exist $h_1,\cdots,h_s\in \Gamma$ such that $\{h|_{Q'}:\;h\in\Gamma_{a}\}=\{h_1|_{Q'},\cdots,h_s|_{Q'}\}$ for any $h\in\Gamma_{a}$, where $h|_{Q'}$ means the restriction of $h$ to $Q'$.
\end{Lemma}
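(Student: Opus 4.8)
The plan is to show that the restriction map $\Gamma_a \to \mathrm{Map}(Q', Q)$, $h \mapsto h|_{Q'}$, has finite image; then, listing the finitely many values as $h_1|_{Q'},\dots,h_s|_{Q'}$ and choosing preimages $h_1,\dots,h_s \in \Gamma_a \subseteq \Gamma$, gives exactly the assertion. The first reduction I would make is to record that, since the underlying graph of $Q$ is a tree (Observation \ref{extend}(2)) and therefore carries at most one arrow between any ordered pair of vertices, every arrow of $Q'$ is determined by its two endpoints. Hence a restriction $h|_{Q'}$, viewed as a quiver morphism $Q' \to Q$, is completely determined by the underlying vertex map $h|_{Q'_0}$: if $h,h' \in \Gamma_a$ agree on $Q'_0$, then for each arrow $\alpha\colon u\to w$ in $Q'$ both $h\cdot\alpha$ and $h'\cdot\alpha$ are the unique arrow from $h\cdot u$ to $h\cdot w$, so they coincide. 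It therefore suffices to prove that the number of vertex maps $h|_{Q'_0}$ arising from $\Gamma_a$ is finite, and since $Q'_0$ is a finite set, this follows once I show that for each single vertex $v\in Q'_0$ the set $\{h\cdot v : h\in\Gamma_a\}$ is finite.

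To bound $\{h\cdot v : h\in\Gamma_a\}$ I would use that every $h\in\Gamma$ is an automorphism of the underlying graph of $Q$ and hence preserves graph distance. Fix $v\in Q'_0$ and set $d=\mathrm{dist}(a,v)$, which is finite because a tree is connected. For $h\in\Gamma_a$ we have $h\cdot a=a$, whence
\[
\mathrm{dist}(a,h\cdot v)=\mathrm{dist}(h\cdot a,h\cdot v)=\mathrm{dist}(a,v)=d .
\]
Thus every image $h\cdot v$ lies on the sphere $S_d(a)=\{w\in Q_0 : \mathrm{dist}(a,w)=d\}$. Because $Q$ is locally finite, an easy induction on the radius shows that the ball of radius $d$ about $a$ has finitely many vertices, so $S_d(a)$ is finite; therefore $\{h\cdot v : h\in\Gamma_a\}$ is finite.

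Combining the two steps, the number of distinct vertex maps is at most $\prod_{v\in Q'_0}\bigl|\{h\cdot v : h\in\Gamma_a\}\bigr|<\infty$, so $\{h|_{Q'} : h\in\Gamma_a\}$ is finite, and writing out its elements as $h_1|_{Q'},\dots,h_s|_{Q'}$ finishes the proof. I expect the only genuinely delicate point to be the first reduction, namely that a restriction is pinned down by its effect on vertices: this relies essentially on the acyclic tree structure forbidding parallel arrows, for otherwise an element of the (generally infinite) stabilizer $\Gamma_a$ could permute parallel arrows and produce infinitely many restrictions. Everything after that is the distance-preservation argument, whose role is precisely to convert the possibly infinite group $\Gamma_a$ into a finite action on the bounded region $Q'$.
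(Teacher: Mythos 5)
Your proof is correct and follows essentially the same strategy as the paper's: both arguments show that $\{h|_{Q'}\;:\;h\in\Gamma_a\}$ is finite by using $h\cdot a=a$ together with local finiteness to confine the image of $Q'$ to a finite region around $a$, and then take finitely many representatives. Your explicit distance-preservation argument fills in what the paper states only tersely (that $\{h\cdot Q'\;|\;h\in\Gamma_a\}$ is finite "since $Q$ is strongly almost finite and $Q'$ is finite"), and your reduction to vertex maps via the tree structure merely repackages the paper's counting of label-preserving isomorphisms from $Q'$ onto the finitely many image subquivers $Q_{\langle i\rangle}$.
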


\begin{proof}
Since $Q$ is strongly almost finite and $Q'$ is finite, $\{h\cdot Q'\;|\;h\in \Gamma_{a}\}$ is a finite set, written as $\{Q_{\langle 1\rangle},\cdots,Q_{\langle t\rangle}\}$. For each $i=1,\cdots,t$, let $S_i=\{f\in Iso(Q', Q_{\langle i\rangle}):\; f\; \text{preserves the labels of vertices} \}$. It is clear that $S_i$ is a finite set since $Q'$ is finite, then we write that $S_i=\{\sigma_{i1},\cdots,\sigma_{ik}\}$. Since $\sigma_{ij}\in Iso(Q',Q_{\langle i\rangle})$ preserves the labels of vertices, by Observation \ref{extend} (3),  we can lift $\sigma_{ij}$ to an isomorphism $h_{ij}\in \Gamma$ such that $h_{ij}|_{Q'}=\sigma_{ij}$. For any $h\in \Gamma_{a}$, we have $h\cdot Q'=Q_{\langle i\rangle}$ for some $i=1,\cdots,t$, then $h|_{Q'}:Q'\rightarrow Q_{\langle i\rangle}$ is a quiver isomorphism preserving the labels of vertices. Thus, $h|_{Q'}=\sigma_{ij}$ for some $j$. Therefore, we have $\{h|_{Q'}:\;h\in\Gamma_{a}\}\subseteq\{h_{ij}|_{Q'}:\;i=1,\cdots,t; j=1,\cdots,k\}$. Then the result follows.
\end{proof}

\begin{Example}
Let $B=\left(\begin{array}{ccc}
0  & 1  & 1\\
-1 & 0  & 1\\
-1 & -1 & 0
\end{array}\right)$. Then $Q^1=\xymatrix{2 & 1\ar[l]\ar[r] & 3}$, $Q^2=\xymatrix{1\ar[r] & 2\ar[r] & 3}$ and $Q^3=\xymatrix{1\ar[r] & 3 & 2\ar[l]}$.
We have $Q(B)$ to be the following quiver:
$$\xymatrix{\cdots\ar[r] &  3 & 2\ar[l] & 1\ar[l]\ar[r] & 3 & 2\ar[l] & 1\ar[l]\ar[r] &\cdots }$$
\end{Example}

\begin{Proposition}\label{stronglyalmost}
If $B$ is an acyclic sign-skew-symmetric matrix, then $Q(B)$ is a strongly almost finite quiver.
\end{Proposition}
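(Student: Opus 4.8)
The plan is to unwind the definition: by the terminology fixed above, $Q(B)$ is strongly almost finite precisely when it is (a) locally finite, (b) interval-finite, and (c) has no infinite paths. I would verify these three properties separately. The single tool that organizes the argument is the label map $\ell\colon Q(B)_0\to\{1,\dots,n\}$ sending each vertex to its label; I would first record that $\ell$ extends to a morphism of quivers $Q(B)\to\Delta(B)$, and then read off all three properties from the construction together with Observations \ref{extend}(2) and (3).

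The two easy properties come directly from those observations. For local finiteness, I would invoke Observation \ref{extend}(3): for any vertex $a$ labeled $i$, the subquiver of all arrows incident with $a$ is isomorphic to $Q^i$, which has only finitely many arrows (namely $\sum_j|b_{ji}|$ of them); hence $a_+$ and $a_-$ are finite. For interval-finiteness, I would use Observation \ref{extend}(2), that the underlying graph of $Q(B)$ is a tree and in particular has no cycles, so between any two vertices there is at most one undirected path; since every directed path is simple and is carried by this unique undirected path, $Q(i,j)$ has at most one element and is finite. Together these give that $Q(B)$ is strongly locally finite.

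The real content is (c), and this is where acyclicity of $B$ enters. First I would check that $\ell$ is a quiver morphism: given an arrow $\alpha\colon a\to a'$ with $\ell(a)=i$ and $\ell(a')=j$, Observation \ref{extend}(3) realizes $\alpha$ as an outgoing arrow at the center of the copy of $Q^i$ sitting at $a$, which by the definition of $Q^i$ forces $b_{ji}<0$, i.e. $b_{ij}>0$, so $\ell(\alpha)\colon i\to j$ is an arrow of $\Delta(B)$. Hence every directed path in $Q(B)$ projects under $\ell$ to a directed walk in $\Delta(B)$. Since $B$ is acyclic, $\Delta(B)$ has no directed cycle, so any such walk repeats no vertex and thus has length at most $n-1$; pulling back, every directed path in $Q(B)$ has length at most $n-1$. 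As any infinite (forward, backward, or bi-infinite) path would contain directed sub-paths of arbitrary length, this bound rules out infinite paths. Combined with strong local finiteness, this shows $Q(B)$ is strongly almost finite.

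I expect the only genuinely delicate point to be the verification that $\ell$ respects arrow directions, together with the uniform length bound it yields from the acyclicity of $\Delta(B)$; the remaining steps are a direct reading of the construction and of Observations \ref{extend}(2) and (3).
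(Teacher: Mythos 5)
Your proof is correct and follows essentially the same route as the paper: local finiteness from Observation 2.8(3), interval-finiteness from the tree structure of the underlying graph, and the exclusion of infinite paths by projecting along the label map to $\Delta(B)$ and invoking acyclicity of $B$. Your version merely makes explicit (via the quiver morphism $\ell$ and the uniform length bound $n-1$) what the paper argues by pigeonhole on repeated labels.
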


\begin{proof}
By Observation \ref{extend} (3), for each vertex $a$ of $Q$ which is labeled by $i$, the subquiver of $Q$ formed by all arrows incident with $a$ is isomorphic to $Q^i$. Thus, $Q$ is locally finite. Moreover, for each vertex $a$ and $b$, we have $|Q(a,b)|\leq 1$ due to the construction.

 Assume that $Q$ has an infinite path $a_1a_2\cdots a_{n+1}\cdots$. Since all vertices are labeled by $1,\cdots,n$, there are  $a_s$ and $a_t$ admitting the same label, denoted as $i$. Thus, the path from $a_s$ to $a_t$ corresponds to a cycle in $\Delta(B)$. It contradicts to the fact $B$ is acyclic.
\end{proof}

\begin{Lemma}\label{fix}
Keep the forgoing notations. For any $h\in \Gamma$ and finite connected subquiver $Q'$ of $Q$ such that $h\cdot Q'=Q'$, then there exist  fixed points in $Q_0'$ under the action of $h$.
\end{Lemma}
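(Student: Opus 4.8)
The plan is to reduce the statement to a classical fact about automorphisms of finite trees, and then to use the absence of $2$-cycles in $Q$ to eliminate the one bad case. By Observation \ref{extend}(2) the underlying graph of $Q$ is a tree, so the underlying graph $T$ of the finite connected subquiver $Q'$, being a connected subgraph of a tree, is itself a finite tree. Since $h\in\Gamma$ satisfies $h\cdot Q'=Q'$, the restriction $h|_{Q'}$ is a quiver automorphism of $Q'$; in particular it induces a graph automorphism of $T$, which preserves the graph distance and hence the eccentricity of every vertex of $T$.

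Next I would invoke the classical fact (due to Jordan) that the center of a finite tree---the set of vertices of minimal eccentricity---is either a single vertex or a pair of adjacent vertices. Because $h|_{Q'}$ preserves eccentricity, it maps the center of $T$ onto itself. If the center is a single vertex $c$, then $h\cdot c=c$, and since $c\in Q_0'$ we are done immediately.

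The hard part---and the only place where the quiver structure, rather than merely the tree structure, is needed---is the remaining case, in which the center is an edge joining two adjacent vertices $c_1,c_2\in Q_0'$. Here $h$ either fixes both of them or interchanges them, and I must rule out the interchange. Suppose for contradiction that $h\cdot c_1=c_2$ and $h\cdot c_2=c_1$. Since $|Q(a,b)|\le 1$ for all vertices $a,b$ (Proposition \ref{stronglyalmost}) and $Q$ has no $2$-cycles, the edge of $T$ between $c_1$ and $c_2$ is a single arrow $\alpha$ of $Q$; say $\alpha\colon c_1\to c_2$, the opposite orientation being symmetric. As $h$ is a quiver automorphism it carries $\alpha$ to an arrow $h\cdot\alpha\colon h\cdot c_1\to h\cdot c_2$, that is, $h\cdot\alpha\colon c_2\to c_1$. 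Then $\alpha$ and $h\cdot\alpha$ together form a $2$-cycle in $Q$, a contradiction. Hence $h$ must fix both $c_1$ and $c_2$, and in either case $Q_0'$ contains a point fixed by $h$, as required.
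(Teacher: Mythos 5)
Your proof is correct, but it takes a genuinely different route from the paper's. The paper argues by contradiction: assuming $h$ fixes no vertex, it chooses $a\in Q_0'$ minimizing the distance to $h\cdot a$, takes a minimal walk $w$ from $a$ to $h\cdot a$, and concatenates the translates $\cdots(h^2\cdot w)(h\cdot w)w$ to produce an infinite walk inside the finite acyclic graph underlying $Q'$, forcing a cycle and hence a contradiction. You instead pass to the underlying finite tree $T$ of $Q'$, invoke Jordan's theorem that its center is a vertex or an edge and is preserved by every automorphism, and then exclude the edge-inversion case using the orientation: swapping the endpoints of the central edge would carry the unique arrow $\alpha\colon c_1\to c_2$ to an arrow $c_2\to c_1$, producing a $2$-cycle, which is impossible since the underlying graph of $Q$ is a tree (Observation \ref{extend}(2), Proposition \ref{stronglyalmost}). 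A merit of your version is that it isolates exactly where the quiver structure (as opposed to the bare tree structure) is needed: a fixed-point-free automorphism of a finite tree can exist only by inverting an edge, and it is precisely this case that the directedness rules out. The paper's proof handles this case only implicitly, in the assertion that consecutive translates $h^i\cdot w$ and $h^{i+1}\cdot w$ meet in a single vertex (which would fail if $h$ reversed $w$); your argument makes that step explicit. Both proofs rely equally on Observation \ref{extend}(2).
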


\begin{proof}
By Observation \ref{extend} (2), the underlying graph of $Q$ is acyclic, so the underlying graph of $Q'$ is acyclic. Assume that $h$ has no fixed points. Let $a\in Q'_0$ such that the distance between $a$ and $h\cdot a$ is minimal and let  $w$ be a walk connecting $a$ and $h\cdot a$ with the minimal length, where the distance means the shortest length of walks between $a$ and $h\cdot a$. Such $a$ always exist since $Q'$ is connected.  For any $i\in \mathbb N$, we have $h^i\cdot w$ and $h^{i+1}\cdot w$ share the only common vertex $h^{i+1}\cdot a$ by the choice of $w$ and the fact that the underlying graph of $Q'$ is acyclic. Applying $h, h^2, \cdots$, we get an infinite walk $\cdots(h^2\cdot w)(h\cdot w)w$. However, since $Q'$ is a finite quiver, the infinite walk $\cdots(h^2\cdot w)(h\cdot w)w$ contains at least a cycle as graph. It contradicts to the fact the underlying graph of $Q'$ is acyclic. The result follows.
\end{proof}

\begin{Lemma}\label{finiteorder}Assume  $Q'$ is a finite connected subquiver of $Q$. If $h\in \Gamma$ has fixed points, then there exists $h'\in \Gamma$ of  finite order such that $h'\cdot a= h\cdot a$ for every vertex $a$ of $Q'$.
\end{Lemma}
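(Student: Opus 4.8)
The plan is to reduce the whole statement to a single application of Observation \ref{extend} (4) on a finite, $h$-invariant piece of $Q$ that contains $Q'$, exploiting the fact that $Q$ is a tree together with the hypothesis that $h$ has a fixed point.

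First I would fix a vertex $v_0\in Q_0$ with $h\cdot v_0=v_0$, which exists by hypothesis. Since $h\in\Gamma$ is in particular an automorphism of the underlying undirected graph of $Q$, it preserves graph distance; combined with $h\cdot v_0=v_0$ this gives $d(v_0,h\cdot a)=d(h\cdot v_0,h\cdot a)=d(v_0,a)$ for every vertex $a$. Hence $h$ maps the ball $B_m(v_0):=\{a\in Q_0 : d(v_0,a)\le m\}$ onto itself for every $m$.

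Next I would realize $B_m(v_0)$ as one of the subquivers from Construction \ref{construction}. By Observation \ref{extend} (1) the construction may be started from the quiver $Q^{i_0}$ with its distinguished vertex taken to be $v_0$, where $i_0$ is the label of $v_0$; call the resulting nested subquivers $Q_{(1)}\subseteq Q_{(2)}\subseteq\cdots$. Because the underlying graph of $Q$ is a tree (Observation \ref{extend} (2)) and the full subquiver of the arrows incident to any vertex $w$ is isomorphic to $Q^{i}$, where $i$ is the label of $w$ (Observation \ref{extend} (3)), the passage from $Q_{(m)}$ to $Q_{(m+1)}$ attaches to each vertex at distance $m$ from $v_0$ exactly its neighbours at distance $m+1$, while vertices at distance $<m$ already have all their neighbours present. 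An easy induction then yields $Q_{(m)}=B_m(v_0)$ for all $m$. Now I would choose $m$ with $Q'\subseteq B_m(v_0)$, which is possible since $Q'$ is finite and every vertex lies at finite distance from $v_0$ (indeed $m=\max_{a\in Q'_0}d(v_0,a)$ works).

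By the distance-preservation step, $\sigma:=h|_{Q_{(m)}}$ is a label-preserving automorphism of $Q_{(m)}=B_m(v_0)$. Since $Q_{(m)}$ is a finite quiver, $\mathrm{Aut}(Q_{(m)})$ is a finite group, so $\sigma$ has finite order. Applying Observation \ref{extend} (4) to $\sigma$ produces $h'\in\Gamma$ with $h'|_{Q_{(m)}}=\sigma=h|_{Q_{(m)}}$ and with the same (finite) order as $\sigma$. In particular $h'\cdot a=h\cdot a$ for every vertex $a$ of $Q'\subseteq Q_{(m)}$, and $h'$ has finite order, which is exactly the assertion. I expect the only genuinely delicate step to be the identification $Q_{(m)}=B_m(v_0)$: this is where the tree structure and the local-model property (Observation \ref{extend} (2),(3)) are really used, and where one must check that the re-rooted construction of Observation \ref{extend} (1) produces precisely the metric balls around $v_0$. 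Everything else is a formal combination of distance preservation and the finiteness of the automorphism group of a finite quiver.
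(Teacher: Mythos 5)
Your proof is correct and follows essentially the same route as the paper: re-root the construction at the fixed vertex via Observation \ref{extend} (1), enclose $Q'$ in some finite $Q_{(m)}$, observe that $h$ restricts to a finite-order automorphism there, and lift it by Observation \ref{extend} (4). The only difference is that you spell out (via distance preservation and the identification $Q_{(m)}=B_m(v_0)$) the step the paper dismisses with ``it is clear that $h|_{Q_{(m)}}$ has finite order,'' which is a welcome clarification rather than a deviation.
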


\begin{proof}
By Observation \ref{extend} (1), $Q_{(1)}$ can be chosen to any $Q^i$ in the construction of $Q$, so we may assume that $h$ fixes a vertex labelled by $1$. As $Q'$ is finite, there exists an $m\in \mathbb N$ such that $Q'$ is a subquiver of the finite quiver $Q_{(m)}$. It is clear that $h|_{Q_{(m)}}$ has finite order, where $h|_{Q_{(m)}}$ means the restriction of $h$ to $Q_{(m)}$. Applying Observation \ref{extend} (4), there exists an  $h'\in \Gamma$ with finite order such that $h'|_{Q_{(m)}}=h|_{Q_{(m)}}$. Our result follows.
\end{proof}

\begin{Lemma}\label{fixedpoint}
If $h\in \Gamma$ has no fixed points, then $h^n$ has no fixed points for any $n\in \mathbb Z$.
\end{Lemma}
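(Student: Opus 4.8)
The plan is to prove the contrapositive: if $h^n$ has a fixed point for some nonzero $n$, then $h$ itself has a fixed point. Since $h^n\cdot v=v$ is equivalent to $h^{-n}\cdot v=v$, I may assume the exponent is positive; the case $n=0$ is excluded, as $h^0$ is the identity and fixes every vertex. So suppose $v\in Q_0$ satisfies $h^n\cdot v=v$ with $n>0$, and aim to produce a vertex fixed by $h$.

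First I would examine the orbit of $v$ under the cyclic group $\langle h\rangle$. Because $h^n\cdot v=v$, this orbit $O=\{v,h\cdot v,\dots,h^{d-1}\cdot v\}$ is finite, where $d$ is the least positive integer with $h^d\cdot v=v$, and $d\mid n$. If $d=1$ then $h\cdot v=v$ and we are already done, so I may assume $d\geq 2$; in particular the points of $O$ are pairwise distinct and $h$ cyclically permutes them via $h\cdot(h^i\cdot v)=h^{i+1}\cdot v$ (indices taken mod $d$).

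The key step is to build from $O$ a finite, connected, $h$-invariant subquiver. By Observation \ref{extend}(2) the underlying graph of $Q$ is a tree; in particular it is connected, so any two of its vertices are joined by a unique finite reduced walk. For each $i$ let $w_i$ be the reduced walk joining $h^i\cdot v$ to $h^{i+1}\cdot v$, and set $Q'=\bigcup_{i=0}^{d-1}w_i$. Consecutive walks share the endpoint $h^{i+1}\cdot v$, so $Q'$ is connected, and it is finite as a finite union of finite walks. Since $h$ is a label-preserving automorphism of $Q$, it carries reduced walks to reduced walks, and by uniqueness $h\cdot w_i=w_{i+1}$ (with $w_d=w_0$); hence $h\cdot Q'=Q'$.

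Finally, $Q'$ is a finite connected subquiver fixed setwise by $h$, so Lemma \ref{fix} yields a vertex of $Q_0'$ fixed by $h$, contradicting the hypothesis that $h$ has no fixed points. This establishes the contrapositive and hence the lemma. I expect the only delicate point to be the construction of $Q'$: one must verify simultaneously that it is finite (guaranteed by the tree structure and strong almost finiteness of $Q$, Proposition \ref{stronglyalmost}), connected (guaranteed by the shared endpoints of consecutive geodesics), and genuinely $h$-stable (guaranteed by $h$ permuting $O$ cyclically and sending reduced walks to reduced walks); once $Q'$ is in hand, the conclusion is immediate from Lemma \ref{fix}.
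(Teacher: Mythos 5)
Your argument is correct. You reduce the statement to Lemma \ref{fix}: from a vertex $v$ with $h^n\cdot v=v$ you form the finite $\langle h\rangle$-orbit $O$ of $v$, take the unique reduced walks $w_i$ in the tree joining consecutive orbit points, and observe that their union $Q'$ is a finite, connected subquiver with $h\cdot Q'=Q'$ (since $h$ permutes the $w_i$ cyclically by uniqueness of geodesics); Lemma \ref{fix} then produces a fixed point of $h$, contradicting the hypothesis. All three verifications you flag as delicate (finiteness, connectivity, $h$-stability of $Q'$) do go through, and Lemma \ref{fix} is proved independently of the present lemma, so there is no circularity. Your aside that $n=0$ must be excluded is a fair reading of the statement.

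The paper takes a more hands-on route: it concatenates the images $h^k\cdot w$ of the single geodesic $w$ from $x_1$ to $h\cdot x_1$ into a closed walk from $x_1$ back to $h^n\cdot x_1=x_1$, and then extracts a contradiction directly from acyclicity of the underlying tree by locating where this closed walk must backtrack, splitting into two cases ($h\cdot x_2=x_m$ or $h\cdot x_1=x_{m-1}$) each of which is refuted. The two proofs share the same geometric input (the tree structure and the $h$-translates of the geodesic), but your version is more modular: by packaging the translates into an $h$-stable finite connected subquiver you delegate the combinatorics to the already-established Lemma \ref{fix} and avoid the paper's case analysis entirely, at the mild cost of having to check $h$-stability of the union. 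Either proof is acceptable; yours is arguably cleaner to verify.
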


\begin{proof}
Assume that $h^n$ has a fixed point $x_1\in Q_0$. By the construction of $Q$, there exists an unique walk $(x_1,x_2,\cdots,x_m,h \cdot x_1)$ from $x_1$ to $h\cdot x_1$ with $x_i\neq x_j$ for $i\neq j$. Applying $h^i$ to the walk for any $i=1,\cdots, n-1$, we get a walk $(h^i\cdot x_1,\cdots,h^{i+1}\cdot x_1)$. Therefore, we obtained a walk $(x_1,x_2,\cdots,h\cdot x_1,h\cdot x_2,\cdots,h^n\cdot x_1=x_1)$. Since the underlying graph of $Q$ has no cycles, and $x_i\neq x_j$ so $h^k\cdot x_i\neq h^k\cdot x_j$ for $i\neq j$, therefore, we have $h^{k+1}\cdot x_2=h^k\cdot x_m$ or $h^{k+1}\cdot x_1=h^k\cdot x_{n-1}$ for some $k$. Thus, we have $h\cdot x_2=x_m$ or $h\cdot x_1=x_{m-1}$. In case $h\cdot x_2=x_m$, since the underlying graph of $Q$ has no cycles and $(x_1,x_2,\cdots,h\cdot x_1,h\cdot x_2,\cdots,h^n\cdot x_1=x_1)$ is a walk, we have $h\cdot x_1=x_1$, which contradicts to $h$ has no fixed points. In case $h\cdot x_1=x_{m-1}$, then $(x_1,\cdots,x_{m-2},h\cdot x_1)$ is a walk, which contradicts to $(x_1,x_2,\cdots,x_m,h\cdot x_1)$ is the unique walk from $x_1$ to $h\cdot x_1$. Therefore, our result follows.
\end{proof}

\begin{Proposition}\label{covering}
If $B\in Mat_{n\times n}(\mathbb Z)$ is an acyclic sign-skew-symmetric matrix, then $(Q(B),\Gamma)$ constructed from $B$ in Construction \ref{construction} is a covering of $B$.
\end{Proposition}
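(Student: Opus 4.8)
The plan is to check directly the defining conditions of a covering in Definition~\ref{unfolding}(i)---namely that $Q=Q(B)$ is locally finite, that the number of $\Gamma$-orbits of vertices equals $n$, and that $Q$ has no $\Gamma$-loops and no $\Gamma$-$2$-cycles---and then to verify the matrix identity $B(Q)=B$. Local finiteness is already available from Proposition~\ref{stronglyalmost}, since a strongly almost finite quiver is in particular locally finite. Throughout, the two structural inputs I would keep exploiting are the local description of stars (Observation~\ref{extend}(3))---the full subquiver of arrows incident with any vertex labelled $i$ is isomorphic to $Q^i$---and the tree shape of the underlying graph (Observation~\ref{extend}(2)).

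First I would identify the $\Gamma$-orbits with the label classes. One inclusion is immediate: elements of $\Gamma$ preserve labels by definition, so vertices in a common orbit share a label. For the converse I would show $\Gamma$ acts transitively on each label class. Given $a,b$ both labelled $i$, fix a label-preserving isomorphism of their stars $Q^i\to Q^i$ carrying $a$ to $b$, and extend it outward along the tree one neighbour at a time: at a vertex $v$ already sent to $w$ (necessarily with the same label $\ell$) the parent edge is already matched, and the remaining arrows of the star $Q^{\ell}$ of $v$ can be matched with those of $w$ respecting labels and orientations. Because the underlying graph is a tree (Observation~\ref{extend}(2)), every vertex is reached along a unique walk, so this recursion is consistent and produces a bijective label-preserving automorphism $h$ of $Q$ with $h\cdot a=b$; thus $h\in\Gamma$. (Equivalently one can realize the required identification on a finite stage $Q_{(m)}$ and lift it by Observation~\ref{extend}(4).) Since $B$ is connected and acyclic, every label $1,\dots,n$ occurs in $Q$, so there are exactly $n$ orbits and $|\overline Q_0|=n<+\infty$.

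Next I would rule out $\Gamma$-loops and $\Gamma$-$2$-cycles, which is a purely local matter once the star structure is in hand. Every arrow of $Q$ joins vertices of distinct labels, because in each $Q^i$ all arrows join the central vertex $i$ to leaves labelled $j\neq i$ (recall $b_{ii}=0$). A $\Gamma$-loop at a vertex $a$ would be an arrow $a\to h\cdot a$ with $h\in\Gamma$, but $a$ and $h\cdot a$ carry the same label, contradicting the previous sentence. For a $\Gamma$-$2$-cycle one would have arrows $p\to q$ and $q\to h\cdot p$ with $h\in\Gamma$; then $p$ and $h\cdot p$ share a label $\ell$, while $q$ has some label $m\neq\ell$ (again since arrows join different labels). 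But in the star $Q^{m}$ of $q$ all arrows joining $q$ to label-$\ell$ vertices point the same way, the direction being dictated by the sign of $b_{\ell m}$; this is incompatible with having both the incoming arrow $p\to q$ and the outgoing arrow $q\to h\cdot p$. Hence no $\Gamma$-$2$-cycles occur.

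Finally I would verify $B(Q)=B$. Fix a vertex of label $j$ and compute $b_{[i][j]}=\sum_{i'\in[i]}b_{i'j}$; only the label-$i$ neighbours of this vertex contribute, and since the underlying graph of $Q$ is a tree there is at most one arrow between any two vertices, so each such neighbour contributes $+1$ or $-1$ according as its arrow points into or out of the chosen vertex. By the star $Q^{j}$ there are exactly $|b_{ij}|$ label-$i$ neighbours, all oriented the same way---towards the chosen vertex when $b_{ij}>0$ and away from it when $b_{ij}<0$---so the sum equals $b_{ij}$ in every case. Therefore $B(Q)=(b_{[i][j]})=(b_{ij})=B$, and combined with the previous paragraphs $(Q,\Gamma)$ is a covering of $B$. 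The one genuinely nontrivial step is the transitivity of $\Gamma$ on label classes in the second paragraph, i.e.\ promoting the local star isomorphisms to a global automorphism; acyclicity of the underlying graph is exactly what makes this extension unambiguous, while the remaining conditions reduce to one-vertex computations inside the fixed quivers $Q^i$.
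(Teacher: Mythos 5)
Your proof is correct and follows essentially the same route as the paper: identify the $\Gamma$-orbits with the label classes and compute $b_{[i][j]}=b_{ij}$ from the star structure of Observation \ref{extend}(3). You supply more detail than the paper does --- in particular the transitivity of $\Gamma$ on each label class (which the paper asserts in one line) and the absence of $\Gamma$-loops and $\Gamma$-$2$-cycles (which the paper defers to the proof of Theorem \ref{mainlemma} as ``easy to see'') --- and those added verifications are sound.
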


\begin{proof}
Denote $B=(b_{ij})$  and  $Q(B)=Q=(\widetilde b_{a_ia_j})$. Since the vertices of $Q$ has $n$ labels and $\Gamma$ preserves the labels, the number of orbits of vertices under the action of $\Gamma$ is $n$. It means that $B(Q)$, given in Definition \ref{unfolding}, is an $n\times n$ matrix;  denote $B(Q)=(b'_{[a_i][a_j]})$. Assume that $a_j$ is labelled by $j$, by Observation \ref{extend} (3) and the construction of $Q^{j}$, we have $b'_{[a_i][a_j]}=\sum\limits_{a'_i\in [a_i]}\widetilde b_{a'_ia_j}=b_{ij}$. Thus, we have $B(Q)=B$, which means that $(Q,\Gamma)$ is a covering of $B$.
\end{proof}

\subsection{Statement of unfolding theorem and totality of acyclic sign-skew-symmetric matrices}.

Following Proposition \ref{covering}, we give the main result about the existence of unfolding of $B$ as follows, which is indeed important for the major consequences in this paper.

\begin{Theorem}\label{mainlemma}({\bf Unfolding Theorem})
If $B\in Mat_{n\times n}(\mathbb Z)$ is an acyclic sign-skew-symmetric matrix, then $(Q(B),\Gamma)$ constructed from $B$ in Construction \ref{construction} is an unfolding of $B$.
\end{Theorem}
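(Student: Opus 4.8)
By Definition \ref{unfolding}(iii), since $(Q,\Gamma)$ is already a covering of $B$ by Proposition \ref{covering} and $Q=Q(B)$ is strongly almost finite by Proposition \ref{stronglyalmost}, the only remaining point is that $Q$ can undergo \emph{arbitrary} sequences of orbit mutations. By Definition \ref{orbitmu} an orbit mutation $\widetilde\mu_{[i]}$ is defined exactly when the current quiver has no $\Gamma$-loops and no $\Gamma$-$2$-cycles at $[i]$. Hence the whole theorem reduces to a single invariance claim: the property ``no $\Gamma$-loops and no $\Gamma$-$2$-cycles'' is preserved by every orbit mutation. The plan is to prove this by induction on the length of the mutation sequence, the base case being $Q$ itself, which has no $\Gamma$-loops (no arrow of $Q(B)$ joins two equally-labelled vertices) and no $\Gamma$-$2$-cycles (by the local structure in Observation \ref{extend}(3), the arrows between a fixed vertex and any one orbit all point the same way), as already used in Proposition \ref{covering}.

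First I would realize $Q$ categorically, following the strategy of \cite{D}. To the strongly almost finite quiver $Q$ I associate a strongly almost finite $2$-Calabi--Yau Frobenius category $\mathcal C^{Q}$ (the content of Corollary \ref{2-cy}) together with a cluster-tilting subcategory $\mathcal T_0$ chosen so that the Gabriel quiver of its image $\underline{\mathcal T}_0$ in the stable category $\underline{\mathcal C^{Q}}$ is exactly $Q$. The gain is that arrows of $Q$ are now encoded by irreducible morphisms and $\mathrm{Ext}^1$-spaces inside $\mathcal C^{Q}$, and the $\Gamma$-action on $Q$ lifts to a $\Gamma$-action on $\mathcal C^{Q}$ permuting the indecomposables of $\mathcal T_0$ compatibly with their labels.

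Next I would translate orbit mutation into categorical mutation. By Fact \ref{fact} the orbit mutation $\widetilde\mu_{[i]}=\prod_{i'\in[i]}\mu_{i'}$ is the simultaneous mutation of $\mathcal T_0$ at every indecomposable lying over the orbit $[i]$, which is a $\Gamma$-equivariant operation; and by Proposition \ref{quivermu}, as long as the subcategory stays cluster-tilting, its Gabriel quiver transforms by exactly the orbit-mutation rule of Definition \ref{orbitmu}. The induction step then reads: if $\mathcal T$ is a $\Gamma$-stable cluster-tilting subcategory whose Gabriel quiver has no $\Gamma$-loops and no $\Gamma$-$2$-cycles, then mutating over an orbit again yields a $\Gamma$-stable cluster-tilting subcategory whose Gabriel quiver has none. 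Staying cluster-tilting is where the global-dimension analysis enters: I would invoke Theorem \ref{keylemma} together with Propositions \ref{globaldim'} and \ref{globaldim} to ensure the mutated rigid subcategory still has global dimension less than $3$ and is therefore cluster-tilting.

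The main obstacle is the disappearance of $\Gamma$-$2$-cycles, and this is where the categorification does the real work. That no $\Gamma$-loops survive is comparatively soft: once no $\Gamma$-$2$-cycles is known it follows as in the proof of Lemma \ref{basiclemma} and Remark \ref{rem2.3}(4), while the $2$-Calabi--Yau structure independently forbids ordinary loops in the Gabriel quiver of any cluster-tilting subcategory. A $\Gamma$-$2$-cycle, however, is a pair of arrows $i\to j\to h\cdot i$ with $j\notin[i]$, which is \emph{not} an ordinary $2$-cycle, so it is not excluded by the generic no-$2$-cycles property alone; it is precisely the configuration that would destroy sign-skew-symmetry of the fold. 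I expect to rule it out by running the structural clauses (2), (4) and (5) of Theorem \ref{cluster tilting}, which describe the morphisms and extensions among the mutated objects, together with the $\Gamma$-equivariance and the tree shape of $Q$ (Observation \ref{extend}(2)), checking orbit by orbit that the arrows between any fixed vertex and any single $\Gamma$-orbit keep a common orientation. Once this invariance is secured, every finite composite of orbit mutations is legitimate, so $Q$ admits arbitrary orbit mutations and $(Q,\Gamma)$ is an unfolding of $B$.
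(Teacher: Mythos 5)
Your proposal follows essentially the same route as the paper's own proof: reduce to the invariance of ``no $\Gamma$-loops and no $\Gamma$-$2$-cycles'' under orbit mutation, categorify $Q$ as the stable Gabriel quiver of the cluster-tilting subcategory $\mathcal T_0\subseteq\mathcal C^Q$, translate orbit mutation into categorical mutation via Proposition \ref{quivermu}, and kill $\Gamma$-$2$-cycles inductively with Theorem \ref{cluster tilting} (together with Lemma \ref{functorially finite} to keep the finitely non-Hom-orthogonal hypothesis alive along the induction). The only point you omit is the degenerate case where $Q(B)$ is a linear quiver of type $A$ (excluded from the construction of $\mathcal C^Q$ in Corollary \ref{2-cy}), which the paper treats separately but trivially since there $Q$ is finite and $\Gamma=\{e\}$.
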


By Definition \ref{unfolding} and Proposition \ref{covering},  it suffices to prove that $\widetilde{\mu}_{[i_s]}\cdots\widetilde{\mu}_{[i_1]}(Q)$ has no $\Gamma$-$2$-cycles for any sequence $([i_1],\cdots,[i_s])$ of orbit mutations. However, for this aim, we will make the preparations in Sections \ref{three}, \ref{cate}, \ref{action}, \ref{quiver}, and then finish the proof of Theorem \ref{mainlemma} at the end of Section \ref{quiver}.

As a direct consequence of Theorem \ref{mainlemma}, we have

\begin{Theorem}\label{sign-skew}
Any acyclic sign-skew-symmetric matrix $B\in Mat_{n\times n}(\mathbb Z)$ is always totally sign-skew-symmetric.
\end{Theorem}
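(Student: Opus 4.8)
The plan is to leverage the Unfolding Theorem (Theorem \ref{mainlemma}) together with the preservation-of-covering Lemma \ref{basiclemma} and run an induction on the length of a mutation sequence. Recall that $B$ being totally sign-skew-symmetric means precisely that every matrix in the mutation class of $B$ is again sign-skew-symmetric; since matrix mutation is only performed in the directions $1,\dots,n$ indexing the columns, it suffices to show that $\mu_{i_s}\cdots\mu_{i_1}(B)$ is sign-skew-symmetric for every finite sequence $(i_1,\dots,i_s)$ with $i_j\in\{1,\dots,n\}$. The bridge to the quiver side is the bijection between the $n$ orbits in $\overline{Q_0}$ and the $n$ labels: since $\Gamma$ preserves labels and, by the proof of Proposition \ref{covering}, the orbit $[i]$ of a vertex labelled $i$ corresponds exactly to direction $i$, the orbit mutation $\widetilde\mu_{[i]}$ on $Q$ covers the matrix mutation $\mu_i$ on $B$. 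Thus each matrix mutation sequence on $B$ lifts to the orbit mutation sequence $\widetilde\mu_{[i_s]}\cdots\widetilde\mu_{[i_1]}$ on $Q$, and under this lift we will have the identification $\mu_{[i]}=\mu_i$ at the matrix level.

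For the induction, set $B_0=B$, $Q_0=Q(B)$, and $B_t=\mu_{i_t}(B_{t-1})$, $Q_t=\widetilde\mu_{[i_t]}(Q_{t-1})$. The base case holds because $B$ is sign-skew-symmetric by hypothesis and $(Q_0,\Gamma)$ is a covering of $B$ by Proposition \ref{covering}. For the inductive step, assume $B_{t-1}$ is sign-skew-symmetric and $(Q_{t-1},\Gamma)$ is a covering of $B_{t-1}$. The crucial input is Theorem \ref{mainlemma}: since $(Q(B),\Gamma)$ is an unfolding, it admits arbitrary sequences of orbit mutations, which is exactly the assertion that $\widetilde\mu_{[i_t]}\cdots\widetilde\mu_{[i_1]}(Q)$ has no $\Gamma$-$2$-cycles (and, by Remark \ref{rem2.3}(4), no $\Gamma$-loops) at every stage. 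In particular $Q_{t-1}$ can do orbit mutation at $[i_t]$ and the result $Q_t=\widetilde\mu_{[i_t]}(Q_{t-1})$ has no $\Gamma$-$2$-cycles, so both clauses of the hypothesis of Lemma \ref{basiclemma} are met. That lemma then yields that $(Q_t,\Gamma)$ is a covering of $\mu_{[i_t]}(B_{t-1})=\mu_{i_t}(B_{t-1})=B_t$, and its proof already establishes that this folded matrix $B_t$ is sign-skew-symmetric. This closes the induction, and since every element of the mutation class of $B$ arises in this way, $B$ is totally sign-skew-symmetric.

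The point to emphasize is that essentially all of the difficulty is concentrated in Theorem \ref{mainlemma}, whose proof occupies the bulk of the subsequent sections; granting it, the argument here is pure bookkeeping. The one thing that must be verified with care is that the no-$\Gamma$-$2$-cycle hypothesis demanded by Lemma \ref{basiclemma} is available at \emph{every} step of the induction rather than merely the first, and this is precisely the content of the Unfolding Theorem, namely the preservation of ``no $\Gamma$-$2$-cycles and no $\Gamma$-loops'' under arbitrary orbit mutation sequences. Once that is in hand, the identification $\mu_{[i]}=\mu_i$ and the sign-skew-symmetry of each $B_t$ follow directly from Lemma \ref{basiclemma} with no further computation.
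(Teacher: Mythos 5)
Your proposal is correct and follows essentially the same route as the paper: both arguments reduce the claim to showing that every mutated matrix $\mu_{i_t}\cdots\mu_{i_1}(B)$ admits $(\widetilde\mu_{[i_t]}\cdots\widetilde\mu_{[i_1]}(Q),\Gamma)$ as a covering, by combining Theorem \ref{mainlemma} (which guarantees the no-$\Gamma$-$2$-cycle hypothesis at every step) with an iterated application of Lemma \ref{basiclemma}, and then conclude sign-skew-symmetry from the definition of covering. Your write-up merely makes explicit the induction that the paper leaves implicit.
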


\begin{proof}
It suffices to prove that $\mu_{[i_t]}\cdots\mu_{[i_1]}(B)$ is sign-skew-symmetric for any sequence $([i_1],\cdots,[i_t])$. By Lemma \ref{basiclemma} and Theorem \ref{mainlemma}, there exists a $(Q, \Gamma)$ such that $(\widetilde\mu_{[i_t]}\widetilde\mu_{i_{[t-1]}}\cdots\widetilde\mu_{[i_1]}(Q), \Gamma)$ is a covering of $\mu_{[i_t]}\mu_{[i_{t-1}]}\cdots\mu_{[i_1]}(B)$. Thus, by the definition of covering, our result follows.
\end{proof}

According to this conclusion, we know that any acyclic sign-skew-symmetric matrix can always be an exchange matrix for a cluster algebra.

\section{Global dimensions of $K$-linear categories}\label{three}

\subsection{Terminologies in categories}.

Now we recall some standard terminologies in categories. All subcategories are assumed to be fully faithful if there is no other statement.

\begin{Definition}(p.27, \cite{K})
Let $\mathcal A$ be an additive category. A pair $(f,g)$ of composable morphisms $0\rightarrow X\overset{f}{\rightarrow}Y\overset{g}{\rightarrow}Z\rightarrow 0$ is {\bf exact} if $f$ is a kernel of $g$ and $g$ is a cokernel of $f$. Let $\mathcal E$ be a class of exact pairs closed under isomorphism and satisfies the following axioms Ex0, $Ex1$, $Ex2$ and $Ex2^{op}$. The {\bf admissible epimorphism} mentioned in these axioms are by definition the second components $g$ of $(f,g)\in \mathcal E$. The first components $f$ are {\bf admissible monomorphisms}.
\begin{description}
  \item[$Ex0$] $id_{X}$ is an admissible epimorphism for any $X\in \mathcal A$.
  \item[$Ex1$] The composition of two admissible epimorphism is an admissible epimorphism.
  \item[$Ex2$] For each $h\in Hom_{\mathcal A}(Z',Z)$ and each admissible epimorphism $g\in Hom_{\mathcal A}(Y,Z)$, there is a pullback square
      $$\xymatrix{
       Y'\ar[r]^{g'}\ar[d]_{h'} & Z'\ar[d]_{h} \\
       Y\ar[r]^g                & Z,}$$
       where $g'$ is an admissible epimorphism.
  \item[$Ex2^{op}$] For each $h\in Hom_{\mathcal A}(X',X)$ and each admissible monomorphism $f\in Hom_{\mathcal A}(X,Y)$, there is a pushout square
      $$\xymatrix{
       X\ar[r]^{f}\ar[d]_{h} & Y\ar[d]_{h'} \\
       X'\ar[r]^{f'}            &Y',}$$
       where $f'$ is an admissible monomorphism.
\end{description}
Then $(\mathcal A, \mathcal E)$ (or $\mathcal A$ for shortly) is called an {\bf exact category}.

An exact sequence in an exact category is said to be an {\bf admissible exact sequence}.
\end{Definition}

Given an exact category $\mathcal A$. An object $P$ is called {\bf projective} if $0\rightarrow Hom_{\mathcal A}(P,X)\rightarrow Hom_{\mathcal A}(P,Y)\rightarrow Hom_{\mathcal A}(P,Z)\rightarrow 0$ is exact for all admissible exact sequences $0\rightarrow X\rightarrow Y\rightarrow Z\rightarrow 0$. Dually, the {\bf injective objects} in exact category $\mathcal A$ is defined. $P$ is called {\bf projective-injective} if $P$ is both projective and injective. We say that $\mathcal A$ has {\bf enough projectives} if for each $X\in \mathcal A$, there exists an admissible short exact sequence $0\rightarrow Y\rightarrow P\rightarrow X\rightarrow 0$ with $P$ is projective and $Y\in \mathcal A$. Dually, $\mathcal A$ has {\bf enough injectives} is defined. An exact category $\mathcal A$ is called {\bf Frobenius} if it has enough project/injective objects and the set of project objects is coincide to that of injective objects. See \cite{H}. We call a subcategory $\mathcal T$ of $\mathcal C$ a {\bf co-generator subcategory} if there exists $T\in \mathcal T$ and an admissible short exact sequence $0\rightarrow X\rightarrow T\rightarrow Y\rightarrow 0$ for any $X\in \mathcal C$. Dually, we can define the {\bf generator subcategory} of $\mathcal C$.

 A $Hom$-finite exact $K$-linear category $\mathcal A$ is called {\bf $2$-Calabi-Yau} in sense that $Ext^{1}_{\mathcal A}(X,Y)\cong DExt^{1}_{\mathcal A}(Y,X)$ naturally for every pair $X,Y\in \mathcal A$, where $D$ is the $K$-linear dual. See \cite{IY}.

Recall that a $K$-linear Krull-Schmidt category $\mathcal C$ is called {\bf $Hom$-finite} if $dim_KHom_{\mathcal C}(X,Y)< \infty$ for any $X,Y\in \mathcal C$. We say a $K$-linear Krull-Schmidt category $\mathcal C$ to be {\bf strongly almost finite} if it is $Hom$-finite and there are only finite indecomposable objects $Y$ (up to isomorphism) such that $Hom_{\mathcal C}(X,Y)\neq 0$ or $Hom_{\mathcal C}(Y,X)\neq 0$ for any indecomposable object $X$.

A $K$-linear category $\mathcal C$ is called {\bf locally bounded} in \cite{BG} if it satisfies:

(a)\; each object is indecomposable,

 (b)\; distinct objects are non-isomorphic,

 (c)\; $\sum\limits_{Y\in \mathcal C}(dim_{K}Hom_{\mathcal C}(X,Y)+dim_{K}Hom_{\mathcal C}(Y,X))\leq \infty$. \\Note that the condition (a) is replaced  in \cite{BG} by that $End_{\mathcal C}(X)$ is local for $X\in \mathcal C$.

 Assume $\mathcal C$ is either a strongly almost finite category or a locally bounded category. A {\bf representation} of $\mathcal C$ over $K$ is a covariant functor $F: \mathcal C\rightarrow modK$. A representation $F$ is called {\bf representable} if it is naturally isomorphic to $Hom_{\mathcal C}(X,-)$ for some $X\in \mathcal C$; $F$ is called {\bf finitely presented} if there is an exact sequence $Hom_{\mathcal C}(X,-)\rightarrow Hom_{\mathcal C}(Y,-)\rightarrow F\rightarrow 0$ for some $X,Y\in \mathcal C$. Denote by $mod\mathcal C$ the category consisting of all finitely presented representations of $\mathcal C$. It is well known from \cite{IY} that $mod\mathcal C$ is an abelian category whose projective objects are just finite direct sum of representable representations of $\mathcal C$. Denote by $proj(mod\mathcal C)$ the projective subcategory of $mod\mathcal C$.

  When $\mathcal C$ is a strongly almost finite category, let $S_X$ be the {\bf simple representation} of $\mathcal C$ via $S_X(Y)=Hom_{\mathcal C}(X,Y)/rad(X,Y)$ for any indecomposable object $X$ of $\mathcal C$, where $rad$ is the radical of category $\mathcal C$.

 When $\mathcal C$ is a locally bounded category,  let $S_X$ be the {\bf simple representation} of $\mathcal C$ via $S_X(X)=End_{\mathcal C}(X)/J$ and $S_X(Y)=0$ if $Y\neq X$, for any object $X$ of $\mathcal C$, where $J$ is the Jacobson radical of $End_{\mathcal C}(X)$.

For $\mathcal C$ a strongly almost finite category or a locally bounded category, like as module categories of algebras, we define the {\bf projective dimension} of $S_X$, denoted as $pd.dim(S_X)$, to be that $$pd.dim(S_X)=inf\{n\;|\;0\rightarrow F_n\cdots\rightarrow F_1\rightarrow F_0\rightarrow S_X\rightarrow 0 \;\text{is exact}, F_i\in proj(mod\mathcal C)\};$$
and define the {\bf global dimension} of $\mathcal C$, denoted as $gl.dim(\mathcal C)$, to be that
$$gl.dim(\mathcal C)=sup\{pd.dim(S_X)\;|\;X\;\text{is an indecomposable object of}\;\mathcal C\}.$$

For a strongly almost finite category $\mathcal C$,  a morphism $f\in\mathcal C(X, Y)$ is called {\bf right minimal}
if it has not a direct summand of the form $T \rightarrow 0$ as a complex for some $0\neq T\in \mathcal C$. Dually, the {\bf left minimal morphism} can be
defined. For a subcategory $\mathcal D$ of $\mathcal C$, $f:X\rightarrow Y$ is called a {\bf right $\mathcal D$-approximation} of $Y\in \mathcal C$ if $X\in \mathcal D$ and $\mathcal C(-, X) \overset{\mathcal C(-, f)}{\longrightarrow} \mathcal C(-, Y) \rightarrow 0$ is exact as functors on $\mathcal D$. Moreover, we call that $f$ is a {\bf minimal right $\mathcal D$-approximation} if it is right minimal. $\mathcal D$ is said to be a {\bf contravariantly finite} subcategory of $\mathcal C$ if any $Y\in \mathcal C$ has a right $\mathcal D$-approximation. Dually, a {\bf(minimal) left $\mathcal D$-approximation} and a {\bf covariantly finite} subcategory can be defined. A contravariantly and covariantly finite subcategory is called {\bf functorially finite}. For a subcategory $\mathcal X$ of $\mathcal C$, denote $\mathcal X^{\perp}=\{T\in\mathcal C\;|\; Ext^1_{\mathcal C}(\mathcal X, T)=0\}$ and $^{\perp}\mathcal X=\{T\in\mathcal C\;|\; Ext^1_{\mathcal C}(T, \mathcal X)=0\}$;  $\mathcal X$ is called {\bf rigid} if $Ext^1_{\mathcal C}(\mathcal X,\mathcal X)=0$; and $\mathcal X$ is said to be a {\bf cluster tilting} subcategory if $\mathcal X$ is functorially finite and $\mathcal X=\mathcal X^{\perp}=$ $^{\perp}\mathcal X$. See \cite{AS}, \cite{IY}.

Assume $\mathcal C$ is a strongly almost finite category, for any subcategory $\mathcal T$ of $\mathcal C$, $add(\mathcal T)$ denotes the smallest additive full subcategory of $\mathcal C$ containing $\mathcal T$, that is, the full subcategory of $\mathcal C$ whose objects are the direct sums of direct summands of objects in $\mathcal T$. Let $\mathfrak{Add}(\mathcal C)$ denote the class of all full $K$-linear subcategories of $\mathcal C$, where the subcategories are stable under isomorphisms and direct summands. It is clear that $\mathcal T\in \mathfrak{Add}(\mathcal C)$ if and only if $\mathcal T=add(\mathcal T)$.

\subsection{Global dimension and cluster tilting subcategory}.

In this part, we will prove that a functorially finite rigid subcategory of a strongly almost finite category which has global dimension less than 3 and satisfies certain conditions is a cluster tilting subcategory.
This result generalizes Theorem 3.30 in \cite{D} and Theorem 5.1 (3) in \cite{I}.

We have the following theorem.

\begin{Theorem}\label{keylemma}
Let $\mathcal C$ be a Frobenius, $2$-Calabi-Yau strongly almost finite $K$-category. Assume that $\mathcal C$ is a extension closed subcategory of an abeian category $\mathcal D$. Let $\mathcal T$ be a functorially finite subcategory of $\mathcal C$. If $\mathcal T$ is a rigid subcategory of $\mathcal C$ such that $gl.dim(\mathcal T)\leq 3$, and $\mathcal T$ contains all projective-injective objects of $\mathcal C$, then $\mathcal T$ is a cluster-tilting subcategory of $\mathcal C$.
\end{Theorem}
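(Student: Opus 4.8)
The plan is to reduce the assertion to a single inclusion and then to produce, for every object of $\mathcal C$, a short $\mathcal T$-coresolution. First I would record the easy half. Since $\mathcal C$ is $2$-Calabi-Yau, the duality $Ext^1_{\mathcal C}(Y,Z)\cong DExt^1_{\mathcal C}(Z,Y)$ shows that $Ext^1_{\mathcal C}(\mathcal T,X)=0$ is equivalent to $Ext^1_{\mathcal C}(X,\mathcal T)=0$, whence $\mathcal T^{\perp}={}^{\perp}\mathcal T$; and rigidity of $\mathcal T$ gives at once $\mathcal T\subseteq \mathcal T^{\perp}$. As $\mathcal T$ is assumed functorially finite, proving that $\mathcal T$ is cluster tilting amounts to proving the reverse inclusion $\mathcal T^{\perp}\subseteq \mathcal T$.

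The pivotal reduction is a splitting observation, which I would isolate next. Suppose that every $X\in\mathcal C$ admits an admissible exact sequence $0\to X\to T^{0}\to T^{1}\to 0$ with $T^{0},T^{1}\in\mathcal T$. Take $X\in\mathcal T^{\perp}$ together with such a sequence. By the previous paragraph $Ext^1_{\mathcal C}(X,T^{1})=0$, and then $2$-Calabi-Yau duality $Ext^1_{\mathcal C}(T^{1},X)\cong DExt^1_{\mathcal C}(X,T^{1})$ forces $Ext^1_{\mathcal C}(T^{1},X)=0$. Hence the sequence splits, so $X$ is a direct summand of $T^{0}\in\mathcal T$ and therefore $X\in\mathcal T$. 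This settles $\mathcal T^{\perp}\subseteq\mathcal T$, and with it the theorem, \emph{modulo} the existence of such two-term coresolutions.

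It remains to construct, for each $X\in\mathcal C$, a two-term $\mathcal T$-coresolution, and this is where $gl.dim(\mathcal T)\le 3$, the Frobenius structure, and strong almost finiteness enter. Since $\mathcal T$ is covariantly finite I would take a minimal left $\mathcal T$-approximation $X\to T^{0}$; because $\mathcal C$ is Frobenius and $\mathcal T$ contains all projective-injective objects, the admissible embedding of $X$ into an injective factors through this approximation, forcing $X\to T^{0}$ to be an admissible monomorphism with cokernel $X^{1}$ again in $\mathcal C$. Iterating yields a $\mathcal T$-coresolution $0\to X\to T^{0}\to T^{1}\to\cdots$, and strong almost finiteness guarantees that each approximation is finite, so that the restricted representation $\mathcal C(-,X)|_{\mathcal T}$ is a genuine object of $mod\mathcal T$. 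The hypothesis $gl.dim(\mathcal T)\le 3$ says this representation has a projective resolution of length at most three in $mod\mathcal T$; translating back through the restricted Yoneda functor and invoking $2$-Calabi-Yau duality to trade the contravariant resolution for the covariant coresolution, I would conclude that the cosyzygy $X^{1}$ already lies in $\mathcal T$, i.e. the coresolution has length two.

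The main obstacle is precisely this last translation. The global dimension bound is a statement inside the abelian category $mod\mathcal T$, whereas the two-term coresolution is a statement about admissible exact sequences in the exact category $\mathcal C$; bridging them requires showing that the minimal projective resolution of $\mathcal C(-,X)|_{\mathcal T}$ lifts to an honest admissible sequence in $\mathcal C$ which is exact in $\mathcal C$, not merely exact after applying $\mathcal C(T,-)$ for $T\in\mathcal T$, and that the Serre duality underlying the $2$-Calabi-Yau structure converts the length-three module resolution into the desired length-two coresolution. Carrying this out faithfully in the present infinite-rank, strongly almost finite setting, rather than the finite setting of \cite{D} and \cite{I} that it generalizes, is the technical heart of the argument.
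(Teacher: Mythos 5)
Your first two paragraphs are fine as far as they go: $\mathcal T^{\perp}={}^{\perp}\mathcal T$ by the $2$-Calabi--Yau duality, and if every $X\in\mathcal C$ admitted an admissible sequence $0\to X\to T^{0}\to T^{1}\to 0$ with $T^{0},T^{1}\in\mathcal T$, then any $X\in\mathcal T^{\perp}$ would split off $T^{0}$. The problem is that this reduction makes no progress: the existence of such two-term coresolutions is the standard characterization of cluster-tilting subcategories in a $2$-Calabi--Yau Frobenius category, so you have replaced the theorem by an equivalent statement, and your proposed construction of the coresolution is circular. Indeed, take a left $\mathcal T$-approximation $X\to T^{0}$ (a monomorphism, as you say) with cokernel $X^{1}$; applying $Hom_{\mathcal C}(-,T)$ for $T\in\mathcal T$ and using that the approximation is surjective on $Hom(-,T)$ shows $Ext^{1}_{\mathcal C}(X^{1},T)=0$, i.e. $X^{1}\in{}^{\perp}\mathcal T=\mathcal T^{\perp}$. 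Terminating the coresolution after two terms therefore requires exactly the inclusion $\mathcal T^{\perp}\subseteq\mathcal T$ you set out to prove. The step you defer --- ``translating the length-three projective resolution in $mod\,\mathcal T$ back through Yoneda and $2$-Calabi--Yau duality'' --- is not a detail but the entire content of the theorem, and no mechanism for it is given.

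The paper's proof avoids this by running the argument contravariantly from the start. For $X$ with $Ext^{1}_{\mathcal C}(X,\mathcal T)=0$ it takes a projective resolution $P_{3}\to P_{2}\to P_{1}\to P_{0}\to X\to 0$ in $\mathcal C$ (all $P_{i}$ lie in $\mathcal T$ since $\mathcal T$ contains the projective-injectives), applies $Hom_{\mathcal C}(-,T)$ restricted to $\mathcal T$, and performs dimension shifting on the syzygies $X_{i}=\ker f_{i-1}$: from $gl.dim(mod\,\mathcal T)\le 3$ one gets $pd(Hom_{\mathcal C}(X_{2},-))\le 2$, then $pd(Hom_{\mathcal C}(X_{1},-))\le 1$, and finally --- using $Ext^{1}_{\mathcal C}(X,\mathcal T)=0$ to make $0\to Hom_{\mathcal C}(X,-)\to Hom_{\mathcal C}(P_{0},-)\to Hom_{\mathcal C}(X_{1},-)\to 0$ exact --- that $Hom_{\mathcal C}(X,-)|_{\mathcal T}$ is projective in $mod\,\mathcal T$, hence isomorphic to $Hom_{\mathcal C}(T,-)|_{\mathcal T}$ for some $T\in\mathcal T$. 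A comparison of injective coresolutions (whose terms again lie in $\mathcal T$) plus Yoneda then upgrades this to an isomorphism $X\cong T$ in $\mathcal C$. If you want to salvage your outline, you would need to supply an argument of exactly this kind for the cosyzygy $X^{1}$; at that point the coresolution framing adds nothing.
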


\begin{proof}
For any $X\in \mathcal C$ such that $Ext^1_{\mathcal C}(X, T)=0$ for all $T\in\mathcal T$, take a projective resolution of $X$, $$\mathbf{P}:P_3\rightarrow P_2\overset{f_2}{\longrightarrow} P_1\overset{f_1}{\longrightarrow} P_0 \overset{f_0}{\longrightarrow} X\rightarrow 0.$$ Applying $Hom_{\mathcal C}(-,\mathcal T)$, we get a sequence of $\mathcal T$ representations, $$0\rightarrow Hom_{\mathcal C}(X,-)\rightarrow Hom_{\mathcal C}(P_0,-)\rightarrow Hom_{\mathcal C}(P_1,-) \rightarrow Hom_{\mathcal C}(P_2,-)\rightarrow Hom_{\mathcal C}(P_3,-).$$ Denote by $H_2$ the homology group of the above sequence at $Hom_{\mathcal C}(P_2,-)$.

We first prove that $Hom_{\mathcal C}(X,-)$ is a projective object in $mod\mathcal T$. Denote $X_1=ker(f_0), X_2=ker(f_1)$ and $X_3=ker(f_2)$, note that $X_1,X_2,X_3$ are in $\mathcal D$. Since $P_2\rightarrow X_2\rightarrow 0$, we have $0\rightarrow Hom_{\mathcal C}(X_2,-)\rightarrow Hom_{\mathcal C}(P_2,-)$. As $gl.dim(mod\mathcal T)\leq 3$ and $Hom_{\mathcal C}(P_2,-)$ is projective by $P_2\in \mathcal T$. Thus, we have $pd.dim(Hom_{\mathcal C}(X_2,-))\leq 2$. Furthermore, we have the exact sequence $$0\rightarrow Hom_{\mathcal C}(X_1,-)\rightarrow Hom_{\mathcal C}(P_1,-)\rightarrow Hom_{\mathcal C}(X_2,-) \overset{g}{\rightarrow} H_2\rightarrow 0.$$ Since
$pd.dim (Hom_{\mathcal C}(X_2,-))\leq 2$ and $pd.dim (H_2)\leq 3$ by $gl.dim (mod\mathcal T)\leq 3$, we get $pd.dim(ker(g))\leq 2$. Thus, we have $pd.dim (Hom_{\mathcal C}(X_1,-))\leq 1$. Applying $Hom_{\mathcal C}(-,\mathcal T)$ to $0\rightarrow X_1\rightarrow P_0\rightarrow X\rightarrow 0$, we obtain $$0\rightarrow Hom_{\mathcal C}(X,-)\rightarrow Hom_{\mathcal C}(P_0,-)\rightarrow Hom_{\mathcal C}(X_1,-) \rightarrow 0$$ as $Ext^1_{\mathcal C}(X,T)=0$. Therefore, $pd.dim (Hom_{\mathcal C}(X,-))= 0$ which means $Hom_{\mathcal C}(X,-)$ is a projective object in $mod\mathcal T$.

Now we show $X\in \mathcal T$. Since $Hom_{\mathcal C}(X,-)$ is a projective object in $mod\mathcal T$, there exists a $T\in\mathcal T$ such that $Hom_{\mathcal C}(X,-)$ isomorphic to $Hom_{\mathcal C}(T,-)$ as representations of $\mathcal T$. Assume $f\in Hom_{\mathcal C}(X,T)$ corresponds to $id_{T}\in Hom_{\mathcal C}(T,T)$. To prove $X\in \mathcal T$, it suffices to prove $X\overset{f}{\cong} T$. For any object $Y\in \mathcal C$, take an injective resolution of $Y$, $0\rightarrow Y\rightarrow I_0\rightarrow I_2$. Applying $Hom_{\mathcal C}(X,-)$ and $Hom_{\mathcal C}(T,-)$ on it respectively, we get the following commuting diagram,

\centerline{\xymatrix{
    0 \ar[r] & Hom_{\mathcal C}(T,Y) \ar[r]\ar[d]_{Hom_{\mathcal C}(f,Y)} & Hom_{\mathcal C}(T,I_0) \ar[r]\ar[d]_{Hom_{\mathcal C}(f,I_0)} & Hom_{\mathcal C}(T,I_1)\ar[d]_{Hom_{\mathcal C}(f,I_1)} \\
    0 \ar[r] & Hom_{\mathcal C}(X,Y) \ar[r]       & Hom_{\mathcal C}(X,I_0) \ar[r]                                 & Hom_{\mathcal C}(X,I_1).
    }}
 Since $I_0,I_1\in \mathcal T$,  $Hom_{\mathcal C}(f,I_0)$ and $Hom_{\mathcal C}(f,I_1)$ are isomorphisms. Then $Hom_{\mathcal C}(f,Y): Hom_{\mathcal C}(T,Y)\rightarrow Hom_{\mathcal C}(X,Y)$ is also an isomorphism.  It is easy to see that $Hom_{\mathcal C}(f,-): Hom_{\mathcal C}(T,-)\rightarrow Hom_{\mathcal C}(X,-)$ is a functor isomorphism over $\mathcal C$. Therefore, by Yoneda embedding Lemma, $X\overset{f}{\cong} T$.

Therefore, since $\mathcal C$ is $2$-Calabi-Yau and  $\mathcal T$ is functorially finite, we know that $\mathcal T$ is a cluster subcategory of $\mathcal C$.
\end{proof}

\subsection{Global dimension, nilpotent element and commutator subgroup}.

In this part, we will generalize Lenzing's Theorem \cite{L} to locally bounded categories.

We recall some terminologies in \cite{L} firstly. Let $\mathcal A$ be an abelian category and $\mathcal B$ be a subcategory of $\mathcal A$ which is closed under extension. $\mathcal B[X]$ is defined as the category whose objects are pairs $(E,f)$ for objects $E\in \mathcal B$ and $f\in Hom(E,E)$, whose morphisms from $(E,f)$ to $(F,g)$ are the morphisms $u:E\rightarrow F$ satisfying $gu=uf$. We call that $0\rightarrow f'\rightarrow f\rightarrow f''\rightarrow 0$ is exact in $\mathcal B[X]$ if there exists a commutative diagram:
$$\xymatrix{
    0 \ar[r] & E' \ar[r]\ar[d]_{f'} & E\ar[r]\ar[d]_f & E''\ar[r]\ar[d]_{f''}\ar[r] &0 \\
    0 \ar[r] & E' \ar[r]            & E\ar[r]         & E''\ar[r] &0,
    }$$
where $0\rightarrow E'\rightarrow E\rightarrow E''\rightarrow 0$ is exact in $\mathcal B$.

\begin{Definition}(Definition 1, \cite{L})
A picture $Tr:Obj(\mathcal B[X])\rightarrow G$ from the class of objects of $\mathcal B[X]$ to a commutative group $G$ is called {\bf trace picture} if it satisfies:

(1) If $0\rightarrow f'\rightarrow f\rightarrow f''\rightarrow 0$ is exact, then $Tr(f)=Tr(f')+Tr(f'')$,

(2) For any $f,f'\in Hom(E,E)$, we have $Tr(f+f')=Tr(f)+Tr(f')$.
\end{Definition}

Recall in \cite{L} that $f:E\rightarrow E\in \mathcal B$ is called {\bf $\mathcal B$-nilpotent} if there exists a filtration $E=E_0\supseteq E_1\supseteq\cdots \supseteq E_s=0$ such that $E_i, E_i/E_{i+1}$ are objects of $\mathcal B$ and $f(E_i)\subseteq E_{i+1}$ for $i=0,\cdots, s$.

\begin{Lemma}\label{zero}(Satz 1, \cite{L})
If $Tr:Obj(\mathcal B[X])\rightarrow G$ is a trace picture and $f:E\rightarrow E$ is $\mathcal B$-nilpotent, then $Tr(f)=0$.
\end{Lemma}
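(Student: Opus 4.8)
The goal is to prove Lemma~\ref{zero} (Lenzing's Satz~1): if $Tr:Obj(\mathcal{B}[X])\to G$ is a trace picture and $f:E\to E$ is $\mathcal{B}$-nilpotent, then $Tr(f)=0$. The plan is to exploit the filtration witnessing $\mathcal{B}$-nilpotence together with the two defining axioms of a trace picture, reducing the problem by induction on the length $s$ of the filtration. The essential idea is that the nilpotence condition $f(E_i)\subseteq E_{i+1}$ means $f$ \emph{shifts the filtration strictly downward}, so on each subquotient $f$ induces the zero endomorphism, and additivity of $Tr$ over short exact sequences should then collapse everything to $0$.

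More concretely, I would first treat the base case. If $s\le 1$ the statement is immediate: when $E_0=E$ and $E_1=0$, the condition $f(E_0)\subseteq E_1=0$ forces $f=0$, and then by axiom~(2) applied with $f'=f=0$ we get $Tr(0)=Tr(0)+Tr(0)$, hence $Tr(0)=0$, giving $Tr(f)=0$. For the inductive step, consider the short exact sequence $0\to E_1\to E_0\to E_0/E_1\to 0$ in $\mathcal{B}$ (all three terms lie in $\mathcal{B}$ by hypothesis). The crucial observation is that $f$ restricts to an endomorphism of $E_1$: indeed $f(E_1)\subseteq E_2\subseteq E_1$. Moreover the induced endomorphism $\bar{f}$ on $E_0/E_1$ is zero, since $f(E_0)\subseteq E_1$. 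Thus $f$ fits into an exact sequence $0\to f|_{E_1}\to f\to \bar f\to 0$ in $\mathcal{B}[X]$, realized by the commutative-diagram definition of exactness given before the lemma. Axiom~(1) then yields
\[
Tr(f)=Tr(f|_{E_1})+Tr(\bar f).
\]

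It remains to handle the two summands. For $\bar f$, since it is the zero endomorphism of $E_0/E_1$, the base-case argument (axiom~(2)) gives $Tr(\bar f)=0$. For $f|_{E_1}$, note that the filtration $E_1\supseteq E_2\supseteq\cdots\supseteq E_s=0$ exhibits $f|_{E_1}$ as $\mathcal{B}$-nilpotent of filtration length $s-1$, so the induction hypothesis applies and $Tr(f|_{E_1})=0$. Combining, $Tr(f)=0$, completing the induction.

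The main obstacle I anticipate is purely in the bookkeeping of verifying that the short exact sequence $0\to f|_{E_1}\to f\to \bar f\to 0$ genuinely satisfies the precise definition of exactness in $\mathcal{B}[X]$ given in the excerpt, namely producing the correct commutative diagram with rows $0\to E_1\to E_0\to E_0/E_1\to 0$ and vertical maps $f|_{E_1}, f, \bar f$; this requires that each square commutes, which is exactly the statement that $f$ is compatible with the inclusion $E_1\hookrightarrow E_0$ and the quotient $E_0\twoheadrightarrow E_0/E_1$, and this follows formally from $f(E_1)\subseteq E_1$ and the definition of $\bar f$. A secondary subtlety is confirming that $Tr(0)=0$ for the zero object or zero endomorphism purely from the stated axioms rather than assuming it; the additivity axiom~(2) handles this cleanly as indicated above. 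Beyond these, the argument is a routine devissage.
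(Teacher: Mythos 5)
Your proof is correct. The paper only cites Lenzing's Satz 1 without reproducing an argument, and your dévissage --- induction on the filtration length via the exact sequence $0\to f|_{E_1}\to f\to \bar f\to 0$ over $0\to E_1\to E_0\to E_0/E_1\to 0$, with $Tr(\bar f)=Tr(0)=0$ from axiom (2) and $Tr(f|_{E_1})=0$ from the induction hypothesis applied to the truncated filtration $E_1\supseteq\cdots\supseteq E_s=0$ --- is exactly the standard argument behind Lenzing's original proof.
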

Let $\mathcal D$ be a locally bounded category. Let $V=\bigoplus\limits_{X,Y\in \mathcal D}Hom_{\mathcal D}(X,Y)$ and $[V,V]$ be the $K$-subspace of $V$ generated by $fg-gf$ for all $f,g\in V$. Let $\mathcal P=add(\{Hom_{\mathcal D}(X,-)\;|\;X\in \mathcal D\})$ be the subcategory of $mod\mathcal D$. Then $\mathcal P$ contains all projective objects in $mod\mathcal D$.

For any $X\in \mathcal D$ and $P_X=Hom_{\mathcal D}(X,-)\in \mathcal P(\mathcal D)$, by Yoneda Lemma, we have a natural isomorphism $End(P_X)\overset{\varphi}{\cong} End_{\mathcal D}(X)$. For $f\in End(P_X)$, define $Tr(f)=\varphi(f)+[V,V]\in V/[V,V]$.

\begin{Lemma}\label{com}
(1) Let $X_i\in \mathcal D (i=1,\cdots, n)$ be pairwise non-isomorphic objects and $P=\bigoplus\limits_{i=1}^n P_{X_i}$. If $g=(g_{ij})\in End(P)$, then $Tr(g)=\sum\limits_{i=1}^n Tr(g_{ii})$.

(2) Let $P=\bigoplus\limits_{i=1}^m P_i, P'=\bigoplus\limits_{i=1}^n P'_i$ be objects in $\mathcal P$ and $f=(f_{ij})_{i\in [1,n],j\in [1,m]}:P\rightarrow P'$, $f'=(g'_{ij})_{i\in [1,m],j\in [1,n]}:P'\rightarrow P$ be morphisms. Then $Tr(ff')=Tr(f'f)$.
\end{Lemma}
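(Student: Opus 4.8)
\textbf{Proof proposal for Lemma \ref{com}.}

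The plan is to establish both parts by reducing to the defining properties of the trace picture $Tr$, namely additivity on exact sequences and additivity on morphism sums, together with the observation that the off-diagonal contributions in a matrix decomposition give rise to $\mathcal{B}$-nilpotent endomorphisms whose trace vanishes by Lemma \ref{zero}. Throughout I would take $\mathcal{B}=\mathcal{P}=add(\{Hom_{\mathcal D}(X,-)\mid X\in\mathcal D\})$ as the extension-closed subcategory of $mod\mathcal D$, so that $Tr$ is the trace picture defined just before the statement via the Yoneda isomorphism $End(P_X)\cong End_{\mathcal D}(X)$ and the projection $V\to V/[V,V]$.

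For part (1), I would proceed by induction on $n$, the base case $n=1$ being trivial. The key idea is to write $g=(g_{ij})\in End(P)$, with $P=\bigoplus_{i=1}^n P_{X_i}$, as a sum $g=d+\nu$ where $d=\mathrm{diag}(g_{11},\dots,g_{nn})$ is the diagonal part and $\nu$ collects the off-diagonal entries $g_{ij}$ with $i\neq j$. By axiom (2) of the trace picture, $Tr(g)=Tr(d)+Tr(\nu)$. For $Tr(d)$ one applies additivity on the exact sequences coming from the direct-sum decomposition (or an immediate induction) to get $Tr(d)=\sum_{i=1}^n Tr(g_{ii})$. The heart of the matter is showing $Tr(\nu)=0$: here I would use that $\nu$ is $\mathcal{P}$-nilpotent. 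Indeed, since the $X_i$ are pairwise non-isomorphic, a strictly upper/lower-triangular endomorphism of a direct sum admits the filtration by partial sums $E_k=\bigoplus_{i\geq k}P_{X_i}$ with $\nu(E_k)\subseteq E_{k+1}$, exhibiting $\nu$ as $\mathcal{P}$-nilpotent; then Lemma \ref{zero} gives $Tr(\nu)=0$. In fact even the non-triangular off-diagonal part can be handled this way after conjugating, or more simply one invokes additivity entry-by-entry and notes each single off-diagonal map $P_{X_i}\to P_{X_j}\hookrightarrow P$ composed into $End(P)$ is nilpotent.

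For part (2), the statement $Tr(ff')=Tr(f'f)$ for $f:P\to P'$ and $f':P'\to P$ is the cyclicity of the trace. The plan is to transport the identity through Yoneda to $End_{\mathcal D}$ and use that $Tr$ factors through $V/[V,V]$, where by construction the images of $uv$ and $vu$ coincide for composable $u,v\in V$. Concretely, applying part (1) to $ff'\in End(P')$ and to $f'f\in End(P)$ reduces both traces to sums of diagonal entries, and after applying $\varphi$ one obtains $Tr(ff')=\sum_{i}\sum_j \varphi(f_{ij}f'_{ji})+[V,V]$ and $Tr(f'f)=\sum_{j}\sum_i \varphi(f'_{ji}f_{ij})+[V,V]$; these agree modulo $[V,V]$ precisely because $\varphi(f_{ij}f'_{ji})-\varphi(f'_{ji}f_{ij})=\varphi(f_{ij})\varphi(f'_{ji})-\varphi(f'_{ji})\varphi(f_{ij})\in[V,V]$ by the definition of the commutator subspace.

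I expect the main obstacle to be the careful verification in part (1) that the off-diagonal part is genuinely $\mathcal{P}$-nilpotent in the sense required by Lemma \ref{zero}, i.e. exhibiting a filtration inside $mod\mathcal D$ whose subquotients lie in $\mathcal{P}$ and along which the off-diagonal endomorphism shifts strictly. The pairwise non-isomorphy of the $X_i$ is essential here, since it guarantees that the diagonal blocks are the only ones contributing to the trace; the bookkeeping of which $Hom$-spaces can be nonzero, together with keeping track of the passage between $End(P_X)$ and $End_{\mathcal D}(X)$ under $\varphi$, is where the delicate part lies, but everything else is a formal consequence of the two trace-picture axioms and Lemma \ref{zero}.
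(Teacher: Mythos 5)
Your part (2) is essentially the paper's argument and is fine. Part (1), however, has two genuine problems. First, the central claim that the full off-diagonal part $\nu$ of $g$ is $\mathcal P$-nilpotent via the filtration $E_k=\bigoplus_{i\geq k}P_{X_i}$ is false: $\nu(E_k)\subseteq E_{k+1}$ holds only for a strictly triangular matrix, whereas a general $\nu$ has both upper- and lower-triangular entries and preserves no such filtration (already for $n=2$ the matrix $\left(\begin{smallmatrix}0 & g_{12}\\ g_{21}&0\end{smallmatrix}\right)$ shifts neither of the two obvious filtrations), and ``conjugating'' it into triangular form is not available. Your fallback -- decompose entry by entry and note that each single off-diagonal entry $f^{s,t}$ satisfies $(f^{s,t})^2=0$ -- is mathematically sound, but it runs into the second, more structural problem.

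That problem is circularity. Every tool you invoke -- axiom (1) of a trace picture (additivity on exact sequences, which you use to evaluate $Tr(d)$), axiom (2), and Lemma \ref{zero} (which has ``$Tr$ is a trace picture'' as a hypothesis) -- presupposes that $Tr$ is already known to be a trace picture on $Obj(\mathcal P[X])$. In the paper that fact is the theorem \emph{following} Lemma \ref{com}, and its proof of additivity on exact sequences explicitly cites Lemma \ref{com} and Corollary \ref{cong}. So Lemma \ref{com} must be proved from the bare definition $Tr(f)=\varphi(f)+[V,V]$, and this is what the paper does: for $s\neq t$ one observes directly that $\varphi(f_{st})=id_{X_t}\varphi(f_{st})-\varphi(f_{st})\,id_{X_t}$ is itself an element of $[V,V]$, because exactly one of the two products is the genuine composition and the other vanishes in $V$ (products of non-composable morphisms are zero, and $X_s\not\cong X_t$ guarantees the objects are distinct). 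Hence every off-diagonal entry contributes $0$ to $Tr$ with no appeal to nilpotency, to Lemma \ref{zero}, or to the trace-picture axioms. You should replace your argument for $Tr(\nu)=0$ and for $Tr(d)=\sum_i Tr(g_{ii})$ by this direct commutator computation.
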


\begin{proof}
(1) For any $1\leq s,t\leq n$, let $f^{s,t}=(f_{ij})\in End(P)$ with $f_{st}=g_{st}$ and $f_{ij}=0$ for all $(i,j)\neq (s,t)$. When $s\neq t$, we have $\varphi(f_{st})=id_{X_t}\varphi(f_{st})-\varphi(f_{st})id_{X_t}\in [V,V]$, so $Tr(f^{s,t})=Tr(f_{st})=0$. Thus,
 $\begin{array}{ccl} Tr(f^{s,t}) &=&
         \left\{\begin{array}{ll}
             Tr(f_{ss}), &\mbox{if $s=t$}, \\
             0, &\mbox{otherwise.}
         \end{array}\right.
      \end{array}$ Therefore, the result follows.

(2) According to (1), we have $Tr(ff')=\sum\limits_{i=1}^n\sum\limits_{j=1}^mTr(a_{ji}a'_{ij})$ and $Tr(f'f)=\sum\limits_{j=1}^m\sum\limits_{i=1}^nTr(a'_{ij}a_{ji})$. Moreover, as $a_{ji}a'_{ij}-a'_{ij}a_{ji}\in [V,V]$, we have $Tr(ff')=Tr(f'f)$.
\end{proof}

\begin{Corollary}\label{cong}
Let $P=\bigoplus\limits_{i=1}^m P_i, P'=\bigoplus\limits_{i=1}^m P'_i$ be objects in $\mathcal P$ with morphisms $f=(a_{ij})_{i\in [1,m],j\in [1,m]}:P\rightarrow P$, $f'=(a'_{ij})_{i\in [1,m],j\in [1,m]}:P'\rightarrow P'$. If there exists an isomorphism $g=(b_{ij})_{i\in [1,m],j\in [1,m]}:P\rightarrow P'$ such that $f'g=gf$, then $Tr(f)=Tr(f')$.
\end{Corollary}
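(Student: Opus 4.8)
The plan is to reduce the statement to the commutation identity in Lemma \ref{com}(2) by exploiting the hypothesis $f'g = gf$ together with the fact that $g$ is an isomorphism. Since $g:P\rightarrow P'$ is invertible, I would first rewrite the relation $f'g = gf$ as $f' = gfg^{-1}$, so that $f$ and $f'$ are conjugate morphisms in $\mathcal P$. The goal $Tr(f) = Tr(f')$ then becomes $Tr(f) = Tr(gfg^{-1})$, which is exactly a trace-conjugation invariance statement.

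To prove this, I would apply Lemma \ref{com}(2) twice. Setting up the composition $gfg^{-1}$, write it as a product of two morphisms in $\mathcal P$: let $u = gf : P \rightarrow P'$ and $v = g^{-1} : P' \rightarrow P$. Then $f' = uv$ while the reverse composite is $vu = g^{-1}(gf) = f$. By Lemma \ref{com}(2), applied to the pair $u,v$ (both morphisms between the objects $P$ and $P'$ of $\mathcal P$, written in matrix form relative to the decompositions $P=\bigoplus_i P_i$ and $P'=\bigoplus_i P'_i$), we obtain $Tr(uv) = Tr(vu)$, that is $Tr(f') = Tr(f)$. The only point requiring care is that Lemma \ref{com}(2) is stated for a pair of morphisms $P\rightarrow P'$ and $P'\rightarrow P$, so I must check that $u=gf$ and $v=g^{-1}$ genuinely have these shapes and are expressible as matrices of morphisms between the indecomposable summands, which is immediate since $f\in\mathrm{End}(P)$, $g:P\rightarrow P'$, and $g^{-1}:P'\rightarrow P$.

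Since $P$ and $P'$ have the same number of summands $m$ here, all the matrices involved are $m\times m$, and the additivity and bilinearity of $Tr$ established in Lemma \ref{com}(1) guarantee that the matrix-entry computation underlying Lemma \ref{com}(2) goes through verbatim; no new nilpotency or trace-picture input is needed beyond what is already proved. I do not expect a genuine obstacle here: the statement is essentially a formal corollary, and the entire content is the conjugation trick $f' = gfg^{-1}$ combined with the cyclic-invariance identity $Tr(uv)=Tr(vu)$. The one thing I would double-check is that the invertibility of $g$ is actually used only through the existence of $g^{-1}$ as a morphism in $\mathcal P$ (so that $v = g^{-1}$ is a legitimate input to the lemma), rather than through any finer structural property, and that the equality $vu = f$ is recorded explicitly before invoking the lemma.
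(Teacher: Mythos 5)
Your proposal is correct and follows essentially the same route as the paper: the paper also rewrites the hypothesis as a conjugation relation ($f=g^{-1}f'g$) and then applies the cyclic-invariance identity $Tr(ff')=Tr(f'f)$ from Lemma \ref{com}(2) once to conclude $Tr(f)=Tr(f')$. The only cosmetic difference is which of $f$, $f'$ you express as a conjugate of the other; the substance is identical.
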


\begin{proof}
Since $f'g=gf$ and $g$ is isomorphic, we have $f=g^{-1}f'g$. Therefore, by Lemma \ref{com}, $Tr(f)=Tr(g^{-1}f'g)=Tr(g^{-1}gf')=Tr(f')$.
\end{proof}

After the above preparations, we can prove that $Tr$ is a trace picture.

\begin{Theorem}
$Tr$ is a trace picture from $Obj(\mathcal P[X])$ to $V/[V,V]$.
\end{Theorem}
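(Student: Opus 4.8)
The plan is to verify the two defining axioms of a trace picture directly for the map $Tr$, using the homological preparations already in place. The statement to prove is that $Tr:Obj(\mathcal P[X])\rightarrow V/[V,V]$ is a trace picture, meaning (following Definition~1 of \cite{L}) that $Tr$ is additive on exact sequences and additive on endomorphisms of a fixed object. The additivity on endomorphisms, condition (2), is essentially immediate: for $P\in\mathcal P$ and $f,f'\in End(P)$, the Yoneda isomorphism $\varphi:End(P)\cong\bigoplus_{i,j}Hom_{\mathcal D}(X_i,X_j)$ is additive, so $\varphi(f+f')=\varphi(f)+\varphi(f')$, and passing to the quotient $V/[V,V]$ preserves this. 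Thus I would dispose of (2) in a single line.

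The real content is condition (1): if $0\rightarrow f'\rightarrow f\rightarrow f''\rightarrow 0$ is exact in $\mathcal P[X]$, then $Tr(f)=Tr(f')+Tr(f'')$. First I would record that such an exact sequence is witnessed by a commutative diagram with rows $0\rightarrow P'\rightarrow P\rightarrow P''\rightarrow 0$ exact in $\mathcal P$, where $P',P,P''$ are projective objects of $mod\mathcal D$ (finite direct sums of representables). Since the bottom sequence is a short exact sequence of \emph{projectives}, it splits; I would fix a splitting, i.e.\ an isomorphism $g:P\xrightarrow{\sim}P'\oplus P''$ compatible with the inclusion and projection. The endomorphism $f$ is then conjugated by $g$ into an endomorphism of $P'\oplus P''$ which, because $f$ respects the sub-object $P'$ and induces $f''$ on the quotient, is block upper-triangular: it has diagonal blocks $f'$ and $f''$ and a single off-diagonal block. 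Now I apply Corollary~\ref{cong}, which says conjugation by an isomorphism preserves $Tr$, to reduce to computing $Tr$ of this triangular endomorphism of $P'\oplus P''$. Finally, Lemma~\ref{com}(1) says $Tr$ of a matrix endomorphism is the sum of the traces of its diagonal blocks, giving $Tr(f)=Tr(f')+Tr(f'')$ exactly.

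I expect the main obstacle to be the bookkeeping that makes the splitting-plus-conjugation argument precise, rather than any deep difficulty. One must check that the chosen isomorphism $g:P\rightarrow P'\oplus P''$ genuinely intertwines $f$ with a block-triangular map whose diagonal entries are (conjugate to) $f'$ and $f''$; this uses that $f$ commutes with the structure maps of the exact sequence, together with the fact that in a split sequence the inclusion and projection can be arranged into honest matrix form. A subtlety worth stating carefully is that Lemma~\ref{com}(1) is phrased for \emph{pairwise non-isomorphic} indecomposable summands, so before applying it I would group $P'\oplus P''$ into its indecomposable (representable) components $P_{X_i}$ and note that the off-diagonal block contributes nothing to $Tr$ precisely because each off-diagonal entry $f_{st}$ with $s\neq t$ lands in $[V,V]$ via the identity trick $\varphi(f_{st})=id_{X_t}\varphi(f_{st})-\varphi(f_{st})id_{X_t}$ already used in the proof of that lemma. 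Once these identifications are set up, the equality $Tr(f)=Tr(f')+Tr(f'')$ drops out, and the two axioms together establish that $Tr$ is a trace picture.
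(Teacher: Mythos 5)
Your proposal is correct and follows essentially the same route as the paper: dispose of additivity on endomorphisms via the additivity of the Yoneda isomorphism, then split the exact sequence of projectives using projectivity of $P''$, conjugate $f$ into a block upper-triangular endomorphism of $P'\oplus P''$ with diagonal blocks $f'$ and $f''$, and conclude by Corollary \ref{cong} and Lemma \ref{com}. Your extra remark about reducing to pairwise non-isomorphic indecomposable summands before invoking Lemma \ref{com}(1) is a reasonable point of care that the paper glosses over, but it does not change the argument.
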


\begin{proof}
(1) For $f,f'\in End(P)$, we have $Tr(f+f')=\varphi(f+f')=\varphi(f)+\varphi(f')=Tr(f)+Tr(f')$.

(2) For $0\rightarrow f'\rightarrow f\rightarrow f''\rightarrow 0$, by the definition, we have the commutative diagram:
$$\xymatrix{
    0 \ar[r] & P' \ar[r]^{x}\ar[d]_{f'} & P\ar[r]^y\ar[d]_f & P''\ar[r]\ar[d]_{f''} &0 \\
    0 \ar[r] & P' \ar[r]^x            & P\ar[r]^y         & P''\ar[r] &0.
    }$$
Since $P''$ is projective, $0\rightarrow P'\rightarrow P\rightarrow P''\rightarrow 0$ splits. Thus, there exist $c\in Hom(P,P'),d\in Hom(P'',P)$ such that $cx=id_{P'}$, $yd=id_{P''}$ and $(x,d):P'\oplus P''\rightarrow P$ is isomorphic with inverse $(c,y)^T$. Therefore, the following diagram is commutative:
$$\xymatrix{
    0 \ar[r] & P' \ar[r]^{(id,0)^T}\ar[d]_{id} & P'\oplus P'' \ar[r]^{(0,id)}\ar[d]_{(x,d)} & P''\ar[r]\ar[d]_{id} &0 \\
    0 \ar[r] & P' \ar[r]^{x}\ar[d]_{f'}        & P\ar[r]^y\ar[d]_{f}                              & P''\ar[r]\ar[d]_{f''} &0 \\
    0 \ar[r] & P' \ar[r]^x\ar[d]_{id}          & P\ar[r]^y\ar[d]_{(c,y)^T}                        & P''\ar[r]\ar[d]_{id} &0\\
    0 \ar[r] & P' \ar[r]^{(id,0)^T}            & P'\oplus P'' \ar[r]^{(0,id)}               & P''\ar[r] &0. }$$
Thus, by Lemma \ref{com} and Corollary \ref{cong}, $$Tr(f)=Tr((c,y)^Tf(x,d))=Tr(\left(\begin{array}{cc}
cfx & cfd \\
yfx & yfd
\end{array}\right))=Tr(cfx)+Tr(yfd)=Tr(f')+Tr(f'').$$
\end{proof}

\begin{Theorem}
Keeps the forgoing notations. If $\mathcal D$ has finite global dimension, then $Tr:Obj(\mathcal P[X])\rightarrow V/[V,V]$ can be extended to a trace picture $Tr:Obj(mod\mathcal D[X])\rightarrow V/[V,V]$.
\end{Theorem}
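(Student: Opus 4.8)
The plan is to define the extended trace by the classical Hattori--Stallings recipe: resolve, lift, and take an alternating sum. Since $\mathcal D$ has finite global dimension, every object $E$ of $mod\mathcal D$ admits a finite projective resolution $0\to P_n\to\cdots\to P_0\to E\to 0$ with all $P_i\in\mathcal P$. Given $(E,f)\in Obj(mod\mathcal D[X])$, projectivity lets me lift $f$ to a chain endomorphism $f_\bullet=(f_i)$ of $P_\bullet$, and I set $Tr(f):=\sum_{i=0}^n(-1)^iTr(f_i)$, where each $Tr(f_i)$ is computed by the already-established trace picture on $\mathcal P[X]$. The entire proof then reduces to checking that this is well defined and satisfies the two trace-picture axioms.

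First I would establish independence of the lift. If $f_\bullet$ and $f'_\bullet$ both lift $f$, their difference is a chain map over the zero endomorphism of $E$, hence null-homotopic: $f_i-f'_i=d_{i+1}h_i+h_{i-1}d_i$ for maps $h_i\colon P_i\to P_{i+1}$. Applying the cyclic identity $Tr(ab)=Tr(ba)$ of Lemma \ref{com}(2) to each term, namely $Tr(d_{i+1}h_i)=Tr(h_id_{i+1})$ and $Tr(h_{i-1}d_i)=Tr(d_ih_{i-1})$, the alternating sum $\sum_i(-1)^iTr(f_i-f'_i)$ telescopes to $0$. Independence of the chosen resolution then follows from the comparison theorem: for two finite projective resolutions $P_\bullet$ and $Q_\bullet$ of $E$ I pick homotopy-inverse chain maps $\phi\colon P_\bullet\to Q_\bullet$ and $\psi\colon Q_\bullet\to P_\bullet$ lying over $id_E$, observe that $\psi f^Q_\bullet\phi$ is a lift of $f$ on $P_\bullet$, and use Lemma \ref{com}(2) together with the homotopy $\phi\psi\simeq id_{Q_\bullet}$ to absorb the correction terms (these are again alternating traces of boundaries, vanishing exactly as in the lift-independence step), with Corollary \ref{cong} handling the conjugation bookkeeping. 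In Lenzing's original formulation this independence is packaged through the nilpotency criterion of Lemma \ref{zero}, which one may invoke instead.

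With well-definedness in hand, the two axioms are comparatively short. Additivity in the endomorphism, $Tr(f+f')=Tr(f)+Tr(f')$, is immediate: lifts add, and $Tr$ is already additive in each degree on $\mathcal P[X]$. For additivity along a short exact sequence $0\to f'\to f\to f''\to0$ of pairs, I would invoke the horseshoe lemma to build a resolution $P_\bullet\cong P'_\bullet\oplus P''_\bullet$ of $E$ whose differential is block upper-triangular over chosen resolutions $P'_\bullet\to E'$ and $P''_\bullet\to E''$, and then lift $f$ to a \emph{block upper-triangular} chain endomorphism whose diagonal blocks are lifts of $f'$ and $f''$. Lemma \ref{com}(1) shows that the degree-$i$ trace of such a triangular endomorphism equals $Tr(f'_i)+Tr(f''_i)$, so the alternating sums split and yield $Tr(f)=Tr(f')+Tr(f'')$.

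I expect the main obstacle to be the well-definedness, specifically the independence of the resolution: one must verify that the homotopy correction terms produced when transporting a lift across a homotopy equivalence genuinely cancel, and this cancellation rests entirely on applying the cyclic property of Lemma \ref{com}(2) uniformly across all degrees. A secondary technical point is arranging, in the short-exact-sequence axiom, a single lift of $f$ that is simultaneously compatible with the chosen lifts of $f'$ and $f''$ and is block-triangular; this forces the horseshoe resolution and the lifting to be carried out compatibly with the given commutative diagram of pairs, which is where the bulk of the routine diagram-chasing resides.
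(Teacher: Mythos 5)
Your proposal is correct and is essentially the paper's argument: the paper's proof of this theorem consists of a single sentence deferring to Satz 4 of Lenzing \cite{L}, and Lenzing's construction is exactly the Hattori--Stallings recipe you describe (finite projective resolution, lift of the endomorphism, alternating sum of traces, with well-definedness from the cyclicity $Tr(ab)=Tr(ba)$ and the exactness axiom from a horseshoe/block-triangular lift). Your write-up supplies the details the paper omits, but it is the same route.
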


\begin{proof}
The proof is the same as that of Satz 4 in \cite{L} since $\mathcal D$ has finite global dimension.
\end{proof}

\begin{Theorem}\label{nil}
Keeps the notations as above. Let $\mathcal D$ be a locally bounded category. Assume that $\mathcal D$ has finite global dimension. If $a\in V$ is nilpotent, then $a\in [V,V]$.
\end{Theorem}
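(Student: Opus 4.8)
The plan is to reduce the statement to the vanishing of the trace picture on a single nilpotent endomorphism of a finitely generated projective object, and then invoke Lemma \ref{zero}. Since $a\in V=\bigoplus_{X,Y\in\mathcal D}Hom_{\mathcal D}(X,Y)$ is a finite sum, only finitely many objects, say $X_1,\dots,X_n$, occur as a source or target of a nonzero component of $a$; write $a=\sum_{i,j}a_{ij}$ with $a_{ij}\in Hom_{\mathcal D}(X_i,X_j)$. I would set $P=\bigoplus_{i=1}^n P_{X_i}\in\mathcal P\subseteq mod\mathcal D$ and, using the Yoneda isomorphisms $Hom_{mod\mathcal D}(P_{X_j},P_{X_i})\cong Hom_{\mathcal D}(X_i,X_j)$, assemble the components $a_{ij}$ into a single endomorphism $\hat a\in End_{mod\mathcal D}(P)$ whose matrix entry in position $(i,j)$ corresponds to $a_{ij}$. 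Because $X\mapsto P_X$ is a contravariant embedding, this assignment is an algebra anti-homomorphism, so the hypothesis $a^N=0$ forces $\hat a^{\,N}=0$; thus $\hat a$ is nilpotent.

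The next step is to match the class of $a$ in $V/[V,V]$ with $Tr(\hat a)$. By Lemma \ref{com}(1) one has $Tr(\hat a)=\sum_{i=1}^n\varphi(\hat a_{ii})+[V,V]=\sum_{i=1}^n a_{ii}+[V,V]$, so the trace picture only sees the diagonal part of $a$. It then remains to observe that every off-diagonal component already lies in $[V,V]$: for $i\neq j$ and $a_{ij}\in Hom_{\mathcal D}(X_i,X_j)$, one of the two products $id_{X_j}a_{ij}$, $a_{ij}id_{X_j}$ equals $a_{ij}$ while the other vanishes (non-composable, since $X_i\neq X_j$), whence $a_{ij}=id_{X_j}a_{ij}-a_{ij}id_{X_j}\in[V,V]$. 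Consequently $a+[V,V]=\sum_i a_{ii}+[V,V]=Tr(\hat a)$, and it suffices to prove $Tr(\hat a)=0$.

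Finally, I would show $\hat a$ is $mod\mathcal D$-nilpotent in the sense required by Lemma \ref{zero}. The filtration by images $E_i=\mathrm{Im}(\hat a^{\,i})$ gives $P=E_0\supseteq E_1\supseteq\cdots\supseteq E_N=0$ (using $\hat a^{\,N}=0$), where each $E_i$ is a subobject of $P$ and each quotient $E_i/E_{i+1}$ lies in the abelian category $mod\mathcal D$, with $\hat a(E_i)=E_{i+1}\subseteq E_{i+1}$. Since $\mathcal D$ has finite global dimension, the preceding theorem extends $Tr$ to a trace picture on $Obj(mod\mathcal D[X])$, so Lemma \ref{zero} yields $Tr(\hat a)=0$ and therefore $a\in[V,V]$. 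The main obstacle is the bookkeeping of the first two paragraphs: one must fix the contravariant Yoneda dictionary carefully so that nilpotency of $a$ transfers to $\hat a$ and so that $Tr(\hat a)$ is identified with the class of $a$ itself rather than of a transpose; once this dictionary is pinned down, the nilpotent-filtration argument and the appeal to Lemma \ref{zero} are routine.
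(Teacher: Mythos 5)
Your proposal is correct and follows essentially the same route as the paper: lift $a$ to a nilpotent endomorphism of a projective object via Yoneda, use the image filtration to see it is $mod\mathcal D$-nilpotent, and apply Lemma \ref{zero} together with the extension of $Tr$ to $Obj(mod\mathcal D[X])$ granted by finite global dimension. Your treatment is in fact slightly more careful than the paper's, which tacitly reduces to the case of a single endomorphism $a\in End_{\mathcal D}(X)$, whereas you explicitly dispose of the off-diagonal components of a general $a\in V$ by exhibiting them as commutators.
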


\begin{proof}
We follow the strategy of the proof of Satz 5 in \cite{L}. In detail, on one hand, by the definition, we have $Tr(\varphi^{-1}(a))=a+[V,V]$. On the other hand, we have the following filtration $P_X\supseteq \varphi^{-1}(a)P_X\supseteq\cdots\supseteq \varphi^{-1}(a^m)=0$ for some $m$ since $a$ is nilpotent. Thus, $\varphi^{-1}(a)$ is $mod\mathcal D$-nilpotent, by Lemma \ref{zero}, we have $Tr(\varphi^{-1}(a))=0$. Therefore, we have $a+[V,V]=0$, equivalently, $a\in [V,V]$.
\end{proof}

\begin{Remark}
If $\mathcal D$ has finite objects, then Theorem \ref{nil} is Satz 5 of \cite{L}.
\end{Remark}

\section{The category $\mathcal C^Q$ arising from the preprojective algebra $\Lambda_{Q}$}\label{cate}

In this section, we will use the colimit of categories to obtain a Frobenius $2$-Calabi-Yau strongly almost finite category $\mathcal C^Q$ from a strongly almost finite quiver $Q$. This is available for the sequel. It may be regarded as a generalization of the results in \cite{GLS}.

Note that a strongly almost finite quiver $Q$ is always acyclic since it is interval finite.

\subsection{Colimit of categories}.

We study the existence and the properties of colimit of categories in this subsection.

\begin{Definition}(\cite{W}, Definition 2.6.13)
Let $\mathcal A_i,i\in \mathbb N$ be additive categories. Assume that $F^i_j:\mathcal A_i\rightarrow \mathcal A_j$ are additive functors for all $i\leq j$ satisfying (a) $F_i^i=id_{\mathcal A_i}$ the identity functor for all $i\in \mathbb N$; (b) $F_j^kF_i^j=F_i^k$ for all $i\leq j\leq k$.

We say an additive category $\mathcal A$ to be the {\bf colimit} of $(\mathcal A_i,F_i^j)$ if

(1) there exist additive functors $F_i:\mathcal A_i\rightarrow \mathcal A$ for all $i\in \mathbb N$ and natural equivalence $\eta_i^j:F_i\rightarrow F_jF_i^j$ for all $i\leq j$ satisfying $\eta_i^i=id_{F_i}$, and
$$\xymatrix{
       F_i\ar[r]^{\eta_i^j}\ar[d]_{id_{F_i}} & F_jF_i^j\ar[d]^{\eta_j^k(F_i^j)} \\
       F_i\ar[r]^{\eta_i^k}                  & F_kF_i^k,}$$
are commutative for $i\leq j\leq k$;

(2) for any additive category $\mathcal B$ with additive functors $G_i:\mathcal A_i\rightarrow \mathcal B$ and natural equivalence $\varphi_i^j:G_i\rightarrow G_jF_i^j$ for all $i\leq j$ satisfying $\varphi_i^i=id_{G_i}$, and the commutative diagrams:
$$\xymatrix{
       G_i\ar[r]^{\varphi_i^j}\ar[d]_{id_{G_i}} & G_jF_i^j\ar[d]^{\varphi_j^k(F_i^j)} \\
       G_i\ar[r]^{\varphi_i^k}                  & G_kF_i^k,}$$
 for $i\leq j\leq k$,  there exists an unique additive functor (up to natural equivalence) $\xi:\mathcal A\rightarrow \mathcal B$ such that $G_i=\xi F_i$  for all $i\in \mathbb N$ and $\varphi_i^j=\xi \eta_i^j$ for all $i\leq j$.

In this case, we denote $\lim\limits_{\rightarrow}\mathcal A_i=\mathcal A$ or  $\lim\limits_{\rightarrow}\mathcal A_i=(\mathcal A,F_i)$.
\end{Definition}

\begin{Lemma}\label{colim}
Let $\mathcal C_i$, $i\in \mathbb N$ be Frobenius categories and $F_i^j:
\mathcal C_i\rightarrow \mathcal C_j$ be exact functors for $i\leq j$, which satisfies:

 (a) $Hom_{\mathcal C_i}(X,Y) \overset{F_i^j}{\cong} Hom_{\mathcal C_j}(F_i^j(X),F_i^j(Y))$ are naturally isomorphic for all $i\leq j$ and $X,Y\in \mathcal C_i$,

  (b) $F_i^i=id_{\mathcal C_i}$ for all $i\in \mathbb N$, and

   (c) $F_i^k=F_j^kF_i^j$ for all $i\leq j\leq k$.\\ Then,

(1)~ $\mathcal C=\lim\limits_{\rightarrow}\mathcal C_i$ exists;

(2)~ if $\mathcal C$ has enough projective and injective objects, then $\mathcal C$ is a Frobenius category.
\end{Lemma}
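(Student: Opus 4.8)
The plan is to construct the colimit category $\mathcal C$ explicitly and then verify the universal property, after which the Frobenius structure will follow almost formally from hypothesis (a). For part (1), I would build $\mathcal C$ by taking as objects the disjoint union $\coprod_i Obj(\mathcal C_i)$ modulo the equivalence relation generated by $X \sim F_i^j(X)$ for $X \in \mathcal C_i$ and $i \leq j$; concretely, since the $F_i^j$ are compatible via (b) and (c), every object can be represented at some stage $i$, and two representatives $X \in \mathcal C_i$, $Y \in \mathcal C_j$ represent the same object of $\mathcal C$ when they agree after pushing both into some common later $\mathcal C_k$. For the morphisms, I would set $Hom_{\mathcal C}([X],[Y]) = \varinjlim_{k} Hom_{\mathcal C_k}(F_i^k(X), F_j^k(Y))$, the colimit of abelian groups over $k \geq i,j$. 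Crucially, hypothesis (a) says each transition map $F_k^{k+1}$ on these Hom-groups is already an isomorphism, so this directed colimit of Hom-groups stabilizes: $Hom_{\mathcal C}([X],[Y]) \cong Hom_{\mathcal C_k}(F_i^k(X), F_j^k(Y))$ for any $k \geq i,j$. This makes each $F_i : \mathcal C_i \to \mathcal C$ a fully faithful additive functor, and the natural equivalences $\eta_i^j$ are the identity-type isomorphisms coming from $[X] = [F_i^j(X)]$.

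Next I would check the universal property of part (1). Given any additive $\mathcal B$ with functors $G_i$ and natural equivalences $\varphi_i^j$ satisfying the stated coherence, I would define $\xi : \mathcal C \to \mathcal B$ on objects by $\xi([X]) = G_i(X)$ for a chosen representative $X \in \mathcal C_i$, which is well-defined up to the natural equivalence $\varphi_i^j$ exactly because $G_i \cong G_j F_i^j$; on morphisms I would use that every morphism in $\mathcal C$ is $F_k$ of a morphism in some $\mathcal C_k$, and set $\xi$ to be $G_k$ on that representative. The coherence diagrams for the $\varphi_i^j$ guarantee these choices glue consistently, and uniqueness up to natural equivalence follows from the requirement $G_i = \xi F_i$, since the $F_i$ jointly cover all objects and morphisms of $\mathcal C$. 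This part is essentially bookkeeping, so I would state the construction and indicate that the coherence conditions make $\xi$ well-defined and unique, rather than spelling out every diagram chase.

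For part (2), the point is that the Frobenius property transfers through the colimit once enough projectives and injectives are assumed to exist. Because each $F_i$ is fully faithful and, by hypothesis (a), reflects and preserves Hom-groups exactly, it is an exact functor that preserves and reflects admissible short exact sequences; an object $P \in \mathcal C_i$ is projective (resp. injective) in $\mathcal C_i$ if and only if $F_i(P)$ is projective (resp. injective) in $\mathcal C$, since the defining $Ext$-vanishing conditions are detected on Hom-groups that are preserved. The key observation is that in each $\mathcal C_i$ the classes of projectives and injectives coincide (Frobenius), and this coincidence is stable under $F_i^j$; hence in $\mathcal C$, given the assumed enough projectives and injectives, an object is projective iff it is injective, because every projective of $\mathcal C$ arises as $F_i$ of a projective-injective of some $\mathcal C_i$ and vice versa. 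Assembling these gives that $\mathcal C$ is Frobenius.

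I expect the main obstacle to be making the morphism colimit and the exact structure on $\mathcal C$ genuinely well-defined and functorial: one must verify that composition in $\mathcal C$ is compatible with the directed system, that the class of admissible exact sequences (defined as those coming from admissible sequences in some $\mathcal C_k$) actually satisfies the Quillen exact-category axioms Ex0 through Ex2$^{op}$, and in particular that pullbacks and pushouts required by the axioms can always be found at a finite stage and then transported. The stabilization coming from hypothesis (a) is what rescues all of this, since it lets every finite diagram be realized inside a single $\mathcal C_k$ where the axioms already hold; the care needed is to confirm that no further colimit process is required beyond that stabilization. The Frobenius transfer in part (2) is comparatively formal once the exact structure is in place, so the real work is establishing that $\mathcal C$ is an exact category with the inherited structure.
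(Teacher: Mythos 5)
Your construction of $\mathcal C$ -- objects represented at some finite stage, morphisms given by the stabilized directed colimit $Hom_{\mathcal C}([X],[Y])\cong Hom_{\mathcal C_k}(F_i^k(X),F_j^k(Y))$ for $k\geq i,j$, admissible exact sequences declared to be those isomorphic to the image of an admissible sequence in a single $\mathcal C_k$, and the axioms Ex0--Ex2$^{op}$ checked by realizing each finite diagram at a finite stage -- is essentially identical to the paper's proof of part (1), including the verification of the universal property. That part of the proposal is sound.

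There is, however, a genuine gap in your argument for part (2). You assert that an object $P\in\mathcal C_i$ is projective in $\mathcal C_i$ if and only if $F_i(P)$ is projective in $\mathcal C$, on the grounds that the relevant Hom-groups are preserved. The ``only if'' direction does not follow from hypotheses (a)--(c): projectivity of $[P_i]$ in $\mathcal C$ must be tested against admissible epimorphisms $[Y_j]\to[Z_j]$ living at arbitrarily late stages $j\geq i$, and lifting a map $F_i^j(P_i)\to Z_j$ requires $F_i^j(P_i)$ to be projective in $\mathcal C_j$. Nothing in the hypotheses forces the fully faithful exact functors $F_i^j$ to preserve projectivity (the objects $Y_j,Z_j$ need not lie in the image of $F_i^j$, so full faithfulness gives no purchase), and it can genuinely fail. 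The correct characterization, which the paper proves in both directions, is that $[P_i]$ is projective in $\mathcal C$ if and only if $F_i^j(P_i)$ is projective in $\mathcal C_j$ for \emph{every} $j\geq i$, and dually for injectives. It is this quantified version that makes the Frobenius transfer work: if $[P_i]$ is projective in $\mathcal C$, then each $F_i^j(P_i)$ is projective, hence injective, in the Frobenius category $\mathcal C_j$, and the dual criterion then shows $[P_i]$ is injective in $\mathcal C$. Your argument reaches the right conclusion but rests on an unproved (and in general false) biconditional; replacing it with the ``for all $j\geq i$'' criterion repairs the proof and brings it in line with the paper's.
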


\begin{proof}
(1) Firstly, we construct an additive category $\mathcal C$ from $\mathcal C_i$, $i\in \mathbb N$.

The objects of $\mathcal C$ consists of all of the forms $[X_i]$ for any $X_i\in \mathcal C_i$, where it means $X_i\in \mathcal C_i$ when we write $[X_i]\in \mathcal C$.

Now we define the homomorphisms and their compositions. For any two objects $[X_i], [Y_j]\in \mathcal C$,  We define $$Hom_{\mathcal C}([X_i],[Y_j]):=Hom_{\mathcal C_{max\{i,j\}}}(F_i^{max\{i,j\}}(X_i),F_j^{max\{i,j\}}(Y_j)).$$ Trivially,  $f:[X_i]\rightarrow [Y_i]$ means just the morphism $f: X_i\rightarrow Y_i$.

For $f\in Hom_{\mathcal C}([X_i],[Y_j])$ and $g\in Hom_{\mathcal C}([Y_j],[Z_k])$, let $l=max\{i,j,k\}$. Define
  \begin{equation}\label{composition}
 g \circ f:=F^l_{max\{j,k\}}(g)F^l_{max\{i,j\}}(f).
  \end{equation}
  Then, $g\circ f\in Hom_{\mathcal C_l}(F_i^l(X_i),F_k^l(Z_k))\overset{\text{by} (a)}{=}Hom_{\mathcal C_{max\{i,k\}}}(F_i^{max\{i,k\}}(X_i),F_k^{max\{i,k\}}(Z_k))=Hom_{\mathcal C}([X_i],[Z_k])$.

For any $f\in Hom_{\mathcal C}([X_i],[Y_j])$, $g\in Hom_{\mathcal C}([Y_j],[Z_k])$, $h\in Hom_{\mathcal C}([Z_k],[U_m])$, we have
\[\begin{array}{ccl}
h\circ (g\circ f)&=&h\circ(F^l_{j,k}(g)F^l_{i,j}(f))=F^{max\{l,m\}}_{max\{k,m\}}(h)F^{max\{l,m\}}_l(F^l_{max\{j,k\}}(g)F^l_{max\{i,j\}}(f))\\
&=&F^{max\{i,j,k,m\}}_{max\{k,m\}}(h)F^{max\{i,j,k,m\}}_{max\{j,k\}}(g)F^{max\{i,j,k,m\}}_{max\{i,j\}}(f).
\end{array}\]
Similarly, $(h\circ g)\circ f=F^{max\{i,j,k,m\}}_{max\{k,m\}}(h)F^{max\{i,j,k,m\}}_{max\{j,k\}}(g)F^{max\{i,j,k,m\}}_{max\{i,j\}}(f).$
Hence, $h\circ (g\circ f)=(h\circ g)\circ f$ always holds.

 By the definition of composition, it is easy to see the composition is bilinear. Thus, $\mathcal C$ is an additive category.

Now we claim $\mathcal C=\lim\limits_{\rightarrow}\mathcal C_i$.

For any $i\in \mathbb N$, define a functor $F_i:\mathcal C_i\rightarrow \mathcal C$ like this: $F_i(X_i)=[X_i]$ for all $X_i\in \mathcal C_i$ and $F_i(f)=f$ for all $f:X_i\rightarrow Y_i$. It is clear that $F_i$ is an additive functor. For any $f:X_i\rightarrow Y_i$ in $\mathcal C_i$ and $j\geq i$, we have an isomorphism $id_{[F_i^j(X_i)]}:[X_i]\rightarrow [F_i^j(X_i)]$. Moreover, it is easy to verify the  commutative diagram:
$$\xymatrix{
       [X_i]\ar[r]^{id_{[F_i^j(X_i)]}}\ar[d]_{f}    & [F_i^j(X_i)]\ar[d]_{F_i^j(f)} \\
       [Y_i]\ar[r]^{id_{[F_i^j(Y_i)]}}              & [F_i^j(Y_i)].}$$
Thus, we have the natural isomorphism of  functors $\eta_i^j:F_i\rightarrow F_jF_i^j$ such that $(\eta_i^j)_{X_i}=id_{[F_i^j(X_i)]}$ for each $X_i\in \mathcal C_i$. Then $(\eta_i^i)_{X_i}=id_{[X_i]}$ for all $i$ and $X_i\in \mathcal C_i$. It is easy to see that $(\eta_j^k)_{F_i^j(X_i)}(\eta_i^j)_{X_i}=(\eta_i^k)_{X_i}$ for all $i\leq j\leq k$ and $X_i\in \mathcal C_i$.

Assume for an additive category $\mathcal D$, there are functors $G_i:\mathcal C_i\rightarrow \mathcal D$ and natural equivalences $\delta_i^j:G_i\rightarrow G_jF_i^j$ for $i\leq j$, satisfying $(\delta_i^i)_{X_i}=id_{G_i(X_i)}$  and $(\delta_j^k)_{F_i^j(X_i)}(\delta_i^j)_{X_i}=(\delta_i^k)_{X_i}$  for all $i\leq j\leq k$ and $X_i\in \mathcal C_i$. We need to find an unique functor $H:\mathcal C\rightarrow \mathcal D$ such that
$G_i=HF_i$ for all $i\in \mathbb N$ and $\delta_i^j=H\eta_i^j$ for all $i\leq j$.

We define the functor $H:\mathcal C\rightarrow \mathcal D$ satisfying $H([X_i]):=G_i(X_i)$ for all $X_i\in \mathcal C_i$ and for all morphisms $f:[X_i]\rightarrow [Y_j]$, defining that
 $ H(f):=
  \left\{\begin{array}{lll} G_j(f)(\delta_i^j)_{X_i}, &\text{if}\; i\leq j, \\
(\delta_j^i)_{Y_j}^{-1}G_i(f), &\text{otherwise.}
\end{array}\right.$

Then $H(id_{[X_i]})=G_i(id_{X_i})=id_{G_i(X_i)}=id_{H([X_i])}$. For $f:[X_i]\rightarrow [Y_j]$ and $g:[Y_j]\rightarrow [Z_k]$, by (\ref{composition}), we have  $g \circ f:=F^l_{max\{j,k\}}(g)F^l_{max\{i,j\}}(f)$ for $l=max\{i,j,k\}$.
Since $\delta_{max\{i,j\}}^l$ is a natural equivalence, we obtain $$(\delta_{max\{i,j\}}^l)_{F^{max\{i,j\}}_j(Y_j)}G_{max\{i,j\}}(f)=(G_lF_{max\{i,j\}}^l)(f)(\delta_{max\{i,j\}}^l)_{F^{max\{i,j\}}_i(X_i)}.$$

If $i\leq j\leq k$, we have
$$H(g\circ f)=H(gF_j^k(f))=G_k(gF_j^k(f))(\delta_i^k)_{X_i}=G_k(g)G_k(F_j^k(f))(\delta_i^k)_{X_i};$$
\[\begin{array}{ccl} H(g)H(f) & = & (G_k(g)(\delta_j^k)_{Y_j})(G_j(f)(\delta_i^j)_{X_i})
  =   G_k(g)G_k(F_j^k(f))(\delta_j^k)_{F_i^j(X_i)}(\delta_i^j)_{X_i}\\
 & = &  G_k(g)G_k(F_j^k(f))(\delta_i^k)_{X_i}=H(g\circ f).
\end{array}\]

If $i\leq k\leq j$, we have $$H(g\circ f)=H((F_k^j)^{-1}(gf))=G_k((F_k^j)^{-1}(gf))(\delta_i^k)_{X_i};$$
$$H(g)H(f)  =  ((\delta_k^j)^{-1}_{Z_k}G_j(g))(G_j(f)(\delta_i^j)_{X_i})
  = (\delta_k^j)^{-1}_{Z_k}G_j(gf)(\delta_k^j)_{F_i^k(X_i)}(\delta_i^k)_{X_i},$$
Since $\delta_k^j$ is a natural transformation, we have $G_j(gf)(\delta_k^j)_{F_i^k(X_i)}=(\delta_k^j)_{Z_k}G_k((F_k^j)^{-1}(gf))$. Thus $H(g\circ f)=H(g)H(f)$.

In the other cases, we can similarly prove $H(g\circ f)=H(g)H(f)$. Thus, $H$ is a functor.

For any $X_i,Y_i\in \mathcal C_i$ and $f:X_i\rightarrow Y_i$, we have $HF_i(X_i)=H([X_i])=G_i(X_i)$ and $HF_i(f)=H(f)=G_i(f)(\delta_i^i)_{X_i}=G_i(f)$, then $G_i=HF_i$ for all $i\in \mathbb N$.

If there is another functor $H_1: \mathcal C\rightarrow \mathcal D$ satisfying $G_i=H_1F_i$ for all $i\in \mathbb N$, then
 $H_1([X_i])=H_1F_i(X_i)=G_i(X_i)=H([X_i])$ for each $[X_i]\in \mathcal C$.

 Let $f:[X_i]\rightarrow [Y_j]\in \mathcal C$. Since $H_1\eta_s^t=\delta_s^t$, $(H_1\eta_s^t)_{X_s}=H_1(id_{[F_s^t(X_s)]})=(\delta_s^t)_{X_s}$ for $s\leq t$. Then $H_1(id^{-1}_{[F_s^t(Y_s)]})=(\delta_s^t)^{-1}_{Y_s}$ for $s\leq t$. If $i\leq j$, then $f: [X_i]\overset{id_{[F_i^j(X_i)]}}{\rightarrow} [F_i^j(X_i)]\overset{f}{\rightarrow} [Y_j]$.  Thus, $$H_1(f)=H_1(fid_{[F_i^j(X_i)]})=H_1(f)H_1(id_{[F_i^j(X_i)]})=G_j(f)(\delta_i^j)_{X_i}=H(f).$$ Similarly, if $i\geq j$, $f:[X_i]\overset{f}{\rightarrow} [F_j^i(Y_j)]\overset{id^{-1}_{[F_j^i(Y_j)]}}{\rightarrow}[Y_j]$, then $H_1(f)=(\delta_i^j)^{-1}_{Y_j}G_i(f)=H(f)$. Hence $H$ are uniquely determined.

(2) Define the admissible short exact sequences in $\mathcal C$ are sequences isomorphic to the form $0\rightarrow [X_i]\overset{f}{\rightarrow} [Y_i]\overset{g}{\rightarrow} [Z_i]\rightarrow 0$, where $0\rightarrow X_i\overset{f}{\rightarrow} Y_i\overset{g}{\rightarrow} Z_i\rightarrow 0$ is an admissible short exact sequence in $\mathcal C_i$. Now we prove that it gives an exact structure on $\mathcal C$.

For admissible short exact sequence $0\rightarrow [X_i]\overset{f}{\rightarrow} [Y_i]\overset{g}{\rightarrow} [Z_i]\rightarrow 0$ and $h:[Y_i]\rightarrow [C_j]$ such that $h\circ f=0$. According to construction, $0\rightarrow X_i\overset{f}{\rightarrow} Y_i\overset{g}{\rightarrow} Z_i\rightarrow 0$ is an admissible short exact sequence in $\mathcal C_i$. We have $h: F_i^{max\{i,j\}}(Y_i)\rightarrow F_j^{max\{i,j\}}(C_j)$ and $h\circ f=hF_i^{max\{i,j\}}(f)=0$. Since $F_i^{max\{i,j\}}$ is exact, $0\rightarrow F_i^{max\{i,j\}}(X_i)\overset{F_i^{max\{i,j\}}(f)}{\longrightarrow} F_i^{max\{i,j\}}(Y_i)\overset{F_i^{max\{i,j\}}(g)}{\longrightarrow} F_i^{max\{i,j\}}(Z_i)\rightarrow 0$ is an admissible short exact sequence in $\mathcal C_{max\{i,j\}}$, and as $\mathcal C_{max\{i,j\}}$ is exact, we have $h=h'F_i^{max\{i,j\}}(g)$ for $h':F_i^{max\{i,j\}}(Z_i)\rightarrow F_j^{max\{i,j\}}(C_j)$. Thus, $h=h'\circ g$ in $\mathcal C$. Therefore, $g$ is the cokernel of $f$ in $\mathcal C$. Dually, $f$ is the kernel of $g$ in $\mathcal C$.

$Ex0$ follows immediately by $\mathcal C_i$ are exact.

For $Ex1$, let $f_1:[X_i]\rightarrow [Y_i]$ and $f_2:[Y_i]\rightarrow [Z_j]$ be admissible epimorphisms. We have $f_1: X_i\rightarrow Y_i$, $f_2:F_i^{max\{i,j\}}(Y_i)\rightarrow F_j^{max\{i,j\}}Z_j$ are admissible epimorphisms in $\mathcal C_i$ and $\mathcal C_{max\{i,j\}}$, and $f_2\circ f_1=f_2F_i^{max\{i,j\}}(f_1)$. As $F_i^{max\{i,j\}}$ is exact, then $F_i^{max\{i,j\}}(f_1)$ is an admissible epimorphism in $\mathcal C_{max\{i,j\}}$. Thus, $f_2F_i^{max\{i,j\}}(f_1)$ is an admissible epimorphism in $\mathcal C_{max\{i,j\}}$. Therefore, $f_2\circ f_1$ is admissible epimorphism in $\mathcal C$.

For $Ex2$, for each $h: [Z'_j]\rightarrow [Z_i]$ and each admissible epimorphism $g: [Y_i]\rightarrow [Z_i]$. We have $h:F_j^{max\{i,j\}}Z'_j\rightarrow F_i^{max\{i,j\}}(Z_i)$ and $g:Y_i\rightarrow Z_i$ is an admissible epimorphism in $\mathcal C_i$, thus $F_i^{max\{i,j\}}(g):F_i^{max\{i,j\}}(Y_i)\rightarrow F_i^{max\{i,j\}}(Z_i)$ is an admissible epimorphism in $\mathcal C_{max\{i,j\}}$. Since $\mathcal C_{max\{i,j\}}$ is exact, there exists the following pullback square
     $$\xymatrix{
       Y'_{max\{i,j\}}\ar[r]^{g'}\ar[d]_{h'}    & F_j^{max\{i,j\}}(Z'_j)\ar[d]_{h} \\
       F_i^{max\{i,j\}}(Y_i)\ar[r]^{F_i^{max\{i,j\}}(g)}   & F_i^{max\{i,j\}}(Z_i)}$$
in $\mathcal C_{max\{i,j\}}$ such that $h'$ is an admissible epimorphism. Thus, we have the following pullback square
     $$\xymatrix{
       [Y'_{max\{i,j\}}]\ar[r]^{g'}\ar[d]_{h'}    & [Z'_j]\ar[d]_{h} \\
       [Y_i]\ar[r]^{g}                 & [Z_i]}$$
in $\mathcal C$ such that $h'$ is an admissible epimorphism in $\mathcal C$. Thus, $Ex2$ also holds.

Dually, we can prove $Ex2^{op}$.

Therefore, $\mathcal C$ is an exact category.

Finally, we claim that $\mathcal C$ is Frobenius.
We show that for $P_i\in \mathcal C_i$, $[P_i]\in \mathcal C$  is projective if and only if $F_i^j(P_i)$ is projective in $\mathcal C_j$ for all $j\geq i$.

 Assume $F_i^j(P_i)$ is projective in $\mathcal C_j$ for all $j\geq i$. For any admissible short exact sequence $0\rightarrow [X_j]\overset{f}{\rightarrow} [Y_j]\overset{g}{\rightarrow} [Z_j]\rightarrow 0$, then $0\rightarrow X_j\overset{f}{\rightarrow} Y_j\overset{g}{\rightarrow} Z_j\rightarrow 0$ is an admissible short exact sequence in $\mathcal C_j$. Since $F_j^{max\{i,j\}}$ is exact, we have $0\rightarrow F_j^{max\{i,j\}}(X_j)\rightarrow F_j^{max\{i,j\}}(Y_j)\rightarrow F_j^{max\{i,j\}}(Z_j)\rightarrow 0$ is an admissible short exact sequence in $\mathcal C_{max\{i,j\}}$. As $F_i^{max\{i,j\}}(P_i)$ is projective in $\mathcal C_{max\{i,j\}}$, so $Hom_{\mathcal C_{max\{i,j\}}}(F_i^{max\{i,j\}}(P_i),F_j^{max\{i,j\}}(Y_j))\rightarrow Hom_{\mathcal C_{max\{i,j\}}}(F_i^{max\{i,j\}}(P_i),F_j^{max\{i,j\}}(Z_j))\rightarrow 0$ is exact. Thus, $$Hom_{\mathcal C}([P_i],[Y_j])\rightarrow Hom_{\mathcal C}([P_i],[Z_j])\rightarrow 0$$ is exact. Therefore, $[P_i]$ is projective in $\mathcal C$.

 Conversely, assume $[P_i]$ is projective. For any $j\geq i$ and admissible short exact sequence $0\rightarrow X_j\overset{f}{\rightarrow} Y_j\overset{g}{\rightarrow} Z_j\rightarrow 0$ in $\mathcal C_j$, then $0\rightarrow [X_j]\overset{f}{\rightarrow} [Y_j]\overset{g}{\rightarrow} [Z_j]\rightarrow 0$ is an admissible short exact sequence in $\mathcal C$. As $[P_i]$ is projective, so $Hom_{\mathcal C}([P_i],[Y_j])\rightarrow Hom_{\mathcal C}([P_i],[Z_j])\rightarrow 0$ is exact. Thus, $Hom_{\mathcal C_j}(F_i^j(P_i),Y_j)\rightarrow Hom_{\mathcal C_j}(F_i^j(P_i),Z_j)\rightarrow 0$ is exact. Therefore, $F_i^j(P_i)$ is projective in $\mathcal C_j$.

Dually, for $I_i\in \mathcal C_i$,  $[I_i]\in \mathcal C$ is injective if and only if $F_i^j(I_i)$ is injective in $\mathcal C_j$ for all $j\geq i$.

Furthermore, since $\mathcal C_i$ are Frobenius for all $i$, it follows that $\mathcal C$ is Frobenius.
\end{proof}

We call the family $\{F_i^j\}_{i\in \mathbb N}$ satisfying the conditions of this lemma a {\bf system of exact functors} for $\mathcal C_i$, $i\in \mathbb N$.

In this proof of Lemma \ref{colim}, for any $i\in \mathbb N$, the additive functor $F_i:\mathcal C_i\rightarrow \mathcal C$ satisfying $F_i(X_i)=[X_i]$ for all $X_i\in \mathcal C_i$ and $F_i(f)=f$ for all $f:X_i\rightarrow Y_i$, is called the {\bf embedding functor} from $\mathcal C_i$ to $\mathcal C$.

\begin{Remark}
(1) In Lemma \ref{colim}, the condition (a) ensures that $F_i^j(X)\cong F_i^j(Y)$ if and only if $X\cong Y$. More precisely, if $F_i^j(X)\cong F_i^j(Y)$, by (a), then  for any $M\in \mathcal C_i$, we have the natural isomorphism $Hom_{\mathcal C_i}(X,M)\cong Hom_{\mathcal C_i}(Y,M)$. Then by Yoneda Lemma, we have $X\cong Y$.

(2) For every $i\leq j$, since $F_i^j$ is exact and additive, we have a group homomorphism $F_i^j:Ext^1_{\mathcal C_i}(X,Y)\rightarrow Ext^1_{\mathcal C_j}(F_i^j(X),F_i^j(Y))$.

(3) If $\mathcal C_i$ are abelian categories satisfying the same conditions, then we can also prove that $\lim\limits_{\rightarrow}\mathcal C_i$ exists and is an abelian category.
\end{Remark}

In the above remark, if $F_i^j$ are isomorphisms and $\mathcal C_i$ are $2$-Calabi-Yau categories, we say that $F_i^j:\mathcal C_i\rightarrow \mathcal C_j$ is {\bf compatible} with the $2$-Calabi-Yau structures if the following diagram is commutative,
      $$\xymatrix{
       Ext^1_{\mathcal C_i}(X_i,Y_i)\ar[r]^{\backsimeq}\ar[d]_{F_i^j} & DExt^1_{\mathcal C_i}(Y_i,X_i) \\
       Ext^1_{\mathcal C_j}(F_i^j(X_i),F_i^j(Y_i))\ar[r]^{\backsimeq}            &DExt^1_{\mathcal C_j}(F_i^j(Y_i),F_i^j(X_i))\ar[u]_{DF_i^j}.}$$

\begin{Lemma}\label{2-C-Y}
Keeps the condition of Lemma \ref{colim}. Assume that for all $i\leq j$,  $Ext^1_{\mathcal C_i}(X,Y)\overset{F_i^j}{\cong} Ext^1_{\mathcal C_j}(F_i^j(X),F_i^j(Y))$ and $\mathcal C_i$ are $2$-Calabi-Yau categories. If $F_i^j:\mathcal C_i\rightarrow \mathcal C_j$ are compatible with the $2$-Calabi-Yau structures, then $\mathcal C$ is a $2$-Calabi-Yau category.
\end{Lemma}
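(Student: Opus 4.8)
The plan is to reduce the $2$-Calabi-Yau property of $\mathcal C$ to that of the individual $\mathcal C_k$, by showing that $Ext^1$ in $\mathcal C$ is already computed inside a single $\mathcal C_k$, and then transporting the $2$-Calabi-Yau duality through the embedding functors. First I would record that $\mathcal C$ is $Hom$-finite: every object of $\mathcal C$ has the form $[X_i]$ and, by the construction in Lemma \ref{colim}, $Hom_{\mathcal C}([X_i],[Y_j])=Hom_{\mathcal C_k}(F_i^k(X_i),F_j^k(Y_j))$ with $k=\max\{i,j\}$, so these spaces are finite-dimensional since the $2$-Calabi-Yau categories $\mathcal C_i$ are $Hom$-finite by definition. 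Using the natural isomorphisms $[X_i]\cong[F_i^k(X_i)]$ of Lemma \ref{colim}, I may assume throughout that any two given objects $[X],[Y]$ come from the same $\mathcal C_k$.

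The key step is the computation
$$Ext^1_{\mathcal C}([X],[Y])\cong Ext^1_{\mathcal C_k}(X,Y),\qquad X,Y\in\mathcal C_k,$$
induced by the embedding functor $F_k$. Since the admissible short exact sequences of $\mathcal C$ are by definition exactly the images of admissible short exact sequences of the $\mathcal C_l$, the functor $F_k$ is exact and gives a well-defined map $Ext^1_{\mathcal C_k}(X,Y)\rightarrow Ext^1_{\mathcal C}([X],[Y])$. For surjectivity I would take an admissible extension of $[X]$ by $[Y]$ in $\mathcal C$; it is isomorphic to the image of an extension living in some $\mathcal C_l$, and after pushing everything into $\mathcal C_m$ with $m=\max\{k,l\}$ and using the Remark following Lemma \ref{colim} (that the $F_i^j$ preserve and reflect isomorphisms), it becomes, up to the transported isomorphisms, an extension of $F_k^m(X)$ by $F_k^m(Y)$ in $\mathcal C_m$; the hypothesis $Ext^1_{\mathcal C_k}(X,Y)\overset{F_k^m}{\cong}Ext^1_{\mathcal C_m}(F_k^m(X),F_k^m(Y))$ then lifts its class back to $\mathcal C_k$. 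For injectivity, a splitting of $F_k(\xi)$ in $\mathcal C$ is a section $[X]\rightarrow[E]$, which lies in $Hom_{\mathcal C}([X],[E])=Hom_{\mathcal C_k}(X,E)$ because $E\in\mathcal C_k$; this equality of $Hom$-spaces forces the section to descend to $\mathcal C_k$, so $\xi$ already splits there. I expect the bookkeeping in surjectivity, namely transporting an abstract extension between $\mathcal C_l$, $\mathcal C_m$ and $\mathcal C_k$ along the various isomorphisms, to be the most delicate part.

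Finally, combining the key step with the $2$-Calabi-Yau duality of $\mathcal C_k$ yields the chain
$$Ext^1_{\mathcal C}([X],[Y])\cong Ext^1_{\mathcal C_k}(X,Y)\cong DExt^1_{\mathcal C_k}(Y,X)\cong DExt^1_{\mathcal C}([Y],[X]).$$
The remaining work is to verify that this composite is natural in $X$ and $Y$ and independent of the chosen index $k$. Here the hypotheses that the $F_i^j$ induce isomorphisms on $Ext^1$ and are \emph{compatible with the $2$-Calabi-Yau structures} are exactly what guarantee that the duality isomorphisms of the $\mathcal C_k$ agree under each $F_k^l$; this coherence is precisely the content of the commutative square in the definition of compatibility, and it lets the duality isomorphisms glue to a single natural isomorphism on $\mathcal C$. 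Thus the true engine of the argument is this compatibility condition rather than the duality of any individual $\mathcal C_k$, and with it in hand the $2$-Calabi-Yau property of $\mathcal C$ follows.
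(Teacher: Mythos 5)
Your proposal is correct and follows essentially the same route as the paper: both reduce everything to a single stage $\mathcal C_t$, where the $2$-Calabi-Yau duality is available, and both use the compatibility square to make the construction independent of the stage and functorial. The only organizational difference is that you isolate the identification $Ext^1_{\mathcal C}([X],[Y])\cong Ext^1_{\mathcal C_k}(X,Y)$ as an explicit key step before transporting the duality, whereas the paper defines the pairing $\gamma(\xi,\eta):=\gamma_t(\widetilde\xi,\widetilde\eta)$ directly and leaves that identification (which it still needs for the non-degeneracy argument) implicit.
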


\begin{proof}
Let $[X_i],[Y_j]\in \mathcal C$. For any $\xi:0\rightarrow [Y_j]\rightarrow [Z_k]\rightarrow [X_i]\rightarrow 0\in Ext^1_{\mathcal C}([X_i],[Y_j])$ and $\eta:0\rightarrow [X_i]\rightarrow [Z'_s]\rightarrow [Y_j]\rightarrow 0\in Ext^1_{\mathcal C}([Y_j],[X_i])$. According to the construction of $\mathcal C$, for any $t\geq max\{i,j,k,s\}$, $\widetilde\xi:0\rightarrow F_j^t(Y_j)\rightarrow F_k^t(Z_k)\rightarrow F_i^t(X_i)\rightarrow 0\in Ext^1_{\mathcal C_t}(F_i^t(X_i),T_j^t(Y_j))$ and $\widetilde\eta:0\rightarrow F_i^t(X_i)\rightarrow F_s^t(Z'_s)\rightarrow F_j^t(Y_j)\rightarrow 0\in Ext^1_{\mathcal C_t}(F_j^t(Y_j),F_i^t(X_i))$. Sine $\mathcal C_t$ is $2$-Calabi-Yau, there is a non-degenerate bilinear form $\gamma_t:Ext_{\mathcal C_t}^1(X,Y)\times Ext^1_{\mathcal C_t}(Y,X)\rightarrow K$.

Define a bilinear map $$\gamma:Ext^1_{\mathcal C}([X_i],[Y_j])\times Ext^1_{\mathcal C}([Y_j],[X_i])\rightarrow K, (\xi,\eta)\rightarrow\gamma_t(\widetilde\xi,\widetilde\eta).$$

Firstly, we show that $\gamma$ is well-defined. Assume that $t'$ is the other integer such that $t'\geq max\{i,j,k,s\}$. Without loss of generality, we may assume that $t'\geq t$. Then,
$$F_t^{t'}(\widetilde\xi)\in Ext^1_{\mathcal C_{t'}}(F_i^{t'}(X_i),F_j^{t'}(Y_j))\;\;\;\;\;\; \text{and}\;\;\;\;\;\; F_t^{t'}(\widetilde\eta)\in Ext^1_{\mathcal C_{t'}}(F_j^{t'}(Y_j),F_i^{t'}(X_i)).$$
Since $F_t^{t'}$ is compatible with the $2$-Calabi-Yau structure, we have $\gamma_{t'}(F_t^{t'}(\widetilde\xi),F_t^{t'}(\widetilde\eta))=\gamma_t(\widetilde\xi,\widetilde\eta)$. For any $\xi=\xi'$, which means the admissible exact sequences $\xi$ and $\xi'$ are isomorphic. We can choose $t$ bigger enough, such that $\widetilde \xi$ and $\widetilde \xi'$ are isomorphic as admissible exact sequences in $\mathcal C_t$, by the well-defined of $\gamma_t$, we have $\gamma(\xi,\eta)=\gamma_t(\widetilde \xi,\widetilde \eta)=\gamma_t(\widetilde \xi',\widetilde\eta)=\gamma(\xi',\eta)$. Dually, if $\eta=\eta'$, we have $\gamma(\xi,\eta)=\gamma(\xi,\eta')$.

It remains to prove that $\gamma$ is non-degenerate. Assume that there is a $\xi\in Ext^1_{\mathcal C}([X_i],[Y_j])$ such that $\gamma(\xi,\eta)=0$ for all $\eta\in Ext^1_{\mathcal C}([Y_j],[X_i])$. Thus, we have $\gamma_t(\widetilde \xi, \widetilde \eta)=0$ for all $\widetilde\eta\in Ext^1_{\mathcal C_t}([F_j^t(Y_j)],[F_i^t( X_i)])=0$. Since $\gamma_t$ is non-degenerate, we have $\widetilde \xi=0$, it implies $\xi=0$. So, $\gamma$ is non-degenerate.

By the non-degenerate bilinear form $\gamma$, we have isomorphisms $Ext^1_{\mathcal C}([Y_j],[X_i])\cong DExt^1_{\mathcal C}([X_i],[Y_j])$ for all $[X_i],[Y_j]\in \mathcal C$. It is easy to see that such isomorphisms are functorial since $\gamma$ is constructed by $\{\gamma_t\}$ and $\{\gamma_t\}$ can induce functorial isomorphisms.
\end{proof}

For exact categories $\mathcal A$ and $\mathcal B$ with an exact functor $\pi: \mathcal A\rightarrow \mathcal B$, let $\mathcal M$ be an additive subcategory of $\mathcal B$. Define $\mathcal C_{\mathcal M}:=\pi^{-1}(\mathcal M)$, that is, the subcategory of $\mathcal A$ generated by all objects whose images under $\pi$ are in $\mathcal M$.

\begin{Proposition}\label{2CY-Fro}
Let $\mathcal C_i$ and $\mathcal D_i$, $i\in \mathbb N$ be exact categories, and exact functors $\pi_i:\mathcal C_i\rightarrow \mathcal D_i$. Assume the families $\{F_i^j\}_{i\in \mathbb N}$ and $\{G_i^j\}_{i\in \mathbb N}$ are respectively  systems of exact functors for $\mathcal C_i$ and $\mathcal D_i$, $i\in \mathbb N$ satisfying $\pi_j F_i^j=G_i^j\pi_i$ for all $i\leq j$. Let $\mathcal C=\lim\limits_{\rightarrow}\mathcal C_i$ and $\mathcal D=\lim\limits_{\rightarrow}\mathcal D_i$ as constructed in Lemma \ref{colim} with the embedding functors $F_i:\mathcal C_i\rightarrow \mathcal C$ and $G_i:\mathcal D_i\rightarrow \mathcal D$. Let $\mathcal M_i$, $i\in \mathbb N$ be additive subcategories of $\mathcal D_i$. Assume that $S:=\{[M_i]\in \mathcal D\;|\;i\in \mathbb N \;\text{and}\;M_i\in \mathcal M_i\;\text{satisfying}\;G_i^j(M_i)\in \mathcal M_j, \forall j\geq i\}\neq \emptyset$. Let $\mathcal M$ be the additive subcategory of $\mathcal D$ generated by $S$. Then the following statements hold:

  (1)~  There is a $\pi:\mathcal C\rightarrow \mathcal D$ such that $\pi F_i=G_i\pi_i$ for all $i$.

(2)~ If all $\mathcal C_{\mathcal M_i}$ are extension closed subcategories of $\mathcal C_i$, then $\mathcal C_{\mathcal M}$ is an extension closed subcategory of $\mathcal C$.

(3)~ If all $\mathcal C_{\mathcal M_i}$ are closed under factor, then $\mathcal C_{\mathcal M}$ is closed under factor.

(4)~ Under the conditions of (2) and (3), assume that all $\mathcal C_{\mathcal M_i}, i\in \mathbb N$ are $2$-Calabi-Yau Frobenius categories. Denote by $\mathcal I_i$ the class consisting of all injective objects of $\mathcal C_{\mathcal M_i}$ satisfying  $F_i^j(I_i)\in \mathcal I_j$ for all $j\geq i$ and $I_i\in \mathcal I_i$, and by $\mathcal I$ the subcategory consisting of all objects of $\mathcal C_{\mathcal M}$ isomorphic to $[I_i]$ for all $I_i\in \mathcal I_i$. If $\mathcal I$ is a generator and co-generator subcategory of $\mathcal C_{\mathcal M}$ and $F_i^j:Ext^1_{\mathcal C_i}(X,Y)\rightarrow Ext^1_{\mathcal C_j}(F_i^j(X),F_i^j(Y))$ are group isomorphisms for all $i\leq j$ and compatible with the $2$-Calabi-Yau structures, then $\mathcal C_{\mathcal M}$ is also a $2$-Calabi-Yau Frobenius category whose projective-injective objects just form  $\mathcal I$.
\end{Proposition}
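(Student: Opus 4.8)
The plan is to reduce every assertion about the colimit $\mathcal C=\lim\limits_{\rightarrow}\mathcal C_i$ to a computation carried out at a single finite stage $\mathcal C_j$ and then let $j$ grow. For (1) I would invoke the universal property in the definition of colimit: the composites $G_i\pi_i\colon\mathcal C_i\to\mathcal D$ carry natural equivalences $G_i\pi_i\to G_j\pi_jF_i^j$, obtained by whiskering the colimit structure equivalences $G_i\to G_jG_i^j$ of $\mathcal D$ with $\pi_i$ and using $\pi_jF_i^j=G_i^j\pi_i$; these satisfy the cocycle condition, so there is a unique $\pi\colon\mathcal C\to\mathcal D$ with $\pi F_i=G_i\pi_i$. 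Concretely $\pi([X_i])=[\pi_iX_i]$, and $\pi(f)=\pi_{\max\{i,j\}}(f)$ for $f\colon[X_i]\to[Y_j]$, which is well defined precisely because $\pi_kF_i^k=G_i^k\pi_i$. Before treating (2)--(4) I would isolate a \emph{stability lemma}: an object $[Z]\in\mathcal D$ lies in $\mathcal M$ if and only if it can be represented by some $Z_N\in\mathcal M_N$ with $G_N^j(Z_N)\in\mathcal M_j$ for all $j\geq N$. Indeed $[Z]\in\mathcal M$ means $[Z]$ is a summand of $[\bigoplus_k M_{i_k}]$ with $[M_{i_k}]\in S$; pushing everything to the level $N=\max_k i_k$ and using that each $G_i^j$ is fully faithful together with the Krull--Schmidt (idempotent complete) property of $\mathcal D_N$, one splits the defining idempotent inside $\mathcal D_N$ to obtain such a representative, and the defining property of $S$ forces $G_N^j(Z_N)\in\mathcal M_j$ for every $j\geq N$. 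Through $\pi_jF_i^j=G_i^j\pi_i$ the same detection statement transfers to $\mathcal C_{\mathcal M}=\pi^{-1}(\mathcal M)$.

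With this in hand (2) and (3) become level-wise. Given an admissible short exact sequence $0\to[A]\to[B]\to[C]\to0$ in $\mathcal C$ with $[A],[C]\in\mathcal C_{\mathcal M}$, I would realize it as $0\to A_l\to B_l\to C_l\to0$ in some $\mathcal C_l$ and, using the stability lemma, enlarge $l$ so that $\pi_lA_l,\pi_lC_l\in\mathcal M_l$ with all further images in $\mathcal M_\bullet$. Applying the exact functor $F_l^j$ for each $j\geq l$ gives $0\to F_l^jA_l\to F_l^jB_l\to F_l^jC_l\to0$ in $\mathcal C_j$ whose outer terms lie in $\mathcal C_{\mathcal M_j}$; extension-closedness of $\mathcal C_{\mathcal M_j}$ yields $\pi_jF_l^jB_l=G_l^j\pi_lB_l\in\mathcal M_j$ for all $j\geq l$, hence $[\pi_lB_l]\in S\subseteq\mathcal M$ and $[B]\in\mathcal C_{\mathcal M}$. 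Part (3) is identical, replacing the sequence by an admissible epimorphism and extension-closedness by factor-closedness of $\mathcal C_{\mathcal M_j}$.

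For (4), part (2) equips $\mathcal C_{\mathcal M}$ with the exact structure inherited from $\mathcal C$. I would first verify that every $[I_i]\in\mathcal I$ is projective-injective in $\mathcal C_{\mathcal M}$: an admissible sequence $0\to[I_i]\to[B]\to[C]\to0$ in $\mathcal C_{\mathcal M}$ can be pushed to a level $j\geq i$ where it becomes an admissible sequence in $\mathcal C_{\mathcal M_j}$ with first term $F_i^j(I_i)\in\mathcal I_j$ injective there, so it splits in $\mathcal C_{\mathcal M_j}$ and hence in $\mathcal C_{\mathcal M}$; injectivity follows, and projectivity dually. Since $\mathcal I$ is assumed a generator and cogenerator, $\mathcal C_{\mathcal M}$ has enough projectives and injectives; moreover a projective $P$ is a summand of some $[I]\in\mathcal I$ via a splitting admissible epimorphism, so, as $\mathcal I$ is closed under summands, the projective-injectives are exactly $\mathcal I$, and $\mathcal C_{\mathcal M}$ is Frobenius. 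For the $2$-Calabi--Yau property I would compute $Ext^1$ at finite levels: because each $\mathcal C_{\mathcal M_i}$ is extension-closed in $\mathcal C_i$ and $F_i^j$ induces isomorphisms on $Ext^1$ compatible with the $2$-Calabi--Yau pairings, every extension in $\mathcal C_{\mathcal M}$ is represented at a finite stage and $Ext^1_{\mathcal C_{\mathcal M}}([X],[Y])\cong Ext^1_{\mathcal C_{\mathcal M_j}}(X_j,Y_j)$ stably; the compatible nondegenerate pairings $\gamma_j$ then assemble to a nondegenerate pairing on $\mathcal C_{\mathcal M}$ exactly as in the proof of Lemma \ref{2-C-Y}, yielding $Ext^1_{\mathcal C_{\mathcal M}}(X,Y)\cong DExt^1_{\mathcal C_{\mathcal M}}(Y,X)$.

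The main obstacle is the bookkeeping concentrated in the stability lemma: detecting membership in $\mathcal M$ and in $\mathcal C_{\mathcal M}$ at a single finite level and controlling it under all transition functors, including the passage through direct summands, which is what legitimizes the level-wise reductions used throughout (2)--(4). Once that is secured, the Frobenius clause is a clean splitting argument and the $2$-Calabi--Yau clause amounts to re-running Lemma \ref{2-C-Y} in this relative situation.
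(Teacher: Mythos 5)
Your proposal is correct and follows essentially the same route as the paper: the universal property of the colimit for (1), level-wise transport of admissible sequences and epimorphisms for (2)--(3), and for (4) the identification of $\mathcal I$ with the projective-injectives via the generator/cogenerator hypothesis together with the assembled non-degenerate pairings for the $2$-Calabi--Yau structure. Your ``stability lemma'' (detecting membership in $\mathcal M$, including through direct summands, at a single finite level) makes explicit a point the paper uses implicitly when it asserts $F_s^t(X'_s)\in\mathcal C_{\mathcal M_t}$ for all $t\geq s$ from $[X_i]\in\mathcal C_{\mathcal M}$, and your splitting argument for injectivity of $[I_i]$ is a harmless variant of the paper's direct $Hom$-exactness computation.
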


\begin{proof}
(1) By Lemma \ref{colim}, there exist $F_i:\mathcal C_i\rightarrow \mathcal C$, $G_i:\mathcal D_i\rightarrow \mathcal D$ and natural functor isomorphisms $\delta_i^j:G_i\rightarrow G_jG_i^j$ such that $(\delta_i^i)_{Y_i}=id_{G_i(Y_i)}$ and $(\delta_j^k)_{G_i^j(Y_i)}(\delta_i^j)_{Y_i}=(\delta_i^k)_{Y_i}$ for all $i\leq j\leq k$ and $Y_i\in \mathcal D_i$. Since $G_i^j\pi_i=\pi_j F_i^j$, we have the natural functor isomorphism $\delta_i^j\pi_i:G_i\pi_i \rightarrow G_jG_i^j\pi_i=(G_j\pi_j) F_i^j$ for $i\leq j$. For any $i\leq j\leq k$ and $X_i\in \mathcal C_i$, we have $(\delta_i^i\pi_i)_{X_i}=id_{G_i\pi_i(X_i)}$ and $(\delta_j^k\pi_j)_{F_i^j(X_i)}(\delta_i^j\pi_i)_{X_i}=(\delta_j^kG_i^j\pi_i)_{X_i}(\delta_i^j\pi_i)_{X_i}=(\delta_i^k\pi_i)_{X_i}$. Thus, by the universal property of colimit, there is a functor $\pi:\mathcal C\rightarrow \mathcal D$ such that $\pi F_i=G_i\pi_i$.

(2) For any $[X_i],[Z_k]\in\mathcal C_{\mathcal M}$ and short exact sequence $0\rightarrow [X_i]\rightarrow [Y_j]\rightarrow [Z_k]\rightarrow 0$ in $\mathcal C$, then there exists a short exact sequence $0\rightarrow X'_s\rightarrow Y'_s\rightarrow Z'_s\rightarrow 0$ in $\mathcal C_s$ such that $0\rightarrow [X_i]\rightarrow [Y_j]\rightarrow [Z_k]\rightarrow 0$ is isomorphic to $0\rightarrow [X'_s]\rightarrow [Y'_s]\rightarrow [Z'_s]\rightarrow 0$. (Since $F_i^j$ ($j\geq i$) are exact, we may assume that $s\geq i,j,k$.) Thus, $X'_s\cong F_i^s(X_i)$, $Y'_s\cong F_j^s(Y_j)$ and $Z'_s\cong F_k^s(Z_k)$. Since $[X_i],[Z_k]\in\mathcal C_{\mathcal M}$, so $F_s^t(X'_s)=F_s^tF_i^s(X_i)=F_i^t(X_i)\in \mathcal C_{\mathcal M_t}$ and $F_s^t(Z'_s)=F_s^tF_k^s(Z_k)=F_k^t(Z_k)\in \mathcal C_{\mathcal M_t}$ for all $t\geq s$, and since $\mathcal C_{\mathcal M_t}$ is extension closed, we have $F_s^t(Y'_s)\in \mathcal C_{\mathcal M_t}$ for all $t\geq s$. Therefore, $[Y'_s]\cong[F_j^s(Y_j)]\cong[Y_j]\in \mathcal C_{\mathcal M}$. Thus, $\mathcal C_{\mathcal M}$ is extension closed.

(3) For any $[X_i]\in \mathcal C_{\mathcal M}$ and surjective morphism $[X_i]\overset{f}{\rightarrow} [Y_j]$ in $\mathcal C$. For any $s\geq max(i,j)$, then $F_i^s(X_i)\in \mathcal C_{\mathcal M_s}$ and $F_i^s(X_i)\overset{f}{\rightarrow} F_j^s(Y_j)$ is surjective in $\mathcal C_s$, and since $\mathcal C_{\mathcal M_s}$ is closed under factor. Thus, $F_j^s(Y_j)\in \mathcal C_{\mathcal M_s}$. Hence, $[F_j^{max(i,j)}(Y_j)]\cong [Y_j]\in \mathcal C_{\mathcal M}$. Therefore, $\mathcal C_{\mathcal M}$ is closed under factor.

(4) By Lemma \ref{2-C-Y}, $\mathcal C$ is a $2$-Calabi-Yau category, and by (2) $\mathcal C_{\mathcal M}$ is closed under extension. Thus, $\mathcal C_{\mathcal M}$ is $2$-Calabi-Yau.

Firstly, we show any object $[I_i]\in \mathcal I$ is injective in $\mathcal C_{\mathcal M}$. For any exact sequence $0\rightarrow [X_j]\rightarrow [Y_j]\rightarrow [Z_j]\rightarrow 0$, then $0\rightarrow X_j\rightarrow Y_j\rightarrow Z_j\rightarrow 0$ is exact in $\mathcal C_{\mathcal M_j}$.
If $i\leq j$, since $F_i^j(I_i)$ is injective in $\mathcal C_{\mathcal M_j}$, we have $0\rightarrow Hom_{\mathcal C_j}(Z_j, F_i^j(I_i))\rightarrow Hom_{\mathcal C_j}(Y_j, F_i^j(I_i))\rightarrow Hom_{\mathcal C_j}(X_j, F_i^j(I_i))\rightarrow 0$ is exact. Thus, $0\rightarrow Hom_{\mathcal C}([Z_j], [I_i])\rightarrow Hom_{\mathcal C}([Y_j], [I_i])\rightarrow Hom_{\mathcal C}([X_j], [I_i])\rightarrow 0$ is exact. If $i\geq j$, since $F_j^i$ is exact, $0\rightarrow F_j^i(X_j)\rightarrow F_j^i(Y_j)\rightarrow F_j^i(Z_j)\rightarrow 0$ is exact in $\mathcal C_i$. As $I_i$ is injective in $\mathcal C_i$, we have $0\rightarrow Hom_{\mathcal C_i}(F_j^i(Z_j), I_i)\rightarrow Hom_{\mathcal C_i}(F_j^i(Y_j), I_i)\rightarrow Hom_{\mathcal C_i}(F_j^i(X_j), I_i)\rightarrow 0$ is exact. Thus, $0\rightarrow Hom_{\mathcal C}([Z_j], [I_i])\rightarrow Hom_{\mathcal C}([Y_j], [I_i])\rightarrow Hom_{\mathcal C}([X_j], [I_i])\rightarrow 0$ is exact. Therefore, $[I_i]$ is injective.

Now we prove that any injective object $[X_i]$ is in $\mathcal I$. Since $\mathcal I$ is a co-generator subcategory of $\mathcal C_{\mathcal M}$, there exists $[I_j]\in \mathcal I$ and exact sequence $0\rightarrow [X_i]\rightarrow [I_j]\rightarrow [Y_k]\rightarrow 0$. By (3), $\mathcal C_{\mathcal M}$ is closed under factor, $[Y_k]\in \mathcal C_{\mathcal M}$. As $[X_i]$ is injective in $\mathcal C_{\mathcal M}$, we have the exact sequence is split. Thus, $[I_j]\cong [X_i]\oplus [Z_k]\cong[F_i^{max(i,k)}(X_i\oplus Z_k)]$. Let $s=max(i,j,k)$. Since $Hom_{\mathcal C}([I_j],[F_i^{max(i,k)}(X_i\oplus Z_k)])=Hom_{\mathcal C_s}(F_j^s(I_j), F_i^s(X_i)\oplus F_k^s(Z_k)).$ Thus, $F_j^s(I_j)\cong F_i^s(X_i)\oplus F_k^s(Z_k)$ in $\mathcal C_s$. For any $t\geq s$, since $Hom_{\mathcal C_s}(X,Y)\cong Hom_{\mathcal C_t}(F_s^t(X),F_s^t(Y))$ for all $X,Y\in \mathcal C_s$, we have $F_j^t(I_j)\cong F_i^t(X_i)\oplus F_k^t(Z_k)$ in $\mathcal C_t$, moreover, as $[I_j]\in \mathcal I$, $F_i^t(I_j)$ is injective in $\mathcal C_{\mathcal M_t}$. Thus, $F_i^t(X_i)$ is injective in $\mathcal C_{\mathcal M_t}$. Therefore, $[X_i]\cong[F_s^t(X_i)]\in \mathcal I$.

Therefore, $\mathcal I$ is the subcategory formed by all injective objects of $\mathcal C_{\mathcal M}$. As $\mathcal C_{\mathcal M}$ is $2$-Calabi-Yau, $[I]\in \mathcal C_{\mathcal M}$ is projective if and only if $[I]$ is injective. Moreover, $\mathcal I$ is the generator and co-generator subcategory, then $\mathcal C_{\mathcal M}$ has enough projective and injective objects. The result follows.
\end{proof}

\subsection{Strongly almost finite Frobenius $2$-Calabi-Yau category $\mathcal C^Q$}.

In this part, we  use Proposition \ref{2CY-Fro} to construct a strongly almost finite Frobenius $2$-Calabi-Yau category $\mathcal C^Q$ via a strongly almost finite quiver $Q$. Moreover, we study some properties of $\mathcal C^Q$.

Assume that $Q$ is a strongly almost finite quiver. Let $\overline Q$ be the double quiver of $Q$, more precisely, it is obtained by adding an new arrow $\alpha^{*}:j\rightarrow i$ whenever there is an arrow $\alpha:i\rightarrow j$. For $K\overline Q$ the path algebra associated with $\overline Q$, let $\mathcal I$ be the idea of $K\overline{Q}$ generated by $\sum\limits_{s(\alpha)=i}\alpha^{*}\alpha-\sum\limits_{t(\alpha)=i}\alpha\alpha^{*}$ for all $i\in Q_0$. Denote by $\Lambda_Q$ the quotient algebra of $K\overline Q/\mathcal I$. As like as preprojective algebras from a finite quiver, we call $\Lambda_Q$ the {\bf preprojective algebra} of $Q$, although in this case the algebra $\Lambda_Q$ may not possess identity when $Q$ has infinite vertices. Thanks to $KQ$ as a subalgebra of $\Lambda_Q$, we denote $\pi: mod\Lambda_Q\rightarrow modKQ$ the restriction functor. For any additive subcategory $\mathcal M$ of $modKQ$, let $\mathcal C_{\mathcal M}:=\pi^{-1}(\mathcal M)$.

Recall that, for any finite acyclic quiver $Q'$, Ringel \cite{R} defined a category $C_{Q'}(1,\tau_{Q'})$ as follows. The objects are of the form $(X,f)$, where $X\in modKQ'$ and $f\in Hom_{KQ'}(X,\tau_{Q'}(X))$. Here $\tau_{Q'}$ is the Auslander-Reiten translation in $modKQ'$. The homomorphism from $(X,f)$ to $(X',f')$ is a $KQ'$-module homomorphism $g:X\rightarrow X'$ such that the following diagram
$$\xymatrix{
       X\ar[r]^{f}\ar[d]_{g} & \tau_{Q'}(X)\ar[d]^{\tau_{Q'}(g)} \\
       X'\ar[r]^{f'}   & \tau_{Q'}(X')}$$
commutes. Ringel (Theorem $B$, \cite{R}) proved that $\mathcal C_{Q'}(1,\tau_{Q'})$ isomorphic to $mod\Lambda_{Q'}$. More precisely, there exists an isomorphic functor $\Phi_{Q'}:mod\Lambda_{Q'}\rightarrow \mathcal C_{Q'}(1,\tau_{Q'})$ such that $\Phi_{Q'}(X)=(Y,f)$ implies $\pi_{Q'}(X)=Y$. We extend such construction to strongly almost finite quiver $Q$ to obtain the category $\mathcal C_{Q}(1,\tau)$, briefly written as $\mathcal C(1,\tau)$, where $\tau$ is the Auslander-Reiten translation in $modKQ$.

\begin{Theorem}
Keep the forgoing notations. Let $Q$ be a strongly almost finite quiver. Then $\mathcal C(1,\tau)$ is categorically isomorphic to $mod\Lambda_Q$ with  an isomorphic functor $\Phi:mod\Lambda_Q\rightarrow \mathcal C(1,\tau)$ satisfying $\pi(X)=Y$ for $\Phi(X)=(Y,f)$.
\end{Theorem}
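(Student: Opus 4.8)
The plan is to realize both sides as colimits of the finite-quiver case and transport Ringel's isomorphism through the colimit. Fix an exhaustive chain of finite full subquivers $Q^{(1)}\subseteq Q^{(2)}\subseteq\cdots$ with $Q=\bigcup_{n}Q^{(n)}$; since $Q$ is strongly almost finite it is acyclic, so each $Q^{(n)}$ is a finite acyclic quiver and Ringel's Theorem B (\cite{R}) supplies an isomorphism of categories $\Phi_{Q^{(n)}}:mod\Lambda_{Q^{(n)}}\rightarrow\mathcal C_{Q^{(n)}}(1,\tau_{Q^{(n)}})$ with $\pi_{Q^{(n)}}(X)=Y$ whenever $\Phi_{Q^{(n)}}(X)=(Y,f)$. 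The whole statement will follow once I identify $mod\Lambda_Q$ with $\lim\limits_{\rightarrow}mod\Lambda_{Q^{(n)}}$, identify $\mathcal C(1,\tau)$ with $\lim\limits_{\rightarrow}\mathcal C_{Q^{(n)}}(1,\tau_{Q^{(n)}})$, and check that the $\Phi_{Q^{(n)}}$ are compatible with the transition functors, so that they glue by the universal property of Lemma \ref{colim}.

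First I would build the transition functors. For a full subquiver inclusion $Q^{(m)}\subseteq Q^{(n)}$ ($m\le n$), extension by zero on the new vertices sends a $\Lambda_{Q^{(m)}}$-module to a $\Lambda_{Q^{(n)}}$-module: at a vertex of $Q^{(m)}$ the preprojective relation of $\Lambda_{Q^{(n)}}$ contains, beyond the terms already present in $\Lambda_{Q^{(m)}}$, only summands running through arrows incident to vertices outside $Q^{(m)}$, and these act as zero on such a module, so the relation reduces to that of $\Lambda_{Q^{(m)}}$. This gives exact functors $F_m^n$; because $Q^{(m)}$ is a \emph{full} subquiver a morphism of the ambient modules is constrained only at the vertices of $Q^{(m)}$, whence $Hom_{\Lambda_{Q^{(m)}}}(X,Y)\cong Hom_{\Lambda_{Q^{(n)}}}(F_m^n(X),F_m^n(Y))$; together with the obvious functoriality this verifies conditions (a)(b)(c) of Lemma \ref{colim}. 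Since every object of $mod\Lambda_Q$ is finite-dimensional and hence supported on finitely many vertices, it lies in the image of some $F_m$, and the same holds for morphisms; this identifies $\lim\limits_{\rightarrow}mod\Lambda_{Q^{(n)}}$ with $mod\Lambda_Q$.

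The crux is the comparison of Auslander--Reiten translates, which is where strong almost finiteness is essential. Because $Q$ is strongly locally finite with no infinite paths, the set of paths starting (resp. ending) at any vertex is finite, so each indecomposable projective and injective $KQ$-module is finite-dimensional and coincides, after extension by zero, with the corresponding projective/injective over $KQ^{(n)}$ once $Q^{(n)}$ contains its support. Computing $\tau=D\mathrm{Tr}$ from a minimal projective presentation then shows that for a fixed finite-dimensional $X$ one has $\tau(X)=\tau_{Q^{(n)}}(X)$ for all sufficiently large $n$. Consequently an object $(X,f)$ of $\mathcal C(1,\tau)$ already lies in $\mathcal C_{Q^{(n)}}(1,\tau_{Q^{(n)}})$ for large $n$, the transition functors $(X,f)\mapsto(F_m^n X, f)$ are well defined and exact, and $\mathcal C(1,\tau)=\lim\limits_{\rightarrow}\mathcal C_{Q^{(n)}}(1,\tau_{Q^{(n)}})$. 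This stabilization of $\tau$ is the main obstacle; everything else is bookkeeping.

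Finally, Ringel's isomorphisms are compatible with these two systems: since $\Phi_{Q^{(n)}}$ preserves the underlying $KQ$-module (i.e.\ $\pi_{Q^{(n)}}(X)=Y$) and the transition functors on both sides are the respective extensions by zero, one gets natural isomorphisms $G_m^n\,\Phi_{Q^{(m)}}\cong\Phi_{Q^{(n)}}\,F_m^n$ satisfying the cocycle condition. By the universal property in Lemma \ref{colim} this family induces a functor $\Phi:mod\Lambda_Q\rightarrow\mathcal C(1,\tau)$ with $\Phi F_m=G_m\Phi_{Q^{(m)}}$; the family $\{\Phi_{Q^{(m)}}^{-1}\}$ induces its inverse, so $\Phi$ is an isomorphism of categories, and the relation $\pi(X)=Y$ for $\Phi(X)=(Y,f)$ is inherited from the $\pi_{Q^{(m)}}$.
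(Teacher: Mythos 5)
Your overall strategy --- realize both sides as colimits over finite full subquivers and glue Ringel's isomorphisms via Lemma \ref{colim} --- is genuinely different from the paper's, which instead redoes Ringel's construction directly on $Q$: for each $Y\in modKQ$ it picks a finite subquiver $Q'$ supporting $Y$ and $\tau(Y)$, builds the auxiliary maps $g_\alpha:\tau(Y)_{t(\alpha)}\to Y_{s(\alpha)}$ by induction along the acyclic ordering of $Q'$, and then defines $\Phi$ and $\Psi$ exactly as in Theorem B' of \cite{R}. Your treatment of the $mod\Lambda_{Q^{(n)}}$ side is fine, but the proposal has a genuine gap on the $\mathcal C(1,\tau)$ side, at the point where you define the directed system.

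The problem is that the Auslander--Reiten translate does not commute with extension by zero along a full subquiver inclusion $Q^{(m)}\subseteq Q^{(n)}$: in general $\tau_{Q^{(n)}}(F_m^nX)\neq F_m^n(\tau_{Q^{(m)}}(X))$. For instance, if $X$ is projective over $KQ^{(m)}$ but some arrow of $Q^{(n)}$ leaves its support, then $\tau_{Q^{(m)}}(X)=0$ while $\tau_{Q^{(n)}}(X)\neq 0$; more generally the minimal projective presentation changes because the indecomposable projectives of $KQ^{(n)}$ at vertices of $Q^{(m)}$ are strictly larger than those of $KQ^{(m)}$. Hence the assignment $(X,f)\mapsto(F_m^nX,f)$ is not a well-defined functor from $\mathcal C_{Q^{(m)}}(1,\tau_{Q^{(m)}})$ to $\mathcal C_{Q^{(n)}}(1,\tau_{Q^{(n)}})$: the morphism $f$ has the wrong codomain. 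Your stabilization observation --- that for a \emph{fixed} $X$ one has $\tau_{Q^{(n)}}(X)=\tau(X)$ for all $n$ large depending on $X$ --- does not repair this, since a directed system requires transition functors defined on all of $\mathcal C_{Q^{(m)}}(1,\tau_{Q^{(m)}})$ for each fixed $m$. One can of course transport the unproblematic system on the $mod\Lambda_{Q^{(n)}}$ side through the isomorphisms $\Phi_{Q^{(n)}}$, but then identifying the resulting colimit with $\mathcal C(1,\tau)$ forces you to compare the structure maps $f$ computed over $Q^{(m)}$ and over $Q^{(n)}$, i.e.\ to choose Ringel's maps $g_\alpha$ coherently for all finite subquivers simultaneously --- and that coherent inductive choice over the acyclic quiver $Q$ is precisely the content of the paper's direct proof. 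So either supply such a compatible family of $g_\alpha$'s (at which point the colimit scaffolding no longer does any work), or argue as the paper does.
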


\begin{proof}
For any $Y=((Y_i)_{i\in Q_0},(Y_{\alpha})_{\alpha\in Q_1})\in modKQ$, we have $\tau(Y)=((\tau(Y)_i)_{i\in Q_0},(\tau(Y)_{\alpha})_{\alpha\in Q_1})$. Since $Y$ and $\tau(Y)$ are finite dimensional, we can find a finite subquiver $Q'$ of $Q$ such that $Y,\tau(Y)\in modKQ'$ under the restriction of $KQ$-action. According the proof of Theorem B', \cite{R}, we can define linear maps $g_{\alpha}:\tau(Y)_{t(\alpha)}\rightarrow Y_{s(\alpha)}$ for all $\alpha\in Q'_1$ inductively on the given partial order of vertices of $Q'$ due to the orientation of arrows.

First, for all $\alpha_0\in Q'_1$ with $t(\alpha_0)=i_0$ a sink (i.e. a minimal vertex on the partial order), we can give the exact sequence
$$0\rightarrow \tau(Y)_{i_0}\overset{(g_{\alpha_0})_{t(\alpha_0)=i_0}}{\longrightarrow} \bigoplus\limits_{t(\alpha_0)=i_0}Y_{s(\alpha_0)}\overset{(Y_{\alpha_0})_{t(\alpha_0)=i_0}^{\top}}{\longrightarrow} Y_{i_0}.$$
to obtain $g_{\alpha_0}$ for all $\alpha_0$ with $t(\alpha_0)=i_0$.
Note in this sequence $(g_{\alpha_0})_{t(\alpha_0)=i_0}$ means to arrange it in a row and $(Y_{\alpha_0})_{t(\alpha_0)=i_0}^{\top}$ means to arrange it in a column such that the operations can be made. In the next sequence the description is similar.

 Next, using the following exact sequence:
 $$0\rightarrow \tau(Y)_i\overset{((g_{\beta})_{t(\beta)=i},(\tau(Y)_{\gamma}))_{s(\gamma)=i}}{\longrightarrow} \bigoplus\limits_{t(\beta)=i}Y_{s(\beta)}\oplus \bigoplus\limits_{s(\gamma)=i}\tau(Y)_{t(\gamma)}\overset{\left(
                                                                                                             \begin{array}{c}
                                                                                                               (Y_{\beta})^{\top}_{t(\beta)=i} \\
                                                                                                               (g_{\gamma})^{\top}_{s(\gamma)=i} \\
                                                                                                             \end{array}
                                                                                                           \right)
 }{\longrightarrow} Y_i,$$
 for any $i\in Q'_0$ we can decide all $g_{\beta}$  with $t(\beta)=i$ inductively by those $g_{\gamma}$ with $s(\gamma)=i$. Note that such exact sequence always exists by Theorem B', \cite{R} since $Q'$ is finite acyclic. For all $\alpha\in Q_1\setminus Q'_1$, set $g_{\alpha}=0$. Then, we obtain $(g_{\alpha})_{\alpha\in Q_1}$ on $Q'$.

Use this $(g_{\alpha})_{\alpha\in Q_1}$ above from $Q'$, we can construct,  relying on $g_{\alpha}$,  the functors $\Psi:\mathcal C(1,\tau)\rightarrow mod\Lambda_Q$ and $\Phi:mod\Lambda_Q\rightarrow \mathcal C(1,\tau)$ as follows.

 Precisely, given $(Y,f)\in \mathcal C(1,\tau)$, define $\Psi(Y,f)=((X_i)_{i\in Q_0}, (X_{\alpha})_{\alpha\in \overline Q_1})$ as follows: $X_i=Y_i$ for each $i\in Q_0$ and $X_{\alpha}=Y_{\alpha}$, $X_{\alpha^{*}}=g_{\alpha}f_{t(\alpha)}$ for each $\alpha \in Q_1$. As in the proof of Theorem B', \cite{R}, we can prove that $X\in mod\Lambda_Q$. Conversely, given $X\in mod\Lambda_Q$, there exist uniquely $f=(f_i)_{i\in Q_0}$ such that $X_{\alpha^{*}}=g_{\alpha}f_{t(\alpha)}$ for each $\alpha\in Q_1$. Let $Y=\pi(X)$ for $\pi$ the restriction functor from $mod\Lambda_Q$ to $modKQ$. Then $f:Y\rightarrow \tau(Y)$ is a $modKQ$ homomorphism, and define $\Phi(X)=(Y,f)$.

It can be proved similarly as Theorem B' of \cite{R} that $\Phi\Psi=id_{\mathcal C(1,\tau)}$ and $\Psi\Phi=id_{mod\Lambda_Q}$. Therefore, our result follows.
\end{proof}

Let $Q$ be a strongly almost finite quiver. Following \cite{GLS}, for any $a\leq b\in \mathbb N$ and $i\in Q_0$, let $I_i$ be the indecomposable representation of $KQ$ corresponding to vertex $i$, define
$$I_{i,[a,b]}=\bigoplus\limits_{j=a}^b\tau^j(I_i),$$ and
$$e_{i,[a,b]}=\left(\begin{array}{cccc}
0 & 1       &    & \\
  & \ddots  & \ddots &\\
  &  & 0 & 1\\
  &  &   & 0
\end{array}\right):I_{i,[a,b]}\rightarrow \tau(I_{i,[a,b]}).$$

Let $Q'$ be a finite acyclic quiver, recall that an object $M$ of $modKQ'$ is called {\bf terminal} (2.2., \cite{GLS}) if the following hold:

(1) $M$ is pre-injective;

(2) If $X$ is an indecomposable module of $KQ'$ such that $Hom(M,X)\neq 0$, then $X\in add(\mathcal M)$;

(3) $I_i\in add(\mathcal M)$ for all indecomposable injective $KQ'$-modules $I_i$.

If $M$ is a terminal module, define $t_i(M):=max\{j\geq 0\;|\;\tau^j(I_i)\in add(M)\setminus \{0\}\}$. See \cite{GLS}.

Note that if $Q'$ is not type $A$ with linear orientation, then $M=DKQ'\oplus \tau(DKQ')$ is a terminal module of $KQ'$ with $t_i(M)=1$ for all $i\in Q'_0$.

If $Q'$ is an acyclic finite quiver with $n$ vertices, denote $\pi_{Q'}:mod\Lambda_{Q'}\rightarrow modKQ'$ be the restriction functor, where $\Lambda_{Q'}$ is the preprojective algebra of $KQ'$. Assume $M$ is a terminal module of $KQ'$. Let $\mathcal C'_{add(M)}:=\pi_{Q'}^{-1}(add(M))$.
We obtain  the following:

\begin{Theorem}\label{GLSmain}(Theorem 2.1., Lemma 5.5., Lemma 5.6., \cite{GLS})

(1) $\mathcal C'_{add(M)}$ is a Frobenius $2$-Calabi-Yau category with $n$ indecomposable $\mathcal C'_{add(M)}$-projective-injective objects $$\{(I_{i,[0,t_i(M)]},e_{i,[0,t_i(M)]})\;|\;i=1,\cdots,n\};$$

(2) $\mathcal C'_{add(M)}$ is closed under extension;

(3) $\mathcal C'_{add(M)}$ is closed under factor.
\end{Theorem}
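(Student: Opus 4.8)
The plan is to transport everything through Ringel's isomorphism functor $\Phi:mod\Lambda_{Q'}\rightarrow \mathcal C_{Q'}(1,\tau_{Q'})$ recalled above, under which $\mathcal C'_{add(M)}$ becomes the full subcategory of those pairs $(X,f)$ with $X\in add(M)$ and $f\in Hom_{KQ'}(X,\tau_{Q'}(X))$, and to study it inside the ambient abelian category $mod\Lambda_{Q'}$. Since the restriction functor $\pi_{Q'}$ is exact, parts (2) and (3) reduce to the purely hereditary assertions that $add(M)\subseteq modKQ'$ is closed under extensions and under quotients: granting these, an extension (respectively a factor) of objects of $\mathcal C'_{add(M)}$ taken in $mod\Lambda_{Q'}$ has its $KQ'$-restriction again in $add(M)$, hence lies in $\mathcal C'_{add(M)}$, and in particular $\mathcal C'_{add(M)}$ inherits an exact structure from $mod\Lambda_{Q'}$.

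For quotient-closedness I would first invoke the $Hom$-orthogonality of the preprojective, regular and preinjective components of the hereditary algebra $KQ'$, from which any quotient of a preinjective module is again preinjective. Then, given $X\in add(M)$ and a surjection $X\twoheadrightarrow Y$, every indecomposable summand $Y_k$ of $Y$ receives a nonzero map from $X$, hence $Hom(M,Y_k)\neq 0$, and the defining property (2) of a terminal module forces $Y_k\in add(M)$; thus $Y\in add(M)$ and part (3) follows. Extension-closedness is argued analogously: an extension of preinjectives is preinjective by the same component orthogonality, and then each indecomposable summand of the middle term is hit nonzero by $M$, giving $Y\in add(M)$ and hence part (2). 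Both steps are routine once the terminal-module axioms are available.

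The substance is part (1): producing the Frobenius and $2$-Calabi-Yau structure and pinning down the projective-injective objects. Here I would follow the strategy of \cite{GLS}. First verify that the $n$ candidate objects $(I_{i,[0,t_i(M)]},e_{i,[0,t_i(M)]})$ are projective-injective in $\mathcal C'_{add(M)}$ by checking the vanishing of $Ext^1$ into and out of them; the bound $t_i(M)$ on the $\tau$-orbit lengths is exactly what makes $e_{i,[0,t_i(M)]}$ well defined and keeps these objects inside the exact subcategory. Next, using the Auslander-Reiten theory of $KQ'$ together with the terminality of $M$, construct for every object of $\mathcal C'_{add(M)}$ a short exact sequence with middle term in the additive span of these $n$ objects, which yields enough projectives and enough injectives and, simultaneously, the coincidence of the projective and injective classes, i.e. the Frobenius property. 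Finally, the $2$-Calabi-Yau duality $Ext^1(A,B)\cong D\,Ext^1(B,A)$ is inherited from the known stable $2$-Calabi-Yau property of the module category of the preprojective algebra $\Lambda_{Q'}$, restricted to the extension-closed subcategory $\mathcal C'_{add(M)}$.

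The main obstacle will be part (1): the precise identification of the projective-injective objects and the construction of the functorial approximation sequences establishing both ``enough'' conditions and the projective-equals-injective coincidence. This is exactly the content of Lemma 5.5 and Lemma 5.6 of \cite{GLS}, and reproving it amounts to a somewhat delicate Auslander-Reiten computation; since it is already available there for finite acyclic $Q'$, I would cite it, using the present reduction only to record the statement in precisely the form needed in the sequel, where $\mathcal C^{Q}$ is obtained as a colimit of such categories $\mathcal C'_{add(M)}$.
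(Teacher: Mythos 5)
The paper gives no proof of this statement at all: it is quoted verbatim as Theorem 2.1, Lemma 5.5 and Lemma 5.6 of \cite{GLS} for a finite acyclic quiver $Q'$, exactly as your final paragraph proposes to do. Your sketch of the closure properties via the exactness of $\pi_{Q'}$ and the terminal-module axioms is a faithful outline of the cited argument, so your proposal is consistent with the paper's treatment.
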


\begin{Theorem}\label{infversion2-c-y}
Let $Q$ be a strongly almost finite quiver. Denote the vertices of $Q$ by $1,2,\cdots$, and $Q^{[i]}$ be the sub-quiver of $Q$ generated by vertices $1,\cdots, i$ for any $i\in \mathbb N$. Let $G_i^j:modKQ^{[i]}\rightarrow modKQ^{[j]}$ be the embedding functors for all $i\leq j\in \mathbb N$. Assume that $M_i$ are terminal modules of $KQ^{[i]}$ and $\mathcal N$ is the additive subcategory of $modKQ$ generated by all objects $M\in add(M_i)$ satisfying $G_i^j(M)\in add(M_j)$ for all $j\geq i$. Denote by $\mathcal I_i$ the subcategory of injective objects in $\mathcal C'_{add(M_i)}$ and by $\mathcal I$ the additive subcategory of $\mathcal C_{\mathcal N}$ generated by all objects $I\in \mathcal I_i$ satisfying $G_i^j(I)\in \mathcal I_j$ for all $j\geq i$. Assume that $\mathcal I$ is a generator and co-generator  subcategory of $\mathcal C_{\mathcal N}$. Then,

(1)~ $\mathcal C_{\mathcal N}$ is closed under extension and under factor respectively;

(2)~ $\mathcal C_{\mathcal N}$ is a Frobenius $2$-Calabi-Yau category whose  projective-injective subcategory is just $\mathcal I$.
\end{Theorem}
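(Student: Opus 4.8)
The plan is to realize $\mathcal C_{\mathcal N}$ as one of the colimit subcategories $\mathcal C_{\mathcal M}$ produced by Proposition \ref{2CY-Fro}, and then read off both assertions directly from parts (2), (3) and (4) of that proposition. Concretely, I would take $\mathcal C_i=mod\Lambda_{Q^{[i]}}$ and $\mathcal D_i=modKQ^{[i]}$, with $\pi_i=\pi_{Q^{[i]}}$ the restriction functor, $\mathcal M_i=add(M_i)$, and with $F_i^j:mod\Lambda_{Q^{[i]}}\rightarrow mod\Lambda_{Q^{[j]}}$ and $G_i^j:modKQ^{[i]}\rightarrow modKQ^{[j]}$ the ``extension by zero'' embeddings (place $0$ on the vertices and arrows of $Q^{[j]}$ not belonging to $Q^{[i]}$). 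Each $Q^{[i]}$ is a finite acyclic quiver, so with these choices $\mathcal C_{\mathcal M_i}=\pi_i^{-1}(add(M_i))=\mathcal C'_{add(M_i)}$, and Theorem \ref{GLSmain} supplies exactly the needed hypotheses at the finite level: every $\mathcal C'_{add(M_i)}$ is a Frobenius $2$-Calabi-Yau category, closed under extensions and under factors.

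First I would check that $\{F_i^j\}$ and $\{G_i^j\}$ are systems of exact functors in the sense of Lemma \ref{colim}. Conditions $F_i^i=id$ and $F_i^k=F_j^kF_i^j$ are immediate for extension by zero, and exactness is clear since these functors leave the underlying vector spaces on the old vertices unchanged. The full-faithfulness condition holds because a homomorphism between two representations supported on $Q^{[i]}$ is determined by its components at the vertices of $Q^{[i]}$; the same observation shows that extension by zero respects the preprojective relations (any relation at an old vertex involving a new arrow acts as $0$), so $F_i^j$ is well defined on $mod\Lambda$. The identity $\pi_jF_i^j=G_i^j\pi_i$ is clear, as restricting the $KQ$-action and extending by zero commute. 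Since every finite-dimensional module is supported on finitely many vertices, each lies in the image of some $F_i$ (resp. $G_i$), so the colimits are identified as $\mathcal C=\lim\limits_{\rightarrow}mod\Lambda_{Q^{[i]}}\cong mod\Lambda_Q$ and $\mathcal D=\lim\limits_{\rightarrow}modKQ^{[i]}\cong modKQ$, under which $\pi$ becomes the restriction functor and the generating set $S$ of Proposition \ref{2CY-Fro} becomes exactly the set generating $\mathcal N$; thus $\mathcal M=\mathcal N$ and $\mathcal C_{\mathcal M}=\mathcal C_{\mathcal N}$, and $\mathcal I$ of the proposition coincides with the $\mathcal I$ of the statement. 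Note $S\neq\emptyset$ since $[0]\in S$.

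The technical heart, and the step I expect to be the main obstacle, is to verify the two remaining hypotheses of Proposition \ref{2CY-Fro}(4): that each $F_i^j$ induces a group isomorphism $Ext^1_{\mathcal C_i}(X,Y)\cong Ext^1_{\mathcal C_j}(F_i^j(X),F_i^j(Y))$ and that these isomorphisms are compatible with the $2$-Calabi-Yau structures. For the first I would use a support argument: in any short exact sequence $0\rightarrow F_i^j(Y)\rightarrow Z\rightarrow F_i^j(X)\rightarrow 0$ the middle term is zero on every vertex outside $Q^{[i]}$ (exactness at each vertex forces $Z_k=0$ whenever $X_k=Y_k=0$), so $Z=F_i^j(Z')$ for a unique $Z'$, which lies in the relevant subcategory by extension-closedness; full-faithfulness of $F_i^j$ then gives a bijection of extension classes and of splittings, whence $F_i^j$ is an isomorphism on $Ext^1$. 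The compatibility with the $2$-Calabi-Yau pairings is the delicate point: it requires that the functorial dualities $Ext^1_{\mathcal C_{\mathcal M_i}}(X,Y)\cong D\,Ext^1_{\mathcal C_{\mathcal M_i}}(Y,X)$ at consecutive levels intertwine with $F_i^j$. I would deduce this from the naturality of the Serre-type pairing underlying the $2$-Calabi-Yau structure in the Ringel--GLS model $\mathcal C(1,\tau)$, together with the facts that $F_i^j$ is exact and fully faithful and carries the projective-injective objects of Theorem \ref{GLSmain}(1) compatibly into those at level $j$ (as encoded by the stability condition defining $\mathcal I$); establishing this intertwining cleanly is the part of the argument that needs the most care.

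Granting these verifications, the conclusions are immediate. Theorem \ref{GLSmain}(2),(3) give that each $\mathcal C_{\mathcal M_i}=\mathcal C'_{add(M_i)}$ is closed under extensions and factors, so Proposition \ref{2CY-Fro}(2),(3) yield that $\mathcal C_{\mathcal N}=\mathcal C_{\mathcal M}$ is closed under extensions and factors, which is part (1). For part (2), Theorem \ref{GLSmain}(1) makes each $\mathcal C'_{add(M_i)}$ a Frobenius $2$-Calabi-Yau category, the hypothesis that $\mathcal I$ is a generator and co-generator of $\mathcal C_{\mathcal N}$ is assumed, and the two compatibility conditions were checked above; hence Proposition \ref{2CY-Fro}(4) applies and shows that $\mathcal C_{\mathcal N}$ is a Frobenius $2$-Calabi-Yau category whose projective-injective subcategory is exactly $\mathcal I$.
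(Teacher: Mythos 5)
Your proposal is correct and follows essentially the same route as the paper: realize $\mathcal C_{\mathcal N}$ as the colimit subcategory $\mathcal C_{\mathcal M}$ of Proposition \ref{2CY-Fro}, feed in Theorem \ref{GLSmain} at each finite level, and read off both conclusions from parts (2), (3) and (4). The one step you flag as delicate --- compatibility of the $F_i^j$ with the $2$-Calabi-Yau pairings --- is exactly the step the paper disposes of by citing the proof of Theorem 3 of \cite{GLS2}, so your instinct about where the real work lies matches the paper's treatment.
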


\begin{proof}
Let $\Lambda_i:=\Lambda_{Q^{[i]}}$ be preprojective algebras of $KQ^{[i]}$ and $F_i^j:mod\Lambda_i\rightarrow mod\Lambda_j$ be the embedding functors for all $i\leq j\in \mathbb N$. It is easy to see that the conditions of Lemma \ref{colim} are satisfied for $mod\Lambda_i$ and $F_i^j$. Therefore, $\mathcal C:=\lim\limits_{\rightarrow}mod\Lambda_i$ exists.

Now we prove that $mod\Lambda_Q$ is equivalent to $\mathcal C$. Let $\widetilde H:\mathcal C\rightarrow mod\Lambda_Q$ be the functor such that $\widetilde H([X_i])=X_i$ for each $[X_i]\in \mathcal C$ and $\widetilde H':mod\Lambda_Q\rightarrow \mathcal C$ be the functor such that $\widetilde H'(X_i)=[X_i]$ for each $X_i\in mod\Lambda_Q$, where $i$ is the least number such that $X_i\in mod\Lambda_{Q^{[i]}}$. It is clear that $\widetilde H$ and $\widetilde H'$ are equivalent functors between $mod\Lambda_Q$ and $\mathcal C$.

Similarly, $\lim\limits_{\rightarrow}modKQ^{[i]}$ exists, denoted by $\mathcal D$, and $modKQ$ is equivalent to $\mathcal D$. Moreover, $H:\mathcal D\rightarrow modKQ$ via $[X_i]\mapsto X_i$ is an equivalence functor.

It is clear that $\pi_jF_i^j=G_i^j\pi_i$. Thus, according to Proposition \ref{2CY-Fro} (1), there is a functor $\widetilde \pi$ such that $\widetilde \pi F_i=G_i\pi_i$ for all $i$. It is easy to verify the following commutative diagram:
$$\xymatrix{
       \mathcal C\ar[r]^{\widetilde H}\ar[d]_{\widetilde \pi}    & mod\Lambda_Q\ar[d]_{\pi} \\
       \mathcal D\ar[r]^{H}   & modKQ.}$$
Denote by $\mathcal M$  the additive subcategory of $\mathcal D$ generated by all objects $[M_i]$ with $M_i\in \mathcal M_i$ and $G_i^j(M_i)\in \mathcal M_j$ for all $j\geq i$. Thus, $H$ induces an equivalence between $\mathcal M$ and $\mathcal N$. By the above commutative diagram,  $\widetilde H$ induces an equivalence between $\mathcal C_{\mathcal M}$ and $\mathcal C_{\mathcal N}$.

(1) By Theorem \ref{GLSmain}, $\mathcal C'_{add(M_i)}$ are closed subcategories of $mod\Lambda_{Q^{[i]}}$ under extension.
Using Proposition \ref{2CY-Fro} (2), we have $\mathcal C_{\mathcal M}$ is a closed subcategory of $\mathcal C$ under extension. Thus, $\mathcal C_{\mathcal N}$ is an extension closed subcategory of $mod\Lambda_Q$.

 By Theorem \ref{GLSmain}, $\mathcal C'_{add(M_i)}$ are closed under factor. Using Proposition \ref{2CY-Fro} (3), we have $\mathcal C_{\mathcal M}$ is closed under factor. Thus, $\mathcal C_{\mathcal N}$ is an extension closed subcategory of $mod\Lambda_Q$.

(2) Let $\widetilde{\mathcal I}$ be the subcategory of $\mathcal C_{\mathcal M}$ which contains all objects $[I]$ with $I\in \mathcal I_i$ and $G_i^j(I)\in \mathcal I_j$ for all $j\geq i$. Then $\widetilde H$ induces equivalence between $\widetilde {\mathcal I}$ and $\mathcal I$. Since $\mathcal I$ is a generator and co-generator subcategory of $\mathcal C_{\mathcal N}$, $\widetilde{\mathcal I}$ is a generator and co-generator subcategory of $\mathcal C_{\mathcal M}$. By the proof of Theorem 3 of \cite{GLS2}, $F_i^j$ are compatible with the $2$-Calabi-Yau structures. Therefore, by Proposition \ref{2CY-Fro} (4), our result follows.
\end{proof}

From the given quiver $Q$ above, let $\mathcal M$ be the additive subcategory of $modKQ$ generated by $I_j$ and $\tau(I_j)$ for all injective objects $I_j$ and $j\in Q_0$. Then, we will always denote $\mathcal C^Q:=\mathcal C_{\mathcal M}$.

\begin{Corollary}\label{2-cy}
Let us keep the forgoing notations. Assume that $Q$ is a strongly almost finite quiver but not a linear quiver of type $A$. Then,

(1) $\mathcal C^Q$ is a $2$-Calabi-Yau Frobenius category;

(2) The projective-injective subcategory is $\{(I_{j,(0,1)},e_{j,[0,1]})\;|\;j\in Q_0\}$.
\end{Corollary}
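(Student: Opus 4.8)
The plan is to realize $\mathcal C^Q$ as a special instance of the category $\mathcal C_{\mathcal N}$ treated in Theorem \ref{infversion2-c-y}, and then read off both assertions from that theorem together with the explicit description of projective-injectives in Theorem \ref{GLSmain}. First I would fix an enumeration $1,2,\dots$ of $Q_0$ and let $Q^{[i]}$ be the full subquiver on $\{1,\dots,i\}$, and I would choose the terminal modules $M_i:=D(KQ^{[i]})\oplus \tau_{Q^{[i]}}(D(KQ^{[i]}))$. By the remark preceding Theorem \ref{GLSmain}, each such $M_i$ is terminal with $t_k(M_i)=1$ for every vertex $k$ of $Q^{[i]}$, provided $Q^{[i]}$ is not a linearly oriented quiver of type $A$. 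Since $Q$ itself is not linear of type $A$, this holds for all sufficiently large $i$; as the colimit is unchanged under passing to a cofinal subsystem, after discarding the initial indices (or re-enumerating the vertices) I may assume it holds for every $i$.

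Next I would identify the subcategory $\mathcal N$ attached to this data with $\mathcal M$. By construction $add(M_i)$ is generated by the indecomposable injective $KQ^{[i]}$-modules and their $\tau$-translates, so the objects $M\in add(M_i)$ that are compatible along the embeddings $G_i^j$ are exactly those assembled from the injectives $I_j$ of $KQ$ and the modules $\tau(I_j)$; here one uses that, for each fixed $j$, the indecomposable injective at $j$ stabilises at a finite stage because $Q$ is strongly almost finite, so $I_j$ and $\tau(I_j)$ are represented by genuinely injective objects of $KQ^{[i]}$ for $i$ large. This yields $\mathcal N=\mathcal M$ and hence $\mathcal C_{\mathcal N}=\mathcal C^Q$. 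With this identification the subcategory $\mathcal I_i$ of injective objects of $\mathcal C'_{add(M_i)}$ is, by Theorem \ref{GLSmain}(1) together with $t_k(M_i)=1$, additively generated by the objects $(I_{k,[0,1]},e_{k,[0,1]})$, and these assemble in the colimit into the objects $(I_{j,[0,1]},e_{j,[0,1]})$, $j\in Q_0$, which is precisely the list claimed in part (2) (the index $(0,1)$ there being the interval $[0,1]$).

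It then remains to verify the one standing hypothesis of Theorem \ref{infversion2-c-y}, namely that $\mathcal I$ is both a generator and a co-generator subcategory of $\mathcal C^Q$; this is the step I expect to be the main obstacle, since the other inputs are inherited directly from Theorem \ref{GLSmain} through the colimit. Concretely, I would show that every $(Y,f)\in\mathcal C^Q$ fits into admissible short exact sequences $0\to (Y,f)\to I\to (Y',f')\to 0$ and $0\to (Y'',f'')\to I'\to (Y,f)\to 0$ with $I,I'\in\mathcal I$, building the required maps into and out of the $(I_{j,[0,1]},e_{j,[0,1]})$ from the injective and projective presentations available already at a finite stage $Q^{[i]}$ and then checking that these presentations are compatible with the colimit structure (that is, survive application of the $F_i^j$). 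Granting this, Theorem \ref{infversion2-c-y}(2) immediately gives that $\mathcal C^Q$ is a Frobenius $2$-Calabi-Yau category whose projective-injective subcategory is $\mathcal I=\{(I_{j,[0,1]},e_{j,[0,1]})\mid j\in Q_0\}$, establishing both (1) and (2).
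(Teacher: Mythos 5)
Your proposal follows essentially the same route as the paper: choose $M_i=DKQ^{[i]}\oplus\tau(DKQ^{[i]})$ after arranging (by renumbering/cofinality) that no $Q^{[i]}$ is linearly oriented of type $A$, identify $\mathcal C^Q$ with the colimit category $\mathcal C_{\mathcal N}$ of Theorem \ref{infversion2-c-y}, and read off both statements from that theorem and Theorem \ref{GLSmain}. The generator/co-generator condition you correctly single out as the remaining obstacle is exactly the step the paper also does not prove in detail, disposing of it by appeal to the arguments of Corollaries 8.4 and 8.8 of \cite{GLS} together with strong almost finiteness of $Q$.
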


\begin{proof}
(1)~  When $Q$ is finite, the result follows immediately by Theorem \ref{GLSmain}. So, we can assume that $Q$ is an infinite quiver.
 Following this, since $Q$ is not a linear quiver of type $A$, we can renumber the vertices of $Q$ such that $Q^{[i]}$ is not of type $A_i$ with linear orientation for each $i\in \mathbb N$. Let $M_i:=DKQ^{[i]}\oplus \tau(DKQ^{[i]})$. Thus, $M_i$ is a terminal module of $KQ^{[i]}$ with $t_j(M_i)=1$ for all $j\in Q^{[i]}_0$. As $Q$ is strongly almost finite, by the similar proof of Corollary 8.4 and Corollary 8.8 of \cite{GLS}, we have $\{(I_{j,(0,1)},e_{j,[0,1]})\;|\;j\in Q_0\}$ is a co-generator and generator subcategory of $\mathcal C^{Q}$. Thus, $\mathcal C^{Q}$ satisfies the condition of Theorem \ref{infversion2-c-y}. Therefore, our result follows.

(2)~It follows by Theorem \ref{infversion2-c-y} (2).
\end{proof}

We say that a group $\Gamma$ {\bf acts on} a quiver $Q$ if there is a group homomorphism $\varphi:\Gamma\rightarrow Aut(Q)$, where $Aut(Q)$ is the automorphism of $Q$ as a directed graph. In this case, we have $h\cdot i=\varphi(h)(i)$  for $i\in Q_0$.

\begin{Definition}\label{def3.1}
Let $\mathcal C$ be a strongly almost finite Frobenius exact category and $\Gamma$ be an (infinite) group. We call that $\mathcal C$ admits a {\bf $\Gamma$-action} if there are categorical isomorphisms $\varphi(h):\mathcal C\rightarrow \mathcal C$ for $h\in \Gamma$ satisfying

(a)~ $\varphi(e)=id_{\mathcal C}$, where $e$ is the identity element of $\Gamma$ and $id_{\mathcal C}$ is the identity functor;

(b)~ $\varphi(hh')=\varphi(h)\varphi(h')$ for any pair $h,h'\in \Gamma$.
\end{Definition}

We write $\varphi(h)(X)$ as $h\cdot X$ briefly for any $X\in \mathcal C$.

\begin{Definition}(Definition 3.17, \cite{D})
Let $X\in \mathcal C$. An epimorphism $f:X\rightarrow Y$ is said to be a {\bf left rigid quasi-approximation} of $X$ if the following conditions are satisfied:
\begin{itemize}
  \item $add(\{h\cdot Y\;|\;h\in \Gamma\})$ is rigid;
  \item $Y$ has an injective envelope, without direct summand in $add(\{h\cdot X\;|\;h\in \Gamma\})$;
  \item If $add(\{h\cdot Z\;|\;h\in \Gamma\})$ is rigid, then every morphism from $X$ to $Z$ without invertible matrix coefficient, factorizes through $f$.
\end{itemize}
\end{Definition}

Let $Q$ be a strongly almost finite quiver which admits a group $\Gamma$ action, it is easy to see $\Gamma$ acts in $\mathcal C^Q$. Assume that $Q$ is not a linear quiver of type $A$. We will show that every projective object in $\mathcal C^Q$ has a left rigid quasi-approximation.

We first recall a crucial lemma of how to judge a projective object has a left rigid quasi-approximation.

\begin{Lemma}\label{left-rigid}(Lemma 3.20, \cite{D})
Let $\mathcal D$ be an abelian category endowed with an action of $\Gamma$. Let $\mathcal C$ be an
exact full subcategory of $\mathcal D$. Let $P$ be a projective indecomposable object of $\mathcal C$. If
\begin{itemize}
  \item every monomorphism of $\mathcal C$ to a projective object is admissible,
  \item every monomorphism of $\mathcal C$ from $P$ to an indecomposable object is admissible,
  \item $P$ has a simple socle $S$ in $\mathcal D$,
  \item $add(\{h\cdot S\;|\;h\in \Gamma\})$ is rigid in $\mathcal D$,
  \item the cokernel of $S \hookrightarrow P$ is in $\mathcal C$,
\end{itemize}
then this cokernel is a left rigid quasi-approximation of $P$ in $\mathcal C$.
\end{Lemma}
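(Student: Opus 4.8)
The object to produce is the canonical quotient $f\colon P\twoheadrightarrow Y$ coming from the exact sequence $0\to S\to P\xrightarrow{f} Y\to 0$ in $\mathcal D$, where $Y=\mathrm{coker}(S\hookrightarrow P)\in\mathcal C$ by hypothesis; the plan is to verify for this $f$ the three clauses of Definition 3.17. The engine of the whole proof is one observation, which I would establish first: for all $h,h'\in\Gamma$ there is no nonzero morphism $h\cdot S\to h'\cdot Y$ in $\mathcal D$. Indeed, $h\cdot S$ is simple (the $\Gamma$-action is exact, so $h\cdot S$ is the simple socle of $h\cdot P$), so such a morphism would be a monomorphism $h\cdot S\hookrightarrow h'\cdot Y=h'\cdot P/h'\cdot S$; taking its preimage in $h'\cdot P$ gives a subobject $E\subseteq h'\cdot P$ fitting in an extension $0\to h'\cdot S\to E\to h\cdot S\to 0$. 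This extension cannot split, since a split $E\cong h'\cdot S\oplus h\cdot S$ would embed a length-two semisimple object into the simple socle of $h'\cdot P$; hence it represents a nonzero class in $Ext^1_{\mathcal D}(h\cdot S,h'\cdot S)$, contradicting the rigidity of $add(\{h\cdot S\})$ in $\mathcal D$.

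Granting the observation, the first clause (rigidity of $add(\{h\cdot Y\})$) and the second clause (the injective envelope of $Y$ has no summand in $add(\{h\cdot P\})$) both follow. Applying the exact $\Gamma$-action gives $0\to h\cdot S\to h\cdot P\to h\cdot Y\to 0$ for every $h$, and feeding this into $Hom(-,h'\cdot Y)$ produces the exact piece $Hom(h\cdot S,h'\cdot Y)\to Ext^1(h\cdot Y,h'\cdot Y)\to Ext^1(h\cdot P,h'\cdot Y)$; the left-hand term vanishes by the observation, and the right-hand term is where the projective--injectivity of $P$ in the Frobenius category $\mathcal C$ and the extension-closedness of $\mathcal C$ in $\mathcal D$ (which identifies $Ext^1_{\mathcal C}$ with $Ext^1_{\mathcal D}$ on objects of $\mathcal C$) are used to force the middle term to be zero. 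For the second clause, a summand $h\cdot P$ of the injective envelope of $Y$ would force its simple socle $h\cdot S$ to embed into $\mathrm{soc}(Y)$, i.e. would give a nonzero map $h\cdot S\to Y$, again excluded by the observation.

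For the third clause, let $Z$ have $add(\{h\cdot Z\})$ rigid and let $g\colon P\to Z$ have no invertible matrix coefficient; I must show $g$ factors through $f$, equivalently $g|_S=0$. Suppose not. Decomposing $Z=\bigoplus_i Z_i$ into indecomposables, some component $g_i|_S\colon S\to Z_i$ is nonzero, hence a monomorphism since $S$ is simple. Because $S$ is the essential socle of $P$, a morphism out of $P$ that is injective on $S$ has zero kernel; thus $g_i\colon P\to Z_i$ is a monomorphism of $\mathcal C$ from $P$ to an indecomposable object, and is admissible by hypothesis. In the Frobenius category $\mathcal C$ the object $P$ is injective, so an admissible monomorphism out of $P$ splits; hence $g_i$ is a split monomorphism between the indecomposables $P$ and $Z_i$, and therefore invertible. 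This contradicts the hypothesis on $g$, so $g|_S=0$ and $g$ factors through $f$.

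The step I expect to be the real obstacle is the vanishing of the term $Ext^1(h\cdot P,h'\cdot Y)$ in the first clause, and more generally the constant need to track whether each $Hom$-, $Ext$-, kernel- and injectivity-statement is taken in the ambient abelian category $\mathcal D$ or in the exact subcategory $\mathcal C$: the socle $S$ lives only in $\mathcal D$, whereas $P$ is projective--injective only relative to $\mathcal C$. Reconciling these is exactly the purpose of the two admissibility hypotheses (every monomorphism of $\mathcal C$ into a projective, and every monomorphism of $\mathcal C$ out of $P$ into an indecomposable, is admissible) together with the extension-closedness of $\mathcal C$ in $\mathcal D$; the careful bookkeeping of which category hosts each computation is where the argument is genuinely delicate.
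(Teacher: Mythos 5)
Your proof is correct and is essentially the argument of Demonet's Lemma 3.20, which is all the paper itself offers (its ``proof'' is the single remark that Demonet's finite-group argument transfers verbatim to infinite $\Gamma$): the $Hom(h\cdot S, h'\cdot Y)=0$ observation via rigidity of $add(\{h\cdot S\})$ and simplicity of the socle, the long-exact-sequence deduction of rigidity of $add(\{h\cdot Y\})$, the socle argument for the injective envelope, and the essential-socle/splitting argument for the factorization property are exactly the intended steps. The only caveat is that your third clause leans on $P$ being injective (Frobenius-ness of $\mathcal C$) and on finite length to make $S$ essential in $P$ -- hypotheses that are standing assumptions in \cite{D} and hold for $\mathcal C^Q$ but are not restated in the lemma as printed here, so it is worth flagging that the statement is really being proved in that ambient setting.
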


\begin{proof}
The proof of (Lemma 3.20, \cite{D}) for finite group can be transferred to that for infinite group $\Gamma$.
\end{proof}

The following corollary is inspired by Corollary of 4.42, \cite{D}.

\begin{Corollary}\label{quasi-approxi}
Let $Q$ be a strongly almost finite quiver with a group $\Gamma$-action. $Q$ is not a linear quiver of type $A$. Then all projective objects of $\mathcal C^Q$ have left rigid quasi-approximations.
\end{Corollary}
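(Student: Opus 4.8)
The plan is to apply Lemma \ref{left-rigid} to each indecomposable projective object of $\mathcal C^Q$, taking as ambient abelian category $\mathcal D = mod\Lambda_Q$, on which $\Gamma$ acts since it acts on $Q$ and hence on $\Lambda_Q$. By Corollary \ref{2-cy}, $\mathcal C^Q$ is a Frobenius $2$-Calabi-Yau category, so its projective objects coincide with its projective-injective objects, and the indecomposable ones are exactly $P_j := (I_{j,[0,1]}, e_{j,[0,1]})$ for $j\in Q_0$; thus it suffices to produce a left rigid quasi-approximation of each $P_j$. By Theorem \ref{infversion2-c-y}, $\mathcal C^Q$ is extension closed in $mod\Lambda_Q$, so it is an exact full subcategory of $\mathcal D$ in the sense required by Lemma \ref{left-rigid}.

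First I would dispatch the two admissibility hypotheses and the cokernel hypothesis of Lemma \ref{left-rigid} at once, using that $\mathcal C^Q$ is closed under factor objects (Theorem \ref{infversion2-c-y}). Indeed, for any monomorphism $g\colon A\hookrightarrow P'$ in $\mathcal C^Q$ its cokernel in $mod\Lambda_Q$ is a factor of $P'\in\mathcal C^Q$, hence lies in $\mathcal C^Q$; together with extension closedness this exhibits $0\to A\to P'\to \mathrm{coker}(g)\to 0$ as a conflation, so $g$ is admissible. This covers the first two bullets, namely monomorphisms into a projective object and from $P_j$ into an indecomposable. The fifth bullet is immediate as well, since the cokernel of the socle inclusion $S\hookrightarrow P_j$ is a factor of $P_j$ and therefore belongs to $\mathcal C^Q$.

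It then remains to verify that $P_j$ has simple socle $S$ in $mod\Lambda_Q$ and that $add(\{h\cdot S\mid h\in\Gamma\})$ is rigid. For the simple socle I would reduce to the finite case: since $Q$ is strongly almost finite, $I_j$ and $\tau(I_j)$ are finite dimensional with finite support, so $P_j$ is a finite dimensional $\Lambda_Q$-module supported on a finite full subquiver $Q'\subseteq Q$. Choosing $Q'$ large enough that the computations of injectives and of $\tau$ stabilize, $P_j$ is identified, through the exact fully faithful embedding $mod\Lambda_{Q'}\hookrightarrow mod\Lambda_Q$, with an indecomposable projective-injective of the finite category $\mathcal C'_{add(M)}$, whose socle is known to be simple from \cite{GLS} and \cite{D}. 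Simplicity of the socle is a local property preserved by the embedding, giving the third bullet.

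The main obstacle is the rigidity of the possibly infinite orbit $add(\{h\cdot S\})$, and here the key observation is that strong almost finiteness forces $Q$ to have no $\Gamma$-loops: an arrow $v\to h\cdot v$ would, upon iterating $h$, produce either an infinite path (when the $h$-orbit of $v$ is infinite) or an oriented cycle (when it is finite), both excluded for a strongly almost finite quiver. Writing $S = S_v$ for the simple at the socle vertex $v$, each $h\cdot S$ is the simple at $h\cdot v$, and for the preprojective algebra $\dim_K Ext^1_{\Lambda_Q}(S_w,S_{w'})$ equals the number of arrows between $w$ and $w'$ in the double quiver $\overline Q$. By local finiteness only finitely many vertices are adjacent to $v$, and by the absence of $\Gamma$-loops none of these adjacent vertices lie in the orbit $[v]$; hence $Ext^1_{\Lambda_Q}(h\cdot S, h'\cdot S)=0$ for all $h,h'\in\Gamma$, where I use $\Gamma$-equivariance of $Ext^1$ to reduce to the case $h=e$. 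Since every object of $add(\{h\cdot S\})$ is a finite direct sum of such simples, additivity of $Ext^1$ yields rigidity and completes the fourth bullet. With all five hypotheses verified, Lemma \ref{left-rigid} produces the desired left rigid quasi-approximation of $P_j$.
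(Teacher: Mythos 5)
Your proposal is correct and follows essentially the same route as the paper: both verify the five hypotheses of Lemma \ref{left-rigid} using factor- and extension-closedness of $\mathcal C^Q$ in $mod\Lambda_Q$, the simple-socle result from \cite{GLS}, and the absence of $\Gamma$-loops to get rigidity of the socle orbit. Your write-up is in fact somewhat more careful at two points the paper glosses over — you handle the second admissibility bullet by the same cokernel argument as the first rather than by an asserted splitting, and you actually derive the no-$\Gamma$-loop property from strong almost finiteness and compute $Ext^1$ between simples over $\Lambda_Q$, where the paper simply asserts rigidity.
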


\begin{proof}
Let $P=(I_{i,(0,1)},e_{i,(0,1)})$ be a projective object in $\mathcal C^Q$. By Theorem \ref{infversion2-c-y} (2), $\mathcal C^Q$ is closed under factor, so all every monomorphism of $\mathcal C^Q$ to a projective object is admissible. Since $\mathcal C^Q$ is Frobenius, every monomorphism of $\mathcal C^Q$ from $P$ to an indecomposable object is split, so it is admissible. By Lemma 8.1, \cite{GLS}, $P$ has simple socle $(S_i,0)$. Moreover, since $Q$ has no $\Gamma$-loops and $\Gamma$-$2$-cycles, $add(\{h\cdot S\;|\;h\in \Gamma\})$ is rigid. In addition, $\mathcal C^Q$ is closed under factor,  the cokernel of $S\hookrightarrow P$ is in $\mathcal C^Q$. Applying Lemma \ref{left-rigid}, our result follows.
\end{proof}

For a Krull-Schmidt $K$-category $\mathcal T$, we define the Gabriel quiver $Q(\mathcal T)$ of $\mathcal T$ as follows: the vertices are the equivalent classes of indecomposable objects. For two vertices $[T_1]$ and $[T_2]$, the number of arrows from $[T_1]$ to $[T_2]$ equal to the dimension of $Irr(T_1,T_2):=rad(T_1,T_2)/rad^2(T_1,T_2)$ in the category $\mathcal T$, where $rad$ is the radical of $\mathcal T$. Note that if $\mathcal T=add(\bigoplus\limits_{i=1}^n T_i)$ with $\{T_i\;|\; i=1,\cdots,n\}$ are pairwise not isomorphic, then the Gabriel quiver of $\mathcal T$ is the same as the Gabriel quiver of the finite dimensional algebra $End_{\mathcal T}(\bigoplus\limits_{i=1}^n T_i)$.

Let $\mathcal T_0=add\{(I_{i,[0,1]},e_{i,[0,1]}), (I_{i,[1,1]},e_{i,[1,1]})\;|\;i\in Q_0\}$ as an subcategory of $\mathcal C^Q$.

\begin{Lemma}\label{mor}(Lemma 6.3, \cite{GLS})
Let $Q$ be a strongly almost finite quiver which is not a linear quiver of type $A$. For any $(X,f)\in \mathcal C^Q$, we have

(1) $Hom_{KQ}(\tau(I_i),X)\overset{\cong}{\rightarrow} Hom_{\mathcal C^Q}((I_{i,[0,1]},e_{i,[0,1]}),(X,f)): g\longmapsto (\tau^{-1}(f)\tau^{-1}(g), g)$;

(2) $\{g\in Hom_{KQ}(\tau(I_i),X)\;|\;fg=0\}\overset{\cong}{\rightarrow} Hom_{\mathcal C^Q}((I_{i,[1,1]},e_{i,[1,1]}),(X,f)): g\longmapsto g$;

(3) $\{g\in Hom_{KQ}(I_i,X)\;|\;fg=0\}\overset{\cong}{\rightarrow} Hom_{\mathcal C^Q}((I_{i,[0,0]},e_{i,[0,0]}),(X,f)): g\longmapsto g$.
\end{Lemma}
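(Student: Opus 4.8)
The plan is to unravel the definition of morphisms in $\mathcal C^Q\cong\mathcal C(1,\tau)$ and reduce each of the three isomorphisms to an explicit commutative-square computation. Recall that a morphism $(Y,g)\to(X,f)$ in $\mathcal C(1,\tau)$ is a $KQ$-module map $\phi:Y\to X$ with $\tau(\phi)\,g=f\,\phi$. Since every object involved is a finite direct sum of translates $\tau^k(I_j)$ and $(X,f)\in\mathcal C^Q$ forces $X\in add(\mathcal M)$ to be finite dimensional and supported on a finite subquiver, all the relevant Hom spaces and AR-translates are already computed inside some finite stage $\mathcal C^{Q^{[n]}}$; by the colimit construction of $\mathcal C^Q$ (Lemma \ref{colim}, Theorem \ref{infversion2-c-y}) they coincide with the corresponding Hom spaces at that finite stage, so it would suffice to argue for finite $Q$, where the statement is (Lemma 6.3, \cite{GLS}). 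I would nonetheless carry out the computation directly, since it is uniform in $Q$.

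Parts (2) and (3) are immediate. Because $e_{i,[1,1]}=0$ and $e_{i,[0,0]}=0$ (each object has a single summand, so the nilpotent shift matrix degenerates to zero), the commuting-square condition for a map $\phi$ into $(X,f)$ collapses to $f\phi=0$. Thus a $KQ$-map $g:\tau(I_i)\to X$ (resp. $g:I_i\to X$) is a morphism in $\mathcal C^Q$ exactly when $fg=0$, and the assignment is the identity $g\mapsto g$; this gives (2) and (3).

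For (1) I would decompose $h:I_i\oplus\tau(I_i)\to X$ as $h=(h_0,h_1)$ with $h_0:I_i\to X$ and $h_1:\tau(I_i)\to X$. Writing out $\tau(h)\,e_{i,[0,1]}=f\,h$, with $e_{i,[0,1]}$ the shift $(a,b)\mapsto(b,0)$, splits the single condition into the two equations $f h_0=0$ and $\tau(h_0)=f h_1$. Setting $g:=h_1$, the second equation determines $h_0=\tau^{-1}(f h_1)=\tau^{-1}(f)\tau^{-1}(g)$, which is precisely the claimed assignment; conversely one must check that $g\mapsto(\tau^{-1}(f)\tau^{-1}(g),g)$ genuinely lands among morphisms, i.e. that $\tau(\tau^{-1}(fg))=fg$ and $f\,\tau^{-1}(fg)=0$. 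Injectivity is clear from the second component, and surjectivity follows once $\tau^{-1}\tau(h_0)=h_0$ on the summands in play.

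The main obstacle is therefore the verification of the two vanishing/identification facts underlying (1). First, one needs $\tau$ to restrict to an equivalence with quasi-inverse $\tau^{-1}$ on the relevant modules, so that $\tau\tau^{-1}$ and $\tau^{-1}\tau$ act as the identity; this is exactly where the hypothesis that $Q$ is not a linear quiver of type $A$ (so that $I_i$ and its translate are genuinely distinct and avoid the projective-injective boundary) together with strong almost finiteness is used. Second, and most delicately, one needs the vanishing $f\,\tau^{-1}(f)=0$ on the image, which encodes the depth bound $t_i(M)=1$ for the terminal module $M=DKQ^{[n]}\oplus\tau(DKQ^{[n]})$ defining $\mathcal C^Q$ (equivalently, the preprojective relation of $\Lambda_Q$): concretely, $X$ lives in ``degrees $0$ and $1$'' (copies of $I_j$ and $\tau(I_j)$) while $f\,\tau^{-1}(f)$ would have to land in the absent ``degree $2$'' part $\tau^2$. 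Making this grading argument precise, or equivalently invoking the finite GLS statement after the colimit reduction described above, completes the proof.
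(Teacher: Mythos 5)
Your argument is sound, but be aware that the paper does not actually prove this lemma: it is stated purely as a citation of Lemma 6.3 of \cite{GLS}, with the passage from finite to strongly almost finite quivers left implicit --- which is exactly the finite-support/colimit reduction you describe in your opening paragraph. Your direct computation is therefore a genuinely more self-contained route than the paper's. Parts (2) and (3) are indeed immediate, since $e_{i,[1,1]}=e_{i,[0,0]}=0$ collapses the commuting-square condition to $fg=0$. For part (1) your splitting of $\tau(h)\,e_{i,[0,1]}=f\,h$ into the two equations $fh_0=0$ and $\tau(h_0)=fh_1$ is correct; the two verifications you flag at the end can be closed more cleanly than by your grading argument: since $KQ$ is hereditary and $I_i$ is injective, $Hom_{KQ}(I_i,\tau N)=0$ for every module $N$, so $fh_0=0$ holds automatically for any $h_0\colon I_i\rightarrow X$ (in particular $f\,\tau^{-1}(fg)=0$ needs no separate grading discussion), while the existence and uniqueness of $h_0$ with $\tau(h_0)=fh_1$ amounts to $\tau$ inducing an isomorphism $Hom_{KQ}(I_i,X)\rightarrow Hom_{KQ}(\tau(I_i),\tau(X))$ on the modules occurring in $\mathcal C^Q$ --- this is the one point where one genuinely leans on the computation in \cite{GLS} rather than on formal manipulation. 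One small correction of emphasis: the hypothesis that $Q$ is not a linear quiver of type $A$ is not primarily about $\tau$ restricting to an equivalence; it is what guarantees that $M=DKQ^{[i]}\oplus\tau(DKQ^{[i]})$ is a terminal module with $t_j(M)=1$, so that $\mathcal C^Q$ and the objects $(I_{i,[a,b]},e_{i,[a,b]})$ are the correct ones to which the GLS formulas apply.
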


\begin{Lemma}\label{T_0}
Keep the notations as above. Let $Q$ be a strongly almost finite quiver which is not a linear quiver of type $A$.

(1)~ For any object $C$ in $\mathcal C^Q$, there are only finite indecomposable objects $T$ (up to isomorphism) in $\mathcal T_0$ such that $Hom_{\mathcal C^Q}(T,C)\neq 0$ or $Hom_{\mathcal C^Q}(C,T)\neq 0$. In particular, $\mathcal T_0$ is functorially finite.

(2)~ $\mathcal T_0$ is a maximal rigid subcategory of $\mathcal C^Q$.

(3)~ The Gabriel quiver of $\mathcal T_0$ is equal to the Gabriel quiver of $\{\tau(I_i), I_i\;|\;i\in Q_0\}$ by adding an arrow from $[I_i]$ to $[\tau(I_i)]$ for each $i\in Q_0$, more precisely, the vertex $[(I_{i,(0,1)},e_{i,(0,1)})]$ in the Gabriel quiver of $\mathcal T_0$ corresponds to the vertex $[I_i]$ in the Gabriel quiver of $\{\tau(I_i),I_i\;|\;i\in Q_0\}$ and $[(I_{i,(1,1)},e_{i,(1,1)})]$ to $I_i$.

(4)~ The Gabriel quiver of $\underline{\mathcal T_0}$ is isomorphic to $Q$, where $\underline{\mathcal T_0}$ is the subcategory of the stable category $\underline{\mathcal C^Q}$ of $\mathcal C^Q$ generated by all objects $X\in \mathcal T_0$.

\end{Lemma}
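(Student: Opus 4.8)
The plan is to prove the four assertions in turn, using throughout the explicit morphism formulas of Lemma \ref{mor}, the identification of the projective-injective objects in Corollary \ref{2-cy}, and the fact that $\mathcal C^Q$ is strongly almost finite. Write $P_i:=(I_{i,[0,1]},e_{i,[0,1]})$ for the projective-injective generators and $M_i:=(I_{i,[1,1]},e_{i,[1,1]})=(\tau(I_i),0)$, so that $\mathcal T_0=add\{P_i,M_i\mid i\in Q_0\}$. For (1), finiteness is immediate from strong almost finiteness: for indecomposable $C$ only finitely many indecomposables of $\mathcal C^Q$, hence of $\mathcal T_0$, admit a nonzero morphism to or from $C$, and the general case follows by Krull--Schmidt. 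Functorial finiteness is then formal: since only finitely many indecomposable summands of $\mathcal T_0$ are relevant to a given $C$ and all Hom-spaces are finite dimensional, the evaluation map out of the finite sum $\bigoplus_\alpha T_\alpha^{\oplus\dim Hom(T_\alpha,C)}$ is a right $\mathcal T_0$-approximation of $C$, and dually on the left.

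For (2), I would split rigidity into two cases. Since each $P_i$ is projective-injective in $\mathcal C^Q$ by Corollary \ref{2-cy}, every $Ext^1$ into or out of $P_i$ vanishes, so it remains to show $Ext^1_{\mathcal C^Q}(M_i,M_j)=0$; this I would obtain from the Hom-computation of Lemma \ref{mor} together with the $2$-Calabi--Yau duality, reducing (via the finiteness in (1)) to a finite module-theoretic check. For maximality I would use the classification of the indecomposables of $\mathcal C^Q$ — for the present terminal modules these are the $(I_{i,[a,b]},e_{i,[a,b]})$ with $0\le a\le b\le 1$ — so that the only ones outside $\mathcal T_0$ are the $(I_i,0)=(I_{i,[0,0]},e_{i,[0,0]})$: the canonical non-split sequence $0\to(\tau(I_i),0)\to(I_{i,[0,1]},e_{i,[0,1]})\to(I_i,0)\to 0$ produces a nonzero class in $Ext^1_{\mathcal C^Q}((I_i,0),M_i)$, so no $(I_i,0)$ can be adjoined to $\mathcal T_0$ without destroying rigidity. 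Hence any indecomposable that is $Ext^1$-orthogonal to $\mathcal T_0$ already lies in $\mathcal T_0$, which is the maximality statement.

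For (3), I would compute the Gabriel quiver directly from Lemma \ref{mor}, under which every morphism space among the $P_i$ and $M_j$ is identified, functorially, with a $Hom$-space in $modKQ$ between the modules $I_i$ and $\tau(I_i)$. Under the dictionary $[P_i]\leftrightarrow[I_i]$ and $[M_i]\leftrightarrow[\tau(I_i)]$ the irreducible morphisms of $\mathcal T_0$ then match the irreducible morphisms of $add\{I_i,\tau(I_i)\}$, with one extra family: the canonical surjection $P_i\to M_i$, corresponding to $id\in Hom_{KQ}(\tau(I_i),\tau(I_i))$ via Lemma \ref{mor}(1), is irreducible and supplies exactly the added arrow $[I_i]\to[\tau(I_i)]$. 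Checking that this map is irreducible (does not factor through the remaining objects of $\mathcal T_0$) and that no further arrows appear is the routine bookkeeping here.

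For (4), I would pass to the stable category $\underline{\mathcal C^Q}$, where each projective-injective $P_i$ becomes zero; thus the indecomposables of $\underline{\mathcal T_0}$ are precisely the $\underline{M_i}$, indexed by $Q_0$, and the vertex set of its Gabriel quiver is $Q_0$. The arrows are the irreducible maps of $\underline{\mathcal C^Q}$, which I would compute by taking $Hom_{\mathcal C^Q}(M_i,M_j)\cong Hom_{KQ}(\tau(I_i),\tau(I_j))$ and quotienting out the morphisms factoring through some $P_k$, then identifying the surviving irreducible maps with the arrows of $Q$. I expect this last identification to be the main obstacle: one must show that, after deleting the projective-injective vertices of the Gabriel quiver of $\mathcal T_0$ from (3), the surviving irreducible maps between the $M_i$ reproduce the arrows of $Q$ with exactly the right multiplicities and orientation, so that no spurious arrows survive or are newly created in the stable quotient. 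This is the step where the precise construction of $\mathcal C^Q$ from $Q$, the behaviour of the Auslander--Reiten translation $\tau$ on injectives, and the stable-category quotient must be reconciled; it is also the heart of the matter, since the equality of the Gabriel quiver of $\underline{\mathcal T_0}$ with $Q$ is exactly what the construction is designed to achieve.
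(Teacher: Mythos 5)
The decisive flaw is in your maximality argument for (2). You assert that the indecomposable objects of $\mathcal C^Q$ are exactly the $(I_{i,[a,b]},e_{i,[a,b]})$ with $0\le a\le b\le 1$, so that the only indecomposables outside $\mathcal T_0$ are the $(I_i,0)$. This is false: $\mathcal C^Q=\pi^{-1}(add\{I_j,\tau(I_j)\})$ consists of \emph{all} pairs $(X,f)$ with $X\in add\{I_j,\tau(I_j)\mid j\in Q_0\}$ and $f\in Hom_{KQ}(X,\tau(X))$ arbitrary, and among these there are many indecomposables mixing several $I_j$'s and $\tau(I_j)$'s via a nontrivial $f$ --- indeed the whole point of Sections 5--6 is that mutating $\mathcal T_0$ inside $\mathcal C^Q$ produces new indecomposable rigid objects, none of which is of the form $(I_{i,[a,b]},e_{i,[a,b]})$. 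Ruling out only the $(I_i,0)$ therefore proves nothing about maximality. The paper instead takes an \emph{arbitrary} $(X,f)\in\mathcal T_0^{\perp}$, applies $Hom_{\mathcal C^Q}(-,(X,f))$ to the sequence $0\to(I_{i,[0,0]},e_{i,[0,0]})\to(I_{i,[0,1]},e_{i,[0,1]})\to(I_{i,[1,1]},e_{i,[1,1]})\to 0$, and uses Lemma \ref{mor} to translate $Ext^1_{\mathcal C^Q}((I_{i,[1,1]},e_{i,[1,1]}),(X,f))=0$ into surjectivity of $Hom_{KQ}(\tau(I_i),X)\to\{g\in Hom_{KQ}(I_i,X)\mid fg=0\}$; writing $X=I\oplus\tau(I')$ and noting $Hom_{KQ}(I_i,\tau(I'))=0$, this surjectivity forces $I=0$ or $I'=I$ with $f$ the identity on $\tau(I)$, i.e.\ $(X,f)\in\mathcal T_0$. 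You would need to replace your classification claim by this (or an equivalent) direct computation.

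The remaining parts are broadly in line with the paper. For (1) the paper argues via the finiteness of the support of $C$ and the strong almost finiteness of the quiver $Q$ (rather than invoking strong almost finiteness of the category $\mathcal C^Q$ wholesale), but your version is acceptable; for rigidity in (2) the paper simply cites Lemma 7.1 of \cite{GLS}; for (3) it appeals to the argument of Lemma 7.6 of \cite{GLS} via Lemma \ref{mor}, which is the computation you sketch; and (4) is deduced from (3) by deleting the projective-injective vertices in the stable category. Your honest flagging of the ``main obstacle'' in (4) is reasonable, but as written neither that step nor the maximality step is actually carried out, and the latter rests on a premise that is simply wrong.
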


\begin{proof}
(1)~ Since $C\in \mathcal C^Q$ is finite dimensional, there are only finite vertices $i\in Q_0$ such that $C_i\neq 0$. As $Q$ is strongly almost finite, there are only finite indecomposable objects $T$ in $\mathcal T_0$ such that $T_i\neq 0$ for each $i\in Q_0$ with $C_i\neq 0$. Thus, our result follows.

(2)~ By Lemma 7.1, \cite{GLS}, since $Q$ is strongly almost finite, we have $\mathcal T_0$ is a rigid subcategory. Now we show it is actually maximal rigid.

For any $i\in Q_0$, it is easy to verify that $$0\rightarrow (I_{i,[0,0]},e_{i,[0,0]})\overset{(1,0)^t}{\rightarrow} (I_{i,[0,1]},e_{i,[0,1]})\overset{(0,1)}{\rightarrow}(I_{i,[1,1]},e_{i,[1,1]})\rightarrow 0$$ is exact. For any $(X,f)\in \mathcal C^Q$, since $(I_{i,[0,1]},e_{i,[0,1]})$ is projective-injective, applying $Hom_{\mathcal C^Q}(-,(X,f))$ to the exact sequence, we get $$Hom_{\mathcal C^Q}((I_{i,[0,1]},e_{i,[0,1]}),(X,f))\rightarrow Hom_{\mathcal C^Q}((I_{i,[0,0]},e_{i,[0,0]}),(X,f))\rightarrow Ext^1_{\mathcal C^Q}((I_{i,[1,1]},e_{i,[1,1]}),(X,f))\rightarrow 0.$$ Thus, using Lemma \ref{mor}, $Ext^1_{\mathcal C^Q}((I_{i,[1,1]},e_{i,[1,1]}),(X,f))=0$ if and only if $$Hom_{KQ}(\tau(I_i),X)\rightarrow \{g\in Hom_{KQ}(I_i,X)\;|\;fg=0\}:g\longmapsto \tau^{-1}(fg)$$ is surjective. Therefore, $(X,f)\in \mathcal T_0^{\bot}$ if and only if $$Hom_{KQ}(\tau(I_i),X)\rightarrow \{g\in Hom_{KQ}(I_i,X)\;|\;fg=0\}:g\longmapsto \tau^{-1}(fg)$$ is surjective for any $i\in Q_0$. By the definition of $\mathcal C^Q$, we may assume that $X=I\oplus \tau(I')$. Moreover, since $Hom_{KQ}(I,\tau(I'))=0$, in fact $f: \tau(I')\rightarrow \tau(I)$.
Since $Hom_{KQ}(I_i,\tau(I'))=0$, $\{g\in Hom_{KQ}(I_i,X)\;|\;fg=0\}=Hom_{KQ}(I_i,X)=Hom_{KQ}(I_i,I)$. Thus, we have
$$Hom_{KQ}(\tau(I),I\oplus \tau(I'))\rightarrow Hom_{KQ}(I,I):g\longmapsto \tau^{-1}(fg)$$ is surjective. Therefore, we have $I=0$; or $I'=I$ and then $f=1:\tau(I)\rightarrow \tau(I)$. In both cases, we have $(X,f)\in \mathcal T_0$. We have that $\mathcal T_0$ is a maximal rigid subcategory.

(3)~ Since we have Lemma \ref{mor}, the proof is similar as Lemma 7.6 of \cite{GLS}.

(4)~ It follows by (3).
\end{proof}

\begin{Remark}
(i)~ The (2) and (3) in Lemma \ref{mor} are respectively the generalizations of   Corollary 7.4 and Lemma 7.6 in \cite{GLS}.

(ii)~ Let $\mathcal P_{\mathcal C^Q}$ be the subcategory of $Hom\mathcal C^Q$ generated by all projective objects. Then $\underline{\mathcal C^Q}=\mathcal C^Q/\mathcal P_{\mathcal C}$. Since $\mathcal P_{\mathcal C^Q}$ is a subcategory of $\mathcal T_0$, we have $\underline{\mathcal T_0}=\mathcal T_0/\mathcal P_{\mathcal C^Q}$.
\end{Remark}

\section{Subcategory of $mod\Lambda_Q$ with an (infinite) group action.}\label{action}

Assume $\mathcal C$ is a strongly almost finite subcategory with a group $\Gamma$ action.  Let $\Gamma'=(h)$ be the subgroup of $\Gamma$ generated by $h\in \Gamma$. Define a category $\mathcal C_{h}$ as follows: the objects are $X\in \mathcal C$ and $Hom_{\mathcal C_h}(Y,X):=\bigoplus\limits_{h'\in\Gamma'}Hom_{\mathcal C}(Y, h'\cdot X)$ for $X,Y\in \mathcal C_h$. For all $f:X_i\rightarrow h'\cdot X_j$ and $g:X_j\rightarrow h''\cdot X_k$, their composition $g\circ f$ is defined to be $h'(g)f:X_i\rightarrow h'h''\cdot X_k$. Clearly, $\mathcal C_h$ is well-defined as a  $K$-linear category. For any subcategory of $\mathcal T$ of $\mathcal C$, denote by $\mathcal C_{h}(\mathcal T)$ the minimal subcategory of $\mathcal C_h$ which contains all objects $T$ of $\mathcal T$. Note that $\mathcal C_{h}$ is not a Krull-Schmidt category in general.

Let $Q$ be a strongly almost finite quiver and $\Gamma$ be an (infinite) subgroup of $Aut(Q)$. In this section, we study a strongly almost finite $2$-Calabi-Yau Frobenius subcategory $\mathcal C$ of $mod\Lambda_Q$ which is stable under the action of group $\Gamma$. Since $\Gamma$ is a subgroup of $Aut(Q)$, it is clear the action of $\Gamma$ on $\mathcal C$ is {\bf exact}, that is, $\varphi(h)$ are exact functors for all $h\in \Gamma$. We set some assumptions to the action of $\Gamma$ on $\mathcal C$:
\begin{itemize}
  \item for each indecomposable $X\in \mathcal C$ and $h\in \Gamma$, there exists a $h'\in \Gamma$ such that $h'\cdot X\cong h\cdot X$ and $\mathcal C_{h'}$ is $Hom$-finite.
  \item every projective object in $\mathcal C$ has a left rigid quasi-approximation.
\end{itemize}

In this section, we study some useful properties of such subcategory $\mathcal C$ of $mod\Lambda_Q$ given above. However, for its applications in Section \ref{quiver}, indeed, we will only consider $\mathcal C=\mathcal C^Q$ as its special kind. In fact, for $\mathcal M$ the additive subcategory of $modKQ$ generated by $I_j$ and $\tau(I_j)$ for all injective modules $I_j$ ($j\in Q_0$), we have $\mathcal C^Q=\pi^{-1}(\mathcal M)$ which is stable under the action of $\Gamma$ since $\mathcal M$ is stable under the action of $\Gamma$. By Corollary \ref{2-cy}, $\mathcal C^Q$ is a strongly almost finite $2$-Calabi-Yau Frobenius category.

Using the action of $\Gamma$ on $Q$, we give precisely the action of $\Gamma$ on $\mathcal C^Q$ satisfying that for each $X=(X_i,f_{\alpha})_{i\in \overline{Q}_0,\alpha\in \overline{Q}_1}\in \mathcal C^Q$, $h\in \Gamma$, define
\begin{equation}\label{action}
h\cdot X=(X_{h\cdot i},f_{h\cdot\alpha})_{i\in \overline{Q}_0,\alpha\in \overline{Q}_1}
\end{equation}
 where $\overline{Q}$ is the double quiver of $Q$.

\subsection{Maximal $\Gamma$-stable rigid subcategories and mutations }\label{proofmain}.

In this subsection, we study the mutation of maximal $\Gamma$-stable rigid subcategories of $\mathcal C$. We follow the strategy of Laurent Demonet \cite{D} through replacing finite group by infinite group. So, in this part, the definitions and results are parallel with that in \cite{D}.

\begin{Definition}(Definition 2.32, \cite{D})
A subcategory $\mathcal T$ of $\mathcal C$ is said to be {\bf $\Gamma$-stable} if $h\cdot X\in \mathcal T$ for every $X\in \mathcal T$ and $h\in \Gamma$.
\end{Definition}

Denote by $\mathfrak{Add}(\mathcal C)^{\Gamma}$ the family of all $\Gamma$-stable subcategories of $\mathfrak{Add}(\mathcal C)$.

\begin{Definition}(Definition 3.22, \cite{D})
Let $\mathcal T\in \mathfrak{Add}(\mathcal C)^{\Gamma}$ and let $X\in \mathcal T$ be indecomposable. A {\bf $\Gamma$-loop} of $\mathcal T$ at $X$ is an irreducible morphism $X\rightarrow h\cdot X$ for some $h\in \Gamma$. A {\bf $\Gamma$-$2$-cycle} at $X$ is a pair of irreducible morphisms $X\rightarrow Y$ and $Y\rightarrow h\cdot X$ for some $h\in \Gamma$.
\end{Definition}

It can be seen that the proofs of these results below can be verified similarly as in \cite{D}. Hence, these proofs are omitted here.

\begin{Lemma}\label{3.6}(Lemma 3.6, \cite{D})
Let $\mathcal T\in \mathfrak{Add}(\mathcal C)^{\Gamma}$ be a rigid subcategory of $\mathcal C$ and $X\in \mathcal C$ such that $add(\{h\cdot X\;|\;h\in \Gamma\})$ is rigid. If
$0\rightarrow X\overset{f}{\rightarrow}T\overset{g}{\rightarrow}Y\rightarrow 0$
is an admissible short exact sequence and $f$ is a left $\mathcal T$-approximation, then the category
$add(\mathcal T\cup \{h\cdot Y\;|\;h\in\Gamma\})$
 is rigid.
\end{Lemma}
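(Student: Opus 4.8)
The plan is to prove directly that $\mathcal U:=add(\mathcal T\cup\{h\cdot Y\mid h\in\Gamma\})$ is rigid, i.e.\ that $Ext^1_{\mathcal C}$ vanishes on every ordered pair of indecomposable summands of $\mathcal U$. Since $\mathcal T$ is already rigid (so the pair $(\mathcal T,\mathcal T)$ is handled by hypothesis) and $\Gamma$-stable, and since each $\varphi(h)$ is an exact autoequivalence, every remaining $Ext^1$ can be translated into one of three cases: $Ext^1_{\mathcal C}(Y,\mathcal T)$, $Ext^1_{\mathcal C}(\mathcal T,Y)$, and $Ext^1_{\mathcal C}(Y,h\cdot Y)$ for $h\in\Gamma$. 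Indeed $Ext^1_{\mathcal C}(h'\cdot Y,h''\cdot Y)\cong Ext^1_{\mathcal C}(Y,(h')^{-1}h''\cdot Y)$, while $Ext^1_{\mathcal C}(\mathcal T,h\cdot Y)\cong Ext^1_{\mathcal C}(h^{-1}\cdot\mathcal T,Y)=Ext^1_{\mathcal C}(\mathcal T,Y)$ because $\mathcal T$ is $\Gamma$-stable, and symmetrically for $Ext^1_{\mathcal C}(h\cdot Y,\mathcal T)$. So it suffices to treat the three cases above.

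For $Ext^1_{\mathcal C}(Y,T')$ with $T'\in\mathcal T$, I would apply $Hom_{\mathcal C}(-,T')$ to the given short exact sequence, yielding the exact piece
$$Hom_{\mathcal C}(T,T')\xrightarrow{Hom_{\mathcal C}(f,T')}Hom_{\mathcal C}(X,T')\rightarrow Ext^1_{\mathcal C}(Y,T')\rightarrow Ext^1_{\mathcal C}(T,T').$$
Here $Ext^1_{\mathcal C}(T,T')=0$ since $T\in\mathcal T$ and $\mathcal T$ is rigid, while $Hom_{\mathcal C}(f,T')$ is surjective precisely because $f$ is a left $\mathcal T$-approximation (every $X\to T'$ factors through $f$). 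Hence $Ext^1_{\mathcal C}(Y,T')=0$. The case $Ext^1_{\mathcal C}(\mathcal T,Y)$ then follows immediately from the $2$-Calabi-Yau duality $Ext^1_{\mathcal C}(T',Y)\cong DExt^1_{\mathcal C}(Y,T')=0$.

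The remaining and genuinely essential case is $Ext^1_{\mathcal C}(Y,h\cdot Y)=0$, and this is where the main obstacle lies. Applying $Hom_{\mathcal C}(-,h\cdot Y)$ to the given sequence gives
$$Hom_{\mathcal C}(T,h\cdot Y)\xrightarrow{Hom_{\mathcal C}(f,h\cdot Y)}Hom_{\mathcal C}(X,h\cdot Y)\rightarrow Ext^1_{\mathcal C}(Y,h\cdot Y)\rightarrow Ext^1_{\mathcal C}(T,h\cdot Y),$$
whose last term vanishes by the already-established $Ext^1_{\mathcal C}(\mathcal T,h\cdot Y)=0$. Thus it remains to show $Hom_{\mathcal C}(f,h\cdot Y)$ is surjective, i.e.\ that every $\phi\colon X\to h\cdot Y$ factors through $f$; the plan for this is a lift-then-factor argument. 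Since the action is exact, $0\to h\cdot X\to h\cdot T\xrightarrow{h\cdot g}h\cdot Y\to 0$ is admissible exact; applying $Hom_{\mathcal C}(X,-)$ and using $Ext^1_{\mathcal C}(X,h\cdot X)=0$ — which holds because $X$ and $h\cdot X$ both lie in the rigid category $add(\{h'\cdot X\mid h'\in\Gamma\})$ — shows $Hom_{\mathcal C}(X,h\cdot g)$ is surjective, so $\phi$ lifts to some $\tilde\phi\colon X\to h\cdot T$ with $(h\cdot g)\tilde\phi=\phi$. Now $h\cdot T\in\mathcal T$ by $\Gamma$-stability, so the left $\mathcal T$-approximation property yields $\psi\colon T\to h\cdot T$ with $\psi f=\tilde\phi$, whence $\phi=(h\cdot g)\psi f$ factors through $f$. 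This gives $Ext^1_{\mathcal C}(Y,h\cdot Y)=0$ and completes all three cases. I expect the infiniteness of $\Gamma$ to cause no trouble, since each vanishing statement involves only the single translate $h$ appearing in it, so the pointwise arguments of \cite{D} transfer without change.
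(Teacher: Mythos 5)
Your proof is correct, and it is essentially the argument the paper relies on: the paper omits the proof and refers to Lemma 3.6 of Demonet, whose proof is exactly this reduction to $Ext^1(Y,\mathcal T)$, $Ext^1(\mathcal T,Y)$ and $Ext^1(Y,h\cdot Y)$ via the exactness of the $\Gamma$-action, followed by applying $Hom(-,-)$ to the exchange sequence and using the left $\mathcal T$-approximation property, the rigidity of $add(\{h\cdot X\})$, and the $2$-Calabi-Yau duality. No gaps; the lift-then-factor step for $Ext^1(Y,h\cdot Y)$ is the key point and you handle it correctly.
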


Although the description of the above lemma is different with Lemma 3.6 in \cite{D}, but its proof is exactly parallel with the proof of Lemma 3.6 in \cite{D}.

\begin{Lemma}\label{leftrightmu}(Proposition 3.7, \cite{D})
Let $\mathcal T\in \mathfrak{Add}(\mathcal C)^{\Gamma}$ be a rigid subcategory of $\mathcal C$ and $X\in \mathcal C$ be indecomposable such that $X\notin\mathcal T$. Suppose that $\mathcal T$ contains all projective objects of $\mathcal C$, and $add(\mathcal T\cup\{h\cdot X\;|\;h\in \Gamma\})$ is rigid. If $X$ has minimal left and right $T$-approximations, then up to isomorphisms, there exist two admissible short exact sequences:

\centerline{$0\rightarrow X\overset{f}{\rightarrow}T\overset{g}{\rightarrow}Y\rightarrow 0$\;\;\;\text{and}\;\;\;$0\rightarrow Y'\overset{f'}{\rightarrow}T'\overset{g'}{\rightarrow}X\rightarrow 0$}
such that

(1)~ $f$ and $f'$ are minimal left $\mathcal T$-approximations;

(2)~ $g$ and $g'$ are minimal right $\mathcal T$-approximations;

(3)~ $add(\mathcal T\cup \{h\cdot Y\;|\;h\in\Gamma\})$ and $add(\mathcal T\cup \{h\cdot Y'\;|\;h\in\Gamma\})$ are rigid;

(4)~ $Y\notin \mathcal T$ and $Y'\notin \mathcal T$;

(5)~ $Y$ and $Y'$ are indecomposable;

(6)~ $add(\{h\cdot X\;|\;h\in \Gamma\})\cap add(\{h\cdot Y\;|\;h\in \Gamma\})=0$ and $add(\{h\cdot X\;|\;h\in \Gamma\})\cap add(\{h\cdot Y'\;|\;h\in \Gamma\})=0$.

\end{Lemma}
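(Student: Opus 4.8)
The plan is to follow the strategy of Demonet's Proposition 3.7 in \cite{D}, carrying it out for the infinite group $\Gamma$ and invoking the two standing assumptions of this section (Hom-finiteness of a suitable $\mathcal C_{h'}$ and the existence of left rigid quasi-approximations) at each point where finiteness of the acting group was used. I will first build the left exchange sequence and then obtain the right one by the dual argument.

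First I would take the minimal left $\mathcal T$-approximation $f:X\rightarrow T$ furnished by hypothesis. The crucial point is that $f$ is an admissible monomorphism. Since $\mathcal C$ is Frobenius, in the ambient category $mod\Lambda_Q$ the object $X$ embeds into an injective object $I$; as $I$ is projective-injective it lies in $\mathcal T$, so the embedding $X\hookrightarrow I$ factors through $f$ and hence $f$ is a monomorphism. Its cokernel $Y$, formed in $mod\Lambda_Q$, is a factor object of $T\in\mathcal C$ and therefore lies in $\mathcal C=\mathcal C^Q$ because $\mathcal C^Q$ is closed under factor objects (Corollary \ref{2-cy}). Thus $f$ is an inflation and $0\rightarrow X\overset{f}{\rightarrow}T\overset{g}{\rightarrow}Y\rightarrow 0$ is an admissible short exact sequence.

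Next I would check properties (2), (3), (4), (6) for this sequence. Applying $Hom_{\mathcal C}(T',-)$ for $T'\in\mathcal T$ and using $Ext^1_{\mathcal C}(T',X)=0$ (rigidity of $add(\mathcal T\cup\{h\cdot X\})$) shows that $g$ is a right $\mathcal T$-approximation, and its minimality will follow from the left minimality of $f$ together with rigidity. If $Y$ were in $\mathcal T$ the sequence would split, since $Ext^1_{\mathcal C}(Y,X)=0$, exhibiting $X$ as a summand of $T\in\mathcal T$ and contradicting $X\notin\mathcal T$; this gives (4). Property (3), the rigidity of $add(\mathcal T\cup\{h\cdot Y\})$, is precisely the conclusion of Lemma \ref{3.6} applied to the left $\mathcal T$-approximation $f$. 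For (6), note that the sequence is non-split, hence defines a nonzero class in $Ext^1_{\mathcal C}(Y,X)$; if $Y\cong h\cdot X$ for some $h\in\Gamma$ this would contradict $Ext^1_{\mathcal C}(h\cdot X,X)=0$, which holds because $add(\{h'\cdot X\;|\;h'\in\Gamma\})$ is rigid. As $X$ and $Y$ are indecomposable, this forces $add(\{h\cdot X\})\cap add(\{h\cdot Y\})=0$.

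The main obstacle is the indecomposability of $Y$ in (5). By the minimality of the right approximation $g$, the object $Y$ has no nonzero projective-injective summand. Passing to the stable category $\underline{\mathcal C}$, which is triangulated and $2$-Calabi-Yau, the exchange sequence becomes a triangle $X\rightarrow T\rightarrow Y\rightarrow X[1]$ with $T\in\mathcal T$; using that $f$ is a left $\mathcal T$-approximation and $Ext^1_{\mathcal C}(\mathcal T,X)=0$, the standard mutation argument (cf. \cite{IY}, \cite{D}) yields a ring isomorphism $\underline{End}_{\mathcal C}(X)\cong\underline{End}_{\mathcal C}(Y)$. Since $X$ is indecomposable and non-projective, $\underline{End}_{\mathcal C}(X)$ is local, hence so is $\underline{End}_{\mathcal C}(Y)$; as $Y$ has no projective summand, a nontrivial decomposition of $Y$ would produce a nontrivial idempotent in $\underline{End}_{\mathcal C}(Y)$, so $Y$ is indecomposable. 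Finally, the right sequence $0\rightarrow Y'\overset{f'}{\rightarrow}T'\overset{g'}{\rightarrow}X\rightarrow 0$ is produced dually, starting from the minimal right $\mathcal T$-approximation $g'$ of $X$ (an admissible epimorphism because $\mathcal C$ is Frobenius and the projective cover of $X$ lies in $\mathcal T$), and each statement in (1)--(6) for $Y'$ follows by dualizing the arguments above. The single place where the infinite group intervenes is in guaranteeing that the relevant orbit sums, approximations, and stable endomorphism algebras remain finite-dimensional: there one replaces $h$ by the element $h'$ given by the standing Hom-finiteness assumption, exactly as in the reduction used throughout \cite{D}.
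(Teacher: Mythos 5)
Your proposal is correct and takes essentially the same route as the paper: for this lemma the paper gives no written proof, stating only that the argument is ``exactly parallel with that of Proposition 3.7 in \cite{D}'' once the finite group is replaced by $\Gamma$, and your sketch is precisely that adaptation, with the standard details (admissibility of $f$ via the projective--injectives in $\mathcal T$, Lemma \ref{3.6} for (3), non-splitness for (4) and (6), and the Iyama--Yoshino-type mutation equivalence of $End$-quotients for (5)) filled in correctly.
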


In the case $\Gamma$ is an infinite group given in Definition \ref{def3.1},  the proof of this lemma is exactly parallel with that of Proposition  3.7 in \cite{D}.

In order to be used in the sequel, write \\
 \centerline{ $\mu_X^r(\mathcal T, X)=add(\mathcal T\cup \{h\cdot Y\;|\;h\in\Gamma\})$ \;\;\;and\;\;\; $\mu_X^l(\mathcal T, X)=add(\mathcal T\cup \{h\cdot Y'\;|\;h\in\Gamma\})$,}
 where $\mu_X^r$ and $\mu_X^l$ are called respectively the {\bf right} and {\bf left mutation} of $\mathcal T$ at $X$, $(X,Y)$ and $(X,Y')$ are called respectively a {\bf right exchange pair} and {\bf left exchange pair}.

Recall that in \cite{D}, two indecomposable objects $X,Y\in \mathcal C$ are called {\bf neighbours}  if they satisfy one of the following equivalent conditions:
\begin{equation}
\begin{array}{ccl} dim Ext_{\mathcal C}^1(h\cdot X, Y) &=&
         \left\{\begin{array}{ll}
             1, &\mbox{if $h\cdot X\cong X$}, \\
             0, &\mbox{otherwise.}
         \end{array}\right.
      \end{array}
\end{equation}
\begin{equation}
\begin{array}{ccl} dim Ext_{\mathcal C}^1(X, h\cdot Y) &=&
         \left\{\begin{array}{ll}
             1, &\mbox{if $h\cdot Y\cong Y$}, \\
             0, &\mbox{otherwise.}
         \end{array}\right.
      \end{array}
\end{equation}

\begin{Proposition}\label{mutationrigid}(Proposition 3.14, \cite{D})
Let $X,Y\in \mathcal C$ be neighbours such that $add(\{h\cdot X\;|\;h\in \Gamma\})$ and $add(\{h\cdot Y\;|\;h\in \Gamma\})$ are rigid.
Let $0\rightarrow X\overset{f}{\rightarrow} W\overset{g}{\rightarrow} Y\rightarrow 0$ be a non-splitting admissible short exact sequence (which is unique up to isomorphism
because $X$ and $Y$ are neighbours). Then,

(1) $add(\{h\cdot X, h\cdot M\;|\;h\in \Gamma\})$ and $add(\{h\cdot Y, h\cdot M\;|\;h\in \Gamma\})$ are rigid;

(2) $X,Y\notin add\{h\cdot M\;|\;h\in \Gamma\}$;

(3) if there is a $\Gamma$-stable subcategory $\mathcal T\in \mathfrak{Add}(\mathcal C)^{\Gamma}$ such that $add(\mathcal T, \{h\cdot X\;|\;h\in \Gamma\})$ and $add(\mathcal T, \{h\cdot Y\;|\;h\in \Gamma\})$ are maximal $\Gamma$-stable rigid, then $f$ is a minimal left $\mathcal T$-approximation and $g$ is a minimal right $\mathcal T$-approximation.
\end{Proposition}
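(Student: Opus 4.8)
The plan is to read the whole statement off the exchange sequence $0\to X\xrightarrow{f} M\xrightarrow{g} Y\to 0$ (whose middle term is written $W$ in the statement, and which I denote $M$) by feeding it into the long exact $Ext$-sequences of $\mathcal C$ and exploiting two structural facts available here: the $2$-Calabi-Yau duality $Ext^1_{\mathcal C}(A,B)\cong DExt^1_{\mathcal C}(B,A)$, and the neighbour condition, which makes $Ext^1_{\mathcal C}(Y,X)$ one-dimensional, generated by the class $e$ of the exchange sequence itself. Throughout I would use that $\Gamma$ acts by exact autoequivalences, so that every $Ext$-vanishing statement is $\Gamma$-equivariant and I only need one representative of each orbit; this is what turns the infinite group into finite bookkeeping. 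The argument parallels Demonet's Proposition 3.7/3.14 in \cite{D}, the one new ingredient being the passage to an infinite $\Gamma$.

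For part (1), apply $Hom_{\mathcal C}(-,h\cdot X)$ to the exchange sequence. The term $Ext^1_{\mathcal C}(X,h\cdot X)$ vanishes by rigidity of $add\{h\cdot X\}$, and $Ext^1_{\mathcal C}(Y,h\cdot X)\cong DExt^1_{\mathcal C}(h\cdot X,Y)$ is $0$ unless $h\cdot X\cong X$; when $h\cdot X\cong X$ the connecting map $Hom_{\mathcal C}(X,h\cdot X)\to Ext^1_{\mathcal C}(Y,h\cdot X)$ carries an isomorphism to the generator $e$ and so is onto. Either way the middle term $Ext^1_{\mathcal C}(M,h\cdot X)$ is squeezed to $0$, and $2$-Calabi-Yau duality then gives $Ext^1_{\mathcal C}(h\cdot X,M)=0$. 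The symmetric computation with $Y$ yields $Ext^1_{\mathcal C}(M,h\cdot Y)=0=Ext^1_{\mathcal C}(h\cdot Y,M)$, and finally applying $Hom_{\mathcal C}(-,h\cdot M)$ and invoking these vanishings on both flanks gives $Ext^1_{\mathcal C}(M,h\cdot M)=0$. Translating by $\Gamma$ closes up rigidity of $add\{h\cdot X,h\cdot M\}$ and of $add\{h\cdot Y,h\cdot M\}$. Part (2) is then immediate: if $X$ were a summand of some $h\cdot M$, then $Ext^1_{\mathcal C}(Y,X)$ would be a direct summand of $Ext^1_{\mathcal C}(Y,h\cdot M)=0$, contradicting $e\neq 0$; the case of $Y$ is symmetric, using rigidity of $add\{h\cdot X,h\cdot M\}$.

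For part (3) I would first show $M\in\mathcal T$. Applying $Hom_{\mathcal C}(T,-)$ and $Hom_{\mathcal C}(-,T)$ to the exchange sequence for $T\in\mathcal T$, both flanking $Ext^1$-terms vanish because $T,X$ lie in the rigid $add(\mathcal T,\{h\cdot X\})$ while $T,Y$ lie in the rigid $add(\mathcal T,\{h\cdot Y\})$; hence $Ext^1_{\mathcal C}(T,M)=Ext^1_{\mathcal C}(M,T)=0$. Combined with part (1) and $\Gamma$-equivariance, this makes $add(\mathcal T\cup\{h\cdot X,h\cdot M\})$ a $\Gamma$-stable rigid category, so maximality of $add(\mathcal T,\{h\cdot X\})$ forces $M\in add(\mathcal T,\{h\cdot X\})$; since by part (2) no $h\cdot X$ is a summand of $M$, in fact $M\in\mathcal T$. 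The approximation property then drops out: the relevant connecting maps land in $Ext^1_{\mathcal C}(Y,T)=0$ and $Ext^1_{\mathcal C}(T,X)=0$, so $f^{*}\colon Hom_{\mathcal C}(M,T)\to Hom_{\mathcal C}(X,T)$ and $g_{*}\colon Hom_{\mathcal C}(T,M)\to Hom_{\mathcal C}(T,Y)$ are surjective, i.e.\ $f$ is a left and $g$ a right $\mathcal T$-approximation. Minimality follows from indecomposability of $X$ and $Y$ with part (2): a splitting-off summand of $f$ (resp.\ $g$) would force a copy of $Y$ (resp.\ $X$) to be a direct summand of $M$.

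The hard part is really part (1), and within it the bookkeeping forced by the infinite group: I must be sure that for every $h$ the relevant $Ext$-groups are finite-dimensional and that ``the connecting map hits a generator'' is justified orbit by orbit. This is exactly where the standing hypotheses of Section \ref{action} — that each indecomposable has a $\Gamma$-translate living in a $Hom$-finite category $\mathcal C_{h'}$, and that the $2$-Calabi-Yau duality from Corollary \ref{2-cy} is available — do the work that finiteness of $\Gamma$ does in \cite{D}.
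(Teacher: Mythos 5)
Your proposal is correct and follows essentially the same route as the paper: the paper omits the proof of Proposition \ref{mutationrigid}, stating only that Demonet's argument for Proposition 3.14 of \cite{D} carries over verbatim to infinite $\Gamma$, and your write-up is exactly that argument — long exact sequences on the exchange sequence, the $2$-Calabi-Yau duality, the one-dimensionality of $Ext^1_{\mathcal C}(Y,X)$ from the neighbour condition, and $\Gamma$-equivariance to reduce to one representative per orbit. Your closing remark correctly identifies the standing hypotheses of Section \ref{action} (the $Hom$-finite translates and Corollary \ref{2-cy}) as the substitute for finiteness of $\Gamma$, which is precisely the point the paper relies on when it declares the proofs "parallel."
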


\begin{Corollary}\label{neighbour}(Corollary 3.15, \cite{D})
Let $\mathcal T\in \mathfrak{Add}(\mathcal C)^{\Gamma}$ and $X\in \mathcal C$ be an indecomposable object such that $X\notin \mathcal T$ and $add(\mathcal T, \{h\cdot X\;|\;h\in \Gamma\})$ is maximal $\Gamma$-stable rigid. Assume that $\mathcal T$ contains all projective objects and $X$ has left and right $\mathcal T$-approximation. Then, the following are equivalent:

(1) There exists an indecomposable object $Y\in \mathcal C$ such that $$\mu_X^l(add(\mathcal T, \{h\cdot X\;|\;h\in\Gamma \}))=add(\mathcal T, \{h\cdot Y\;|\;h\in\Gamma \})$$ and $X, Y$ are neighbours.

(2) There exists an indecomposable object $Y'\in \mathcal C$ such that $$\mu_X^r(add(\mathcal T, \{h\cdot X\;|\;h\in\Gamma \}))=add(\mathcal T, \{h\cdot Y'\;|\;h\in\Gamma \})$$ and $X, Y'$ are neighbours.

In this case, we have $Y\cong Y'$ and denote that
$$\mu_X(add(\mathcal T, \{h\cdot X\;|\;h\in\Gamma \}))=\mu_X^l(add(\mathcal T, \{h\cdot X\;|\;h\in\Gamma \}))=\mu_X^r(add(\mathcal T, \{h\cdot X\;|\;h\in\Gamma \})),$$
then $\mu_Y(\mu_X(add(\mathcal T, \{h\cdot X\;|\;h\in\Gamma \})))=add(\mathcal T, \{h\cdot X\;|\;h\in\Gamma \}).$
\end{Corollary}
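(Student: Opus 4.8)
The plan is to exploit the symmetry of the neighbour relation together with Proposition \ref{mutationrigid}: for neighbours $X$ and $Y$, a single non-split admissible short exact sequence between them simultaneously realizes the left mutation from one end and the right mutation from the other, forcing $\mu_X^l$ and $\mu_X^r$ to agree; the involution then follows by interchanging the roles of $X$ and $Y$. Throughout, the only departure from Demonet (Corollary 3.15, \cite{D}) is that the standing finiteness hypotheses on the $\Gamma$-action replace the finiteness of $\Gamma$, so that minimal approximations and the relevant $Hom$- and $Ext$-spaces remain well-defined.

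First I would establish $(1)\Rightarrow(2)$. Assume an indecomposable $Y$ with $\mu_X^l(add(\mathcal T, \{h\cdot X\;|\;h\in\Gamma\}))=add(\mathcal T, \{h\cdot Y\;|\;h\in\Gamma\})$ and with $X, Y$ neighbours. By Lemma \ref{leftrightmu} the object $Y$ sits in an admissible short exact sequence $0\rightarrow Y\overset{f'}{\rightarrow}T'\overset{g'}{\rightarrow}X\rightarrow 0$ whose outer maps are minimal left and right $\mathcal T$-approximations. Because $X$ and $Y$ are neighbours and both $add(\{h\cdot X\;|\;h\in\Gamma\})$ and $add(\{h\cdot Y\;|\;h\in\Gamma\})$ are rigid, Proposition \ref{mutationrigid} provides the unique (up to isomorphism) non-split admissible short exact sequence $0\rightarrow X\overset{f}{\rightarrow}W\overset{g}{\rightarrow}Y\rightarrow 0$; and since $add(\mathcal T, \{h\cdot X\;|\;h\in\Gamma\})$ is maximal $\Gamma$-stable rigid, part (3) of that proposition ensures $f$ is a minimal left $\mathcal T$-approximation and $g$ a minimal right $\mathcal T$-approximation. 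This is precisely the datum defining the right mutation, so $\mu_X^r(add(\mathcal T, \{h\cdot X\;|\;h\in\Gamma\}))=add(\mathcal T, \{h\cdot Y\;|\;h\in\Gamma\})$ and we may take $Y'\cong Y$. The converse $(2)\Rightarrow(1)$ is the mirror image, reading the same non-split sequence from the opposite end. In particular $Y\cong Y'$, so $\mu_X^l$ and $\mu_X^r$ coincide and writing $\mu_X$ is justified.

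For the involution, I would use that the neighbour relation is symmetric in its two arguments, as is visible from its two equivalent defining equalities. By the $2$-Calabi-Yau duality the extension $0\rightarrow X\rightarrow W\rightarrow Y\rightarrow 0$ corresponds to a non-split extension $0\rightarrow Y\rightarrow W''\rightarrow X\rightarrow 0$. Starting from $\mu_X(add(\mathcal T, \{h\cdot X\;|\;h\in\Gamma\}))=add(\mathcal T, \{h\cdot Y\;|\;h\in\Gamma\})$ and applying Proposition \ref{mutationrigid} with the roles of $X$ and $Y$ interchanged, this dual sequence is the exchange sequence computing $\mu_Y$ and returns $X$; hence $\mu_Y(\mu_X(add(\mathcal T, \{h\cdot X\;|\;h\in\Gamma\})))=add(\mathcal T, \{h\cdot X\;|\;h\in\Gamma\})$.

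The step I expect to be the crux is checking that the sequence furnished by Proposition \ref{mutationrigid} is genuinely a minimal left and minimal right $\mathcal T$-approximation sequence, i.e.\ that its middle term lies in $\mathcal T$ and that nothing splits off, since this is exactly where maximality of the $\Gamma$-stable rigid subcategory, the $2$-Calabi-Yau structure, and the finiteness assumptions on the $\Gamma$-action must be combined. Once this identification is secured, both the equivalence and the involution are formal consequences.
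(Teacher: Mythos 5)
The paper gives no written proof of this corollary: it states that the argument is ``exactly parallel with that of Corollary 3.15 in \cite{D}'', so your proposal has to be judged against Demonet's argument, which is indeed the route you are trying to reproduce. Your overall strategy (pass through the unique non-split extension between the neighbours and identify it with both exchange sequences) is the right one, but there is a genuine gap exactly at the point you yourself flag as ``the crux''. To get from (1) to (2) you invoke Proposition \ref{mutationrigid}(3) ``since $add(\mathcal T, \{h\cdot X\;|\;h\in\Gamma\})$ is maximal $\Gamma$-stable rigid'', but part (3) of that proposition requires \emph{both} $add(\mathcal T,\{h\cdot X\})$ \emph{and} $add(\mathcal T,\{h\cdot Y\})$ to be maximal $\Gamma$-stable rigid. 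For the mutated subcategory $add(\mathcal T,\{h\cdot Y\})=\mu_X^l(\cdots)$ the corollary's hypotheses and Lemma \ref{leftrightmu}(3) only give \emph{rigidity}, not maximality; in this paper maximality after mutation is only ever obtained much later and under stronger hypotheses (Theorem \ref{cluster tilting}(5), via the Iyama--Yoshino mutation-pair machinery for cluster-tilting subcategories), so it cannot be taken for granted here. The same unverified maximality is needed again, implicitly, when you ``apply Proposition \ref{mutationrigid} with the roles of $X$ and $Y$ interchanged'' to get the involution. As written, the hypotheses of the cited proposition are simply not met.

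The gap is fillable, but it requires an actual argument rather than a citation: given the unique non-split sequence $0\rightarrow X\rightarrow W\rightarrow Y\rightarrow 0$, one uses Proposition \ref{mutationrigid}(1) together with $Ext^1_{\mathcal C}(W,\mathcal T)=0$ (obtained by applying $Hom_{\mathcal C}(-,T'')$ to the sequence and using rigidity of $add(\mathcal T,\{h\cdot X\})$ and of $add(\mathcal T,\{h\cdot Y\})$ at the two ends, plus the $2$-Calabi--Yau symmetry) to conclude that $add(\mathcal T\cup\{h\cdot X\}\cup\{h\cdot W\})$ is rigid; the \emph{given} maximality of $add(\mathcal T,\{h\cdot X\})$ then forces $W\in add(\mathcal T,\{h\cdot X\})$, and Proposition \ref{mutationrigid}(2) rules out summands of $W$ in the orbit of $X$, so $W\in\mathcal T$. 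One then checks directly that $X\rightarrow W$ is a (minimal) left $\mathcal T$-approximation using $Ext^1_{\mathcal C}(Y,\mathcal T)=0$, and uniqueness of minimal approximations identifies $Y$ with the right-exchange partner $Y'$. None of this appears in your write-up; you assert the conclusion of the key step instead of proving it, which is why the proposal does not yet constitute a proof.
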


Note that the final formula in this corollary can be regarded as the analogue of the involution of a mutation of matrix.

\begin{Lemma}\label{loops-neighbour}(Lemma 3.25 \cite{D})
Let $\mathcal T'\in \mathfrak{Add}(\mathcal C)^{\Gamma}$ and $X$ be indecomposable in $\mathcal C$ such that $add(\mathcal T'\cup\{h\cdot X\;|\;h\in \Gamma\})$ is rigid. Assume that $\mathcal T'$ contains all projective objects and $X\notin \mathcal T'$ has left and right minimal $\mathcal T'$-approximations. Let $(X,Y)$ be a left (respectively, right) exchange pair associated to $\mathcal T'$ such that $add(\mathcal T'\cup\{h\cdot X\;|\;h\in \Gamma\})=\mathcal T$. The following are equivalent:

(1) $\mathcal T$ has no $\Gamma$-loops at $X$.

(2) For any indecomposable $X'\in \{h\cdot X\;|\;h\in \Gamma\}$, every non-invertible morphism from $X$ to $X'$ factorizes through $\mathcal T'$.

(3) $X$ and $Y$ are neighbours.
\end{Lemma}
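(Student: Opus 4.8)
The plan is to prove the cycle of implications $(2)\Rightarrow(1)$, then $(2)\Leftrightarrow(3)$, and finally $(1)\Rightarrow(2)$, treating the right exchange pair case (the left case is handled symmetrically using the dual exchange sequence and the $2$-Calabi-Yau duality). Throughout I fix the exchange short exact sequence $0\to X\overset{f}{\to}T\overset{g}{\to}Y\to 0$ from Lemma \ref{leftrightmu}, where $f$ is a minimal left $\mathcal T'$-approximation. The first observation I would record is that, because $f$ is a left $\mathcal T'$-approximation, a morphism $X\to h\cdot X$ factorizes through $\mathcal T'$ if and only if it factorizes through $f$; equivalently, the subspace of $Hom(X,h\cdot X)$ consisting of such morphisms is exactly $\mathrm{im}(f^{*})$, where $f^{*}=Hom(f,h\cdot X)$. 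I will also use repeatedly that, since $X\notin\mathcal T'$, no isomorphism $X\to h\cdot X$ can factor through $\mathcal T'$ (otherwise $X$ would be a direct summand of an object of $\mathcal T'$).

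For $(2)\Rightarrow(1)$: if there were a $\Gamma$-loop, that is, an irreducible morphism $\phi:X\to h\cdot X$, then by (2) it factors as $\phi=\psi f$. Irreducibility of $\phi$ together with the fact that $f$ is not a split monomorphism (otherwise $X\in\mathcal T'$) forces $\psi$ to be a split epimorphism, whence $h\cdot X$ is a direct summand of $T\in\mathcal T'$ and, by $\Gamma$-stability of $\mathcal T'$, $X\in\mathcal T'$ — a contradiction.

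For $(2)\Leftrightarrow(3)$: apply $Hom(-,h\cdot X)$ to the exchange sequence. Since $T\in\mathcal T'$ and $add(\mathcal T'\cup\{h\cdot X\})$ is rigid, we have $Ext^1(T,h\cdot X)=0$, so the long exact sequence yields a natural isomorphism $Ext^1(Y,h\cdot X)\cong Hom(X,h\cdot X)/\mathrm{im}(f^{*})$. Combining this with the $2$-Calabi-Yau isomorphism $Ext^1(h\cdot X,Y)\cong D\,Ext^1(Y,h\cdot X)$ translates the neighbour condition (3) into the statement that $Hom(X,h\cdot X)/\mathrm{im}(f^{*})$ is one-dimensional when $h\cdot X\cong X$ and zero otherwise. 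Since $End(X)$ is local with residue field $K$ and no isomorphism factors through $\mathcal T'$, the subspace $\mathrm{im}(f^{*})$ is contained in $rad(X,h\cdot X)$, which has codimension $1$ in $Hom(X,h\cdot X)$ exactly when $h\cdot X\cong X$ and equals the whole space otherwise; a short dimension count then shows that $\mathrm{im}(f^{*})=rad(X,h\cdot X)$, which is precisely condition (2), holds if and only if (3) holds.

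The hard implication is $(1)\Rightarrow(2)$, and I expect it to be the main obstacle. Condition (1) says precisely that $rad(X,h\cdot X)=rad^{2}(X,h\cdot X)$ for every $h$, while (2) says $rad(X,h\cdot X)=\mathrm{im}(f^{*})$. Splitting the middle term of any element of $rad^{2}(X,h\cdot X)$ according to whether the indecomposable it passes through lies in $\mathcal T'$ or in the $\Gamma$-orbit of $X$, and noting that $\mathcal T'$-factoring composites lie in $\mathrm{im}(f^{*})$ and that pre- or post-composing an $\mathrm{im}(f^{*})$-morphism again lands in $\mathrm{im}(f^{*})$, I would obtain $rad(X,h\cdot X)=\mathrm{im}(f^{*})+\big(\text{length-}N\text{ radical composites through the }\Gamma\text{-orbit of }X\big)$ for every $N$, by iterating (1). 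The point is then termination: because $\mathcal C$ is strongly almost finite, only finitely many indecomposables $Z$ satisfy $Hom(X,Z)\neq0$ and $Hom(Z,h\cdot X)\neq0$, so all relevant composites factor through the endomorphism algebra of a single finite direct sum $M$ of indecomposables; its radical is nilpotent, say $rad(End\,M)^{N}=0$, which kills the length-$N$ remainder and yields $rad(X,h\cdot X)=\mathrm{im}(f^{*})$. Making this nilpotency argument precise in the presence of the infinite group $\Gamma$ — in particular checking that the relevant composites genuinely live in one finite-dimensional, radical-nilpotent algebra, which is where the $Hom$-finiteness of the categories $\mathcal C_{h'}$ and the strong almost finiteness of $\mathcal C$ enter — is the delicate step; the finite-group version is Demonet's Lemma 3.25 in \cite{D}.
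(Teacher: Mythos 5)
Your proof is correct, and it is essentially the argument the paper intends: the paper does not prove this lemma at all, but merely asserts that Demonet's proof of Lemma 3.25 in \cite{D} transfers to infinite $\Gamma$, and your three implications ($(2)\Rightarrow(1)$ by irreducibility, $(2)\Leftrightarrow(3)$ via $Ext^1_{\mathcal C}(Y,h\cdot X)\cong Hom_{\mathcal C}(X,h\cdot X)/\mathrm{im}(f^{*})$ together with the $2$-Calabi-Yau duality and locality of $End_{\mathcal C}(X)$, and the iterative $(1)\Rightarrow(2)$) are exactly Demonet's, with the termination of the iteration being the only point where infiniteness of $\Gamma$ genuinely intervenes. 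That step does close by the mechanism you name, but one observation is needed to make your finiteness claim literally true: if a length-$N$ composite $X\to h_1\cdot X\to\cdots\to h_{N-1}\cdot X\to h\cdot X$ of radical morphisms is nonzero, then every partial composite $X\to h_i\cdot X$ and $h_i\cdot X\to h\cdot X$ is nonzero, so each intermediate object $h_i\cdot X$ lies in the finite set (supplied by strong almost finiteness of $\mathcal C$) of indecomposables $Z$ with $Hom_{\mathcal C}(X,Z)\neq 0$ and $Hom_{\mathcal C}(Z,h\cdot X)\neq 0$. A priori the intermediates of a long chain need only be Hom-neighbours of their predecessors, not of $X$ and $h\cdot X$, so not every chain factors through your object $M$; but every chain that escapes $M$ already has zero composite, and every chain confined to $M$ dies once $N$ exceeds the nilpotency index of $rad\,End_{\mathcal C}(M)$, so the remainder term $R_N$ in $rad(X,h\cdot X)=\mathrm{im}(f^{*})+R_N$ vanishes as you want. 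With that sentence added, your write-up is a complete proof of the statement the paper leaves to the reader.
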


In the case $\Gamma$ is infinite,  the proofs of Proposition \ref{mutationrigid}, Corollary \ref{neighbour} and Lemma \ref{loops-neighbour} are parallel respectively with that of Proposition 3.14, Corollary 3.15 and Lemma 3.25 in \cite{D}.

In summary, we have:

\begin{Lemma}\label{mutation}
Let $\mathcal T'\in \mathfrak{Add}(\mathcal C)^{\Gamma}$ and $X$ be indecomposable in $\mathcal C$ such that $\mathcal T:=add(\mathcal T'\cup\{h\cdot X\;|\;h\in \Gamma\})$ is maximal $\Gamma$-stable rigid. Assume that $\mathcal T'$ contains all projective objects and $X\notin \mathcal T'$ has left and right minimal $\mathcal T'$-approximations. If $\mathcal T$ has no $\Gamma$-loops at $X$, then

(1) there exists an indecomposable object $Y\in\mathcal C$ such that $(X,Y)$ are neighbours,

(2) $\mu_{X}(\mathcal T)=add(\mathcal T',\{h\cdot Y\;|\;h\in \Gamma\})$ is $\Gamma$-stable rigid.
\end{Lemma}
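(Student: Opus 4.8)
The plan is to assemble the three immediately preceding results (Lemma \ref{leftrightmu}, Lemma \ref{loops-neighbour}, Corollary \ref{neighbour}), since the statement is explicitly labelled a summary; the content is carried by those results and the work here is to check hypotheses and match notation. First I would verify that the setup of Lemma \ref{leftrightmu} is available with its base category taken to be $\mathcal T'$. Indeed, because $\mathcal T=add(\mathcal T'\cup\{h\cdot X\;|\;h\in\Gamma\})$ is maximal $\Gamma$-stable rigid, it is in particular rigid, so its full subcategory $\mathcal T'$ is rigid and the category $add(\mathcal T'\cup\{h\cdot X\})=\mathcal T$ is rigid; by hypothesis $\mathcal T'$ contains all projectives, $X\notin\mathcal T'$ is indecomposable, and $X$ has minimal left and right $\mathcal T'$-approximations. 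Applying Lemma \ref{leftrightmu} I obtain a right exchange sequence $0\rightarrow X\overset{f}{\rightarrow}T_1\overset{g}{\rightarrow}Y\rightarrow 0$ and a left exchange sequence $0\rightarrow Y'\overset{f'}{\rightarrow}T_1'\overset{g'}{\rightarrow}X\rightarrow 0$, in which $Y$ and $Y'$ are indecomposable and both $add(\mathcal T'\cup\{h\cdot Y\})$ and $add(\mathcal T'\cup\{h\cdot Y'\})$ are rigid.

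Next I would exploit the hypothesis that $\mathcal T$ has no $\Gamma$-loops at $X$. Feeding the right exchange pair $(X,Y)$ into Lemma \ref{loops-neighbour} and using the equivalence $(1)\Leftrightarrow(3)$ there, the absence of $\Gamma$-loops gives that $X$ and $Y$ are neighbours; running the same argument on the left exchange pair $(X,Y')$ shows that $X$ and $Y'$ are neighbours as well. Taking the object $Y$ produced by the right exchange sequence, this already establishes part (1).

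For part (2) I would invoke Corollary \ref{neighbour}, again with its base category equal to $\mathcal T'$, so that $add(\mathcal T',\{h\cdot X\})=\mathcal T$ plays the role of the maximal $\Gamma$-stable rigid subcategory in the corollary; all its hypotheses ($\mathcal T'\in\mathfrak{Add}(\mathcal C)^{\Gamma}$, $X\notin\mathcal T'$, maximality, all projectives in $\mathcal T'$, and existence of left and right $\mathcal T'$-approximations) are exactly the assumptions of the present lemma. By the definition of the left and right mutations one has $\mu_X^l(\mathcal T)=add(\mathcal T',\{h\cdot Y'\})$ and $\mu_X^r(\mathcal T)=add(\mathcal T',\{h\cdot Y\})$, and together with the two neighbour statements just proved this verifies both conditions of Corollary \ref{neighbour}. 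The corollary then forces $Y\cong Y'$, so the left and right mutations coincide and we may write $\mu_X(\mathcal T)=add(\mathcal T',\{h\cdot Y\})$. This category is rigid by Lemma \ref{leftrightmu}(3), and it is $\Gamma$-stable since it is generated by the $\Gamma$-stable category $\mathcal T'$ together with the full $\Gamma$-orbit of $Y$; this completes (2).

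The step requiring the most care is not any single deduction but the bookkeeping of conventions: keeping straight which of $\mathcal T'$, $\mathcal T$, and $add(\mathcal T',\{h\cdot X\})$ serves as the ``base'' versus the ``maximal rigid'' object in each cited result, and which exchange object ($Y$ from the right sequence or $Y'$ from the left) is attached to $\mu_X^r$ versus $\mu_X^l$. The genuine mathematical leverage comes entirely from Lemma \ref{loops-neighbour}: its equivalence between the absence of $\Gamma$-loops and the neighbour condition is precisely what upgrades the mere rigidity of the mutated subcategory to the well-definedness of $\mu_X$, i.e. the coincidence of the left and right mutations.
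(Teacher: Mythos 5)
Your proposal is correct and is essentially the paper's own argument: the paper states this lemma with the words ``In summary, we have'' and offers no further proof, precisely because it is the assembly of Lemma \ref{leftrightmu}, Lemma \ref{loops-neighbour} (the equivalence of ``no $\Gamma$-loops at $X$'' with the neighbour condition), and Corollary \ref{neighbour}, exactly as you carry out. Your hypothesis-checking (taking $\mathcal T'$ as the base category in each cited result, noting $\mathcal T'$ is rigid as a subcategory of the rigid $\mathcal T$, and matching $Y$, $Y'$ to $\mu_X^r$, $\mu_X^l$) is accurate.
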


\subsection{Global dimension of maximal $\Gamma$-stable rigid subcategories $\mathcal T$ and categories $\mathcal C_{h}(\mathcal T)$}

In this subsection, we study the global dimension of maximal $\Gamma$-stable rigid subcategories and the categories $\mathcal C_{h}(\mathcal T)$. Fix $h\in \Gamma$ such that $\mathcal C_h$ is $Hom$-finite, denote $\Gamma'=(h)$ the subgroup of $\Gamma$ generated by $h$.

Let $\mathcal T$ be a $\Gamma$-stable rigid subcategory of $\mathcal C$ which contains all projective objects. From now on, we always assume that $\mathcal T=add\{h\cdot \bigoplus\limits_{i=1}^n X_i\;|\;h\in \Gamma\}$ for indecomposable objects $X_i$ such that $X_i\notin \{h\cdot X_j\;|\;h\in \Gamma\}$ for $i\neq j$ until the end the this subsection.

\begin{Proposition}\label{globaldim'}(Proposition 3.26, \cite{D})
Keep the forgoing notations. Suppose that $\mathcal T=add\{h\cdot \bigoplus\limits_{i=1}^n X_i\;|\;h\in \Gamma\}$ is a maximal $\Gamma$-stable rigid subcategory of $\mathcal C_h(\mathcal T)$ without $\Gamma$-loops. Then,
 \[\begin{array}{ccl} gl.dim (mod\mathcal T)
  \left\{\begin{array}{ll}\leq
2, &\mbox{if any object of $\mathcal T$ is projective}, \\
=3, &\mbox{otherwise.}
\end{array}\right.
\end{array}\]
\end{Proposition}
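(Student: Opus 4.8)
The statement concerns the global dimension of $\mod\mathcal T$ where $\mathcal T$ is a maximal $\Gamma$-stable rigid subcategory of the $2$-Calabi-Yau Frobenius category $\mathcal C_h(\mathcal T)$, and this result is attributed to Proposition 3.26 of Demonet. Since $\mathcal T=add\{h\cdot\bigoplus_{i=1}^n X_i\mid h\in\Gamma\}$ has no $\Gamma$-loops, the plan is to compute projective resolutions of the simple objects $S_X$ of $\mod\mathcal T$ for each indecomposable $X\in\mathcal T$, and show that the projective dimension is at most $2$ when $X$ is projective in $\mathcal C$ and exactly $3$ otherwise. The strategy follows the classical computation of global dimension of endomorphism algebras of cluster-tilting objects in $2$-Calabi-Yau categories, adapted to the $\Gamma$-equivariant (infinite) setting via the category $\mathcal C_h(\mathcal T)$ whose $\mathrm{Hom}$-finiteness we have assumed.

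First I would fix an indecomposable $X\in\mathcal T$ and construct the start of a minimal projective resolution of $S_X$ in $\mod\mathcal T$. The first syzygy is computed from a minimal right $\mathcal T'$-approximation, giving the radical map into the representable functor $\mathrm{Hom}_{\mathcal C_h}(X,-)$; the key input is Lemma \ref{mutation}, which (using the no-$\Gamma$-loops hypothesis) produces a neighbour $Y$ of $X$ and the two admissible short exact sequences $0\to X\to T\to Y\to 0$ and $0\to Y'\to T'\to X\to 0$ of Lemma \ref{leftrightmu}. Applying $\mathrm{Hom}_{\mathcal C_h}(-,\mathcal T)$ to these sequences and using that $\mathcal T$ is rigid, one obtains an exact sequence of representable functors computing the beginning of the resolution of $S_X$. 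The crucial point is that the minimal left $\mathcal T$-approximation $f$ and minimal right $\mathcal T$-approximation $g$ of $X$ assemble, after dualizing via the $2$-Calabi-Yau property $\mathrm{Ext}^1_{\mathcal C}(A,B)\cong D\,\mathrm{Ext}^1_{\mathcal C}(B,A)$, into the next syzygies.

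The main technical step is to identify the third syzygy and show it vanishes precisely when $X$ is projective. Here I would use the $2$-Calabi-Yau duality to relate $\mathrm{Ext}^1_{\mathcal C_h}(X,-)$ on $\mathcal T$ back to the simple $S_X$: concretely, the cokernel of the last map in the resolution is $S_X$ itself (up to the twist coming from $\Gamma'=(h)$), and the resolution closes up after three steps, yielding $pd.dim(S_X)\le 3$ in general. When $X$ is $\mathcal C$-projective (equivalently projective-injective, since $\mathcal C$ is Frobenius), the right approximation sequence degenerates — the relevant $\mathrm{Ext}^1$ term vanishes because projective-injectives have no nonsplit extensions — so the resolution terminates one step earlier, giving $pd.dim(S_X)\le 2$. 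Taking the supremum over all indecomposable $X$ gives the global dimension bound, and the strictness of $=3$ in the non-projective case follows by exhibiting a single $X\in\mathcal T$ that is not projective and checking that its third syzygy is genuinely nonzero, which again uses the $2$-Calabi-Yau pairing to see that $\mathrm{Ext}^3$ pairs nondegenerately against $\mathrm{Ext}^0$.

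I expect the main obstacle to be the careful bookkeeping in the $\Gamma$-equivariant category $\mathcal C_h(\mathcal T)$: because $\mathrm{Hom}$-spaces are direct sums over the infinite cyclic group $\Gamma'=(h)$, one must verify that the approximation sequences and the $2$-Calabi-Yau duality descend correctly to $\mathcal C_h(\mathcal T)$ and that all the relevant functors remain finitely presented, so that $\mod\mathcal T$ genuinely has the expected projective objects. This is exactly where the assumption that $\mathcal C_h$ is $\mathrm{Hom}$-finite is indispensable. Since the underlying computation is formally identical to Demonet's Proposition 3.26 once these finiteness and equivariance issues are handled — and the excerpt explicitly states that such proofs ``can be verified similarly as in \cite{D}'' — the cleanest presentation is to reduce to the finite-group computation of \cite{D} by restricting to the $\mathrm{Hom}$-finite category $\mathcal C_h(\mathcal T)$ and invoking that the argument there transfers verbatim to the infinite $\Gamma'$ via the direct-sum description of morphisms.
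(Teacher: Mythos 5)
Your proposal matches the paper's approach: the paper's entire proof of this proposition is the one-line remark that Demonet's finite-group argument (Proposition 3.26 of \cite{D}) carries over verbatim to the infinite group $\Gamma$, which is exactly the reduction you settle on in your final paragraph. The additional detail you supply — resolving the simples $S_X$ via the exchange sequences from Lemmas \ref{leftrightmu} and \ref{mutation}, using the $2$-Calabi-Yau pairing to identify the top of the resolution and to force $\mathrm{Ext}^3\neq 0$ in the non-projective case — is consistent with the argument the paper does spell out for the companion Proposition \ref{globaldim}, so no gap.
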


\begin{proof} The proof in the case of a finite group is suitable to $\Gamma$ as an infinite group.
\end{proof}

For $X\in \mathcal C_h(\mathcal T)$  which is also indecomposable in $\mathcal T$, denote $A:=End_{\mathcal C_h(\mathcal T)}(X)$ the endomorphism algebra of $X$ in $\mathcal C_h(\mathcal T)$ and
\begin{equation}\label{J}
J=J(A):=\{(f_{h'})_{h'\in \Gamma'}\in A\;|\;f_{h'} \;\text{is not an isomorphism for any}\;h'\in \Gamma'\;\}.
\end{equation}

\begin{Lemma}\label{radical}
Keep the notations as above. Assume that $\mathcal C_h$ is $Hom$-finite. Then $J$ is the Jacobson radical of $A$.
\end{Lemma}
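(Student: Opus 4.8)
The statement to prove is Lemma \ref{radical}: that $J$ (defined in \eqref{J}) is the Jacobson radical of $A = End_{\mathcal C_h(\mathcal T)}(X)$, under the hypothesis that $\mathcal C_h$ is $Hom$-finite and $X$ is indecomposable in $\mathcal T$ (and in $\mathcal C_h(\mathcal T)$). The plan is to establish the two characterizing properties of the Jacobson radical of a finite-dimensional (or at least $Hom$-finite) algebra: namely, that $J$ is a two-sided ideal, that $A/J$ is semisimple, and that $J$ is nilpotent (or at least that $J$ consists of the non-invertible elements in a setting where $A$ is local or semiperfect). Since $X$ is indecomposable, $\mathcal C$ is Krull--Schmidt, so $End_{\mathcal C}(X)$ is local; the key point is to understand how the twisted endomorphisms $Hom_{\mathcal C}(X, h'\cdot X)$ for $h'\in \Gamma' = (h)$ assemble into $A$ and to show that the non-isomorphism components form precisely the radical.

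**Key steps.** First I would recall that $A = \bigoplus_{h'\in\Gamma'} Hom_{\mathcal C}(X, h'\cdot X)$ as a $K$-module, with the twisted composition $g\circ f = h'(g)f$ from the definition of $\mathcal C_h$. The component $h' = e$ gives $End_{\mathcal C}(X)$, which is local by the Krull--Schmidt property, with maximal ideal $rad(X,X)$ consisting of non-isomorphisms. Second, I would check that $J$ is a two-sided ideal of $A$: if $(f_{h'})\in J$ and $(g_{h''})\in A$ is arbitrary, then each component of the product is a sum of twisted composites, and any composite involving a non-isomorphism factor $f_{h'}$ cannot be an isomorphism (an isomorphism $X \to h'h''\cdot X$ would force, via the indecomposability and the local structure of the endomorphism ring, that the non-isomorphism factor be an isomorphism, a contradiction). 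This uses that composing with a non-invertible map between indecomposables in a Krull--Schmidt category stays in the radical. Third, I would show $A/J$ is semisimple: since $X$ is indecomposable, the only way a component $Hom_{\mathcal C}(X, h'\cdot X)$ contributes an isomorphism is when $h'\cdot X \cong X$, and the isomorphism classes $\{h'\in\Gamma' : h'\cdot X\cong X\}$ form a subgroup $H\leq \Gamma'$; modulo $J$, the algebra collapses to a quotient supported on $H$, which (using $Hom$-finiteness and that $End_{\mathcal C}(X)/rad \cong K$ as $K$ is algebraically closed) is isomorphic to a group algebra $K[H]$ of a subgroup of the cyclic group $\Gamma'$, hence semisimple in characteristic $0$.

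**Main obstacle.** The hardest part will be the third step — identifying $A/J$ precisely and confirming semisimplicity — because one must carefully track the twisted multiplication and the action of $h'$ on the isomorphisms $X \cong h'\cdot X$. One needs to verify that the multiplication on the isomorphism-components descends, modulo $J$, to the group-algebra multiplication of the (cyclic) subgroup $H = \{h'\in\Gamma' : h'\cdot X\cong X\}$, and that no subtleties from the $h'$-twist obstruct this identification. Since $K$ is algebraically closed of characteristic $0$, $K[H]$ is semisimple by Maschke's theorem whenever $H$ is finite; if $H$ is infinite (i.e. $h'\cdot X\cong X$ for infinitely many powers of $h$) one must invoke $Hom$-finiteness of $\mathcal C_h$ to rule this out, since $A = \bigoplus_{h'}Hom_{\mathcal C}(X,h'\cdot X)$ is finite-dimensional and each isomorphism-component is nonzero, forcing $H$ to be finite. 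Once $A/J$ is semisimple and $J$ is a two-sided ideal with $J$ consisting of non-invertibles, the standard characterization identifies $J$ as the Jacobson radical, completing the proof.
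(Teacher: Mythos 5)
Your overall strategy (show $J$ is a two-sided ideal, identify $A/J$ with the group algebra $K[\Gamma'_X]$ of the finite stabilizer subgroup, and use $\mathrm{char}\,K=0$ to get semisimplicity) is essentially the paper's, and your remarks on the ideal property and on Hom-finiteness forcing $H=\Gamma'_X$ to be finite are sound. But your concluding inference contains a genuine gap: there is no ``standard characterization'' saying that a two-sided ideal consisting of non-invertible elements with semisimple quotient must equal the Jacobson radical. In $K\times K$ the ideal $K\times 0$ consists of non-invertible elements and has semisimple quotient $K$, yet the radical is $0$. Semisimplicity of $A/J$ only yields $\mathrm{rad}(A)\subseteq J$; for the reverse inclusion $J\subseteq \mathrm{rad}(A)$ you must prove that $J$ is a nil (in fact nilpotent) ideal, or equivalently that $1+J$ consists of units. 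Your outline never addresses this.

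The paper supplies precisely this missing step. When $\Gamma'=(h)$ has finite order $m$, it assigns to $f=(f_0,\dots,f_{m-1})\in J$ the matrix $F=(F_{ij})$ with $F_{ij}=h^{i-1}\cdot f_{j-i}$ in $End_{\mathcal C}\bigl(\bigoplus_{k=0}^{m-1}h^k\cdot X\bigr)$, notes that $f^k$ can be read off from $F^k$, and shows $F$ is nilpotent by proving every eigenvalue vanishes: since $End_{\mathcal C}(X)$ is local, $\lambda\,\mathrm{id}-F_{ii}$ is invertible for $\lambda\neq 0$ while all $F_{ij}$ with $i\neq j$ are non-isomorphisms, so invertible row and column operations reduce $\lambda-F$ to an isomorphism. (Equivalently, all entries of $F$ lie in the radical of the Hom-finite Krull--Schmidt category $\mathcal C$, so $F$ lies in the nilpotent radical of the finite-dimensional algebra $End_{\mathcal C}(\bigoplus_k h^k\cdot X)$.) Only after this does ``$A/J$ semisimple'' close the argument. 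In the remaining case, where $\Gamma'$ is infinite and $\Gamma'_X$ is trivial, a separate short argument is still required. Without some version of this nilpotency step your proof establishes only $\mathrm{rad}(A)\subseteq J$, not equality.
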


\begin{proof}
Since $A=End_{\mathcal C(\mathcal T)}(X)$ is local, $J$ is an ideal of the algebra $A$. By assumption, $\mathcal C_h$ is $Hom$-finite, then $End_{\mathcal C_h(\mathcal T)}(X)=\bigoplus\limits_{h'\in \Gamma'}Hom_{\mathcal C}(X,h'\cdot X)$ is finite dimensional for any $X\in \mathcal C$. Following this, we know that either  (a) $\Gamma'=(h)$ is a finite group, or (b) $\Gamma'$ is an infinite group but the subgroup $\Gamma'_X:=\{h'\in \Gamma'\;|\;h'\cdot X\cong X\}$ is a trivial group. Otherwise, if $\Gamma'$ is an infinite group but $\Gamma'_X$ is a non-trivial subgroup of $\Gamma'$, then $\Gamma'_X$ is also infinite and thus, $End_{\mathcal C_h}(X)$ is infinite dimensional, which is a contradiction.

(a) In case $\Gamma'$ is a finite group. Assume that $h$ has finite order $m$. Now we show that $J$ is a nilpotent ideal of $A$. Any $f=(f_0,\cdots,f_{m-1})\in J$ determinate a morphism $F=(F_{ij})_{i,j\in[1,m]}\in End_{\mathcal C}(\bigoplus\limits_{k=0}^{m-1}h^k\cdot X)$ with $F_{ij}=h^{i-1}\cdot f_{j-i}$, where the plus $j-i$ is in $\mathbb Z_m$ and $f_i:X\rightarrow h^i\cdot X$. According to the composition in $\mathcal C_h(\mathcal T)$, it is easy to see that $f^k$ is the first row of $F^k$ for any $k\in \mathbb N$. Thus, to prove that $f$ is nilpotent, it suffices to prove that $F$ is nilpotent. It suffices to show that all eigenvalues of $F$ as a linear map are equal to $0$ since $K$ is algebraical closed. For any $\lambda(\neq 0)\in K$, since $f_i$ are non-isomorphic and $End_{\mathcal C}(X)$ is local, we have $\lambda-F_{ii}$ are isomorphic and $-F_{ij}$ ($i\neq j$) are non-isomorphic. Since $End_{\mathcal C}(X)$ is local, an isomorphic endomorphism plus a non-isomorphic endomorphism is isomorphic. Therefore, we can do invertible row and column operations on $\lambda-F$ to obtain an isomorphic endomorphism. Thus, we have $\lambda-F$ is isomorphic for any $\lambda\neq 0$, which implies $F$ is nilpotent and $f$ is nilpotent. Therefore, $J$ is a nilpotent ideal of $A$. In addition, $A$ is finite dimensional, $J$ is included in the Jacobson radical of $A$.

To show $J$ is the Jacobson radical of $A$, it suffices to prove that $A/J$ is semi-simple.

Let $\Gamma'_X=\{h'\in \Gamma'\;|\;h'\cdot X\cong X\}$. Since $\Gamma'$ is a cyclic group, we may assume that $\Gamma'_X=(h^k)$ for some $k\in \mathbb N$. Denote the order of $h^k$ be $s$. Fixed an isomorphism $\varphi_1:X\rightarrow h^k\cdot X$ such that $\varphi_1^s-id_X$ is nilpotent. (We can always choose such $\varphi_1$ since for any isomorphism $\varphi:X\rightarrow h^k\cdot X$, assume that $\varphi^s-\lambda_0id_X$ is nilpotent, then we can set $\varphi_1=\sqrt[s]{\lambda_0}\varphi$.) Let $\varphi_i=\varphi_1^i:X\rightarrow h^{ki}\cdot X$ for $i=1,\cdots, s$. Let $\pi:End_{\mathcal C}(X)\rightarrow End_{\mathcal C}(X)/M\overset{\cong}{\rightarrow} K$ be the canonical surjective algebra homomorphism, where $M$ is the unique maximal ideal of $End_{\mathcal C}(X)$.

We define an algebra homomorphism $\theta:A/J\rightarrow K[\Gamma'_X]$ like this: for any $f:X\rightarrow h^i\cdot X$ ($0\leq i< sk$), define $\theta(f)=0$ if $h^i\notin \Gamma'_X$, $\theta(f)=\pi(\varphi^{-1}_{\frac{i}{k}}f\varphi_s)h^i$ if $h^i\in \Gamma'_X$. Now we prove that $\theta$ defines an algebra isomorphism from $A/J$ to $K[\Gamma'_X]$.

It is clear that $\theta$ is a well defined map, since if $f$ is non-isomorphic, $\theta(f)=0$. For $f:X\rightarrow h^{ki}\cdot X$ ($0\leq i< s$) and $f':X\rightarrow h^{kj}\cdot X$ ($0\leq j< s$), denote $\theta(f)=\lambda_ih^{ki}$ and $\theta(f')=\lambda_jh^{kj}$. Thus, we have $\varphi_{i}^{-1}f\varphi_s=\lambda_iid_X+g_i$ and $\varphi^{-1}_{j}f'\varphi_s=\lambda_jid_X+g_j$ for nilpotent homomorphisms $g_i$ and $g_j$. Consider the following commutative diagram:
$$\xymatrix{
X\ar[r]\ar[d]^{\varphi_s}   & X\ar[r] \ar[d]^{\varphi_i}     & X\ar[d]_{\varphi_{t}} \\
X\ar[r]^{f}                 & h^{ki}\cdot X \ar[r]^{h^{ki}\cdot f'}      & h^{ki+kj}\cdot X ,}$$
where $t\in [1,s]$ such that $t\equiv i+j(mod\; s)$. Denote $\varphi^{-1}_t(h^{ki}\cdot f')\varphi_i=\varphi^{-t}_1(h^{ki}\cdot f')\varphi_i=\lambda id_X+g$ for a nilpotent homomorphism $g$. Therefore, $\theta(f'\circ f)=\pi(\varphi^{-1}_{t}(h^{ki}\cdot f')f\varphi_s)=\lambda\lambda_ih^{ki+kj}$. Thus, to show $\theta(f\circ f')=\theta(f)\theta(f)$, it suffices to show that $\lambda=\lambda_j$. Consider the following commutative diagram:
$$\xymatrix{
X\ar[r]^{\varphi_i}\ar[d]   & h^{ki}X\ar[r]^{h^{ki}\cdot \varphi_s} \ar[d]     & h^{ki}\cdot X\ar[d]_{h^{ki}\cdot f'} \\
X\ar[r]^{\varphi_i}         & h^{ki}X\ar[r]^{h^{ki}\cdot \varphi_j}            & h^{ki+kj}\cdot X ,}$$ since $\varphi^{-1}_{j}f'\varphi_s=\lambda_jid_X+g_j$, we have
$$(h^{ki}\cdot \varphi^{-1}_{j})(h^{ki}\cdot f')(h^{ki}\cdot \varphi_s)=h^{ki}\cdot (\lambda_jid_X)+h^{ki}\cdot g_j=\lambda_jid_{h^{ki}\cdot X}+h^{ki}\cdot g_j.$$ Therefore,
$$\varphi^{-1}_i(h^{ki}\cdot \varphi^{-1}_{j})(h^{ki}\cdot f')(h^{ki}\cdot \varphi_s)\varphi_i=\varphi^{-1}_i(\lambda_jid_{h^{ki}\cdot X}+h^{ki}\cdot g_j)\varphi_i=\lambda_jid_{X}+\varphi^{-1}_i(h^{ki}\cdot g_j)\varphi_i.$$
According to our construction, $\varphi_s-id_X$ is nilpotent, so $$\varphi^{-1}_i(h^{ki}\cdot \varphi^{-1}_{j})(h^{ki}\cdot f')\varphi_i-\lambda_jid_X=\varphi_1^{-(i+j)}(h^{ki}\cdot f')\varphi_i-\lambda_jid_X$$ is nilpotent. Moreover, $i+j\equiv t(mods)$ and $\varphi_s-id_X=\varphi_1^s-id_X$ is nilpotent. Thus, $\varphi_1^{-t}(h^{ki}\cdot f')\varphi_i-\lambda_jid_X$ is nilpotent. Since $\varphi_1^{-t}(h^{ki}\cdot f')\varphi_i-\lambda id_X$ is nilpotent, we obtain $\lambda=\lambda_j$. Therefore, $\theta$ is an algebra homomorphism. Since $\theta(\varphi_i)=\pi(\varphi^{-1}_i\varphi_i\varphi_s)h^{ki}=h^{ki}$ for all $i=0,\cdots,s-1$. Thus, $\theta$ is surjective. It is clear that $dim (A/J)\leq s$ and $dim (K[\Gamma'_X])=s$. Therefore, we obtain $\theta$ is an algebra isomorphism.

Since $char(K)=0$, $A/J\cong K[\Gamma'_X]$ is semi-simple. In addition, $J$ is nilpotent and $A$ is finite dimensional, we get $J$ is the Jacobson radical of $A$.

(b) In case $\Gamma'$ is an infinite group but $\Gamma'_X=\{h'\in \Gamma'\;|\;h'\cdot X\cong X\}$ is a trivial group. Since $End_{\mathcal C}(X)$ is local, $J$ is the unique maximal ideal of $A$. Thus, $J$ is the Jacobson radical of $A$.
\end{proof}

By the definition, in order to calculate the global dimension of $\mathcal C_h(\mathcal T)$, we have to calculate the projective dimension of any simple representations $S_X$, where $S_X$ is the simple top of indecomposable projective representation $Hom_{\mathcal C_h}(X,-)$ of $\mathcal C_h(\mathcal T)$ for an indecomposable object $X$. We divide the discussion into two parts: (I) $X$ is not projective as an object of $\mathcal C$, (II) $X$ is projective as an object of $\mathcal C$.

(I) In the case that $X$ is not projective as an object of $\mathcal C$.

Let $X,Y$ be neighbours and $0\rightarrow X\rightarrow T\rightarrow Y\rightarrow 0$ be the admissible short exact sequence defined in Lemma \ref{leftrightmu}. For $h'\in \Gamma'$, applying $Hom_{\mathcal C}(-,h'\cdot X)$ to the admissible short exact sequence, we get the exact sequence:  $$Hom_{\mathcal C}(X, h'\cdot X) \overset{\delta_{h'}}{\longrightarrow} Ext^1_{\mathcal C}(Y, h'\cdot X)\rightarrow Ext^1_{\mathcal C}(Y, h'\cdot T)=0,$$ with $\delta_{h'}$ as the connecting morphism, where the last equality is from the rigidity of $\mu_X(\mathcal T)$ and $Y, h'\cdot T\in \mu_X(\mathcal T)$. Thus, $$\bigoplus\limits_{h'\in \Gamma'}Hom_{\mathcal C}(X, h'\cdot X) \overset{(\delta_{h'})_{h'\in \Gamma'}}{\longrightarrow} \bigoplus\limits_{h'\in \Gamma'}Ext^1_{\mathcal C}(Y, h'\cdot X)\rightarrow 0.$$

 For any $f:X\rightarrow h'\cdot X$ and $\xi\in Ext^1_{\mathcal C}(Y, h''\cdot X)$, define $f\xi$ via the following push out diagram:
\begin{equation}\label{modaction}
\xymatrix{
       \xi:  0\ar[r] &h''\cdot X\ar[r]\ar[d]_{h''\cdot f} &Z\ar[r]\ar[d]  & Y\ar[r]\ar[d]_{id_Y} &0 \\
       f\xi: 0\ar[r] &h''h'\cdot X\ar[r]            &Z'\ar[r]       & Y\ar[r]              &0,}
       \end{equation}
where $Z'=(h''h'\cdot X)\prod_{(h''\cdot X)}Z$.

\begin{Lemma}\label{simple}
Keep the notations as above. Assume that $\mathcal C_h$ is $Hom$-finite and $X$ is indecomposable non-projective in $\mathcal C$. Then,

(1)~
 $\bigoplus\limits_{h'\in \Gamma'}Ext^1_{\mathcal C}(Y, h'\cdot X)$ admits a left $A$-module structure defined via the expansion of the action given in the above (\ref{modaction}).

(2)~ $A=\bigoplus\limits_{h'\in \Gamma'}Hom_{\mathcal C}(X, h'\cdot X) \overset{\delta}{\longrightarrow} \bigoplus\limits_{h'\in \Gamma'}Ext^1_{\mathcal C}(Y, h'\cdot X)\rightarrow 0$, where $\delta=(\delta_{h'})_{h'\in \Gamma'}$. Then $\delta$ is an $A$-module homomorphism and $ker(\delta)$ is the Jacobson radical of $A$, where $A$ is viewed as a left regular $A$-module.
\end{Lemma}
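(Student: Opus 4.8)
The plan is to run both parts through a single principle: the connecting homomorphism of the exchange sequence is pushout along $f$ of one fixed extension class, and pushout is functorial and additive. Write $\epsilon\in Ext^1_{\mathcal C}(Y,X)$ for the class of the admissible short exact sequence $0\rightarrow X\rightarrow T\rightarrow Y\rightarrow 0$ of Lemma \ref{leftrightmu}, and for a morphism $\alpha\colon U\rightarrow V$ let $\alpha_{*}$ denote the induced pushout map $Ext^1_{\mathcal C}(Y,U)\rightarrow Ext^1_{\mathcal C}(Y,V)$. For part (1), the homogeneous action in (\ref{modaction}) reads $f\cdot\xi=(h''\cdot f)_{*}\xi$ for $f\colon X\rightarrow h'\cdot X$ and $\xi\in Ext^1_{\mathcal C}(Y,h''\cdot X)$, extended bilinearly. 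The identity $id_{X}$ acts trivially because $(id_{h''\cdot X})_{*}=id$; additivity in $f$ and in $\xi$ is the additivity of the pushout; and the associativity axiom $(g\circ f)\cdot\xi=g\cdot(f\cdot\xi)$ reduces to functoriality $(\beta\alpha)_{*}=\beta_{*}\alpha_{*}$ together with $h''\cdot(g\circ f)=h''\cdot(h'(g)f)=(h''h'(g))(h''\cdot f)$, where I use that $\Gamma'=(h)$ is abelian so the $\Gamma'$-degrees match. Thus $M:=\bigoplus_{h'\in\Gamma'}Ext^1_{\mathcal C}(Y,h'\cdot X)$ becomes a left $A$-module.

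For part (2)(a), I would first record the standard description of the connecting map, namely $\delta_{h'}(f)=f_{*}\epsilon$ for $f\colon X\rightarrow h'\cdot X$. Then for $f\in Hom_{\mathcal C}(X,h'\cdot X)$ and $g\in Hom_{\mathcal C}(X,h''\cdot X)$ one computes on the one hand $\delta(g\circ f)=(g\circ f)_{*}\epsilon=(h'(g)f)_{*}\epsilon=(h'(g))_{*}f_{*}\epsilon$, and on the other hand, since $\delta(f)=f_{*}\epsilon$ lies in the degree-$h'$ piece $M_{h'}$, the module action gives $g\cdot\delta(f)=(h'\cdot g)_{*}(f_{*}\epsilon)$. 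Because $h'\cdot g=h'(g)$ these agree, so $\delta$ is a homomorphism of left $A$-modules; this is again nothing but functoriality of pushout. The surjectivity of $\delta$ is already available from the exact sequence displayed before the lemma.

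For part (2)(b) I would identify $M$ and then show $J\cdot M=0$. By the $2$-Calabi-Yau property $Ext^1_{\mathcal C}(Y,h'\cdot X)\cong DExt^1_{\mathcal C}(h'\cdot X,Y)$, and by the neighbour condition this is $K$ when $h'\cdot X\cong X$, i.e. $h'\in\Gamma'_{X}$, and $0$ otherwise; hence $\dim_{K}M=|\Gamma'_{X}|=s=\dim_{K}(A/J)$ by Lemma \ref{radical}. To see $J\cdot M=0$, note that $Ext^1_{\mathcal C}(Y,X)$ is a one-dimensional left module over the local ring $End_{\mathcal C}(X)$ (with residue field $K$ since $K$ is algebraically closed), so $\rad\,End_{\mathcal C}(X)$ annihilates it; transporting along fixed isomorphisms $u_{h'}\colon X\xrightarrow{\sim}h'\cdot X$ for $h'\in\Gamma'_{X}$ and writing any relevant $h'\cdot f$ (for $f\in J$) as $u_{h'h''}\,\theta\,u_{h'}^{-1}$ with $\theta$ a non-isomorphic endomorphism of $X$, functoriality gives $(h'\cdot f)_{*}=(u_{h'h''})_{*}\theta_{*}(u_{h'}^{-1})_{*}=0$; the degrees outside $\Gamma'_{X}$ land in zero pieces automatically. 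Therefore $J\cdot M=0$, and since $\delta$ is surjective and $A$-linear, $\delta(J)=J\cdot\delta(A)\subseteq J\cdot M=0$, so $J\subseteq\ker\delta$. As $A/\ker\delta\cong M$ has dimension $s=\dim(A/J)$ and $A$ is finite-dimensional by $Hom$-finiteness of $\mathcal C_{h}$, comparing dimensions with the inclusion $J\subseteq\ker\delta$ forces $\ker\delta=J$.

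The routine content is entirely the bookkeeping of pushouts, so the main obstacle is part (2)(b): I expect the real work to be pinning down $\dim_{K}M=|\Gamma'_{X}|$ via the neighbour condition and $2$-Calabi-Yau duality, and establishing the vanishing $J\cdot M=0$ through the one-dimensionality of $Ext^1_{\mathcal C}(Y,X)$ over the local endomorphism ring. Once these are in place the identification $\ker\delta=J$ is a short dimension count.
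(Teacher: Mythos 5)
Your proposal is correct, and parts (1) and (2)(a) coincide with the paper's argument: both reduce everything to functoriality and additivity of pushout along the connecting map $\delta_{h'}(f)=f_{*}\epsilon$. The divergence is in identifying $\ker(\delta)=J$. The paper proves both inclusions element-wise: for $J\subseteq\ker(\delta)$ it argues that a non-isomorphism $f_{h'}$ is either killed because $Ext^1_{\mathcal C}(Y,h'\cdot X)=0$ (when $h'\cdot X\not\cong X$) or is nilpotent and hence acts by $0$ on the one-dimensional $Ext^1$; for $\ker(\delta)\subseteq J$ it observes that if $f_{h'}$ were an isomorphism with $\delta_{h'}(f_{h'})=0$, the pushout of the non-split exchange sequence along an isomorphism would split, forcing the original sequence to split, a contradiction. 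You replace this second, splitting-based inclusion by a dimension count: $\dim_K M=|\Gamma'_X|=\dim_K(A/J)$ via $2$-Calabi--Yau duality, the neighbour condition, and the isomorphism $A/J\cong K[\Gamma'_X]$ from the proof of Lemma \ref{radical}, so that surjectivity of $\delta$ together with $J\subseteq\ker(\delta)$ forces equality. Both routes are valid; your $J\cdot M=0$ argument (transporting a non-isomorphism to a radical endomorphism of $X$ acting on a one-dimensional module over the local ring $End_{\mathcal C}(X)$) is in fact a cleaner rendering of the paper's somewhat terse nilpotence step, while the paper's splitting argument avoids invoking the neighbour dimension formula and the structure of $A/J$, at the cost of being less quantitative. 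Note only that you are citing a fact ($\dim A/J=|\Gamma'_X|$) established inside the proof of Lemma \ref{radical} rather than in its statement; this is harmless but worth flagging.
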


\begin{proof}
(1) It is clear that $id_X\xi=\xi$ for all $\xi\in Ext^1_{\mathcal C}(Y, h'\cdot X)$. For any $f:X\rightarrow h'\cdot X$, $f':X\rightarrow h''\cdot X$ and $\xi\in Ext^1_{\mathcal C}(Y,h'''\cdot X)$, we have the following commutative diagram:
$$\xymatrix{
       \xi:      0\ar[r] &h'''\cdot X\ar[r]\ar[d]_{h'''\cdot f'}    &Z\ar[r]\ar[d]   & Y\ar[r]\ar[d]_{id_Y} &0 \\
       f'\xi:    0\ar[r] &h'''h''\cdot X\ar[r]\ar[d]_{h'''h''\cdot f} &Z'\ar[r]\ar[d]  & Y\ar[r]\ar[d]_{id_Y} &0 \\
       f(f'\xi): 0\ar[r] &h'''h''h'\cdot X\ar[r]               &Z''\ar[r]       & Y\ar[r]              &0,}$$
where $Z'=(h'''h''\cdot X)\coprod_{h'''\cdot X}Z$ and $Z''=(h'''h''\cdot Z)\coprod_{(h'''h''\cdot X)}Z'$. Thus, $Z''=(h'''h''\cdot Z)\coprod_{(h'''h''\cdot X)}Z'=(h'''h''\cdot Z)\coprod_{(h'''h''\cdot X)}(h'''h''\cdot X)\coprod_{h'''\cdot X}Z=(h'''h''\cdot Z)\coprod_{h'''\cdot X}Z$. Therefore, we have $f(f'\xi)=(ff')\xi$. Our result follows.

(2) According to Lemma \ref{radical}, $J=\{(f_{h'})_{h'\in \Gamma'}\in A\;|\;f_{h'} \;\text{is not an isomorphism}\;\}$.

For any $h'\in \Gamma'$ and $f:X\rightarrow h'\cdot X$, we have $\delta_{h'}(f)\in Ext^1_{\mathcal C}(Y,h'\cdot X)$ by the following diagram:
$$\xymatrix{
                        0\ar[r] &X\ar[r]\ar[d]_{f} &T\ar[r]\ar[d]  & Y\ar[r]\ar[d]_{id_Y} &0 \\
       \delta_{h'}(f): 0\ar[r] &h'\cdot X\ar[r]    &T'\ar[r]       & Y\ar[r]              &0,}$$
where $T'=(h'\cdot X)\coprod_{X}T$.

For any $h''\in \Gamma'$ and $f':X\rightarrow h''\cdot X$, we have the commutative diagram:
$$\xymatrix{
                           0\ar[r] &X\ar[r]\ar[d]_{f}                  &T\ar[r]\ar[d]   & Y\ar[r]\ar[d]_{id_Y} &0 \\
           \delta_{h'}(f):0\ar[r] &h'\cdot X\ar[r]\ar[d]_{h'\cdot f'} &T'\ar[r]\ar[d]  & Y\ar[r]\ar[d]_{id_Y} &0 \\
       f'(\delta_{h'}(f)):0\ar[r] &h'h''\cdot X\ar[r]        &T''\ar[r]       & Y\ar[r]              &0,}$$
where $T'=(h'\cdot X)\coprod_{X}T$ and $T''=(h'h''\cdot X)\coprod_{(h'\cdot X)}T'$. Thus, $T''=(h'h''\cdot X)\coprod_{X}T$. Therefore, we get $f'(\delta_{h'}(f))=\delta_{h'}(f'\circ f)$. Hence, $\delta=(\delta_{h'})_{h'\in \Gamma'}$ is an $A$-module homomorphism.

Now we prove that $ker(\delta)=J$, where $J$ is the Jacobson radical of $A$.

 Give any non-isomorphic morphism $f_{h'}:X\rightarrow h'\cdot X$. If $h'\cdot X\not\cong X$, then $Ext^1_{\mathcal C}(Y,h'\cdot X)=0$. Hence, $\delta_{h'}(f_{h'})=0$. If $h'\cdot X\cong X$, then $h'\in \Gamma'_X$ has finite order $m$. Thus, $f_{h'}^m\in Hom_{\mathcal C}(X,X)$. Since $Hom_{\mathcal C}(X,X)$ is local, and $f_{h'}$ is non-isomorphic, so $f_{h'}^m$ is nilpotent. Therefore, $f_{h'}$ is nilpotent. Moreover, since $dim Ext^1_{\mathcal C}(Y,h'\cdot X)=1$, $\delta(f_{h'})=\delta_{h'}(f_{h'})=0$. Thus, $J\subseteq ker(\delta)$.

For any isomorphic morphism $f_{h'}:X\rightarrow h'\cdot X$, we have the commutative diagram:
$$\xymatrix{
                        0\ar[r] &X\ar[r]\ar[d]_{f_{h'}} &T\ar[r]\ar[d]  & Y\ar[r]\ar[d]_{id_Y} &0 \\
       \delta_{h'}(f_{h'}): 0\ar[r] &h'\cdot X\ar[r]    &T'\ar[r]       & Y\ar[r]              &0.}$$
If $\delta_{h'}(f_{h'})=0$, then $\delta_{h'}(f_{h'})=0$ is split. Further, since $f_{h'}$ is isomorphic, $0\rightarrow X\rightarrow T\rightarrow Y\rightarrow 0$ is split, which makes a contradiction. Thus, $\delta(f_{h'})=\delta_{h'}(f_{h'})\neq 0$. Hence, $ker(\delta)\subseteq J$.

Then, we obtain $ker(\delta)=J$.
\end{proof}

(II) In the case that  $X$ is projective as an object in $\mathcal C$.

By the assumption of the beginning of this section, let $X\twoheadrightarrow Y$ be the left rigid quasi-approximation of $X$. Denote by $M$ the cokernel of $\bigoplus\limits_{h'\in \Gamma'}Hom_{\mathcal C}(Y,h'\cdot X)\rightarrow \bigoplus\limits_{h'\in \Gamma'}Hom_{\mathcal C}(X,h'\cdot X)$.
Thus, we obtain the exact sequence $\bigoplus\limits_{h'\in \Gamma'}Hom_{\mathcal C}(Y,h'\cdot X)\rightarrow \bigoplus\limits_{h'\in \Gamma'}Hom_{\mathcal C}(X,h'\cdot X)\overset{\varepsilon}{\rightarrow} M\rightarrow 0.$

\begin{Lemma}\label{simplepro}
Keep the notations as above. Assume that $\mathcal C_h(\mathcal T)$ is $Hom$-finite and $X$ is an indecomposable projective object in $\mathcal C$.
Then $M$ given above is an $A$-module and $M\cong A/J$, where $J$ is the Jacobson radical of $A$.
\end{Lemma}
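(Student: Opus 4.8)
The goal is to show that the cokernel $M$ of the map $\bigoplus_{h'\in\Gamma'}\mathrm{Hom}_{\mathcal C}(Y,h'\cdot X)\to A=\bigoplus_{h'\in\Gamma'}\mathrm{Hom}_{\mathcal C}(X,h'\cdot X)$ is isomorphic to $A/J$ as a left $A$-module. The strategy mirrors the non-projective case treated in Lemma \ref{simple}: first equip $M$ with its natural left $A$-module structure, then identify the image of the approximation map with the Jacobson radical $J$ described in Lemma \ref{radical}. Concretely, I would denote by $\eta:X\twoheadrightarrow Y$ the left rigid quasi-approximation and by $\theta:\bigoplus_{h'}\mathrm{Hom}_{\mathcal C}(Y,h'\cdot X)\to A$ the induced map $g\mapsto g\circ\eta$ (precomposition with $\eta$, suitably $\Gamma'$-indexed). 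The whole content is then to prove $\mathrm{im}(\theta)=J$, since then $M=A/\mathrm{im}(\theta)=A/J$.

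\textbf{The module structure.} First I would verify that $M$ is a left $A$-module. The space $A$ carries its left regular $A$-action by composition in $\mathcal C_h(\mathcal T)$, that is, $f\cdot g=f\circ g$ for $f,g\in A$. The subspace $\mathrm{im}(\theta)$ is a left ideal of $A$: for $f:X\to h'\cdot X$ and $g:Y\to h''\cdot X$, the composite $f\circ(g\circ\eta)$ factors as $(f\circ g)\circ\eta$ after applying the appropriate $h''$-twist, which is again of the form (a $\Gamma'$-morphism $Y\to h''h'\cdot X$) composed with $\eta$; this uses only the functoriality of the $\Gamma$-action and the definition of composition in $\mathcal C_h$. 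Hence $A$ acts on the quotient $M$ and $\varepsilon$ is an $A$-module surjection.

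\textbf{Identifying the image with $J$.} The main work is the two inclusions. For $\mathrm{im}(\theta)\subseteq J$: any element $g\circ\eta$ with $g:Y\to h'\cdot X$ cannot be an isomorphism, because $\eta$ is a proper epimorphism (its kernel is the simple socle $S$ of $X$, which is nonzero since $X$ is projective and not injective in general, and moreover $Y$ has no summand in $\mathrm{add}\{h\cdot X\}$ by the definition of a left rigid quasi-approximation), so no composite through $Y$ can be invertible; thus $g\circ\eta\in J$ by the description \eqref{J}. For the reverse inclusion $J\subseteq\mathrm{im}(\theta)$: take any non-isomorphism $f:X\to h'\cdot X$. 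I expect this to be the crux, and it is exactly where the defining property of the left rigid quasi-approximation is used — namely that every morphism from $X$ to an object $Z$ with $\mathrm{add}\{h\cdot Z\}$ rigid, having no invertible matrix coefficient, factorizes through $\eta$. Applying this with $Z=h'\cdot X$ (whose $\Gamma$-orbit closure is rigid since $\mathcal T$ is rigid) shows $f$ factors as $f=g\circ\eta$ for some $g:Y\to h'\cdot X$, hence $f\in\mathrm{im}(\theta)$. Combining both inclusions gives $\mathrm{im}(\theta)=J$, and therefore $M\cong A/J$ as left $A$-modules.

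\textbf{The anticipated obstacle.} The delicate point is the reverse inclusion: one must check that the hypothesis ``no invertible matrix coefficient'' in the definition of the left rigid quasi-approximation is automatically met for a non-isomorphism $f:X\to h'\cdot X$ when $X$ is indecomposable. Here I would argue via the local endomorphism ring: since $\mathrm{End}_{\mathcal C}(X)$ is local and $X$ is indecomposable, a non-isomorphism $f$ has no invertible component in the relevant matrix decomposition over the $\Gamma'$-orbit, so the factorization property applies. The compatibility of this factorization with the full $\Gamma'$-indexing (rather than a single morphism) then follows by assembling the factorizations componentwise, using that $\mathcal C_h$ is $\mathrm{Hom}$-finite so that only finitely many twists $h'$ contribute, exactly as in the proof of Lemma \ref{radical}.
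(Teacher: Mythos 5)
Your proposal is correct and follows the same route as the paper: the paper's proof simply observes that $M=A/\ker(\varepsilon)$ and that $\ker(\varepsilon)=J$ "by the definition of left rigid quasi-approximation," which is exactly the two-inclusion argument you spell out (composites through $\eta$ are never isomorphisms since $\eta$ has nonzero kernel, and every non-isomorphism $X\to h'\cdot X$ factors through $\eta$ by the third condition in the definition, the matrix-coefficient hypothesis being automatic for indecomposable $X$). Your version just supplies the details the paper leaves implicit.
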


\begin{proof}
According to Lemma \ref{radical}, $J=\{(f_{h'})_{h'\in \Gamma'}\in A\;|\;f_{h'} \;\text{is not an isomorphism}\;\}$. It suffices to prove that the kernel of $\varepsilon$ equals to $J$. It is clear that $ker(\varepsilon)=J$ by the definition of left rigid quasi-approximation.
\end{proof}

Now we can already calculate the global dimension of $\mathcal C_h(\mathcal T)$.

\begin{Proposition}\label{globaldim}
Keep the forgoing notations. Suppose that $\mathcal T=add\{h\cdot \bigoplus\limits_{i=1}^n X_i\;|\;h\in \Gamma\}$, with  indecomposables $X_i (i=1,\cdots,n)$, is a maximal $\Gamma$-stable rigid subcategory of $\mathcal C$ without $\Gamma$-loops. Assume  $\mathcal C_h(\mathcal T)$ is $Hom$-finite for $h\in \Gamma$, then
 \[\begin{array}{ccl} gl.dim (\mathcal C_h(\mathcal T))
  \left\{\begin{array}{ll}\leq
2, &\mbox{if all objects in $\mathcal T$ are projective}, \\
=3, &\mbox{otherwise.}
\end{array}\right.
\end{array}\]
\end{Proposition}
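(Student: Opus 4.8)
The plan is to compute $pd.dim(S_X)$ for every indecomposable object $X$ of $\mathcal C_h(\mathcal T)$ and then take the supremum, since by definition $gl.dim(\mathcal C_h(\mathcal T))=\sup_X pd.dim(S_X)$. For each such $X$ the projective cover of $S_X$ in $mod\mathcal C_h(\mathcal T)$ is the representable functor $P_X=Hom_{\mathcal C_h}(X,-)$, and by Lemma \ref{radical} its radical is the subfunctor determined by $J=rad\,A$, where $A=End_{\mathcal C_h(\mathcal T)}(X)$; thus $S_X=P_X/rad\,P_X$ is well understood. The whole computation then splits into the two cases already isolated by Lemmas \ref{simple} and \ref{simplepro}: $X$ non-projective as an object of $\mathcal C$, and $X$ projective in $\mathcal C$.

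First I would treat the non-projective case, where I expect $pd.dim(S_X)=3$. Since $\mathcal T$ has no $\Gamma$-loops, Lemma \ref{mutation} provides a neighbour $Y$ of $X$, and from Lemma \ref{leftrightmu} together with the $2$-Calabi-Yau duality I obtain the two exchange sequences $0\to X\to T_0\to Y\to 0$ and $0\to Y\to T_1\to X\to 0$ with $T_0,T_1\in\mathcal T$ the minimal approximations. Applying $Hom_{\mathcal C_h}(-,-)$ and using the rigidity of $\mathcal T$ to kill the relevant $Ext^1$-terms, these two sequences splice into a projective resolution
\begin{equation*}
0\to P_X\to P_{T_1}\to P_{T_0}\to P_X\to S_X\to 0
\end{equation*}
of $S_X$ in $mod\mathcal C_h(\mathcal T)$. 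The role of Lemma \ref{simple} here is exactly to certify the two ends of this resolution: it identifies the cokernel $\bigoplus_{h'\in\Gamma'}Ext^1_{\mathcal C}(Y,h'\cdot X)$ with $A/J=S_X$ and shows $\ker\delta=J=rad\,A$, which guarantees that the presentation $P_{T_0}\to P_X\to S_X\to 0$ is correct (the image of $P_{T_0}\to P_X$ is precisely $rad\,P_X$, using minimality of the approximation and the absence of $\Gamma$-loops) and that the resolution is minimal, so that the length is exactly $3$.

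Next I would treat the projective case. Here $X$ is projective-injective in the Frobenius category $\mathcal C$, so it does not mutate and only a single approximation sequence is available, namely the left rigid quasi-approximation $X\twoheadrightarrow Y$ furnished by Corollary \ref{quasi-approxi}. Lemma \ref{simplepro} identifies the cokernel $M$ of $\bigoplus_{h'}Hom_{\mathcal C}(Y,h'\cdot X)\to A$ with $A/J=S_X$, which gives the presentation $P_Y^{\mathcal T}\to P_X\to S_X\to 0$ with image exactly $rad\,P_X$; resolving the remaining non-representable term $P_Y^{\mathcal T}$ by one further projective yields a resolution of length at most $2$. The gain of one step compared with the non-projective case is precisely the absence of a second exchange sequence, so $pd.dim(S_X)\le 2$.

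Finally I would assemble the two cases: if every indecomposable of $\mathcal T$ is projective in $\mathcal C$ then every $S_X$ has projective dimension at most $2$ and $gl.dim(\mathcal C_h(\mathcal T))\le 2$; otherwise some non-projective $X$ contributes $pd.dim(S_X)=3$ and, since $3$ bounds all the resolutions above, $gl.dim(\mathcal C_h(\mathcal T))=3$ (this parallels the statement for $mod\mathcal T$ in Proposition \ref{globaldim'}). The main obstacle is the non-projective case: one must verify that the spliced sequence is genuinely exact and minimal, i.e. that the maps land in the radical and that the resolution does not collapse to length $\le 2$. This is where the $2$-Calabi-Yau duality (to close the resolution with $P_X$ on top) and the precise radical computation of Lemma \ref{radical} together with Lemma \ref{simple} are indispensable, and where the bookkeeping must be carried out uniformly whether the cyclic group $\Gamma'=(h)$ is finite or infinite.
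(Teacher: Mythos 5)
Your proposal is correct and follows essentially the same route as the paper: split into the non-projective and projective cases, splice the two exchange sequences (resp. the left rigid quasi-approximation plus one further approximation sequence) into a projective resolution of $S_X$ of length $3$ (resp. $\leq 2$), and use Lemmas \ref{radical}, \ref{simple} and \ref{simplepro} to identify the cokernel with $S_X$. The only cosmetic difference is that the paper certifies the lower bound $pd.dim(S_{X_i})=3$ by computing $Ext^3_{mod\mathcal C_h(\mathcal T)}(S_{X_i},S_{X_i})\neq 0$ from $Hom(Hom(T'',-),S_{X_i})=0$, rather than by an appeal to minimality of the resolution, and in the projective case it explicitly verifies $Z\in\mathcal T$ (via Lemma \ref{3.6} and maximality) so that the extra resolving term is indeed representable.
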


\begin{proof}
The $X_i (i=1,\cdots,n)$ are also all indecomposables of $\mathcal C_h(\mathcal T)$. So, $gl.dim(\mathcal C_h(\mathcal T))=max\{pd.dim(S_{X_i})\;|i=1,\cdots,n\},$ where $S_{X_i}: \mathcal C_h(\mathcal T)\rightarrow K-vec$ with $S_{X_i}(X)=Hom_{\mathcal C_h(\mathcal T)}(X_i, X)/J(X_i, X)$.    Then, we have to compute  $pd.dim(S_{X_i})$ for any fixed $X_i$.

Suppose firstly that $X_i$ is not projective.
Denote $\mathcal T'_{(i)}=add(\{h'\cdot X_j\;|\;h'\in \Gamma,j\neq i\})$ and $\Gamma'=(h)\subseteq \Gamma$.
 Since $\mathcal T$ has no $\Gamma$-loops, by Lemma \ref{mutation}, there exists $Y\in \mathcal C$ such that $(X_i,Y)$ are neighbours and $\mu_X(\mathcal T)=add(\mathcal T'_{(i)},\{h'\cdot X\;|\;h'\in \Gamma\})$ is $\Gamma$-stable rigid. Moreover, using Corollary \ref{neighbour}, we get two admissible short exact sequences,
$$0\rightarrow X_i\rightarrow T'\rightarrow Y\rightarrow 0\;\;\;\;\text{and}\;\;\;\;0\rightarrow Y\rightarrow T''\rightarrow X_i\rightarrow 0$$ for $T',T''\in \mathcal T'_{(i)}$.

For each $h'\in \Gamma'$ and $X\in\mathcal T$, applying $Hom_{\mathcal C}(-,h'\cdot X)$ to the above sequences, we get the two long exact sequences,
$$0\rightarrow Hom_{\mathcal C}(Y,h'\cdot X)\rightarrow Hom_{\mathcal C}(T',h'\cdot X)\rightarrow Hom_{\mathcal C}(X_i,h'\cdot X)\rightarrow Ext^1_{\mathcal C}(Y,h'\cdot X)\rightarrow Ext^1_{\mathcal C}(X_i,h'\cdot X)=0,$$
and
$0\rightarrow Hom_{\mathcal C}(X_i,h'\cdot X)\rightarrow Hom_{\mathcal C}(T'',h'\cdot X)\rightarrow Hom_{\mathcal C}(Y,h'\cdot X)\rightarrow Ext^1_{\mathcal C}(X_i,h'\cdot X)=0.$

 Combing the two long exact sequences, and by the arbitrary of $h'$, we can obtain  that
\begin{equation}\label{projdim}
0\rightarrow Hom_{\mathcal C_h(\mathcal T)}(X_i,-)\rightarrow Hom_{\mathcal C_h(\mathcal T)}(T'',-)\rightarrow Hom_{\mathcal C_h(\mathcal T)}(T',-)\rightarrow Hom_{\mathcal C_h(\mathcal T)}(X_i,-)\rightarrow F\rightarrow 0,
 \end{equation}
 where the functor $F: \mathcal C_h(\mathcal T)\rightarrow K\text{-}vec$ with $X\mapsto \bigoplus\limits_{h'\in \Gamma'}Ext^1_{\mathcal C}(Y, h'\cdot X)$.

For indecomposable $X\not\cong X_i$ in $\mathcal C_h(\mathcal T)$, we have $F(X)=\bigoplus\limits_{h'\in \Gamma'}Ext^1_{\mathcal C}(Y,h'\cdot X)=0$ since $\mu_{X_i}(\mathcal T)$ is $\Gamma$-stable rigid and then $Y,h'\cdot X\in\mu_{X_i}(\mathcal T)$. And,  $S_{X_i}(X)=Hom_{\mathcal C_h(\mathcal T)}(X_i, X)/J(X_i, X)=0$ since in this case $Hom_{\mathcal C_h(\mathcal T)}(X_i, X)=J(X_i, X)$ by \cite{ASS}. Then $F(X)=S_{X_i}(X)$.

In the case $X\cong X_i$,
by Lemma \ref{simple}, $F(X)\cong F(X_i)=\bigoplus\limits_{h'\in \Gamma'}Ext^1_{\mathcal C}(Y,h'\cdot X_i)\cong End_{\mathcal C_h(\mathcal T)}(X_i)/J$. And, $S_{X_i}(X)\cong S_{X_i}(X_i)=Hom_{\mathcal C_h(\mathcal T)}(X_i,X_i)/J(X_i,X_i)=End_{\mathcal C_h(\mathcal T)}(X_i)/J$. Hence, also, $F(X)=S_{X_i}(X)$.

In summary, we have $F\cong S_{X_i}$.
  As a consequence from (\ref{projdim}), $proj.dim(S_{X_i})\leq 3$. Since $X_i\notin \mathcal T'_{(i)}$ and $T''\in \mathcal T'_{(i)}$, we get $Hom_{mod\mathcal C_h(\mathcal T)}(Hom_{\mathcal C_h(\mathcal T)}(T'',-), S_{X_i})=0.$ Therefore,
  $$Ext^3_{mod\mathcal C_h(\mathcal T)}(S_{X_i},S_{X_i})\cong Hom_{mod\mathcal C_h(\mathcal T)}(Hom_{\mathcal C_h(\mathcal T)}(X_i,-), S_{X_i})\neq 0.$$ Thus, $proj.dim(S_{X_i})=3.$

Now, suppose that $X_i$ is projective, then $X_i$ is also injective in $\mathcal C^Q$. By the assumption at the beginning of this section, let $X_i\twoheadrightarrow Y$ be a left rigid quasi-approximation. According to the definition, the injective envelope of $Y$ does not intersect with $add(\{h'\cdot X\;|\;h'\in \Gamma\})$ and, by Lemma 2.38 of \cite{D}, there is an admissible short exact
sequence $$0\rightarrow Y\overset{f}{\rightarrow} T'\rightarrow Z\rightarrow 0,$$ where $f$ is a left $\mathcal T'_{(i)}$-approximation. Using Lemma \ref{3.6},  $add(\mathcal T, \{h'\cdot Z\;|\;h'\in\Gamma\})$ is $\Gamma$-stable rigid. Moreover, since $\mathcal T$ is maximal $\Gamma$-stable rigid, we have $Z\in \mathcal T$. Applying $Hom_{\mathcal C}(-,h'\cdot X)$ to the admissible sequence, we obtain
$$0\rightarrow Hom_{\mathcal C}(Z,h\cdot X)\rightarrow Hom_{\mathcal C}(T',h\cdot X)\rightarrow Hom_{\mathcal C}(Y, h\cdot X)\rightarrow Ext^1_{\mathcal C}(Z,h\cdot X)=0.$$
Thus, $0\rightarrow Hom_{\mathcal C_h(\mathcal T)}(Z,-)\rightarrow Hom_{\mathcal C_h(\mathcal T)}(T',-)\rightarrow Hom_{\mathcal C_h(\mathcal T)}(Y,-)\rightarrow 0.$

Let $G:\mathcal C_h(\mathcal T)\rightarrow K\text{-}vec$ be the functor satisfying that
$$G(X)=coker(\bigoplus\limits_{h'\in \Gamma'}Hom_{\mathcal C}(Y,h'\cdot X)\rightarrow \bigoplus\limits_{h'\in \Gamma'}Hom_{\mathcal C}(X_i, h'\cdot X)).$$
Thus, $0\rightarrow Hom_{\mathcal C_h(\mathcal T)}(Y,-)\rightarrow Hom_{\mathcal C_h(\mathcal T)}(X_i,-)\rightarrow G\rightarrow 0.$
Therefore, we get
\begin{equation}\label{proj}
0\rightarrow Hom_{\mathcal C_h(\mathcal T)}(Z,-)\rightarrow Hom_{\mathcal C_h(\mathcal T)}(T',-)\rightarrow Hom_{\mathcal C_h(\mathcal T)}(X_i,-)\rightarrow G\rightarrow 0.
\end{equation}

As $X_i\twoheadrightarrow Y$ is a left rigid quasi-approximation, we get $G(X)=0=S_{X_i}(X)$ when $X\not\cong X_i$ in $\mathcal C_h(\mathcal T)$. By Lemma \ref{simplepro}, we have $G(X_i)\cong A/J=S_{X_i}(X_i)$. Thus, $G\cong S_{X_i}$. As a consequence of (\ref{proj}), $pd.dim(S_{X_i})=pd.dim(G)\leq 2$.

Then the result follows.
\end{proof}

\section{Proof of the unfolding theorem of acyclic sign-skew-symmetric matrices}\label{quiver}

In this section, we use the previous preparation to prove that any acyclic sign-skew-symmetric cluster algebra has an unfolding corresponding to a skew-symmetric cluster algebra. As application, we have given a positive answer for the problem by Berenstein, Fomin, Zelevinsky on the totality of acyclic sign-skew-symmetric matrices in Theorem \ref{sign-skew}.

Let $B\in Mat_{n\times n}(\mathbb Z)$ be an acyclic sign-skew-symmetric matrix and $Q$ be the strongly almost finite quiver constructed in section \ref{strongly-finite}, $\Gamma$ be the group acts on $Q$. Let $\mathcal C^Q$ be the strongly almost finite Frobenius $2$-Calibi-Yau category associated to $Q$ which has been defined in section \ref{cate} before Corollary \ref{2-cy}. The action of $\Gamma$  on $\mathcal C^Q$ is exactly given from the action of $\Gamma$ acts on $Q$ as shown in (\ref{action}).

In this section, we firstly focus on the action of $\Gamma$ on the $\mathcal C^Q$.
 We prove that the action satisfies the assumptions at the beginning of section \ref{action}. By Corollary \ref{left-rigid}, every projective object in $\mathcal C^Q$ has a left rigid quasi-approximation. It remains to prove that for an indecomposable $X\in \mathcal C^Q$ and $h\in \Gamma$, there exists a $h'\in \Gamma$ such that $h'X\cong hX$ and $\mathcal C^Q_{h'}$ is $Hom$-finite. To see this, we need the following preparation.

Assume that $h\in \Gamma$ has no fixed points, for any indecomposable object $X\in \mathcal C^Q$, set $Q^X$ to be the sub-quiver of $Q$  generated by all vertices $a$ such that $X(a)\neq 0$, and $\Gamma'_{X}:=\{h'\in \Gamma'\;|\;h'X\cong X\}$. It is easy to see that there is a group homomorphism $\omega:\Gamma'_{X}\rightarrow Aut(Q^X), h'\rightarrow h'|_{Q^X}$, where $h'|_{Q^X}$ means the restriction of $h'$ to $Q^X$. Since $h$ has no fixed points, by Lemma \ref{fixedpoint}, we have $ker(\omega)=\{e\}$. Thus, $\Gamma'_X$ is a finite group.

\begin{Lemma}\label{Hom-finite} The category  $\mathcal C^Q_h$ is $Hom$-finite if one of two conditions holds:
(i)~ the order of $h$ is finite, (ii)~  $h\in \Gamma$ acts on $Q$ has no fixed points.
\end{Lemma}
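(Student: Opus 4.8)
The plan is to unwind the definition of $\mathcal C^Q_h$ and reduce the finiteness of $\dim_K Hom_{\mathcal C^Q_h}(Y,X)=\dim_K\bigoplus_{h'\in\Gamma'}Hom_{\mathcal C^Q}(Y,h'\cdot X)$ to a counting problem: how many summands in this direct sum can be nonzero. First I would reduce to the case where both $X$ and $Y$ are indecomposable. Since $\mathcal C^Q$ is Krull-Schmidt with finite-dimensional objects, $X$ and $Y$ decompose into finite direct sums of indecomposables, and $Hom_{\mathcal C^Q_h}(-,-)$ is additive in each variable, so it suffices to bound $\bigoplus_{h'\in\Gamma'}Hom_{\mathcal C^Q}(Y,h'\cdot X)$ for indecomposable $X,Y$.

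In case (i), where $h$ has finite order, $\Gamma'=(h)$ is a finite group, so the direct sum ranges over a finite index set; as $\mathcal C^Q$ is $Hom$-finite each summand $Hom_{\mathcal C^Q}(Y,h'\cdot X)$ is finite-dimensional, and the conclusion is immediate. This case is essentially formal.

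The substance lies in case (ii), where $\Gamma'$ may be infinite and so I would control the direct sum through two finiteness inputs. Since $\mathcal C^Q$ is strongly almost finite, for the fixed indecomposable $Y$ there are only finitely many isomorphism classes of indecomposable objects $Z$ with $Hom_{\mathcal C^Q}(Y,Z)\neq 0$; because each $\varphi(h')$ is a categorical isomorphism, every $h'\cdot X$ is again indecomposable, so only finitely many isomorphism classes $[h'\cdot X]$ can contribute a nonzero summand. For each such class the set $\{h'\in\Gamma'\mid h'\cdot X\cong Z\}$ is either empty or a coset of the stabilizer $\Gamma'_X=\{h'\in\Gamma'\mid h'\cdot X\cong X\}$, and the paragraph preceding the lemma shows, via the injection $\omega:\Gamma'_X\hookrightarrow Aut(Q^X)$ into the automorphism group of the finite support quiver $Q^X$ (finite because $X$ is finite-dimensional, with injectivity forced by Lemma \ref{fixedpoint}), that $\Gamma'_X$ is finite. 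Hence only finitely many $h'\in\Gamma'$ yield a nonzero $Hom_{\mathcal C^Q}(Y,h'\cdot X)$, each of which is finite-dimensional, so the whole direct sum is finite-dimensional.

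The main obstacle is precisely case (ii): the direct sum is indexed by the possibly infinite cyclic group $\Gamma'$, so a naive term-by-term estimate gives nothing. The crux is to combine strong almost finiteness, which bounds the number of relevant isomorphism classes appearing in the orbit of $X$, with the finiteness of the stabilizer $\Gamma'_X$, which bounds the number of group elements mapping $X$ into each such class. The no-fixed-points hypothesis is exactly what makes $\omega$ injective and thereby forces $\Gamma'_X$ to be finite, so Lemma \ref{fixedpoint} is the load-bearing ingredient here.
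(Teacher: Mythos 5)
Your case (i) and the reduction to indecomposable $X$, $Y$ match the paper's proof. The gap is in case (ii), specifically in your first finiteness input: ``since $\mathcal C^Q$ is strongly almost finite, for fixed indecomposable $Y$ there are only finitely many isomorphism classes of indecomposables $Z$ with $Hom_{\mathcal C^Q}(Y,Z)\neq 0$.'' Although the paper repeatedly labels $\mathcal C^Q$ a strongly almost finite \emph{category}, that property is never actually proved there (Corollary \ref{2-cy} only establishes the Frobenius $2$-Calabi-Yau structure, and Lemma \ref{T_0}(1) proves the analogous finiteness only for the special subcategory $\mathcal T_0$, not for all indecomposables of $\mathcal C^Q$), and it is false in general: whenever $Q$ is of infinite representation type, $\mathcal C^Q$ has infinitely many pairwise non-isomorphic indecomposables supported on a fixed finite set of vertices, and by Lemma \ref{mor} a projective-injective $(I_{i,[0,1]},e_{i,[0,1]})$ admits a nonzero morphism to every object whose restriction to $KQ$ contains a suitable $I_j$ or $\tau(I_j)$ as a summand --- infinitely many indecomposables. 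So the step you lean on cannot be justified as stated, and deriving the needed finiteness from it does not work.

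What is true, and what the paper's proof uses instead, is a much weaker support-level statement: a nonzero morphism $Y\rightarrow h'\cdot X$ forces the finite support quivers $Q^Y$ and $Q^{h'\cdot X}=h'\cdot Q^X$ to share a vertex, and only finitely many $h'\in\Gamma'$ can achieve this. The paper proves the latter by a pigeonhole argument: if infinitely many translates $h'\cdot Q^X$ met the finite vertex set of $Q^Y$, then, using local finiteness of $Q$ and connectedness of $Q^X$, infinitely many $h'$ would have to agree on all of $Q^X$, so some nontrivial power of $h$ would fix $Q^X$ pointwise, contradicting Lemma \ref{fixedpoint}. Your second ingredient --- finiteness of the stabilizer $\Gamma'_X$ via the injection into $Aut(Q^X)$ --- is correct and is the same use of Lemma \ref{fixedpoint}, but it only bounds the number of $h'$ within each isomorphism class of $h'\cdot X$; you still need the support-overlap argument to bound the number of classes in the orbit of $X$ that can interact with $Y$, and that is precisely the part your proof is missing.
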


\begin{proof}
If the order of $h$ is finite, then $\Gamma'=(h)$ is finite. Since $\mathcal C^Q$ is $Hom$-finite, $\mathcal C^Q_h$ is a $Hom$-finite category.

If $h$ has no fixed points. We may assume that $X$ is indecomposable. To show $Hom_{\mathcal C^Q_h}(Y,X)$ is finite dimensional. It suffices to prove there are only finite $h'\in \Gamma'$ such that $Q^{h'(X)}$ has common vertices with $Q^Y$.

Otherwise, since $Q^Y$ is a finite quiver, there are infinite $h'\in \Gamma'$ such that $Q^{h'\cdot X}$ contains a vertex of $Q^Y$, assume to be $1'$. Assume that $1'$ is a common vertex of $Q^{h'\cdot X}$ for $h'\in S_1$ for $S_1\subseteq \Gamma'$. Thus, $\{{h'}^{-1}\cdot 1\;|\;h'\in S_1\}\subseteq Q^X_0$. Moreover, as $Q^X$ is a finite quiver, there are infinite $h'\in S_1$ such that ${h'}^{-1}\cdot 1'$ have the same image. Without loss of generality, assume that ${h'}^{-1}\cdot 1'=1$ for a vertex $1\in Q^X_0$ and for all $h'\in S_1$. For a vertex $2\in Q^X_0$ with an arrow $\alpha:1\rightarrow 2$ (if we the orientation is inverse, the proof is the same), there is an arrow from $h'\cdot 1=1'$ to $h'\cdot 2$ for all $h'\in S_1$. Since $Q$ is strongly almost finite, there are only finite arrows starting from $1'$. Thus, there is a infinite subset $S_2\subseteq S_1$ such that $h'\cdot 2$ have common image for all $h'\in S_2$. Since $X$ in indecomposable, $Q^X$ is connected. Inductively, there is a infinite subset $S_n\subseteq S_{n-1}\subseteq \Gamma'$, such that $h'\cdot a$ have common image for all $h'\in S_n$ and vertices $a\in Q^X$. Thus, pick $h'\neq h''\in S_n$, we have $h'|_{Q^X}=h''|_{Q^X}$. Therefore, we have $h'{h''}^{-1}|_{Q^X}=id_{Q^X}$, which contradicts to Lemma \ref{fix} which says that any element in $\Gamma'$ has no fixed points.

Thus, in both cases, $\mathcal C^Q_h$ is $Hom$-finite.
\end{proof}

\begin{Proposition}\label{is-hom}
Let $X\in \mathcal C^Q$ be indecomposable and $h\in \Gamma$. Then, there exists an $h'\in \Gamma$ such that $h'\cdot X\cong h\cdot X$ and $\mathcal C^Q_{h'}$ is $Hom$-finite.
\end{Proposition}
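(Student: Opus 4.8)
The plan is to split the argument according to whether $h$ has a fixed point. If $h$ has no fixed point, then Lemma \ref{Hom-finite}(ii) already guarantees that $\mathcal C^Q_h$ is $Hom$-finite, so I would simply take $h'=h$ and be done. All the real work is in the case where $h$ \emph{does} have a fixed point, and there the idea is to replace $h$ by a \emph{finite-order} element of $\Gamma$ that agrees with $h$ on just the finite part of $Q$ that governs $h\cdot X$, and then to invoke Lemma \ref{Hom-finite}(i).

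Concretely, in the fixed-point case I would work with the support quiver $Q^X$ (the full subquiver on the vertices $a$ with $X(a)\neq 0$), which is finite since $X$ is finite dimensional and connected since $X$ is indecomposable. Then $Q':=h^{-1}\cdot Q^X$ is again a finite connected subquiver of $Q$ (connectedness is preserved because $h^{-1}$ is a quiver automorphism), and by the definition of the action in $(\ref{action})$ it is precisely the support of $h\cdot X$. Applying Lemma \ref{finiteorder} to the finite connected subquiver $Q'$, using that $h$ has a fixed point, produces $h'\in\Gamma$ of finite order with $h'\cdot a=h\cdot a$ for every vertex $a$ of $Q'$.

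It then remains to check that $h'\cdot X=h\cdot X$, after which Lemma \ref{Hom-finite}(i) applied to the finite-order element $h'$ finishes the proof. By $(\ref{action})$ we have $(h\cdot X)_i=X_{h\cdot i}$ and $(h'\cdot X)_i=X_{h'\cdot i}$. For $i\in Q'_0$ one has $h'\cdot i=h\cdot i$, so these components agree; for $i\notin Q'_0$ the vertex $h\cdot i$ lies outside $Q^X_0$ (as $Q'_0=h^{-1}Q^X_0$), whence $(h\cdot X)_i=0$, and since $h'$ carries $Q'_0$ bijectively onto $Q^X_0=h\cdot Q'_0$, injectivity of $h'$ forces $h'\cdot i\notin Q^X_0$ as well, so $(h'\cdot X)_i=0$. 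The structure maps match by the same bookkeeping: using that $Q$ has at most one arrow between any two vertices (Observation \ref{extend}(2)) one gets $h'\cdot\alpha=h\cdot\alpha$ for every arrow $\alpha$ both of whose endpoints lie in $Q'$, while every remaining structure map is zero on both sides. Hence $h'\cdot X=h\cdot X$.

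The hard part — indeed the only step needing care — will be this last verification, namely that agreement of $h'$ with $h$ on the \emph{finite} subquiver $Q'=\mathrm{supp}(h\cdot X)$ already forces the global equality $h'\cdot X=h\cdot X$. The essential input making this work is the injectivity of $h'$ as a permutation of $Q_0$, which prevents any vertex outside $Q'$ from being carried into the support $Q^X$ of $X$; without this one could only control $h'\cdot X$ on $Q'$ itself and would not be able to rule out spurious nonzero components elsewhere.
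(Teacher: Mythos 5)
Your proposal is correct and follows essentially the same route as the paper: split on whether $h$ has a fixed point, take $h'=h$ and invoke Lemma \ref{Hom-finite}(ii) in the fixed-point-free case, and otherwise apply Lemma \ref{finiteorder} to a finite connected subquiver controlling $h\cdot X$ and conclude with Lemma \ref{Hom-finite}(i). The only difference is that you spell out the verification that agreement of $h'$ with $h$ on the support $h^{-1}\cdot Q^X$ forces $h'\cdot X\cong h\cdot X$ (using injectivity of $h'$ and the uniqueness of arrows in the tree $Q$), a step the paper leaves implicit.
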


\begin{proof}
If $h$ has no fixed points, then according to Lemma \ref{Hom-finite}, $\mathcal C^Q_h$ is $Hom$-finite. In this case, choose $h'=h$.
Otherwise, if $h$ has fixed points, by Lemma \ref{finiteorder}, there exists $h'\in \Gamma$ such that the order of $h'$ is finite and $h\cdot X\cong h'\cdot X$. By Lemma \ref{Hom-finite}, $\mathcal C^Q_{h'}$ is $Hom$-finite.
\end{proof}

Until now, we have proved that the action of  $\Gamma$ on $\mathcal C^Q$ satisfying the assumptions at the beginning of section \ref{action}.

A subcategory $\mathcal S$ of $\mathcal C^Q$ is called {\bf finitely non-Hom-orthogonal in $\mathcal C^Q$} if for each $X\in \mathcal C^Q$, there are only finite indecomposable objects $T$ (up to isomorphism) in $\mathcal S$ such that $Hom_{\mathcal C^Q}(X, T)\neq 0$ or $Hom_{\mathcal C^Q}(T,X)\neq 0$. It is easy to see a finitely non-Hom-orthogonal subcategory is functorially finite.

\begin{Lemma}\label{functorially finite}
Let $\mathcal T=add(\{hX_i\;|\;i=1,\cdots,n, h\in \Gamma\})$ be a maximal $\Gamma$-stable rigid subcategory of $\mathcal C^Q$ with each $X_i$ indecomposable, containing all projective-injective objects. If $\mathcal T$ is finitely non-Hom-orthogonal in  $\mathcal C^Q$, then $\mu_{X_i}(\mathcal T)$ is also finitely non-Hom-orthogonal in $\mathcal C^Q$ for each non-projective-injective indecomposable $X_i\in \mathcal T$.

In particular, $\mu_{X_i}(\mathcal T)$ is functorially finite.
\end{Lemma}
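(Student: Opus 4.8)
The plan is to analyze separately the two kinds of indecomposable objects that appear in $\mu_{X_i}(\mathcal T)$. By Lemma \ref{mutation} together with Corollary \ref{neighbour}, setting $\mathcal T'_{(i)}=add(\{h\cdot X_j\mid j\neq i,\ h\in\Gamma\})$, the mutation has the form $\mu_{X_i}(\mathcal T)=add(\mathcal T'_{(i)}\cup\{h\cdot Y\mid h\in\Gamma\})$ for a single indecomposable $Y$, so its indecomposable objects are precisely the $h\cdot X_j$ with $j\neq i$ together with the translates $h\cdot Y$. The first family is a subfamily of the indecomposables of $\mathcal T$; since $\mathcal T$ is finitely non-Hom-orthogonal by hypothesis, for a fixed $C\in\mathcal C^Q$ only finitely many $h\cdot X_j$ $(j\neq i)$ admit a nonzero morphism to or from $C$. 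Hence the entire problem reduces to showing that, for each $C$, only finitely many iso-classes among the $h\cdot Y$ satisfy $Hom_{\mathcal C^Q}(C,h\cdot Y)\neq 0$ or $Hom_{\mathcal C^Q}(h\cdot Y,C)\neq 0$.

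For this I would first record the elementary observation that a nonzero morphism in $\mathcal C^Q$ forces an overlap of supports: a morphism is in particular a collection of vertexwise linear maps commuting with the $\overline Q$-action, so if it is nonzero then some vertex lies in the (finite) supports $\mathrm{supp}(\,\cdot\,)$ of both the source and the target. Writing $\mathrm{supp}(h\cdot Y)=h\cdot\mathrm{supp}(Y)$ by the action formula (\ref{action}), a nonzero morphism between $h\cdot Y$ and $C$ thus requires $h\cdot a=c$ for some $a\in\mathrm{supp}(Y)$ and some $c\in\mathrm{supp}(C)$. Since $\mathrm{supp}(Y)$ and $\mathrm{supp}(C)$ are finite, it suffices to bound, for each of the finitely many pairs $(a,c)$, the number of iso-classes of $h\cdot Y$ with $h$ ranging over $\{h\in\Gamma\mid h\cdot a=c\}$. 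Fixing one $g_0$ with $g_0\cdot a=c$, this set is the coset $g_0\Gamma_a$, where $\Gamma_a=\{h\in\Gamma\mid h\cdot a=a\}$, and since the $\Gamma$-action is by functors (Definition \ref{def3.1}), the iso-classes among $\{g_0 k\cdot Y\mid k\in\Gamma_a\}$ are indexed by $\Gamma_a$ modulo the subgroup $\{k\in\Gamma_a\mid k\cdot Y\cong Y\}$.

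The hard part will be exactly this last count, namely showing that this index is finite even when $\Gamma_a$ is infinite, and here Lemma \ref{ext} is the decisive input. I would choose a finite full subquiver $Q'$ of $Q$ containing $\mathrm{supp}(Y)$; then any $k\in\Gamma_a$ with $k|_{Q'}=id_{Q'}$ fixes every vertex and arrow of $Q'$, and since $k$ is a bijection it cannot move any vertex outside $Q'_0$ into $Q'_0$, so $k\cdot Y=Y$ on the nose by (\ref{action}). Therefore $\{k\in\Gamma_a\mid k|_{Q'}=id\}$ is contained in the stabilizer of $Y$, whence the relevant index is bounded by $\lvert\{k|_{Q'}\mid k\in\Gamma_a\}\rvert$, which is finite by Lemma \ref{ext}. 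Summing the resulting finite bounds over the finitely many pairs $(a,c)$ shows that only finitely many iso-classes $h\cdot Y$ have support meeting $\mathrm{supp}(C)$, hence only finitely many interact with $C$ via $Hom$. Combined with the first paragraph, this proves $\mu_{X_i}(\mathcal T)$ is finitely non-Hom-orthogonal, and the final ``functorially finite'' clause is then immediate from the remark preceding the statement. It is worth noting that this route bypasses the exchange short exact sequences entirely, relying only on the support estimate and the finiteness of stabilizer restrictions; the exchange sequences could alternatively be used to transport a nonzero $Hom(h\cdot Y,C)$ back to a nonzero $Hom$ on the fixed object $h\cdot T'\in\mathcal T'_{(i)}$, but that reformulation still ultimately needs Lemma \ref{ext} to finish, so I expect the support argument to be the cleanest.
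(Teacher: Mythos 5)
Your proof is correct, and it takes a genuinely different route from the paper's at the key step. Both arguments begin identically: the new indecomposables of $\mu_{X_i}(\mathcal T)$ other than those of $\mathcal T'_{(i)}$ are the translates $h\cdot Y$ of a single indecomposable $Y$, so everything reduces to bounding the iso-classes of $h\cdot Y$ interacting with a fixed object. The paper then uses the two exchange sequences $0\to X_i\to T\to Y\to 0$ and $0\to Y\to T'\to X_i\to 0$ to embed $Hom_{\mathcal C^Q}(h\cdot Y,X)$ into $Hom_{\mathcal C^Q}(h\cdot T,X)$ (and dually), invokes the finitely-non-Hom-orthogonal hypothesis on $\mathcal T$ to reduce to finitely many iso-classes of $h\cdot T_i$, and only then counts the $h$ realizing a given iso-class via Lemma \ref{fix}, Observation \ref{extend}(1) and Lemma \ref{ext}. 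You bypass the exchange sequences entirely with the support-overlap observation, reducing directly to cosets of point stabilizers $\Gamma_a$ and then to Lemma \ref{ext}; this is more elementary (it never uses that $\mathcal T$ itself is finitely non-Hom-orthogonal for the $h\cdot Y$ part, only that supports are finite and $Q$ is strongly almost finite), and it makes the final stabilizer count cleaner and more explicit than the paper's somewhat compressed "up to the isomorphic of $h\cdot T$" phrasing. Two cosmetic remarks: with the paper's convention $(h\cdot X)_i=X_{h\cdot i}$ one has $\mathrm{supp}(h\cdot Y)=h^{-1}\cdot\mathrm{supp}(Y)$ rather than $h\cdot\mathrm{supp}(Y)$, which merely swaps the roles of $a$ and $c$ (the relevant set of $h$ is still a coset of a point stabilizer, to which Lemma \ref{ext} applies); and, like the paper, you implicitly need the no-$\Gamma$-loop hypothesis at $X_i$ for Lemma \ref{mutation} and Corollary \ref{neighbour} to produce the indecomposable $Y$, which holds in every context where the lemma is applied.
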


\begin{proof}
By Lemma \ref{mutation}, there exists indecomposable $Y_i$ such that $(X_i,Y_i)$ are neighbours. By Corollary \ref{neighbour}, there exist two admissible short exact sequences $0\rightarrow X_i\rightarrow T\rightarrow Y_i\rightarrow 0$ and $0\rightarrow Y_i\rightarrow T'\rightarrow X_i\rightarrow 0$ with $T,T'\in \mathcal T$. By the assumption of the conditions on $\mathcal T$, to prove our result, it suffices to prove that there are only finite indecomposable $h\cdot Y_i$ (up to isomorphism), $h\in \Gamma$ such that $Hom_{\mathcal C^Q}(h\cdot Y_i,X)\neq 0$ or $Hom_{\mathcal C^Q}(X,h\cdot Y_i)\neq 0$ for each $X\in \mathcal C^Q$. Since $Hom_{\mathcal C^Q}(h\cdot Y_i,X)\subseteq Hom_{\mathcal C^Q}(h\cdot T,X)$ and $Hom_{\mathcal C^Q}(X,h\cdot Y_i)\subseteq Hom_{\mathcal C^Q}(X,h\cdot T')$, it suffices to prove that there are only finite $h$ (up to the isomorphic of $h\cdot T$ and $h\cdot T'$) such that $Hom_{\mathcal C^Q}(h\cdot T,X)\neq 0$ or $Hom_{\mathcal C^Q}(X,h\cdot T')\neq 0$.

We only prove that there are only finite $h$ (up to the isomorphic of $hT$) such that $Hom_{\mathcal C^Q}(hT,X)\neq 0$. The case for $Hom_{\mathcal C^Q}(X,h\cdot T')\neq 0$ can be proved dually.

Assume that $T=\bigoplus\limits_{i=1}^m T_i^{n_i}$ for indecomposable objects $T_i$. Thus, $Hom_{\mathcal C^Q}(h\cdot T,X)\neq 0$ if and only if $Hom_{\mathcal C^Q}(h\cdot T_i,X)\neq 0$ for some $i$. Since $T_i\in \mathcal T$, there are only finite $h$ (up to the isomorphic of $h\cdot T_i$) such that $Hom_{\mathcal C^Q}(h\cdot T_i,X)\neq 0$, we may assume that $Hom_{\mathcal C^Q}(h_j\cdot T_i,X)\neq 0$ for $j=1,\cdots,s$.

Now we show there are only finite $h$ (up to the isomorphic of $h\cdot T$) such that $h\cdot T_i\cong h_j\cdot T_i$ for each $j=1,\cdots,s$. If we can do this, our result follows according to the above discussion. For each $j=1,\cdots, s$, we may assume that $h_j=e$ without loss of generality. For each $h\in \Gamma$ such that $h\cdot T_i\cong T_i$, since $T_i$ is indecomposable, by Lemma \ref{fix}, $h$ has fixed points. By Observation \ref{extend} (1), we may assume that $h$ fix a vertex labelled by 1. Then it follows by Lemma \ref{ext} immediately that there are only finite $h$ (up to the isomorphic of $h\cdot T$) such that $h\cdot T_i\cong T_i$.
\end{proof}

\begin{Proposition}\label{quivermu}
Let us keep the forgoing notations. Suppose that $\mathcal T$ is a maximal $\Gamma$-stable rigid subcategory which has no $\Gamma$-loops. Assume $Q'$ is the Gabriel quiver of $\mathcal T$ and $Q''$ is the Gabriel quiver of $\mu_{X_i}(\mathcal T)$ for a non-projective object $X_i$. Then
$Q''=\widetilde{\mu}_{[i]}(Q').$
\end{Proposition}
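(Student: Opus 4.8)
The plan is to read both Gabriel quivers off the irreducible morphisms of the relevant subcategories and then match the change in arrow multiplicities, vertex-orbit by vertex-orbit, against the Fomin--Zelevinsky rule encoded in Definition \ref{orbitmu}. Write $\mathcal T'_{(i)}=add(\{h\cdot X_j\;|\;h\in\Gamma,\,j\neq i\})$ for the common part of $\mathcal T$ and $\mu_{X_i}(\mathcal T)$. By Lemma \ref{mutation} and Corollary \ref{neighbour} there is an indecomposable $Y_i$ with $(X_i,Y_i)$ neighbours and two admissible short exact sequences
$$0\rightarrow X_i\overset{f}{\rightarrow} T^{+}\rightarrow Y_i\rightarrow 0, \qquad 0\rightarrow Y_i\rightarrow T^{-}\overset{g}{\rightarrow} X_i\rightarrow 0,$$
in which $f$ and $g$ are the minimal left and right $\mathcal T'_{(i)}$-approximations and $T^{+},T^{-}\in\mathcal T'_{(i)}$. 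The indecomposables of $\mu_{X_i}(\mathcal T)$ are obtained from those of $\mathcal T$ by replacing the orbit of $X_i$ with that of $Y_i$, so the vertex sets of $Q'$ and $Q''$ are identified through $[X_i]\leftrightarrow[Y_i]$; it then remains only to compare arrow multiplicities.

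First I would fix the dictionary between arrows and approximations. Since $\mathcal T$ has no $\Gamma$-loops, Lemma \ref{loops-neighbour} shows every non-invertible map $X_i\rightarrow h\cdot X_i$ factors through $\mathcal T'_{(i)}$, so no irreducible morphism out of $X_i$ stays inside the orbit $[X_i]$. Hence the minimal left $\mathcal T'_{(i)}$-approximation $f$ collects precisely the irreducible morphisms out of $X_i$, and the multiplicity of each $X_k$ in $T^{+}$ equals the number of arrows $[X_i]\rightarrow[X_k]$ recorded in $Q'$; dually for $T^{-}$ and arrows into $[X_i]$. Read $\Gamma$-equivariantly, this is what lets me express $T^{+}$ and $T^{-}$ through the arrow counts $b_{i'k}$ of $Q'$. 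Moreover the no-$\Gamma$-$2$-cycle property, which holds for $\mathcal T$ and persists under the mutation, forces all arrows between $X_i$ and the various $h\cdot X_k$ in a fixed orbit $[X_k]$ to point the same way, so that $\sum_{i'\in[i]}|b_{i'k}|=|\sum_{i'\in[i]}b_{i'k}|$; this sign-coherence (the same one exploited in Lemma \ref{basiclemma}) is exactly the hypothesis making the orbit-mutation formula well defined.

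Next I would verify the three clauses of the mutation rule separately. Arrows incident to $[X_i]$ reverse: via the two approximation sequences an arrow $[X_j]\rightarrow[X_i]$ in $Q'$ becomes an arrow $[Y_i]\rightarrow[X_j]$ in $Q''$ and conversely, which is the clause $b'_{jk}=-b_{jk}$ when $j$ or $k$ lies in $[i]$. For orbits $[X_j],[X_k]$ both different from $[X_i]$, the new irreducible morphisms $X_j\rightarrow X_k$ in $\mu_{X_i}(\mathcal T)$ arise by composing old arrows $X_j\rightarrow h\cdot X_i$ with arrows $h\cdot X_i\rightarrow X_k$ through the middle terms $T^{+},T^{-}$, the mesh/exchange relations cancelling the resulting $2$-cycles; carrying this out with the projective resolution (\ref{projdim}) of the simple $S_{X_i}$ over $\mathcal C_h(\mathcal T)$ from Proposition \ref{globaldim}, and summing over the orbit $[i]$, converts the count into $b_{jk}+\sum_{i'\in[i]}\frac{|b_{ji'}|b_{i'k}+b_{ji'}|b_{i'k}|}{2}$, matching Definition \ref{orbitmu} exactly.

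I expect this last step to be the main obstacle: showing that the categorical composition of arrows through the mutated vertex yields precisely the Fomin--Zelevinsky correction term, with the correct sign and with the spurious $2$-cycles cancelled, once everything is summed over the infinite orbit $[i]$. This requires controlling how the $2$-Calabi-Yau duality and the minimality of the approximations interact with the group action, and it is here that the finiteness tools of Section \ref{quiver} --- Lemmas \ref{ext}, \ref{fix} and \ref{finiteorder} together with the $Hom$-finiteness of $\mathcal C^Q_h$ established in Lemma \ref{Hom-finite} --- are needed to guarantee that the relevant orbit sums are finite and that the comparison of multiplicities is legitimate.
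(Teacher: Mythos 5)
Your scaffolding is the right one: identifying the vertex sets via $[X_i]\leftrightarrow[Y_i]$, using Lemma \ref{loops-neighbour} to see that (in the absence of $\Gamma$-loops) the minimal $\mathcal T'_{(i)}$-approximations record exactly the arrows adjacent to the orbit $[i]$, and reading the reversal of those arrows off the two exchange sequences. But the substantive content of the proposition is the remaining clause, the formula $b''_{jk}=b_{jk}+\sum\limits_{i'\in[i]}\frac{|b_{ji'}|b_{i'k}+b_{ji'}|b_{i'k}|}{2}$ for orbits $[j],[k]\neq[i]$, and this is precisely the step you defer: you assert that composing arrows through the middle terms $T^{+},T^{-}$ ``converts the count into'' the Fomin--Zelevinsky correction term with the spurious $2$-cycles cancelled, and then declare this the main obstacle. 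Nothing in the proposal actually computes $\dim rad(X_j,X_k)/rad^2(X_j,X_k)$ in $\mu_{X_i}(\mathcal T)$; in particular you never confront the two delicate points, namely that an old arrow $X_j\to X_k$ may be absorbed into $rad^2$ after mutation, and that the number of surviving plus newly created arrows must be shown to \emph{equal} the stated sum rather than merely be bounded by it. The resolution (\ref{projdim}) you invoke lives over $\mathcal C_h(\mathcal T)$ for a single cyclic subgroup $(h)$ and computes extension groups of the simple at $X_i$, not radical layers between $X_j$ and $X_k$, so it does not by itself supply the missing count. As written, the argument establishes only the reversal of arrows at $[i]$.

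Two further cautions. First, you appeal to the persistence of the no-$\Gamma$-$2$-cycle property under mutation; in the paper's logic that persistence is exactly what Theorem \ref{cluster tilting} (4) establishes by an independent argument, with Proposition \ref{quivermu} as an input to the induction, so you may use the absence of $\Gamma$-$2$-cycles in $\mathcal T$ itself (needed for the sign-coherence $\sum_{i'\in[i]}|b_{i'k}|=|\sum_{i'\in[i]}b_{i'k}|$ and for $\widetilde\mu_{[i]}$ to be defined) but not its persistence, on pain of circularity. Second, the paper does not perform the direct irreducible-morphism count either: it adapts the matrix-identity proof of Theorem 7.8 of \cite{GLS1}, replacing the one-vertex mutation matrix $S$ of Section 7.1 there by the orbit version with $s_{jk}=-\delta_{jk}+\frac{|b_{jk}|-b_{jk}}{2}$ for $j\in[i]$ and $s_{jk}=\delta_{jk}$ otherwise. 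That route packages the entire orbit sum into one matrix computation and avoids tracking individual compositions through the exchange sequences; if you wish to complete your direct approach instead, the computation you have postponed is the whole proof.
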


\begin{proof}
Since $Q$ is strongly almost finite, the proof is parallel with the proof of Theorem 7.8, \cite{GLS1}. In fact, we only need to replace the matrix $S$ in section 7.1 of \cite{GLS1} to $S=(s_{jk})_{j,k\in Q_0}$, where $\begin{array}{ccl} s_{jk} &=&
         \left\{\begin{array}{ll}
             -\delta_{jk}+\frac{|b_{jk}|-b_{jk}}{2}, &\mbox{if $j\in [i]$}, \\
             \delta_{jk}, &\mbox{otherwise.}
         \end{array}\right.
      \end{array}$
\end{proof}

This result means the preservability of Gabriel quivers under orbit mutations.

\begin{Proposition}\label{extention2}(\cite {D}, Proposition 3.31; \cite{B}, P463)
Let $Q$ be a strongly almost finite  quiver and $I$ be an admissible ideal of $KQ$.  Denote by $J$ the
ideal of $KQ$ generated by all of its arrows. Let $i,j\in Q_0$. Then,
$dim e_j(I/(IJ+JI))e_i=dim Ext^2_{modKQ/I}(S_i,S_j)$ for $S_i$, $S_j$ simple $KQ/I$-modules supported on vertices $i$ and $j$.
\end{Proposition}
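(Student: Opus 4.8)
Proposition \ref{extention2} asserts a dimension equality relating the "quadratic part" of the admissible ideal $I$ to second $\mathrm{Ext}$-groups of simple modules over $A := KQ/I$. This is a standard fact in the representation theory of finite-dimensional algebras — the arrows of the Gabriel quiver of $A$ encode $\mathrm{Ext}^1$ between simples, while a minimal set of relations encodes $\mathrm{Ext}^2$ — but the point here is to check that it survives verbatim when $Q$ is only strongly almost finite rather than finite. Since the paper cites both \cite{D} (Proposition 3.31) and \cite{B} (p.~463), the intended proof is to reduce to the known finite-quiver statement via the strong almost finiteness of $Q$, so that is the route I would take.

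\begin{Proof}
The plan is to construct, for fixed vertices $i,j\in Q_0$, a minimal projective resolution of the simple module $S_i$ over $A=KQ/I$ in low degrees and read off the second syzygy. First I would recall the general recipe: the start of a minimal projective presentation of $S_i$ has the form
$$
P_j^{\,\dim \mathrm{Ext}^2(S_i,S_j)}\oplus(\cdots)\longrightarrow P^{(1)}\longrightarrow P_i\longrightarrow S_i\longrightarrow 0,
$$
where $P_k=Ae_k$ is the indecomposable projective at vertex $k$ and the multiplicity of $P_j$ in the second term records exactly $\dim\mathrm{Ext}^2_{A}(S_i,S_j)$. The combinatorial identification of this multiplicity with $\dim e_j\bigl(I/(IJ+JI)\bigr)e_i$ is precisely the content of the finite-dimensional statement, since $I/(IJ+JI)$ is the space of minimal relations and the $e_j(-)e_i$-component selects the relations running from $i$ to $j$.
\end{Proof}

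The key reduction step is to observe that since $Q$ is strongly almost finite and $I$ is admissible, for any pair $i,j$ all the relevant data is supported on a finite full subquiver. Concretely, because $I$ is admissible we have $I\subseteq J^2$, and because $Q$ is interval-finite and locally finite, the subspace $e_j(I/(IJ+JI))e_i$ is spanned by images of finitely many paths from $i$ to $j$, all passing through a finite interval-closed set of vertices. Dually, the minimal projective resolution of $S_i$ in degrees $\leq 2$ involves only the projectives $P_k$ for $k$ in a finite neighbourhood of $i$, since $A$ is locally bounded and each $\mathrm{Hom}_A(P_k,P_i)$ is finite-dimensional. Thus I would pass to the finite full subquiver $Q'$ on this neighbourhood, with the induced ideal $I'=I\cap KQ'$, apply the classical equality of \cite{B}, p.~463 to the genuinely finite-dimensional algebra $KQ'/I'$, and check that restriction induces isomorphisms both on $e_j(I/(IJ+JI))e_i$ and on $\mathrm{Ext}^2_{\mathrm{mod}\,A}(S_i,S_j)$.

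The main obstacle I anticipate is not the algebraic identity itself — that is entirely classical — but rather the bookkeeping needed to justify that $\mathrm{Ext}^2_{\mathrm{mod}\,KQ/I}(S_i,S_j)$ computed over the infinite algebra agrees with the corresponding group over the finite truncation. Here one must be careful that the category $\mathrm{mod}\,A$ consists of finitely presented representations and that higher $\mathrm{Ext}$ between simples is computed by projective resolutions in $\mathrm{proj}(\mathrm{mod}\,A)$, which are direct sums of representable functors $\mathrm{Hom}_A(P_k,-)$; the local boundedness from Section \ref{three} guarantees these resolutions are locally finite and that truncation to $Q'$ does not alter the first two syzygies. Once this compatibility is established, the equality follows immediately from the finite case, so I would spend most of the write-up verifying that the finite-support reduction is legitimate and then invoking the cited results rather than reproving the quiver-with-relations computation from scratch.
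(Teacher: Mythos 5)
Your proposal follows essentially the same route as the paper's proof: both reduce to a finite full subquiver $Q'$ chosen so that the simples, the first few terms of a projective resolution of $S_i$, and all paths from $i$ to $j$ are supported there, set $I'=I\cap KQ'$, verify that restriction preserves both $e_j(I/(IJ+JI))e_i$ and $\mathrm{Ext}^2$ between the simples, and then invoke the classical finite-dimensional equality from \cite{B}. The only difference is one of emphasis — the paper spends its effort on the explicit check that $e_j(IJ+JI)e_i=e_j(I'J'+J'I')e_i$, which you flag as the bookkeeping to be done — so the argument is correct and matches the paper.
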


\begin{proof}
Although the corresponding result in (\cite{B}, P463) deals with the algebra $KQ/I$ in the case of finite dimension from a finite quiver, we can see its proof can be applied to $KQ/I$ similarly when $Q$ is strongly almost finite. Precisely, since $Q$ is strongly almost finite, for any simple modules $S_i, S_j$  and a projective resolution of $P_3\rightarrow P_2\rightarrow P_1\rightarrow P_0\rightarrow S_i\rightarrow 0$ of $S_i$ in $modKQ/I$, we can choose a finite acyclic subquiver $Q'$ of $Q$ such that $S_i,S_j$ and $ P_k$ are all in $modKQ'$ for $k=0,1,2,3$ and $e_jKQ'e_i=e_jKQe_i$ holds. In this case, we denote $I'=KQ'\cap I$. Then we can regard $P_k$, $k=0,1,2,3$ as projective and $S_i, S_j$ as simple in $modKQ'/I'$. Thus, following the definition of the $Ext^2$-group, we obtain  $Ext^2_{modKQ/I}(S_i,S_j)=Ext^2_{modKQ'/I'}(S_i,S_j)$.

Since $I'=I\cap KQ'$, we have $e_jI'e_i=e_j(I\cap KQ')e_i=e_jIe_i\cap e_jKQ'e_i=e_jIe_i\cap e_jKQe_i=e_jIe_i$. Clearly, $e_jI'J'e_i\subseteq e_jIJe_i$. Write $\sum\limits_{s}k_se_ja_sb_se_i\in e_jIJe_i\subseteq e_jKQe_i$ with $0\neq k_s\in K$ and $a_s\in e_jIe_s$, $b_s\in e_sJe_i$, $a_sb_s\neq 0$. Since $e_jKQe_i=e_jKQ'e_i$, any vertex $s$ through a path in $Q$ from $i$ to $j$ is a vertex of $Q'$ and the paths from $i$ to $s$ and from $s$ to $j$ are in $Q'$. Hence, we have $a_s, b_s\in KQ'$. Thus, $a_s\in KQ'\cap I=I'$, $b_s\in KQ'\cap J=J'$, then $\sum\limits_{s}k_se_ia_sb_se_j\in e_jI'J'e_i$, this implies $e_jIJe_i\subseteq e_jI'J'e_i$. Hence, $e_jIJe_i=e_jI'J'e_i$. Similarly, $e_jJIe_i=e_jJ'I'e_i$. Then  $e_j(IJ+JI)e_i=e_j(I'J'+J'I')e_i$. Therefore, $e_j(I'/(I'J'+J'I'))e_i=e_j(I/(IJ+JI))e_i$. Applying the result in \cite{B} for $KQ'/I'$, we have $$dim Ext^2_{modKQ/I}(S_i,S_j)=dimExt^2_{modKQ'/I'}(S_i,S_j)=dim e_j(I'/(I'J'+J'I'))e_i=dim e_j(I/(IJ+JI))e_i.$$
\end{proof}

For a strongly almost finite quiver $Q$, we can view $KQ$ as a Krull-Schmidt $K$-category whose objects are vertices of $Q$ and their all finite direct sums $\bigoplus\limits_{i\in Q_0} i^{n_i}$ (here $n_i\in\mathbb N$) and whose morphisms $$Hom_{KQ}(\bigoplus\limits_{i\in Q_0} i^{n_i},\bigoplus\limits_{j\in Q_0} j^{m_j}):=\bigoplus\limits_{i,j\in Q_0} KQ(i,j)^{n_im_j}$$ for any objects $\bigoplus\limits_{i\in Q_0} i^{n_i},\bigoplus\limits_{j\in Q_0} j^{m_j}$ of $KQ$.
 Trivially, vertices of $Q$ are just all indecomposable objects in $KQ$.
 Any admissible ideal $I$ of the algebra $KQ$ can be view as a two-sided ideal in $KQ$ as category. Thus, $KQ/I$ can be also regarded as a quotient category.

\begin{Theorem}\label{Gabr}(Categorical Gabriel's Theorem)
Let $\mathcal A$ be a $Hom$-finite Krull-Schmidt category and $Q$ be its Gabriel quiver. If $Q$ is strongly almost finite, then there exists an admissible ideal $I$ of $KQ$ such that $\mathcal A$ is categorically equivalent to $KQ/I$.
\end{Theorem}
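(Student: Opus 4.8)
The plan is to realise $\mathcal A$ as a quotient of the path category $KQ$ by constructing an explicit dense, full $K$-linear functor $F\colon KQ\to\mathcal A$ and setting $I:=\ker F$. First I would fix, for every vertex $i\in Q_0$ (which by definition of the Gabriel quiver is an isomorphism class of indecomposables), a representative object $X_i\in\mathcal A$; and for every arrow $\alpha\colon i\to j$ I would choose a morphism $f_\alpha\in rad_{\mathcal A}(X_i,X_j)$ whose residue class is the corresponding basis vector of $Irr(X_i,X_j)=rad(X_i,X_j)/rad^2(X_i,X_j)$. Because $Q$ is interval-finite, each morphism space $KQ(i,j)$ is finite-dimensional, so sending $i\mapsto X_i$, $\alpha\mapsto f_\alpha$ and extending $K$-linearly and multiplicatively over compositions of arrows yields a well-defined $K$-linear functor $F$. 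On objects $F$ takes $\bigoplus_i i^{n_i}$ to $\bigoplus_i X_i^{n_i}$, so it is bijective on isomorphism classes of objects; density is then immediate from the Krull--Schmidt property of $\mathcal A$.

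Granting that $F$ is full, the theorem follows quickly. Writing $J$ for the arrow ideal of $KQ$, I would check $I=\ker F\subseteq J^2$: a degree-$0$ component $\sum_i c_ie_i$ of an element of $I$ maps to $\sum_i c_i\,id_{X_i}$, which is zero modulo the radical only if all $c_i=0$; and a degree-$1$ component maps to a combination of the $f_\alpha$, whose classes form a basis of $\bigoplus Irr(X_i,X_j)$, forcing that component to vanish as well. Hence $I\subseteq J^2$. Moreover, since $Q$ is interval-finite there are only finitely many paths between any two vertices, so $e_jJ^me_i=0$ for $m$ large (depending on $i,j$); together with $I\subseteq J^2$ this shows $I$ is admissible. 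Consequently the induced functor $KQ/I\to\mathcal A$ is faithful by construction, full (by the fullness of $F$) and dense, hence a categorical equivalence.

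The main obstacle is therefore the fullness of $F$, i.e.\ showing that for all $i,j$ the map $KQ(i,j)\to Hom_{\mathcal A}(X_i,X_j)$ is surjective; this is the categorical analogue of the statement that the radical of a finite-dimensional algebra is generated by $rad/rad^2$, and it is exactly here that the hypothesis that $Q$ is strongly almost finite is indispensable. I would proceed by the standard Krull--Schmidt filtration $rad(X_i,X_j)=\langle F(p):p\ \text{a path}\ i\to j\rangle+rad^{N+1}(X_i,X_j)$ and reduce the remaining vanishing to the classical Gabriel theorem over a finite subquiver, in the spirit of the finite-subquiver reduction already used in Proposition \ref{extention2}. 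Concretely, for fixed $i,j$ one chooses a finite full convex (path-closed) subquiver $Q'\ni i,j$ and applies the classical structure theorem to the finite-dimensional basic algebra $A'=End_{\mathcal A}\big(\bigoplus_{k\in Q'_0}X_k\big)$; the point to be verified is that the Gabriel quiver of $A'$ coincides with $Q'$ and that no morphism $X_i\to X_j$ factors through indecomposables outside $Q'$. Since $Q$ is acyclic ($End(X_i)=K$ follows because there are no loops) and interval-finite, path lengths between $i$ and $j$ are bounded, giving $rad^{N}(X_i,X_j)=rad^\infty(X_i,X_j)$ for $N$ large; the crux is that $rad^\infty(X_i,X_j)=0$, equivalently that a nonzero morphism between indecomposables forces a path in $Q$. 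This last vanishing is where the absence of infinite paths enters: it prevents an unbounded chain of irreducible maps from producing a persistent ``infinite radical'' contribution, and it is the step I expect to require the most care.
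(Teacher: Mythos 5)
Your proposal follows essentially the same route as the paper: the paper likewise defines $F\colon KQ\to\mathcal A$ on vertices and arrows exactly as you do, proves fullness by induction on radical powers (expanding compositions of irreducible morphisms in matrix form and correcting modulo $rad^{n+1}_{\mathcal A}$, rather than by your proposed reduction to the classical finite-dimensional Gabriel theorem), and then verifies admissibility of $I=\ker F$ by the same degree-$0$/degree-$1$ argument. The nilpotency $rad^{m}_{\mathcal A}(\overline i,\overline j)=0$ that you single out as the crux is obtained in the paper directly from the finiteness of $Q(i,j)$, so your plan and the paper's proof rest on the same point.
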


\begin{proof}
 The following proof is parallel with the Gabriel Theorem for basic algebras, see Chapter 2, Theorem 3.7 of \cite{AS}. According to the definition of Gabriel quiver, we can write the set of indecomposable objects (up to isomorphism) of $\mathcal A$ as  $\{\overline{i}\;|\;i\in Q_0\}$, and any path $p:i\rightarrow j$ in $Q$ corresponds to a morphism $\overline{p}:\overline{i}\rightarrow \overline{j}$. Define an additive functor $F:KQ\rightarrow \mathcal A$ as follows: $F(i)=\overline{i}$ and $F(p)=\overline{p}$ for any  $i,j\in Q_0$ and path $p:i\rightarrow j$. For paths $p_1:i\rightarrow j$ and $p_2:j\rightarrow k$ in $Q$, obviously $\overline{p_2}~\overline{p_1}=\overline{p_2p_1}$. Thus, we have $F(p_2p_1)=\overline{p_2}\overline{p_1}=F(p_2)F(p_1)$. Its additivity is defined via linear expansions. Therefore, $F$ is an additive functor.

Since $Q$ is strongly almost finite, we know that $Q(i,j)$ is finite for any $i,j\in Q_0$. Then there exists $m\in \mathbb N$ such that $rad^{m}_{\mathcal A}(\overline{i},\overline{j})=0$. For any morphism $f:\overline{i}\rightarrow \overline{j}$ of $\mathcal A$, we may assume that $f\in rad^{n}_{\mathcal A}(\overline{i},\overline{j})\setminus rad^{n+1}_{\mathcal A}(\overline{i},\overline{j})$ for some $n\in \mathbb N$, and decompose  $f=f^n\cdots f^1$ for irreducible morphisms $f^k=\left(\begin{array}{c} f^k_{ts}\end{array}\right)_{t\in \Lambda_{k+1},s\in \Lambda_{k}}\in Hom_{\mathcal A}(\bigoplus\limits_{s\in \Lambda_{k}} \overline{i_s}, \bigoplus\limits_{t\in \Lambda_{k+1}} \overline{i_t})$, $1\leq k\leq n+1$, where $\Lambda_k$ are finite index sets. In particular, $|\Lambda_1|=|\Lambda_{n+1}|=1$. So, write $\Lambda_1=\{s_1\}$ and $\Lambda_{n+1}=\{s_{n+1}\}$. Then $f=\left(\begin{array}{c} f^n_{ts}\end{array}\right)_{t\in \Lambda_{n+1},s\in \Lambda_{n}}\cdots \left(\begin{array}{c} f^1_{ts}\end{array}\right)_{t\in \Lambda_{2},s\in \Lambda_{1}}$ is exactly a $1\times 1$ matrix, or say, is just an element. Throught the multiplication of matrices, we can write $f$ into two parts, i.e.     $$f=\sum\limits_{(s_{n},\cdots,s_{2})\in \Lambda'}f^n_{s_{n+1}s_{n}}\cdots f^1_{s_{2}s_1}+\sum\limits_{(s_{n},\cdots,s_{2})\in \Lambda''}f^n_{s_{n+1}s_{n}}\cdots f^1_{s_{2}s_1},$$ where $\Lambda'=\{(s_{n},\cdots,s_{2})\;|\;s_k\in \Lambda_k, f^k_{s_{k+1}s_{k}}\;\text{are irreducible for all}\;k\}$, and $\Lambda''=\{(s_{n},\cdots,s_{2})\;|\;s_k\in \Lambda_k\}\setminus \Lambda'$. Thus, $f-\sum\limits_{(s_{n},\cdots,s_{2})\in \Lambda'}f^n_{s_{n+1}s_{n}}\cdots f^1_{s_{2}s_1}\in rad^{n+1}_{\mathcal A}(\overline{i},\overline{j})$.

We claim that there exists $\beta\in Hom_{KQ}(i, j)$ such that $f-F(\beta)\in rad^{n+1}_{\mathcal A}(\overline{i},\overline{j})$. In fact, for any $k$, by the definition of $Q$, the set $B_{i_{s_k}i_{s_{k+1}}}=\{F(\alpha_{s_{k}})+rad^2_{\mathcal A}(\overline{i_{s_k}},\overline{i_{s_{k+1}}})\;|\;\alpha_{s_{k}}\in Q_1(i_{s_k},i_{s_{k+1}})\}$ is a basis of $rad_{\mathcal A}(\overline{i_{s_k}},\overline{i_{s_{k+1}}})/rad^2_{\mathcal A}(\overline{i_{s_k}},\overline{i_{s_{k+1}}})$, where $Q_1(i_{s_k},i_{s_{k+1}})$ denotes the set of arrows from $i_{s_k}$ to $i_{s_{k+1}}$.  Then there exist $a_{s_{k}}\in K$ such that
\begin{equation}\label{rad2}
f^{k}_{s_{k+1}s_k}-\sum a_{s_{k}}F(\alpha_{s_{k}})\in rad^2_{\mathcal A}(\overline{i_{s_k}},\overline{i_{s_{k+1}}}).
\end{equation}
 Therefore, for any $(s_n,\cdots,s_2)\in \Lambda'$, we have
\[\begin{array}{ccl} && \prod\limits_{k=1}^nf^k_{s_{k+1}s_k} -F(\prod\limits_{k=1}^n\sum a_{s_{k}}\alpha_{s_{k}}) \\
  &=&   \prod\limits_{k=1}^n(f^k_{s_{k+1}s_k}-\sum a_{s_{k}}F(\alpha_{s_{k}})+\sum a_{s_{k}}F(\alpha_{s_{k}}))-\prod\limits_{k=1}^n(\sum a_{s_{k}}F(\alpha_{s_{k}}))\\
  &=&   \prod\limits_{k=1}^n(f^k_{s_{k+1}s_k}-\sum a_{s_{k}}F(\alpha_{s_{k}}))+E+\prod\limits_{k=1}^n(\sum a_{s_{k}}F(\alpha_{s_{k}})) -\prod\limits_{k=1}^n(\sum a_{s_{k}}F(\alpha_{s_{k}}))\\
   &=&   \prod\limits_{k=1}^n(f^k_{s_{k+1}s_k}-\sum a_{s_{k}}F(\alpha_{s_{k}}))+E\in rad^{n+1}_{\mathcal A}(\overline{i},\overline{j}),
\end{array}\]
by (\ref{rad2}) and since in the expansion $E$ each monomial includes at least once a factor in the form $f^k_{s_{k+1}s_k}-\sum a_{s_{k}}F(\alpha_{s_{k}})$.
Thus, we have $f-F(\sum\limits_{(s_n,\cdots,s_2)\in \Lambda'}\prod\limits_{k=n}^1\sum a_{s_{k}}\alpha_{s_{k}})=f-\sum\limits_{(s_n,\cdots,s_2)\in \Lambda'}F(\prod\limits_{k=n}^1\sum a_{s_{k}}\alpha_{s_{k}})\in rad^{n+1}_{\mathcal A}(\overline{i},\overline{j}).$
Hence, the claim follows.

Using the fact that $rad^{m}_{\mathcal A}(\overline{i},\overline{j})=0$ and the above claim inductively, there exists $g\in Hom_{KQ}(i, j)$ such that $F(g)=f$. Therefore, $F(i,j): Hom_{KQ}(i,j)\rightarrow Hom_{\mathcal A}(\overline i,\overline j)$ is surjective for all $i,j\in Q_0$.

It is easy to see that $F$ makes a bijection from $Ob(KQ)$ to $Ob(\mathcal A)/\cong$, where $\cong$ means the isomorphisms of objects. Let $I$ be the ideal of $KQ$ generated by $ker(F(i,j))$ for all $i,j\in Q_0$. Then, $Hom_{KQ/I}(i,j)\cong Hom_{\mathcal A}(\overline i,\overline j)$ for all $i,j\in Q_0$. Thus, as categories, we have $KQ/I\cong \mathcal A$.

It remains to prove that $I$ is admissible. Denote $rad_{KQ}$ the radical of $KQ$ as a category, which is generated by all arrows in $Q$. For any $s,t\in Q_0$ and $\sum\limits_{i\in Q_0} a_iid_{i}+\sum\limits_{\alpha_l\in Q_1} b_l\alpha_l+y\in ker(F(s,t))$ with  $y\in rad_{KQ}^2(s,t)$ and $a_i,b_l\in K$, it suffices to prove $a_i$=0 and $b_l=0$ for all $i,l$.

 In fact, $\sum a_i id_{\overline i}+\sum\limits_{\alpha_l\in Q_1}b_l F(\alpha_l)+F(y)=0$ as morphism in $\mathcal A$. Thus, $\sum a_i id_{\overline i}=-\sum\limits_{\alpha_l\in Q_1}b_lF(\alpha_l)-F(y)\in rad_{\mathcal A}(\overline s,\overline t)$. Since $\mathcal A$ is $Hom$-finite, we have $-\sum\limits_{\alpha_l\in Q_1}b_lF(\alpha_l)-F(y)$ is nilpotent. Thus, $a_i=0$ for all $i$. Then,  $\sum\limits_{\alpha_l\in Q_1}b_lF(\alpha_l)=-F(y)\in rad^2(\mathcal A)(\overline s,\overline t)$. Hence, $(\sum\limits_{\alpha_l\in Q_1}b_lF(\alpha_l))+rad^2(\mathcal A)(\overline s,\overline t)=0$ holds in $rad_{\mathcal A}(\overline s,\overline t)/rad^2_{\mathcal A}(\overline s,\overline t)$. By the definition of the Gabriel quiver $Q$, we have $\{F(\alpha_l)+rad^2_{\mathcal A}(\overline s,\overline t)\;|\;\alpha_l\in Q_1\}$ are linearly independent. Therefore, $b_l=0$ for all $l$.
\end{proof}

Note that an admissible ideal $I$ of the algebra $KQ$ is just an admissible ideal $I$ (that is, which is included in $rad^2_{KQ}$) of $KQ$ as a category.

\begin{Theorem}\label{cluster tilting} Keep the forgoing notations.
Suppose that $\mathcal T=add(\{h\cdot \bigoplus\limits_{i=1}^n X_i\;|\;h\in \Gamma\})\in \mathfrak{Add}(\mathcal C^{Q})^{\Gamma}$, with $X_i$ indecomposable, is a maximal $\Gamma$-stable rigid subcategory of $\mathcal C^Q$ without $\Gamma$-loops, $\mathcal T$ contains all projective-injective objects. If $\mathcal T$ is finitely non-Hom-orthogonal, then:

(1) $gl.dim(mod\mathcal T)=3$;

(2) $\mathcal T$ is a cluster tilting subcategory of $\mathcal C^Q$;

(3) For all simple $\mathcal T$-modules $S$ and $S'$ such that $add(\bigoplus\limits_{h\in \Gamma}h\cdot S)=add(\bigoplus\limits_{h\in \Gamma}h\cdot S')$, we have
$Ext_{mod\mathcal T}^1(S,S')=Ext_{mod\mathcal T}^2(S,S')=0$;

(4) $\mathcal T$ has no $\Gamma$-$2$-cycles;

(5) If $X_i$ is non-projective for some $i\in\{1,\cdots,n\}$, then $\mu_{X_i}(\mathcal T)$ has no $\Gamma$-loops and moreover it is a cluster tilting subcategory (certainly is maximal rigid).
\end{Theorem}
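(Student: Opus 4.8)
The plan is to treat the five assertions in the order (1), (2), (3), (4), (5), exploiting that each later item feeds on the earlier ones and that (5) ultimately reduces to re-applying (1)--(2) to the mutated subcategory.

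First I would dispose of (1). Since $\mathcal T$ is maximal $\Gamma$-stable rigid and properly contains the subcategory of projective-injectives (a category consisting only of projective-injectives is never maximal rigid in $\mathcal C^Q$, as the object $(I_{i,[1,1]},e_{i,[1,1]})$ of $\mathcal T_0$ is a non-projective rigid object one could adjoin), some $X_i$ is non-projective; so we are in the ``otherwise'' case of Proposition \ref{globaldim'}, which gives $gl.dim(mod\mathcal T)=3$. For (2) I would invoke Theorem \ref{keylemma}: $\mathcal C^Q$ is a Frobenius, $2$-Calabi-Yau, strongly almost finite category that is extension closed in the abelian category $mod\Lambda_Q$ (Corollary \ref{2-cy}, Theorem \ref{infversion2-c-y}); $\mathcal T$ is functorially finite because it is finitely non-Hom-orthogonal, it is rigid, it contains all projective-injectives, and $gl.dim(mod\mathcal T)=3\le 3$ by (1). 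Hence $\mathcal T$ is cluster tilting.

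For (3) I would read off the vanishing from the explicit minimal projective resolution built in the proof of Proposition \ref{globaldim}. Fix $h\in\Gamma$ with $\mathcal C^Q_h$ Hom-finite and put $\Gamma'=(h)$. For a non-projective $X_i$ the two exchange sequences of Corollary \ref{neighbour} produce the length-three resolution (\ref{projdim})
$$0\to Hom_{\mathcal C_h(\mathcal T)}(X_i,-)\to Hom_{\mathcal C_h(\mathcal T)}(T'',-)\to Hom_{\mathcal C_h(\mathcal T)}(T',-)\to Hom_{\mathcal C_h(\mathcal T)}(X_i,-)\to S_{X_i}\to 0$$
with $T',T''\in\mathcal T'_{(i)}=add(\{h'\cdot X_j\mid j\neq i\})$. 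Applying $Hom_{mod\mathcal C_h(\mathcal T)}(-,S_{X_i})$ and using $S_{X_i}(T')=S_{X_i}(T'')=0$ (as $T',T''$ avoid the $\Gamma$-orbit of $X_i$), the cochain complex collapses to $K\to 0\to 0\to K$, so $Ext^1_{mod\mathcal C_h(\mathcal T)}(S_{X_i},S_{X_i})=Ext^2_{mod\mathcal C_h(\mathcal T)}(S_{X_i},S_{X_i})=0$. I would then transfer this to $mod\mathcal T$ through the orbit decomposition $Ext^j_{mod\mathcal C_h(\mathcal T)}(S_{X_i},S_{X_i})\cong\bigoplus_{h'\in\Gamma'}Ext^j_{mod\mathcal T}(S_{X_i},S_{h'\cdot X_i})$, which forces every summand to vanish for $j=1,2$; using Proposition \ref{is-hom} to choose, for each prescribed $S'=S_{h_0\cdot X_i}$, a suitable Hom-finite $h$ with $h\cdot X_i\cong h_0\cdot X_i$, this covers all simples $S'$ with $add(\bigoplus_{h}h\cdot S')=add(\bigoplus_{h}h\cdot S)$.

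The hard part will be (4), which I would argue by contradiction following Demonet's strategy, translating a $\Gamma$-$2$-cycle into forbidden homological data. Passing to $\mathcal C^Q_h$ (for a Hom-finite $h$) collapses a $\Gamma$-$2$-cycle $X_i\to Z\to h\cdot X_i$ into an ordinary $2$-cycle between $X_i$ and $Z$ in the Gabriel quiver of $\mathcal C^Q_h(\mathcal T)$, i.e. $Z$ occurs as a common summand of $T'$ and $T''$. Using Theorem \ref{Gabr} I would present $\mathcal C^Q_h(\mathcal T)$ as $KQ'/I$ and Proposition \ref{extention2} to identify minimal relations at $[X_i]$ with $Ext^2_{mod\mathcal C_h(\mathcal T)}(S_{X_i},S_{X_i})$; the point is that the $2$-Calabi-Yau property, manifested in the self-dual shape of (\ref{projdim}) (it begins and ends with $Hom_{\mathcal C_h(\mathcal T)}(X_i,-)$), together with the minimality of the exchange approximations $f$ and $g$, forces such a common summand to produce a minimal relation detected at the orbit $[X_i]$, contradicting the vanishing $Ext^2_{mod\mathcal C_h(\mathcal T)}(S_{X_i},S_{X_i})=0$ from (3). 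This is the most delicate point: one must control the second syzygy of $S_{X_i}$ precisely enough to locate the relation at $[X_i]$ rather than merely between the distinct orbits $[X_i]$ and $[Z]$, and this is exactly where the concrete structure of $\mathcal C^Q$ (whose underlying quiver $Q$ is a tree by Observation \ref{extend}(2)) must enter, as in the no-loop/no-$2$-cycle phenomenon for cluster-tilted algebras of hereditary type.

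Finally, for (5) I would proceed with (4) already in hand. For non-projective $X_i$, Lemma \ref{loops-neighbour} (no $\Gamma$-loops $\Leftrightarrow$ neighbours) and Lemma \ref{mutation} give an indecomposable $Y$ with $(X_i,Y)$ neighbours and $\mu_{X_i}(\mathcal T)=add(\mathcal T'_{(i)}\cup\{h\cdot Y\})$ a maximal $\Gamma$-stable rigid subcategory. By Proposition \ref{quivermu} its Gabriel quiver is $\widetilde\mu_{[i]}(Q')$; since $Q'$ has no $\Gamma$-loops (hypothesis) and no $\Gamma$-$2$-cycles (by (4)), Remark \ref{rem2.3}(4) together with the skew-symmetric mutation formula shows $\widetilde\mu_{[i]}(Q')$ has no $\Gamma$-loops, i.e. $\mu_{X_i}(\mathcal T)$ has no $\Gamma$-loops. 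Moreover $\mu_{X_i}(\mathcal T)$ is finitely non-Hom-orthogonal by Lemma \ref{functorially finite} and still contains all projective-injectives (the mutation is at a non-projective vertex). Hence $\mu_{X_i}(\mathcal T)$ satisfies the hypotheses of the present theorem, and applying (1)--(2) to it yields $gl.dim(mod\,\mu_{X_i}(\mathcal T))=3$ and that $\mu_{X_i}(\mathcal T)$ is a cluster tilting, in particular maximal rigid, subcategory, as required.
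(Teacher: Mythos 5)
Parts (1)--(3) of your proposal are essentially the paper's argument: (1) and (2) are exactly Proposition \ref{globaldim'} plus Theorem \ref{keylemma}, and for (3) the paper reads off $Ext^1=Ext^2=0$ from the same four-term projective resolutions coming from the exchange sequences, working directly in $mod\mathcal T$ rather than detouring through $\mathcal C^Q_h(\mathcal T)$ and an (unproved) orbit decomposition of $Ext$-groups; that detour is avoidable and your version is repairable. The genuine gap is in (4). You correctly identify the target --- a nonzero class in $Ext^2_{mod\mathcal T}(S,S')$ between simples in the same $\Gamma$-orbit, detected via Proposition \ref{extention2} as a minimal relation --- but you never supply the mechanism that produces such a relation, and the mechanism you gesture at ($2$-Calabi-Yau self-duality of the resolution, minimality of the exchange approximations, the tree structure of $Q$) is not the one that works and is not developed into an argument. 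The paper's proof is a trace argument: the composite $a\circ b$ of the two irreducible morphisms of the putative $\Gamma$-$2$-cycle lies in the Jacobson radical of $End_{\mathcal C^Q_h(\mathcal T)}(X_i)$ (Lemma \ref{radical}), hence is nilpotent; since $gl.dim(\mathcal C^Q_h(\mathcal T))\leq 3<\infty$ (Proposition \ref{globaldim}), the generalized Lenzing theorem (Theorem \ref{nil}) forces $a\circ b=\sum_i\lambda_i[u_i,v_i]$ to be a sum of commutators; a case split on the coefficient $\lambda_1$ of $[a,b]$ then exhibits a nonzero element of $e_{h\cdot X_i}(I/(IJ+JI))e_{X_i}$ or of $e_{h\cdot X_j}(I/(IJ+JI))e_{X_j}$ in the presentation $\mathcal T\cong KQ_{\mathcal T}/I$ of Theorem \ref{Gabr}, contradicting (3) via Proposition \ref{extention2}. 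Without Theorem \ref{nil} --- which is the entire reason Section 3.3 exists --- your sketch of (4) does not close.

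Part (5) also has a gap. You propose to re-apply (1)--(2) to $\mu_{X_i}(\mathcal T)$, but Proposition \ref{globaldim'} (needed for (1)) requires the subcategory to be \emph{maximal} $\Gamma$-stable rigid, whereas Lemma \ref{mutation} only delivers that $\mu_{X_i}(\mathcal T)$ is $\Gamma$-stable rigid; maximality of the mutated category is precisely what is not yet available at that point (it is recorded in the theorem only as a consequence of being cluster tilting). The paper sidesteps this by passing to the stable category $\underline{\mathcal C^Q}$, checking that $(\underline{\mathcal T},\underline{\mu_{X_i}(\mathcal T)})$ is a $\underline{\mathcal T'_{(i)}}$-mutation pair, and invoking Theorem 5.1 of \cite{IY} together with Lemma II.1.3 of \cite{BIRT} to transfer the cluster-tilting property from $\mathcal T$ to $\mu_{X_i}(\mathcal T)$. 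The no-$\Gamma$-loops claim in your (5), via Proposition \ref{quivermu} and part (4), does match the paper.
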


\begin{proof}
(1) It follows by Proposition \ref{globaldim'};

(2) It follows from Theorem \ref{keylemma} and (1);

(3) By the relation between $S$ and $S'$, we know they are in an orbit under action of $\Gamma$. So, from the fact $\mathcal T$ has no $\Gamma$-loops, we get $Ext^1_{mod\mathcal T}(S,S')=0$.

Concerning $Ext^2_{mod\mathcal T}(S,S')$, assume $S=S_{X_i}$ for some $X_i\in \mathcal T$, set with $\mathcal T'_{(i)}=add(\{h\cdot \bigoplus\limits_{j\neq i}^n X_j\;|\;h\in \Gamma\})$. If $X_i\in \mathcal T$ is not projective,
 $S=S_{X_i}$ has the following projective resolution, given in the proof of Proposition \ref{globaldim}:
$$0\rightarrow Hom_{\mathcal C^Q}(X_i,-)\rightarrow Hom_{\mathcal C^Q}(T'',-)\rightarrow Hom_{\mathcal C^Q}(T',-)\rightarrow Hom_{\mathcal C^Q}(X_i,-)\rightarrow S\rightarrow 0$$ for $T',T''\in \mathcal T'_{(i)}$.
Since $add(\mathcal T'_{(i)})\cap add(\{h\cdot X_i\;|\;h\in \Gamma\})=0$, we have $Hom_{mod\mathcal T}(Hom_{\mathcal C}(T'',-), S')=0$, therefore,
$Ext_{mod\mathcal T}^2(S,S')=0.$

If $X_i\in \mathcal T$ is projective, then due to the proof of Proposition \ref{globaldim},  $S=S_{X_i}$ has the  projective resolution:
$0\rightarrow Hom_{\mathcal C^Q}(Z,-)\rightarrow Hom_{\mathcal C^Q}(T',-)\rightarrow Hom_{\mathcal C^Q}(X_i,-)\rightarrow S\rightarrow 0$ for $T'\in \mathcal T'_{(i)}$ and $Z\in \mathcal T$, and there is an admissible epimorphism $T'\twoheadrightarrow Z$.

If $Z$ has a direct summand $h\cdot X_i$ for some $h\in \Gamma$, then there is an admissible epimorphism $Z\twoheadrightarrow h\cdot X_i$. Therefore, $T'\rightarrow Z\rightarrow h\cdot X_i$ is an admissible epimorphism. Since $h\cdot X_i$ is projective, then $T'\twoheadrightarrow h\cdot X_i$ is split. It contradicts to the fact $T'\in \mathcal T'_{(i)}$. So, $Z$ has no direct summand isomorphic to $h\cdot X_i$ for any $h\in \Gamma$. Thus, $add(Z)\cap add(\{h\cdot X_i\;|\;h\in \Gamma\})=0$. Then $Hom_{mod\mathcal T}(Hom_{\mathcal C}(Z,-), S')=0$. Therefore, $Ext_{mod\mathcal T}^2(S,S')=0.$

(4) Assume that $\mathcal T$ admits a $\Gamma$-$2$-cycle. Without loss of generality, assume there are irreducible morphisms $b:X_i\rightarrow X_j$, $a:X_j\rightarrow h\cdot X_i$ for some $i\neq j$ and $h\in \Gamma$. By Proposition \ref{is-hom}, we may assume that $\mathcal C^Q_h(\mathcal T)$ is $Hom$-finite. By the definition of $J$ in (\ref{J}) and the irreducibility of $a,b$, we have $a\circ b\in J(End_{\mathcal C^Q_h}(X_i))$. Then using Lemma \ref{radical}, $a\circ b$ is nilpotent.  According to Theorem \ref{nil},
we obtain $a\circ b=\sum\limits_{i=1}^m \lambda_i[u_i,v_i]$ as morphisms in $\mathcal C^Q_h(\mathcal T)$
 since $gl.dim(\mathcal C^Q_h(\mathcal T))\leq 3$ by Proposition \ref{globaldim}, where $u_i, v_i$ are compositions of irreducible morphisms in $\mathcal T$ and $[u_i,v_i]=u_i\circ v_i-v_i\circ u_i$. In general, we can write that $a\circ b=\lambda_1[a,b]+\sum\limits_{i\geq 2}^m \lambda_i[u_i,v_i]$, and $\{u_i, v_i\}\neq\{a, b\}$ for $i\geq 2$. By the composition of morphisms in $\mathcal C^Q_h(\mathcal T)$, $a\circ b=ab$ (as the composition in $\mathcal T$), $b\circ a=(h\cdot b)a$.

If $\lambda_1\neq 1$, then
\[\begin{array}{ccl} ab=a\circ b
 & = &  id_{h\cdot X_i} a\circ b id_{X_i}\\
 & = &  \frac{1}{1-\lambda_1}(-\lambda_1 id_{h\cdot X_i} b\circ a id_{X_i}+\sum\limits_{i\geq 2}id_{h\cdot X_i}[u_i,v_i] id_{X_i})\\
 & = &  \frac{1}{1-\lambda_1}\sum\limits_{i\geq 2}id_{h\cdot X_i}[u_i,v_i] id_{X_i}.
\end{array}\]
  which is an equality as morphisms in $\mathcal T$. Let $Q_{\mathcal T}$ be the Gabriel quiver of $\mathcal T$. Then by Theorem \ref{Gabr}, $\mathcal T\cong KQ_{\mathcal T}/I$ for an admissible ideal $I$. Hence, $ab-\frac{1}{1-\lambda_1}\sum\limits_{i\geq 2}id_{h\cdot X_i}[u_i,v_i] id_{X_i}\in I\setminus \{0\}$ in $KQ_{\mathcal T}$, where $\{0\}$ is deleted according to that $\{u_i, v_i\}\neq\{a, b\}$ for $i\geq 2$. Thus, we have $\bar 0\neq ab-\frac{1}{1-\lambda_1}\sum\limits_{i\geq 2}id_{h\cdot X_i}[u_i,v_i] id_{X_i}+e_{h\cdot X_i}(IJ+JI)e_{X_i}\in e_{h\cdot X_i}(I/(IJ+JI))e_{X_i}$ for $J$ the ideal of $KQ_{\mathcal T}$ generated by all arrows of $Q_{\mathcal T}$. Therefore, $e_{h\cdot X_i}(I/(IJ+JI))e_{X_i}\neq \bar 0$.

If $\lambda_1=1$,
$$(h\cdot b)a=b\circ a=id_{h\cdot X_j} a\circ b id_{X_j}=-\sum\limits_{i\geq 2}id_{h\cdot X_j}[u_i,v_i] id_{X_j}$$
which is also an equality as morphisms in $\mathcal T$. Similarly, we have $e_{h\cdot X_j}(I/(IJ+JI))e_{X_j}\neq \bar 0$.

In both cases, by Proposition \ref{extention2}, we obtain $Ext^2_{mod\mathcal T}(S_{X_i}, S_{h\cdot X_i})\neq 0$ or $Ext^2_{mod\mathcal T}(S_{X_j}, S_{h\cdot X_j})\neq 0$. But, it contradicts to (3).

(5) By Proposition \ref{quivermu}, we have $Q'=\widetilde{\mu}_{i}(Q)$, where $Q$ is the Gabriel quiver of $\mathcal T$ and $Q'$ is the Gabriel quiver of $\mu_{X_i}(\mathcal T)$. Moreover, by (4), $Q$ has no $\Gamma$-$2$-cycles, then $Q'$ has no $\Gamma$-loops, as well as $\mu_{X_i}(\mathcal T)$.

Recall that in\cite{IY}, given a functorially finite subcategory $\mathcal D$ of a triangulated category $\mathcal A$, a pair of subcategories $(\mathcal X,\mathcal Y)$ of $\mathcal A$ is called a {\bf $\mathcal D$-mutation pair} if $$\mathcal D\subseteq \mathcal X\subseteq \mu(\mathcal Y;\mathcal D):=add(\{T\;|\;T\rightarrow D \overset{b}{\rightarrow} Y, Y\in \mathcal Y, D\in \mathcal D, b\;\text{is a right}\;\mathcal D\text{-approximation}\}),$$ and $$\mathcal D\subseteq \mathcal Y \subseteq \mu^{-1}(\mathcal X;\mathcal D):=add(\{T\;|\;X\overset{a}{\rightarrow} D \rightarrow T, X\in \mathcal X, D\in \mathcal D, a\;\text{is a left}\;\mathcal D\text{-approximation}\}).$$
Since $\mathcal T$ is finitely non-Hom-orthogonal, the subcategory $\mathcal T'_{(i)}$ is finitely non-Hom-orthogonal. Hence, $\underline{\mathcal T'_{(i)}}$ is a finitely non-Hom-orthogonal subcategory of $\underline{\mathcal C^Q}$, then it is functorially finite. According to the definition of $\mu_{X_i}(\mathcal T)$, in the triangulated category $\underline{\mathcal C^Q}$, it is easy to see that $\underline{\mathcal T'_{(i)}}\subseteq \underline{\mathcal T}\subseteq \mu(\underline{\mu_{X_i}(\mathcal T)};\underline{\mathcal T'_{(i)}})$ and $\underline{\mathcal T'_{(i)}}\subseteq \underline{\mu_{X_i}(\mathcal T)}\subseteq \mu^{-1}(\underline{\mathcal T};\underline{\mathcal T'_{(i)}})$. Thus, $(\underline{\mathcal T},\underline{\mu_{X_i}(\mathcal T)})$ is a $\underline{\mathcal T'_{(i)}}$-mutation pair. By Theorem 5.1 of \cite{IY}, $\underline{\mathcal T}$ is a cluster tilting subcategory of $\underline{\mathcal C^Q}$ if and only if $\underline{\mu_{X_i}(\mathcal T)}$ is so. By Lemma II.1.3 of \cite{BIRT}, $\mathcal T$($\mu_{X_i}(\mathcal T)$) is a cluster tilting subcategory of $\mathcal C^Q$ if and only if  $\underline{\mathcal T}$ ($\underline{\mu_{X_i}(\mathcal T)}$) is a cluster tilting subcategory of $\underline{\mathcal C^Q}$. Thus, $\mu_{X_i}(\mathcal T)$ is a cluster tilting subcategory of $\mathcal C^Q$ since $\mathcal T$ is such one of $\mathcal C^Q$.
\end{proof}

Note that
the (1)-(4) of the above theorem is inspired by Theorem 3.33, \cite{D}, but they have various circumstances.

We are now already to give the proof of the main result Theorem \ref{mainlemma}: \\ \\
{\bf Proof of Theorem \ref{mainlemma}: }

For any sequence $([i_1],\cdots,[i_s],[i_{s+1}])$ of orbits, by Remark \ref{rem2.3} (3), if $\widetilde{\mu}_{[i_s]}\cdots\widetilde{\mu}_{[i_1]}(Q)$ has no $\Gamma$-$2$-cycles and no $\Gamma$-loops, then $\widetilde{\mu}_{[i_{s+1}]}\widetilde{\mu}_{[i_s]}\cdots\widetilde{\mu}_{[i_1]}(Q)$ has no $\Gamma$-loops. Hence,
following Definition \ref{unfolding} and Proposition \ref{covering}, it suffices to prove the claim that $Q$ has no $\Gamma$-$2$-cycles and no $\Gamma$-loops and $\widetilde{\mu}_{[i_s]}\cdots\widetilde{\mu}_{[i_1]}(Q)$ has no $\Gamma$-$2$-cycles for any sequence $([i_1],\cdots,[i_s])$ of orbits.

  In the case $Q$ is not a linear quiver of type $A$. From Construction \ref{construction}, it is easy to see that $Q$ has no $\Gamma$-$2$-cycles and no $\Gamma$-loops. By Lemma \ref{T_0} (4), the Gabriel quiver of $\underline{\mathcal T_0}$ is isomorphic to $Q$. Since the Gabriel quiver of $\underline{\mu_{X_{i_s}}\cdots\mu_{X_{i_1}}(\mathcal T_0)}$ is a subquiver of the Gabriel quiver of $\mu_{X_{i_s}}\cdots\mu_{X_{i_1}}(\mathcal T_0)$, then by Proposition \ref{quivermu}, it suffices to prove that $\mu_{X_{i_s}}\cdots\mu_{X_{i_1}}(\mathcal T_0)$ has no $\Gamma$-$2$-cycles, where $X_{i_j}$ are the indecomposable objects in $\mathcal C$ corresponding to vertices $i_j$ for all $1\leq j\leq s$. This follows inductively by Theorem \ref{cluster tilting} and Lemma \ref{functorially finite}.
   Also, we know that the Gabriel quiver of $\underline{\mu_{X_{i_s}}\cdots\mu_{X_{i_1}}(\mathcal T_0)}$ is isomorphic to  $\widetilde{\mu}_{[i_s]}\cdots\widetilde{\mu}_{[i_1]}(Q)$, where $i_j$ are the vertices in $Q$ correspondence to $X_{i_j}$ for $1\leq j\leq s$. Thus, the above claim holds.

 In the case $Q$ is a linear quiver of type $A$, since $Q$ is strongly almost finite and the it has no infinite path, thus it is a  finite quiver. Moreover, it is clear that $\Gamma=\{e\}$. In this case, our result follows at once.\;\;\; \;\;\;\;\;\;\;\;\;\;\;\;\;\;\;\;\;\;\;
$\square$
\\

\section{Applications to positivity and F-polynomials }

In this section, we apply the unfolding theorem (Theorem \ref{mainlemma}) to study positivity and F-polynomials of an acyclic \emph{sign-skew-symmetric} cluster algebra $\mathcal A(\Sigma)$ in a uniform method. That is, in order to prove a property for $\mathcal A(\Sigma)$, one can firstly try to check this property for the skew-symmetric cluster algebra $\mathcal A(\widetilde{\Sigma})$ of \emph{infinite rank,} where $\widetilde{\Sigma}$ is the unfolding of $\Sigma$; secondly, try to give a surjective algebra morphism $\pi: \; \mathcal A(\widetilde{\Sigma})\rightarrow \mathcal A(\Sigma)$ which preserves such property.

\subsection{The surjective algebra morphism $\pi$}.

Let  $B\in Mat_{n\times n}(\mathbb Z)$ be sign-skew-symmetric, $B'\in Mat_{m\times n}(\mathbb Z)$ and $\widetilde B=\left(\begin{array}{c}
B  \\
B'
\end{array}\right)\in Mat_{(n+m)\times n}(\mathbb Z)$.  Assume that the exchange matrix $B\in Mat_{n\times n}(\mathbb Z)$ is acyclic. Denote $B''=\left(\begin{array}{cc}
B  & -B'^{T}\\
B' & 0
\end{array}\right)$. Since $B$ is acyclic, $B''$ is acyclic. Let $(Q'', \Gamma)$ be an unfolding of $B''$. Let $\widetilde Q$ be the quiver obtained from $Q''$ by freezing all vertices of $Q''$ corresponding to $B'$. Denote by $Q$ the subquiver of $\widetilde Q$ generated by the exchangeable vertices. Clearly, $(Q,\Gamma)$ is just the unfolding of $B$. In fact, it is easy to verify that $Q$ is a covering of $B$ and  $\widetilde\mu_{[i_s]}\cdots\widetilde\mu_{[i_1]}(Q)$ is a subquiver of $\widetilde\mu_{[i_s]}\cdots\widetilde\mu_{[i_1]}(\widetilde Q)$ for any sequences $([i_1],\cdots,[i_s])$ of orbits.

Let $\widetilde\Sigma=\Sigma(\widetilde Q)=(\widetilde X, \widetilde Y, \widetilde Q)$ be the seed associated with $\widetilde Q$, where $\widetilde X=\{x_{u}\;|\;u\in Q_0\}$, $\widetilde Y=\{y_v\;|\;v\in \widetilde Q_0\setminus Q_0\}$. Let $\Sigma=\Sigma(\widetilde B)=(X, Y,\widetilde B)$ be the seed associated with $\widetilde B$, where $X=\{x_{[i]}\;|\;i=1,\cdots,n\}$, $Y=\{y_{[j]}\;|\;j=1,\cdots,m\}$. It is clear that there is a surjective algebra homomorphism:
\begin{equation}\label{pi}
\pi:\;\; \mathbb Q[x^{\pm 1}_i, y_j\;|\;i\in Q_0, j\in \widetilde Q_0\setminus Q_0]\rightarrow \mathbb Q[x^{\pm 1}_{[i]},y_{[j]}\;|\;1\leq i\leq n, 1\leq j\leq m]\;\;\;\text{with}\;\;\; x_{i}\mapsto x_{[i]}, y_j\mapsto y_{[j]}
\end{equation}
where $[i]$, $[j]$  is the orbits of $i$, $j$ respectively under the action of $\Gamma$.

For any cluster variable $x_u\in \widetilde X$, define $\widetilde\mu_{[i]}(x_u)=\mu_u(x_u)$ if $u\in [i]$; otherwise,  $\widetilde\mu_{[i]}(x_u)=x_u$ if $u\not\in [i]$. Formally, we define $\widetilde\mu_{[i]}(\widetilde X)=\{\widetilde\mu_{[i]}(x)\;|\;x\in \widetilde X\}$ and $\widetilde\mu_{[i]}({\widetilde X}^{\pm 1})=\{\widetilde\mu_{[i]}(x)^{\pm 1}\;|\;x\in \widetilde X\}$.

\begin{Lemma}\label{mutationorbit}
Keep the notations as above. Assume that $B$ is acyclic. If $[i]$ is an orbit of vertices with $i\in Q_0$, then we have

(1)~$\widetilde\mu_{[i]}(x_j)$ is a cluster variable of $\mathcal A(\widetilde Q)$ for any $j\in Q_0$,

(2)~$\widetilde\mu_{[i]}(\widetilde X)$ is algebraic independent over $\mathbb Q[y_j\;|\;j\in \widetilde Q_0\setminus Q_0]$.
\end{Lemma}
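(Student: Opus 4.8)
The plan is to reduce both statements to properties of single mutations by exploiting that the vertices of one orbit $[i]$ are pairwise non-adjacent. Since $(Q,\Gamma)$ is a covering of $B$, it has no $\Gamma$-loops, so for any two distinct $i',i''\in[i]$ there is no arrow between them in $\widetilde Q$; equivalently $b_{i''i'}=0$. Hence the mutation $\mu_{i'}$ at a vertex $i'\in[i]$ affects neither the variable $x_{i''}$ nor the column $i''$ of the exchange matrix for any other $i''\in[i]$, so the individual mutations $\{\mu_{i'}\}_{i'\in[i]}$ comprising $\widetilde\mu_{[i]}$ do not interfere. For Part (1): if $j\notin[i]$ then $\widetilde\mu_{[i]}(x_j)=x_j\in\widetilde X$ is an initial cluster variable; if $j\in[i]$, then by the definition of orbit mutation on cluster variables $\widetilde\mu_{[i]}(x_j)=\mu_j(x_j)$, which is exactly the new cluster variable produced by the single seed mutation $\mu_j$ of $\widetilde\Sigma$ via (\ref{exchangerelation}) and is therefore a cluster variable of $\mathcal A(\widetilde Q)$. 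The non-interference noted above guarantees that the exchange relation defining $\mu_j(x_j)$ is unchanged by the other factors $\mu_{i'}$, $i'\ne j$, so this reading is consistent with Fact \ref{fact}.

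For Part (2), write $L=\mathbb Q[y_v\,|\,v\in\widetilde Q_0\setminus Q_0]$. For each $i'\in[i]$ set $z_{i'}=\widetilde\mu_{[i]}(x_{i'})=\mu_{i'}(x_{i'})=P_{i'}/x_{i'}$, where by (\ref{exchangerelation}) $P_{i'}=\prod_{b_{ti'}>0}t^{b_{ti'}}+\prod_{b_{ti'}<0}t^{-b_{ti'}}$ is a nonzero binomial indexed by the neighbours of $i'$. The crucial point is that, since $b_{i''i'}=0$ for all $i''\in[i]$, the polynomial $P_{i'}$ lies in $L[x_u\,|\,u\in Q_0\setminus[i]]$ and involves no variable $x_{i''}$ with $i''\in[i]$. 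Thus $\widetilde\mu_{[i]}(\widetilde X)=\{x_u\,|\,u\notin[i]\}\cup\{z_{i'}\,|\,i'\in[i]\}$, each $z_{i'}$ being a ratio whose numerator avoids all orbit variables.

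I would then prove algebraic independence by clearing denominators. Because any algebraic dependence involves only finitely many variables, suppose $F=\sum_{\alpha}c_\alpha\prod_{l}z_{i_l'}^{\alpha_l}=0$ is a nontrivial relation with $c_\alpha\in L[x_{u_1},\dots,x_{u_k}]$ ($u_l\notin[i]$) and finitely many distinct $i_l'\in[i]$. Substituting $z_{i_l'}=P_{i_l'}/x_{i_l'}$ and multiplying by a suitable monomial $\prod_l x_{i_l'}^{d_l}$ yields a polynomial identity in the original transcendence basis $\{x_u\,|\,u\in Q_0\}$ over $L$. Since each $c_\alpha$ and each $P_{i_l'}$ is free of the orbit variables $x_{i_l'}$, distinct exponent vectors $\alpha$ produce distinct monomials in the $x_{i_l'}$; by the algebraic independence of $\{x_u\}$ over $L$ every coefficient $c_\alpha\prod_l P_{i_l'}^{\alpha_l}$ vanishes, and as the $P_{i_l'}$ are nonzero this forces $c_\alpha=0$ for all $\alpha$, a contradiction.

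The main obstacle is organizational rather than conceptual. First, I must make sure $P_{i'}$ genuinely contains no orbit variable: this is precisely where the no-$\Gamma$-loops hypothesis enters, and it is what makes mutating the whole orbit "all at once" harmless. Second, the orbit $[i]$ may be infinite, so $\widetilde\mu_{[i]}(\widetilde X)$ may introduce infinitely many new variables $z_{i'}$; this is dispatched by the observation that any algebraic dependence involves only finitely many of them, reducing everything to the finite computation above.
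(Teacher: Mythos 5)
Your proof is correct. Part (1) coincides with the paper's (which simply says it follows from the definition of $\widetilde\mu_{[i]}$ on cluster variables). For part (2) you and the paper start from the same key input --- the absence of $\Gamma$-loops forces $b_{i''i'}=0$ for all $i',i''\in[i]$, so the individual mutations composing $\widetilde\mu_{[i]}$ do not interact --- but you then finish differently. The paper picks the finite set $I\subseteq[i]$ of orbit vertices relevant to a given finite subset $\{z_1,\dots,z_s\}$, observes $z_t=\mu_{i_t}(x_t)=\prod_{i'\in I}\mu_{i'}(x_t)$, and concludes that the finite subset sits inside the genuine cluster $\prod_{i'\in I}\mu_{i'}(\widetilde X)$, so independence follows from the general fact that a cluster obtained by a finite mutation sequence is a transcendence basis. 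You instead prove independence directly: writing $z_{i'}=P_{i'}/x_{i'}$ with the exchange binomial $P_{i'}$ free of every orbit variable, clearing denominators, and comparing monomials in the $x_{i'}$'s against the algebraic independence of the initial extended cluster over $\mathbb Q[y_v]$. Your route is more elementary and self-contained (it avoids invoking that mutation preserves transcendence bases, which in infinite rank requires the finite-support reduction anyway), at the cost of a slightly longer computation; the paper's route is shorter and, more importantly, its intermediate conclusion --- that any finite subset of $\widetilde\mu_{[i]}(\widetilde X)$ lies in an honest cluster of $\mathcal A(\widetilde\Sigma)$ --- is exactly what gets reused and strengthened in Lemma \ref{co} and Lemma \ref{laurentpre}, so it is the more economical choice in context.
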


\begin{proof}
(1)~ It follows immediately by the definition.

(2)~ By the definition, we need to prove any finite variables in $\widetilde\mu_{[i]}(\widetilde X)$ is algebraic independent. It suffices to show that any finite variables in $\widetilde\mu_{[i]}(\widetilde X)$ are in a cluster of $\mathcal A(\widetilde\Sigma)$. Assume that $\{z_1,\cdots,z_s\}\subseteq \widetilde\mu_{[i]}(\widetilde X)$, by definition of $\widetilde\mu_{[i]}(\widetilde X)$, there exist $s$ vertices $i_t, t=1,\cdots,s$ in $[i]$ such that $z_t=\mu_{i_t}(x_t)$ for $t=1,\cdots, s$. Denote $I=\bigcup\limits_{t=1,\cdots,s}\{i_t\}$, since $\widetilde Q$ has no $\Gamma$-loops, we have $z_t=\mu_{i_t}(x_t)=\prod\limits_{i'\in I}\mu_{i'}(x_t)$, then $\{z_1,\cdots,z_s\}\subseteq \prod\limits_{i'\in I}\mu_{i'}(\widetilde X)$. Our result follows.
\end{proof}

By Lemma \ref{mutationorbit}, $\widetilde\mu_{[i]}(\widetilde \Sigma):=(\widetilde\mu_{[i]}(\widetilde X), \widetilde Y, \widetilde\mu_{[i]}(\widetilde Q))$ is a seed. Thus, we can define $\widetilde\mu_{[i_s]}\widetilde\mu_{[i_{s-1}]}\cdots\widetilde\mu_{[i_1]}(x)$ and $\widetilde\mu_{[i_s]}\widetilde\mu_{[i_{s-1}]}\cdots\widetilde\mu_{[i_1]}(\widetilde X)$ and $\widetilde\mu_{[i_s]}\widetilde\mu_{[i_{s-1}]}\cdots\widetilde\mu_{[i_1]}(\widetilde \Sigma)$ for any sequence $([i_1],[i_2],\cdots,[i_s])$ of orbits in $Q_0$.

\begin{Lemma}\label{co}
Keep the notations as above. Assume that $B$ is acyclic. If $a$ is a vertex of $Q$, then any finite cluster variables in a cluster of $\mathcal A(\widetilde \mu_{[a]}\widetilde \Sigma)$ is contained in a cluster of $\mathcal A(\widetilde \Sigma)$.
\end{Lemma}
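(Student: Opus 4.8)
The plan is to reduce the orbit mutation $\widetilde\mu_{[a]}$, which by Fact \ref{fact} equals the infinite product $\prod_{a'\in[a]}\mu_{a'}$ of single-vertex mutations, to a \emph{finite} product of such mutations as soon as only finitely many output variables are of interest. First I would record that a cluster of $\mathcal A(\widetilde\mu_{[a]}\widetilde\Sigma)$ arises from a seed of the form $\mu_{v_r}\cdots\mu_{v_1}(\widetilde\mu_{[a]}(\widetilde\Sigma))$ for some finite sequence of vertices $v_1,\dots,v_r$, and that the chosen finitely many cluster variables $z_1,\dots,z_s$ sit at finitely many vertices $u_1,\dots,u_s$ of that seed. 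Since $\widetilde Q$ has no $\Gamma$-loops, no two distinct vertices of the orbit $[a]$ are adjacent, so the single mutations $\mu_{a'}$ ($a'\in[a]$) pairwise commute; this makes the truncation of the infinite product to any finite sub-product meaningful and keeps such a sub-product inside the mutation structure of $\mathcal A(\widetilde\Sigma)$.

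The heart of the argument is a locality/cone-of-influence estimate. Because $\widetilde Q$ is strongly almost finite---in particular locally finite---I would produce a finite full subquiver $P$ of $\widetilde Q$ large enough to contain $u_1,\dots,u_s$, the mutation vertices $v_1,\dots,v_r$, every vertex within $r$ mutation steps of the $u_t$ (so that the exchange relations computing $z_1,\dots,z_s$ never read data outside $P$), together with all vertices of $[a]$ adjacent to $P$ (needed because an orbit-mutated arrow $b'_{jk}$ with $j,k\in P$ only receives contributions from $a'\in[a]$ adjacent to both $j$ and $k$). Set $J:=[a]\cap P_0$, a finite set. Then, using the explicit formulas in Definition \ref{orbitmu} together with the locality of single mutations, the seeds $\prod_{a'\in J}\mu_{a'}(\widetilde\Sigma)$ and $\widetilde\mu_{[a]}(\widetilde\Sigma)$ carry the same exchange variables and the same arrows when restricted to $P$; the omitted factors $\mu_{a'}$ with $a'\in[a]\setminus J$ only alter data outside $P$ and, by commutativity, may be discarded without affecting the finite computation.

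Consequently the variables $z_1,\dots,z_s$ are computed identically from $\prod_{a'\in J}\mu_{a'}(\widetilde\Sigma)$ as from $\widetilde\mu_{[a]}(\widetilde\Sigma)$, so
$$\{z_1,\dots,z_s\}\subseteq \mu_{v_r}\cdots\mu_{v_1}\Big(\textstyle\prod_{a'\in J}\mu_{a'}(\widetilde X)\Big),$$
and the right-hand side is a cluster of $\mathcal A(\widetilde\Sigma)$ reached by the finite mutation sequence consisting of the (commuting) mutations indexed by $J$ followed by $v_1,\dots,v_r$. This gives the claim. When $r=0$ this is precisely the mechanism already used in the proof of Lemma \ref{mutationorbit}(2), so the present statement may be viewed as its extension past the initial orbit mutation.

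I expect the main obstacle to be making the cone-of-influence step fully rigorous: one must verify that finitely many single mutations, far from $P$, genuinely leave unchanged both the exchange variables at the vertices of $P$ and all the arrows internal to $P$ that are read off during the subsequent $\mu_{v_i}$, so that truncating the orbit mutation is exact rather than merely approximate. This is where local finiteness of $\widetilde Q$ and the commutativity coming from the absence of $\Gamma$-loops (with the control on two-sided arrow contributions from the absence of $\Gamma$-$2$-cycles) do all the work.
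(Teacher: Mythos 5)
Your proposal is correct and follows essentially the same route as the paper: both arguments truncate the infinite commuting product $\prod_{a'\in[a]}\mu_{a'}$ (commuting because no $\Gamma$-loops means no arrows within the orbit) to a finite sub-product indexed by the orbit vertices that can influence the finitely many output variables and the arrows read during the subsequent mutations, using local finiteness of $\widetilde Q$. The paper's proof is precisely the rigorous implementation of your ``cone of influence'': it runs an induction on the length $s$ of the mutation sequence, at each step enlarging the tracked vertex sets ($\Delta_1$ to its incidence closure, $\Delta_2$ to $\Delta_1\cup\Delta_2$) and proving that both the cluster variables and the local quiver data agree with those computed from a finite sub-product $\prod_{a'\in S'}\mu_{a'}$ for every finite $S'\supseteq S$ --- exactly the step you identify as the main obstacle, and note that the correct radius must be grown one adjacency step per mutation rather than fixed in advance as ``within $r$ steps'' of the original quiver.
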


\begin{proof}
Let $\Delta_1$ and $\Delta_2$ be two finite subsets of $Q_0$. For any finite cluster variables $z_j=\mu_{i_s}\cdots\mu_{i_1}(\widetilde \mu_{[a]}x_j)$, $j\in\Delta_1$ of $\mathcal A(\widetilde \mu_{[a]}\widetilde \Sigma)$ and finite vertices $\Delta_2$ of $\mu_{i_s}\cdots\mu_{i_1}(\widetilde \mu_{[a]}\widetilde Q)$, we prove inductively on $s$ that there exists a finite set $S\subseteq [a]$ such that (1) $z_j=\mu_{i_s}\cdots\mu_{i_1}(\prod\limits_{a'\in [S']} \mu_{a'}x_j)$ for any $j\in \Delta_1$ and finite set $S\subseteq S'\subseteq [a]$; (2) for any $a_j\in \Delta_2$, the subquiver of $\mu_{i_s}\cdots\mu_{i_1}(\widetilde \mu_{[a]}\widetilde Q)$ formed by the arrows incident with $a_j$ equals the subquiver of $\mu_{i_s}\cdots\mu_{i_1}(\prod\limits_{a'\in [S']} \mu_{a'}\widetilde Q)$ formed by the arrows incident with $a_j$.

For $s=0$, set $S=S_1\cup S_2$, where $S_1=\{j\in [a]\;|\; j\in \Delta_1\}$ and $S_2=\{c\in [a]\;|\; j\;\text{incidents with or in}\;\Delta_2\}$. For any finite set $S'$ such that $S\subseteq S'\subseteq [a]$, by the definition of $z_j=\widetilde\mu_{[a]}x_j$, it is easy to see that $z_j=\prod\limits_{a'\in [S']} \mu_{a'}x_j$, (1) holds; for any $a_j\in \Delta_2$, denote $\widetilde \mu_{[a]}(\widetilde Q)=(b'_{ij})$ and $\prod\limits_{a'\in [S']} \mu_{a'}(\widetilde Q)=(b''_{ij})$. If $a_j\in [a]$, since $a_j\in S'$, we have $b''_{a_jk}=-b_{a_jk}=b'_{a_jk}$. If $a_j\not\in [a]$, since all $c\in [a]$ incident with $a_j$ are in $S'$, we have $b''_{a_jk}=b_{a_jk}+\sum\limits_{a'\in[a]}\frac{|b_{a_ja'}|b_{a'k}+b_{a_ja'}|b_{a'k}|}{2}=b'_{a_jk}$. Thus, (2) holds.

Assume that the statements hold for $s-1$. For case $s$, set $\Delta'_1=\{j\in Q_0\;|\; j\;\text{incidents with or in}\;\Delta_1\}$ and $\Delta'_2=\Delta_1\cup\Delta_2$. Applying the assumption to $\Delta'_1$ and $\Delta'_2$, there exists a finite subset $S'\subseteq [a]$ such that (a) $\mu_{i_{s-1}}\cdots\mu_{i_1}(\widetilde \mu_{[a]}x_j)=\mu_{i_{s-1}}\cdots\mu_{i_1}(\prod\limits_{a'\in [S'']} \mu_{a'}x_j)$ for any $j$ incidents with or in $\Delta_1$ and finite set $S'\subseteq S''\subseteq [a]$ and (b) for any $a'_j\in \Delta'_2$, the subquiver of $\mu_{i_{s-1}}\cdots\mu_{i_1}(\widetilde \mu_{[a]}\widetilde Q)$ formed by the arrows incident with $a'_j$ equals the subquiver of $\mu_{i_{s-1}}\cdots\mu_{i_1}(\prod\limits_{a'\in [S']} \mu_{a'}\widetilde Q)$ formed by the arrows incident with $a'_j$. For any $j\in \Delta_1$, if $j\neq i_s$, since $\Delta_1\subseteq \Delta'_1$, by (a), we have $$\mu_{i_s}\cdots\mu_{i_1}(\widetilde \mu_{[a]}x_j)=\mu_{i_{s-1}}\cdots\mu_{i_1}(\widetilde \mu_{[a]}x_j)=\mu_{i_{s-1}}\cdots\mu_{i_1}(\prod\limits_{a'\in [S'']} \mu_{a'}x_j)=\mu_{i_{s}}\cdots\mu_{i_1}(\prod\limits_{a'\in [S'']} \mu_{a'}x_j);$$ if $j=i_s$, since $j\in \Delta'_1,\Delta'_2$, by (a), (b) and the definition of mutation of cluster variable, we have $\mu_{i_s}\cdots\mu_{i_1}(\widetilde \mu_{[a]}x_j)=\mu_{i_{s}}\cdots\mu_{i_1}(\prod\limits_{a'\in [S'']} \mu_{a'}x_j).$ (1) holds. For any $a_j\in \Delta_2$, since $\Delta_2\subseteq \Delta'_2$, by (b), we have the subquiver of $\mu_{i_s}\cdots\mu_{i_1}(\widetilde \mu_{[a]}\widetilde Q)$ formed by the arrows incident with $a_j$ equals the subquiver of $\mu_{i_s}\cdots\mu_{i_1}(\prod\limits_{a'\in [S']} \mu_{a'}\widetilde Q)$ formed by the arrows incident with $a_j$. (2) holds.
Therefore, the statements hold for all $s\in \mathbb N$.

In particular, for any finite cluster variables $\{z_1,\cdots,z_m\}$ in a cluster of $\mathcal A(\widetilde \mu_{[a]}\widetilde \Sigma)$, let $\Delta_1$ be the vertices correspond to the cluster variables and $\Delta_2=\emptyset$. Applying the statements to $\Delta_1$ and $\Delta_2$, our result follows.
\end{proof}

\begin{Lemma}\label{laurentpre}
Keep the above notations. Assume that $B$ is acyclic. Then for any sequence of orbits $([i_1],\cdots,[i_s])$ in $Q_0$, we have

(1)~ The cluster variables of $\mathcal A(\widetilde\mu_{[i_s]}\cdots\widetilde\mu_{[i_1]}(\widetilde \Sigma))$ are the same as the cluster variables of $\mathcal A(\widetilde \Sigma)$;

(2)~ Any finite variables in $\widetilde\mu_{[i_s]}\cdots\widetilde\mu_{[i_1]}(\widetilde X)$ is contained in a cluster of $\mathcal A(\widetilde \Sigma)$;

(3)~ Any variable in $\widetilde\mu_{[i_s]}\cdots\widetilde\mu_{[i_1]}(\widetilde X)$ is a cluster variable of $\mathcal A(\widetilde \Sigma)$;

(4)~ Any monomial with variables in $\widetilde\mu_{[i_s]}\cdots\widetilde\mu_{[i_1]}(\widetilde X)$ is a cluster monomial of $\mathcal A(\widetilde \Sigma)$.
\end{Lemma}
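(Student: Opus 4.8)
The plan is to reduce all four assertions to a single uniform statement that generalizes Lemma \ref{co}, and then run an induction on the length $s$ of the sequence of orbit mutations. Writing $\widetilde\Sigma_s=\widetilde\mu_{[i_s]}\cdots\widetilde\mu_{[i_1]}(\widetilde\Sigma)$, the key claim I would isolate is: \emph{for every sequence $([i_1],\dots,[i_s])$ of orbits, any finite set of cluster variables lying in one cluster of $\mathcal A(\widetilde\Sigma_s)$ is contained in a single cluster of $\mathcal A(\widetilde\Sigma)$.} Granting this claim, (2) is exactly its instance applied to the initial cluster $\widetilde\mu_{[i_s]}\cdots\widetilde\mu_{[i_1]}(\widetilde X)$ of the seed $\widetilde\Sigma_s$ (which is a genuine seed by Lemma \ref{mutationorbit}); (3) is the one-element case of (2), since a variable contained in some cluster of $\mathcal A(\widetilde\Sigma)$ is by definition a cluster variable of $\mathcal A(\widetilde\Sigma)$; and (4) follows because a monomial involves only finitely many variables, which by (2) lie in a common cluster of $\mathcal A(\widetilde\Sigma)$, making the monomial a cluster monomial.

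To prove the key claim I would induct on $s$. The base case $s=0$ is trivial and the case $s=1$ is precisely Lemma \ref{co}. For the inductive step, peel off the last orbit mutation: a cluster of $\mathcal A(\widetilde\Sigma_s)=\mathcal A(\widetilde\mu_{[i_s]}\widetilde\Sigma_{s-1})$ containing the given finite set $F$ is, by Lemma \ref{co} applied with base seed $\widetilde\Sigma_{s-1}$, such that $F$ lies in some cluster $C'$ of $\mathcal A(\widetilde\Sigma_{s-1})$; then $F$ is a finite subset of a cluster of $\mathcal A(\widetilde\Sigma_{s-1})$, so the inductive hypothesis places $F$ in a cluster of $\mathcal A(\widetilde\Sigma)$. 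The one point needing care here is that Lemma \ref{co} is stated for the original unfolding seed $\widetilde\Sigma$; I would observe that its proof only uses that the ambient quiver is locally finite with finitely many $\Gamma$-orbits and has no $\Gamma$-loops and no $\Gamma$-$2$-cycles, so that each orbit mutation is locally a finite product of ordinary mutations (Fact \ref{fact}). Since by Theorem \ref{mainlemma} every intermediate quiver $\widetilde\mu_{[i_{s-1}]}\cdots\widetilde\mu_{[i_1]}(\widetilde Q)$ still enjoys these properties, $\widetilde\Sigma_{s-1}$ is again an unfolding seed and Lemma \ref{co} applies to it verbatim.

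Finally I would deduce (1) from the key claim using the involutivity of orbit mutation. Each $\widetilde\mu_{[i]}$ is a product of the pairwise commuting involutions $\mu_{i'}$, $i'\in[i]$ (Fact \ref{fact}; the vertices of an orbit are mutually non-adjacent because there are no $\Gamma$-loops), and remains applicable at $[i]$ after one application by Remark \ref{rem2.3}(4); hence $\widetilde\mu_{[i_1]}\cdots\widetilde\mu_{[i_s]}(\widetilde\Sigma_s)=\widetilde\Sigma$, so $\widetilde\Sigma$ and $\widetilde\Sigma_s$ play symmetric roles. Any cluster variable of $\mathcal A(\widetilde\Sigma_s)$ lies in a cluster of $\mathcal A(\widetilde\Sigma_s)$, so by the key claim it is a cluster variable of $\mathcal A(\widetilde\Sigma)$; applying the same argument with $\widetilde\Sigma_s$ as base seed and the reversed sequence $([i_s],\dots,[i_1])$ gives the reverse inclusion, proving the two sets of cluster variables coincide.

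The hard part is not the bookkeeping of the induction but securing the transfer of Lemma \ref{co} to the intermediate seeds $\widetilde\Sigma_{s-1}$: one must be sure that \emph{after} arbitrary orbit mutations the quiver is still strongly almost finite with finitely many orbits and free of $\Gamma$-loops and $\Gamma$-$2$-cycles, which is exactly the content guaranteed by the unfolding Theorem \ref{mainlemma}. The other delicate point, caused by the orbits $[i]$ being possibly infinite, is that an orbit mutation is an infinite product of ordinary mutations, so statements like ``contained in a cluster'' must always be localized to finite sets of variables and vertices before invoking the finite-support arguments of Lemmas \ref{mutationorbit} and \ref{co}; keeping everything finite at each step is what makes the induction legitimate.
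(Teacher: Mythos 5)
Your proposal is correct and follows essentially the same route as the paper: (2) is obtained by applying Lemma \ref{co} step by step (your induction on $s$), (3) is the singleton case of (2), and (1) comes from Lemma \ref{co} together with the involutivity of orbit mutation, exactly as in the paper. The only differences are cosmetic — you derive (4) from (2) rather than from (1) (arguably the cleaner deduction, since being a cluster monomial requires the variables to share a cluster), and you make explicit the point, left implicit in the paper, that Lemma \ref{co} applies to the intermediate seeds because Theorem \ref{mainlemma} guarantees the mutated quivers remain free of $\Gamma$-loops and $\Gamma$-$2$-cycles.
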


\begin{proof}
(1) Using Lemma \ref{co}, the cluster variables of $\mathcal A(\widetilde\mu_{[i_1]}(\widetilde \Sigma))$ are the cluster variables of $\mathcal A(\widetilde \Sigma)$. Since it is easy to see that $\widetilde\mu_{[i_1]}\widetilde\mu_{[i_1]}(\widetilde \Sigma)=\widetilde\Sigma$, dually, we have the cluster variables of $\mathcal A(\widetilde \Sigma)$ are the cluster variables of $\mathcal A(\widetilde\mu_{[i_1]}(\widetilde \Sigma))$. Therefore, The cluster variables of $\mathcal A(\widetilde\mu_{[i_1]}(\widetilde \Sigma))$ are the same as the cluster variables of $\mathcal A(\widetilde \Sigma)$. Our result follows.

(2) Since any finite variables of $\widetilde\mu_{[i_s]}\cdots\widetilde\mu_{[i_1]}(\widetilde X)$ are in the initial cluster of $\mathcal A(\widetilde\mu_{[i_s]}\cdots\widetilde\mu_{[i_1]}(\widetilde \Sigma))$, applying Lemma \ref{co} step by step, our result follows.

(3) It follows immediately by (2).

(4) It follows immediately by (1).
\end{proof}

\begin{Corollary}\label{Lau}
Keep the notations as above. Assume that $B$ is acyclic. Then for any sequence of orbits $([i_1],\cdots,[i_s])$ in $Q_0$, any cluster variable of $\mathcal A(\widetilde\Sigma)$ can be expressed as a Laurent polynomial of $\widetilde\mu_{[i_s]}\cdots\widetilde\mu_{[i_1]}(\widetilde X)$ with ground field $\mathbb Z[y_j\;|\;j\in \widetilde Q_0\setminus Q_0]$.
\end{Corollary}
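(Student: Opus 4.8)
The plan is to recognize $\widetilde\Sigma' := \widetilde\mu_{[i_s]}\cdots\widetilde\mu_{[i_1]}(\widetilde\Sigma)$ as a genuine (infinite-rank) seed of the very same cluster algebra $\mathcal A(\widetilde\Sigma)$, and then to invoke the Laurent phenomenon with $\widetilde\Sigma'$ playing the role of the initial seed. The whole statement then becomes a localized application of the classical Fomin--Zelevinsky Laurent phenomenon, once the infinite-rank issues are under control.

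First I would check that $\widetilde\Sigma'$ really is a seed. Its cluster $\widetilde X' = \widetilde\mu_{[i_s]}\cdots\widetilde\mu_{[i_1]}(\widetilde X)$ is algebraically independent over $\mathbb{Z}[y_j \mid j \in \widetilde{Q}_0 \setminus Q_0]$ by iterating Lemma \ref{mutationorbit}(2), and its exchange quiver $\widetilde\mu_{[i_s]}\cdots\widetilde\mu_{[i_1]}(\widetilde Q)$ is a strongly locally finite skew-symmetric quiver with no loops and no $2$-cycles, since the absence of $\Gamma$-loops and $\Gamma$-$2$-cycles guaranteed by Theorem \ref{mainlemma} forbids ordinary loops and $2$-cycles in particular. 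Hence $\widetilde\Sigma'$ is a legitimate seed of an infinite-rank skew-symmetric cluster algebra. Next, by Lemma \ref{laurentpre}(1) the set of cluster variables of $\mathcal A(\widetilde\Sigma')$ coincides with that of $\mathcal A(\widetilde\Sigma)$; since each algebra is generated over the ground ring by its cluster variables, $\mathcal A(\widetilde\Sigma') = \mathcal A(\widetilde\Sigma)$ as subalgebras of the ambient field.

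Then I would apply the Laurent phenomenon. Fix a cluster variable $x$ of $\mathcal A(\widetilde\Sigma)$; by the previous step it is a cluster variable of $\mathcal A(\widetilde\Sigma')$, so it is produced from the initial cluster $\widetilde X'$ by a finite sequence of ordinary mutations. Because $Q$, and hence every quiver obtained from it, is strongly almost finite and mutation at a vertex alters only finitely many arrows, this finite mutation sequence takes place inside a finite-rank sub-seed of $\widetilde\Sigma'$. Applying the Laurent phenomenon of Fomin and Zelevinsky to that finite-rank cluster algebra, $x$ is a Laurent polynomial in the finitely many relevant variables of $\widetilde X'$ with coefficients in $\mathbb{Z}[y_j \mid j \in \widetilde{Q}_0 \setminus Q_0]$, which is exactly the asserted expression.

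The main obstacle is the bookkeeping needed to reduce the infinite-rank situation to a finite one: I must be certain that computing a single cluster variable of $\mathcal A(\widetilde\Sigma')$ ever involves only finitely many indices, so that the classical Laurent phenomenon can legitimately be cited. This is precisely where strong almost finiteness of $Q$, namely local finiteness and interval finiteness, together with the control already supplied by Lemma \ref{co} and Lemma \ref{laurentpre} does the work: any finite family of variables of $\widetilde X'$ sits inside a single honest finite-rank cluster of $\mathcal A(\widetilde\Sigma)$, so the mutation steps producing $x$ never reach beyond a finite portion of $\widetilde\Sigma'$. With this reduction in hand, the remaining computation is routine.
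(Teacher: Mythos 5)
Your proposal is correct and takes essentially the same route as the paper: the paper's proof likewise invokes Lemma \ref{laurentpre}(1) to identify every cluster variable of $\mathcal A(\widetilde\Sigma)$ as a cluster variable of $\mathcal A(\widetilde\mu_{[i_s]}\cdots\widetilde\mu_{[i_1]}(\widetilde\Sigma))$ and then applies the Fomin--Zelevinsky Laurent phenomenon with respect to that seed. Your additional checks (that the orbit-mutated tuple is a genuine seed, and that the computation of a single cluster variable confines itself to a finite-rank sub-seed) are exactly the points the paper delegates to Lemma \ref{mutationorbit} and Lemma \ref{co}/\ref{laurentpre}, so they are welcome but not a different argument.
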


\begin{proof}
By Lemma \ref{laurentpre} (1), any cluster variable of $\mathcal A(\widetilde\Sigma)$ is a cluster variable of $\mathcal A(\widetilde\mu_{[i_s]}\cdots\widetilde\mu_{[i_1]}(\widetilde \Sigma))$. By Laurent Phenomenon in \cite{fz1}, our result follows.
\end{proof}

\begin{Theorem}\label{sur}
Keep the notations as above with an acyclic sign-skew-symmetric matrix $B$ and $\pi$ defined in (\ref{pi}).
 Restricting $\pi$ to $\mathcal A(\widetilde\Sigma)$, then $\pi:\mathcal A(\widetilde\Sigma)\rightarrow \mathcal A(\Sigma)$ is a surjective algebra morphism satisfying that $\pi(\widetilde\mu_{[j_k]}\cdots\widetilde\mu_{[j_1]}(x_{a} ))=\mu_{[j_k]}\cdots\mu_{[j_1]}(x_{[i]})\in \mathcal A(\Sigma)$ and $\pi(\widetilde\mu_{[j_k]}\cdots\widetilde\mu_{[j_1]}(\widetilde X))=\mu_{[j_k]}\cdots\mu_{[j_1]}(X)$ for any sequences of orbits $[j_1],\cdots,[j_k]$ and any $a\in [i]$.
\end{Theorem}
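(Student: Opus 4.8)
The plan is to prove the two displayed identities by induction on the length $k$ of the orbit-mutation sequence, and then to read off both surjectivity and the inclusion $\pi(\mathcal A(\widetilde\Sigma))\subseteq\mathcal A(\Sigma)$ as consequences. First I would record that $\pi$ is genuinely defined on all of $\mathcal A(\widetilde\Sigma)$: by the Laurent phenomenon every element of $\mathcal A(\widetilde\Sigma)$ lies in $\mathbb Z[y_v\,|\,v\in\widetilde Q_0\setminus Q_0][x_u^{\pm1}\,|\,u\in Q_0]$, which sits inside the domain of the ring homomorphism (\ref{pi}); hence the restriction of $\pi$ makes sense and is automatically a ring homomorphism, and it remains only to control its image.

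For the inductive proof of the identities, the base case $k=0$ is just the definition of $\pi$, namely $\pi(x_a)=x_{[i]}$ for $a\in[i]$ and $\pi(\widetilde X)=X$. For the inductive step I would single out a vertex $a\in[j_k]$ in the seed $\widetilde\mu_{[j_{k-1}]}\cdots\widetilde\mu_{[j_1]}(\widetilde\Sigma)$. Because $\widetilde\mu_{[j_k]}=\prod_{a'\in[j_k]}\mu_{a'}$ and no stage carries $\Gamma$-loops or $\Gamma$-$2$-cycles (Theorem \ref{mainlemma}), the only factor acting nontrivially on the variable indexed by $a$ is $\mu_a$, so $\widetilde\mu_{[j_k]}$ applied to that variable is given by the single exchange relation (\ref{exchangerelation}) in $\widetilde Q^{(k-1)}:=\widetilde\mu_{[j_{k-1}]}\cdots\widetilde\mu_{[j_1]}(\widetilde Q)$. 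Applying $\pi$ and grouping the monomials by $\Gamma$-orbit, the exponent attached to $x_{[t]}$ becomes $\sum_{t'\in[t]}b_{t'a}$; by the absence of $\Gamma$-$2$-cycles all these summands share one sign, so this sum equals $b_{[t][a]}$ in the sense of Definition \ref{unfolding}. By Lemma \ref{basiclemma} together with Theorem \ref{mainlemma}, the folding of $\widetilde Q^{(k-1)}$ is exactly $\mu_{[j_{k-1}]}\cdots\mu_{[j_1]}(\widetilde B)$, so the resulting expression is precisely the downstairs exchange relation for $\mu_{[j_k]}$ in $\mu_{[j_{k-1}]}\cdots\mu_{[j_1]}(\Sigma)$, once the induction hypothesis $\pi(\text{old cluster})=\text{old downstairs cluster}$ is substituted. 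This yields $\pi(\widetilde\mu_{[j_k]}\cdots\widetilde\mu_{[j_1]}(x_a))=\mu_{[j_k]}\cdots\mu_{[j_1]}(x_{[i]})$; since the unmutated variables of the orbit cluster are fixed and already map correctly, the cluster identity $\pi(\widetilde\mu_{[j_k]}\cdots\widetilde\mu_{[j_1]}(\widetilde X))=\mu_{[j_k]}\cdots\mu_{[j_1]}(X)$ follows at once.

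Surjectivity is then immediate: every cluster variable of $\mathcal A(\Sigma)$ has the form $\mu_{[j_k]}\cdots\mu_{[j_1]}(x_{[i]})$, which by the identity just proved is $\pi$ of the cluster variable $\widetilde\mu_{[j_k]}\cdots\widetilde\mu_{[j_1]}(x_a)$ of $\mathcal A(\widetilde\Sigma)$ (a cluster variable by Lemma \ref{laurentpre}), and these generate $\mathcal A(\Sigma)$. For the inclusion $\pi(\mathcal A(\widetilde\Sigma))\subseteq\mathcal A(\Sigma)$ I would invoke Corollary \ref{Lau}: any cluster variable $z$ of $\mathcal A(\widetilde\Sigma)$ is a Laurent polynomial in $\widetilde\mu_{[i_s]}\cdots\widetilde\mu_{[i_1]}(\widetilde X)$ over $\mathbb Z[y_v]$, so applying $\pi$ and the cluster identity expresses $\pi(z)$ as a Laurent polynomial in $\mu_{[i_s]}\cdots\mu_{[i_1]}(X)$. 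Ranging over all orbit sequences — equivalently, over all clusters of the rank-$n$ algebra $\mathcal A(\Sigma)$ — shows $\pi(z)$ lies in every cluster's Laurent ring, i.e. in the upper cluster algebra of $\Sigma$.

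The main obstacle is exactly this last step. A general cluster variable of $\mathcal A(\widetilde\Sigma)$ is produced by single-vertex mutations, which are not orbit mutations, so $\pi(z)$ need not itself be a downstairs cluster variable; one only controls it as an element of the upper cluster algebra. To conclude $\pi(z)\in\mathcal A(\Sigma)$ one must know that for the acyclic matrix $B$ the cluster algebra coincides with its upper cluster algebra, and one must also handle the genuinely infinite products $\widetilde\mu_{[i]}=\prod_{a'\in[i]}\mu_{a'}$ rigorously — which is where Lemmas \ref{co} and \ref{laurentpre} are needed to reduce every assertion to finitely many honest mutations before passing to the limit. I expect verifying the upper-cluster-algebra equality in the present sign-skew-symmetric, coefficient-carrying, infinite-rank setting to demand the most care.
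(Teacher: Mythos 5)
Your proposal is correct and follows essentially the same route as the paper: the two identities are proved by induction on the length of the orbit-mutation sequence using Lemma \ref{basiclemma} and the absence of $\Gamma$-$2$-cycles, and the containment $\pi(\mathcal A(\widetilde\Sigma))\subseteq\mathcal A(\Sigma)$ is obtained by pushing Corollary \ref{Lau} through $\pi$ to land in $\mathcal U(\Sigma)$ and then invoking $\mathcal A(\Sigma)=\mathcal U(\Sigma)$ for acyclic seeds. The equality $\mathcal A=\mathcal U$ that you correctly flag as the remaining obstacle is exactly what the paper supplies in its appendix (Corollary \ref{A=U}, adapted from Muller's cover argument); note only that the downstairs algebra here has finite rank $n$, so it is the finite-rank acyclic case of that equality that is actually needed.
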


\begin{proof}
For $k=1$, by the definition of $\pi$ and $b_{[j][i]}=\sum\limits_{j'\in[j]}b_{j'i}$, we have $\pi(\widetilde\mu_{[j_1]}(x_i))=\mu_{[j_1]}(x_{[i]})$. Assume that the equation holds for every $k<s$. Since $\mathcal A(\widetilde\mu_{[j_1]}(\widetilde\Sigma))\subseteq \mathbb Q[x^{\pm 1}_i,y_j\;|\;i\in Q_0,j\in \widetilde Q_0\setminus Q_0]$ and $\mathcal A(\mu_{[j_1]}(\Sigma))\cong \mathcal A(\Sigma)$, applying $\pi$ to $\mathcal A(\widetilde\mu_{[j_1]}(\widetilde\Sigma))$ and by induction, we have $$\pi(\widetilde\mu_{[j_k]}\cdots\widetilde\mu_{[j_1]}( x_i))=\pi(\widetilde\mu_{[j_k]}\cdots\widetilde\mu_{[j_2]}(\widetilde\mu_{[j_1]} x_i))=\mu_{[j_k]}\cdots\mu_{[j_2]}(\pi(\widetilde\mu_{[j_1]} x_{[i]}))=\mu_{[j_k]}\cdots(\mu_{[j_1]}(x_{[i]}))=x.$$
Thus, $\pi(\widetilde\mu_{[j_k]}\cdots\widetilde\mu_{[j_1]}(\widetilde X))=\mu_{[j_k]}\cdots\mu_{[j_1]}(X)$.

Now, to prove that $\pi$ is a surjective algebra homomorphism, it suffices to show that $\pi(\mathcal A(\widetilde \Sigma))\subseteq \mathcal A(\Sigma)$. For any cluster $X'=\mu_{[j_k]}\cdots\mu_{[j_1]}(X)\in \mathcal A(\Sigma)$, we have $\pi(\widetilde\mu_{[j_k]}\cdots\widetilde\mu_{[j_1]}(\widetilde X))=X'$. Thus,

$\pi(\bigcap\limits_{\widetilde\mu_{[j_k]}\cdots\widetilde\mu_{[j_1]}(\widetilde X)}\mathbb Z[y_j\;|\;j\in \widetilde Q_0\setminus Q_0][\widetilde\mu_{[j_k]}\cdots\widetilde\mu_{[j_1]}(\widetilde X)^{\pm 1}])$\\
$=\bigcap\limits_{\mu_{[j_k]}\cdots\mu_{[j_1]}(X)}\mathbb Z[y_{[j]}\;|\;1\leq j\leq m][\mu_{[j_k]}\cdots\mu_{[j_1]}(X)^{\pm 1}]=\mathcal U(\Sigma).$\\
 Therefore, due to Lemma \ref{laurentpre} (4) and Corollary \ref{A=U}, we have $$\pi(\mathcal A(\widetilde \Sigma))\subseteq\pi(\mathcal U(\widetilde \Sigma))\subseteq \pi(\bigcap\limits_{\widetilde\mu_{[j_k]}\cdots\widetilde\mu_{[j_1]}(\widetilde X)}\mathbb Z[y_j\;|\;j\in\widetilde Q_0\setminus Q_0][\widetilde\mu_{[j_k]}\cdots\widetilde\mu_{[j_1]}(\widetilde X)^{\pm 1}])=\mathcal U(\Sigma)=\mathcal A(\Sigma).$$ Our result follows.
\end{proof}

\subsection{Positivity conjecture}

\begin{Conjecture}(\cite{fz1}, Positivity conjecture)
Let $\mathcal A(\Sigma)$ be a cluster algebra over $\mathbb {ZP}$. For any cluster $X$, and any
cluster variable $x$, the Laurent polynomial expansion of $x$ in the cluster $X$ has coefficients
which are nonnegative integer linear combinations of elements in $\mathbb P$.
\end{Conjecture}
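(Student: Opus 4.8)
The plan is to deduce positivity for the acyclic sign-skew-symmetric algebra $\mathcal A(\Sigma)$ from the positivity theorem of Lee and Schiffler \cite{LS} for the skew-symmetric unfolding $\mathcal A(\widetilde\Sigma)$, transported along the surjective morphism $\pi$ of Theorem \ref{sur}. Fix a cluster $X'=\mu_{[j_l]}\cdots\mu_{[j_1]}(X)$ of $\mathcal A(\Sigma)$ and a cluster variable $x=\mu_{[j'_k]}\cdots\mu_{[j'_1]}(x_{[i]})$. Using Theorem \ref{sur} together with Lemma \ref{laurentpre}, I would lift these to $\mathcal A(\widetilde\Sigma)$: put $\widetilde X'=\widetilde\mu_{[j_l]}\cdots\widetilde\mu_{[j_1]}(\widetilde X)$, choose $a\in[i]$, and set $\tilde x=\widetilde\mu_{[j'_k]}\cdots\widetilde\mu_{[j'_1]}(x_a)$, so that $\pi(\tilde x)=x$, $\pi(\widetilde X')=X'$, and each cluster variable of $\widetilde X'$ is carried by $\pi$ onto the corresponding cluster variable of $X'$ (variables lying over a common $\Gamma$-orbit mapping to the same target). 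It then suffices to establish (i) that the Laurent expansion of $\tilde x$ in $\widetilde X'$ has nonnegative integer coefficients, and (ii) that its image under $\pi$ is precisely the expansion of $x$ in $X'$.

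The heart of the matter, and the main obstacle, is (i), since $\mathcal A(\widetilde\Sigma)$ has \emph{infinite} rank whereas \cite{LS} applies only to finite-rank skew-symmetric cluster algebras. To bridge this I would exploit that $Q$, hence $\widetilde Q$, is strongly almost finite. Both $\tilde x$ and $\widetilde X'$ arise by finitely many mutations from $\widetilde\Sigma$; concatenating the two sequences, only finitely many vertices are ever mutated, and by local finiteness only finitely many further vertices ever occur as a neighbour of a mutated vertex at some stage. Collecting all of these into a finite vertex set $W$, every mutation in the two sequences is performed at a vertex all of whose neighbours lie in $W$, so no arrow leaving $W$ is ever touched. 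Freezing the vertices of $W$ that are never mutated produces a finite-rank skew-symmetric seed whose associated cluster algebra computes exactly the same cluster variables and the same Laurent expansion of $\tilde x$ in $\widetilde X'$ as $\mathcal A(\widetilde\Sigma)$ does, because a mutation depends only on the arrows incident to the mutated vertex and the frozen boundary variables retain their values. Applying \cite{LS} to this finite-rank skew-symmetric cluster algebra yields the nonnegativity in (i); re-reading the frozen boundary variables as the genuine cluster variables of $\widetilde X'$ they are, this is precisely positivity of the expansion of $\tilde x$ in $\widetilde X'$ over $\mathbb{ZP}$.

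For (ii), since $\pi$ is an algebra homomorphism sending each cluster variable of $\widetilde X'$ to the corresponding cluster variable of $X'$ and each frozen variable $y_j$ to $y_{[j]}\in\mathbb P$, every Laurent monomial in $\widetilde X'$ with coefficient in $\mathbb Z_{\geq 0}\mathbb P$ is carried to a Laurent monomial in $X'$ (exponents over a common orbit being summed) with the \emph{same} coefficient; by the Laurent phenomenon the image is forced to be the unique expansion of $x$ in $X'$. Hence nonnegativity is preserved, giving positivity for $\mathcal A(\Sigma)$. I expect the delicate point to verify carefully to be the locality principle of the preceding paragraph — that freezing the boundary of $W$ leaves the relevant cluster variables and their mutual Laurent expansions unchanged — which rests on the strong almost finiteness of $Q$ guaranteeing that $W$ can be taken finite, together with the routine but attentive bookkeeping separating the roles of $\mathbb P$-elements, boundary variables, and genuine Laurent variables.
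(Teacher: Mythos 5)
Your overall architecture is the same as the paper's proof of Theorem \ref{positivity}: lift the cluster variable and the cluster along the surjection $\pi$ of Theorem \ref{sur}, invoke positivity for the skew-symmetric unfolding $\mathcal A(\widetilde\Sigma)$, and push the nonnegative expansion back down through $\pi$ (no cancellation can occur since all coefficients are nonnegative and $\pi$ sends Laurent monomials to Laurent monomials). The one place you diverge is the infinite-rank issue: the paper simply cites Gratz's theorem (\cite{G}, Theorem 7, quoted as Theorem \ref{po}) for positivity of skew-symmetric cluster algebras of infinite rank, together with Corollary \ref{Lau}, whereas you re-derive it by hand via a finite-subquiver/freezing argument. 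Your reduction is essentially the content of \cite{G} (and mirrors what the paper does for $F$-polynomials in Lemma \ref{inf}), so it is a legitimate alternative, but note that your phrase ``both $\tilde x$ and $\widetilde X'$ arise by finitely many mutations'' is not literally true: each orbit mutation $\widetilde\mu_{[j]}$ is an infinite product of ordinary mutations, and one genuinely needs Lemma \ref{co}/Lemma \ref{laurentpre} to replace the finitely many relevant variables of $\widetilde X'$ by variables in a cluster reachable by finitely many \emph{ordinary} mutations before your finite set $W$ exists; you cite the right lemmas but should make this step explicit. Finally, for the conjecture as stated over a general coefficient semifield $\mathbb{ZP}$, the paper first reduces to geometric type (indeed to trivial coefficients $\mathbb Z$) via Fomin--Zelevinsky's separation-of-additions formula and the rooted-cluster-subalgebra results of \cite{HLY}, \cite{CZ}; your argument handles only the geometric-type situation directly, so that reduction should be stated up front.
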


By Fomin-Zelevinsky¡¯s separation of addition formula (Theorem 3.7, \cite{fz4}), to verify positivity conjecture, it suffices to prove that it holds for cluster algebras of geometric type. By Theorem 4.4 of \cite{HLY}, see also Theorem 2.23 of \cite{CZ}, any cluster algebra of geometric type $\mathcal A(\Sigma_1)$ is a rooted cluster subalgebra of a cluster algebra $\mathcal A(\Sigma)$ with coefficients $\mathbb Z$ via freezing some exchangeable variables. Thus, any cluster variable of $\mathcal A(\Sigma_1)$ is a cluster variable of $\mathcal A(\Sigma)$ and a cluster of $\mathcal A(\Sigma_1)$ corresponds to a cluster of $\mathcal A(\Sigma)$. Then the positivity of $\mathcal A(\Sigma)$ implies the positivity of $\mathcal A(\Sigma_1)$. Thus,{\bf we only need to prove the  positivity of sign-skew-symmetric cluster algebras $\mathcal A(\Sigma)$ in the case $\mathcal A(\Sigma)$ is with coefficients $\mathbb Z$, that is, the exchange matrix of $\mathcal A(\Sigma)$ is a square matrix,} as which discussed in the foregoing.

\begin{Theorem}\label{po}(Theorem 7, \cite{G})
The positivity conjecture is true for skew-symmetric cluster algebras of
infinite rank.
\end{Theorem}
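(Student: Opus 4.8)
The plan is to reduce the infinite-rank statement to the finite-rank positivity theorem for skew-symmetric cluster algebras proved by Lee and Schiffler in \cite{LS}. The guiding principle is \emph{locality}: although the quiver $Q$ defining $\mathcal A(\Sigma)$ has infinitely many vertices, both a fixed cluster variable $x$ and a fixed cluster $X'$ are produced from the initial seed by \emph{finite} sequences of mutations, and each exchange relation only involves the finitely many neighbours of the mutated vertex (here local finiteness of $Q$ is essential). So first I would fix a finite mutation sequence realizing the seed containing $X'$ and, starting from that seed, a finite sequence realizing $x$, and let $W$ be the finite set of all vertices mutated along the way.

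Next I would produce a finite full subquiver that ``absorbs'' the whole computation. Since a single mutation $\mu_i$ only reverses arrows at $i$ and creates arrows between two former neighbours of $i$, no mutation ever introduces an arrow to a vertex outside the union of the $Q$-neighbourhoods of the vertices already involved. I would argue inductively that the set $S$ obtained by adjoining to $W$ all vertices adjacent, in any of the intermediate quivers, to some vertex of $W$ is finite, and that every exchange relation used in either sequence has all its monomials supported on $S$.

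Then I would form the finite seed $\Sigma_S$ whose quiver is the full subquiver $Q|_S$, with exchangeable vertices $W$ and with all vertices of $S\setminus W$ (together with any frozen vertices of $\Sigma$ lying in $S$) declared frozen. The key compatibility lemma is that mutation in $\mathcal A(\Sigma)$ at a vertex of $W$ agrees, after discarding the variables not indexed by $S$, with the corresponding mutation in $\mathcal A(\Sigma_S)$: the exchange relations coincide because their monomials are supported on $S$, and the quiver mutations coincide on $S$ because newly created arrows stay inside $S$. Iterating this, the cluster variable $x$ and its Laurent expansion in the cluster $X'$, computed in $\mathcal A(\Sigma)$, are \emph{literally the same} Laurent polynomial in the $S$-variables as the corresponding expansion computed in the finite-rank algebra $\mathcal A(\Sigma_S)$.

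Finally, I would apply the finite-rank positivity theorem \cite{LS} to $\mathcal A(\Sigma_S)$, which gives that the expansion of $x$ in $X'$ has coefficients that are nonnegative integer combinations of monomials in the frozen variables; since this expansion coincides with the one in $\mathcal A(\Sigma)$, positivity holds in infinite rank. The main obstacle is the compatibility lemma of the third step: one must verify with care both the finiteness of $S$ and that the truncation-with-freezing operation commutes with mutation \emph{on the nose}, so that nonnegativity transfers as an identity of Laurent polynomials rather than merely qualitatively. This is exactly where local finiteness of $Q$ and the precise bookkeeping of which arrows and which variables a single mutation can touch are indispensable.
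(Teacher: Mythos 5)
Your argument is correct, but note that the paper does not actually prove this statement: Theorem \ref{po} is imported verbatim as Theorem 7 of \cite{G}, so there is no internal proof to compare against. What you have written is essentially the locality/colimit argument that underlies Gratz's proof, and it is also the same reduction the present paper carries out by hand in the analogous Lemma \ref{inf} for $F$-polynomials: there the authors take the finite set of mutated vertices, adjoin the vertices incident to it, freeze the boundary, and invoke Theorem 4.4 of \cite{HLY} to identify the resulting finite-rank algebra as a \emph{rooted cluster subalgebra}, which packages exactly your ``truncation-with-freezing commutes with mutation'' lemma. The one place where your write-up could be tightened is the definition of $S$: it is cleaner to let $S$ be $W$ together with all $Q$-neighbours of $W$ in the \emph{initial} quiver, and then observe (by induction on the length of the mutation sequence, using that $\mu_i$ only creates arrows between former neighbours of $i$) that every intermediate quiver has all neighbours of $W$ inside this fixed $S$ --- this avoids the slightly circular-sounding ``adjacent in any of the intermediate quivers'' and makes finiteness immediate from local finiteness of $Q$. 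With that adjustment, the identification of the two Laurent expansions is an identity of rational functions in a common transcendence basis, uniqueness of the Laurent expansion gives equality of coefficients, and Lee--Schiffler \cite{LS} applied to $\mathcal A(\Sigma_S)$ yields the nonnegativity. So your route is sound and self-contained; the paper simply outsources it to \cite{G}, and elsewhere prefers the rooted-cluster-subalgebra formalism of \cite{HLY} over verifying the compatibility by direct bookkeeping.
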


\begin{Theorem}\label{positivity}
The positivity conjecture is true for acyclic sign-skew-symmetric cluster algebras.
\end{Theorem}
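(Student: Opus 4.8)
The plan is to transport positivity from the infinite-rank skew-symmetric cluster algebra $\mathcal{A}(\widetilde\Sigma)$ down to $\mathcal{A}(\Sigma)$ along the surjection $\pi$ of Theorem \ref{sur}. By the reduction recorded just before Theorem \ref{po} (the separation-of-additions formula of \cite{fz4} together with \cite{HLY} and \cite{CZ}), it suffices to treat the case where the exchange matrix $B$ is a square acyclic sign-skew-symmetric matrix, i.e. $\mathcal{A}(\Sigma)$ has coefficients in $\mathbb{Z}$. For such a $B$, Theorem \ref{mainlemma} provides the unfolding $(Q,\Gamma)$, and I would attach to it the skew-symmetric seed $\widetilde\Sigma=\Sigma(\widetilde Q)$ of (in general) infinite rank, together with the surjective algebra morphism $\pi:\mathcal{A}(\widetilde\Sigma)\to\mathcal{A}(\Sigma)$ of Theorem \ref{sur}, which sends $x_u\mapsto x_{[u]}$ and carries orbit-mutated clusters and cluster variables of $\mathcal{A}(\widetilde\Sigma)$ to mutated clusters and cluster variables of $\mathcal{A}(\Sigma)$.

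Next I would fix an arbitrary cluster $X'$ and cluster variable $x$ of $\mathcal{A}(\Sigma)$. Writing $X'=\mu_{[j_k]}\cdots\mu_{[j_1]}(X)$ and $x=\mu_{[l_p]}\cdots\mu_{[l_1]}(x_{[i]})$, I lift them to $\widetilde X'=\widetilde\mu_{[j_k]}\cdots\widetilde\mu_{[j_1]}(\widetilde X)$ and to the cluster variable $\widetilde x=\widetilde\mu_{[l_p]}\cdots\widetilde\mu_{[l_1]}(x_a)$ of $\mathcal{A}(\widetilde\Sigma)$, for a chosen $a\in[i]$; then $\pi(\widetilde X')=X'$ and $\pi(\widetilde x)=x$ by Theorem \ref{sur}, and $\widetilde x$ is a genuine cluster variable by Lemma \ref{mutationorbit} and Lemma \ref{laurentpre}. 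Since orbit mutation preserves both skew-symmetry and infinite rank, $\mathcal{A}(\widetilde\mu_{[j_k]}\cdots\widetilde\mu_{[j_1]}(\widetilde\Sigma))$ is again a skew-symmetric cluster algebra of infinite rank with initial cluster $\widetilde X'$ and, by Lemma \ref{laurentpre}(1), with the same cluster variables as $\mathcal{A}(\widetilde\Sigma)$; hence Theorem \ref{po} applies and yields that the Laurent expansion of $\widetilde x$ in $\widetilde X'$ has nonnegative integer coefficients.

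The last step is to push this positive expansion through $\pi$, and this is where the only real work lies. Because $\pi$ is defined on the Laurent ring in the \emph{initial} variables and not directly on the inverted variables of $\widetilde X'$, I would first clear denominators: the positive expansion yields an honest polynomial identity $\widetilde x\cdot\prod_v v^{d_v}=P(\widetilde X')$ in $\mathbb{Q}[x_u^{\pm1},y_j]$, where the product runs over the finitely many variables $v\in\widetilde X'$ that occur and $P$ has nonnegative integer coefficients. Applying the ring homomorphism $\pi$ gives $x\cdot\prod_v\pi(v)^{d_v}=P(\pi(\widetilde X'))$ inside $\mathbb{Q}[x_{[i]}^{\pm1},y_{[j]}]$; here each $\pi(v)$ lies in the cluster $X'$ and is therefore a unit in $\mathbb{Z}[y_{[j]}][X'^{\pm1}]$, so dividing back produces a Laurent expansion of $x$ in $X'$. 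The potential danger is cancellation: distinct variables $v$ lying in one $\Gamma$-orbit collapse to a single variable of $X'$, so monomials of $P$ merge; but since every coefficient of $P$ is nonnegative, merged coefficients only add and remain nonnegative. Finally, as $X'$ is a transcendence basis the Laurent expansion of $x$ in $X'$ is unique, so the nonnegative expression just obtained must be it, which proves positivity. The main obstacle, then, is the bookkeeping of this transport—verifying that clearing and reintroducing denominators through the many-to-one map $\pi$ never destroys nonnegativity—rather than any new structural input beyond Theorems \ref{mainlemma}, \ref{sur} and \ref{po}.
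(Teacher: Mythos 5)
Your proposal is correct and follows essentially the same route as the paper: reduce to $\mathbb{Z}$-coefficients, lift the cluster variable and cluster through the surjection $\pi$ of Theorem \ref{sur}, invoke Corollary \ref{Lau} and Theorem \ref{po} to get a nonnegative Laurent expansion upstairs, and push it down through $\pi$. The only difference is that you spell out the denominator-clearing and cancellation check in the last step, which the paper compresses into ``by the definition of $\pi$''; your extra care there is harmless and arguably an improvement.
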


\begin{proof}
 By the remark at the beginning of this subsection, it suffices to prove that positivity holds for cluster algebras with coefficients $\mathbb Z$. Let $\mathcal A(\Sigma)$ be an acyclic sign-skew-symmetric cluster algebra with coefficients $\mathbb Z$. For any cluster variable $z=\mu_{[j_k]}\cdots\mu_{[j_1]}(x_{[i]})\in \mathcal A(\Sigma)$ and cluster $\mu_{[i_t]}\cdots\mu_{[i_1]}(X)$ of $\mathcal A(\Sigma)$, applying Theorem \ref{sur} to the coefficients $\mathbb Z$ case, we have $\pi(\widetilde z)=z$ and $\pi(\widetilde{\mu}_{[i_t]}\cdots \widetilde{\mu}_{[i_1]}(\widetilde{X}))=\mu_{[i_t]}\cdots\mu_{[i_1]}(X)$, where $\widetilde z=\widetilde {\mu}_{[j_k]}\cdots\widetilde{\mu}_{[j_1]}(x_{i})$. By Corollary \ref{Lau} and Theorem \ref{po}, we have $\widetilde z\in \mathbb N[\widetilde{\mu}_{[i_t]}\cdots \widetilde{\mu}_{[i_1]}(\widetilde{X})^{\pm 1}]$. Thus, we obtain $z\in \mathbb N[\mu_{[i_t]}\cdots\mu_{[i_1]}(X)^{\pm 1}]$  by the definition of $\pi$ in (\ref{pi}).
\end{proof}

\subsection{$F$-polynomials}

Let $\Sigma=(X,Y,B)$ be a seed of rank $n$ with principal coefficients and the coefficients variables are $y_{1},\cdots,y_{n}$, that is, its  extended exchange matrix $\widetilde B=\left(\begin{array}{c}
B  \\
I_n
\end{array}\right)\in Mat_{2n\times n}(\mathbb Z)$, where $I_n$ is the $n\times n$ identity matrix. For each cluster variable $x\in \mathcal A(\Sigma)$, it can be expressed as a Laurent polynomial $x=x(x_{1},\cdots,x_{n},y_{1},\cdots,y_{n})\in \mathbb Z[x_{1}^{\pm 1},\cdots,x_{n}^{\pm 1},y_{1},\cdots,y_{n}]$ by Laurent phenomenon, whose $F$ polynomial is defined as $F(x)=x(1,\cdots,1,y_{1},\cdots,y_{n})\in \mathbb Z[y_{1},\cdots,y_{n}]$. For details, see \cite{fz4}.
It is conjectured in \cite{fz4} that

\begin{Conjecture}\label{$F$-pol}(\cite{fz4}, Conjecture 5.4) In a cluster algebra with principal coefficients, each $F$-polynomial has constant term $1$.
\end{Conjecture}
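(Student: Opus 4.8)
The plan is to transport the constant-term-$1$ property from the infinite-rank skew-symmetric cluster algebra $\mathcal A(\widetilde\Sigma)$ down to $\mathcal A(\Sigma)$ along the surjection $\pi$ of Theorem \ref{sur}, in exactly the same spirit as the positivity argument of Theorem \ref{positivity}. First I would instantiate the construction preceding Theorem \ref{sur} in the principal-coefficient case, taking $\widetilde B=\left(\begin{array}{c}B\\ I_n\end{array}\right)$, i.e. $B'=I_n$. Then $B''$ is acyclic because $B$ is (the frozen vertices become sources, creating no new oriented cycle), so Theorem \ref{mainlemma} provides an unfolding $(Q'',\Gamma)$, and freezing the vertices coming from $I_n$ yields $(\widetilde Q,\Gamma)$ whose exchangeable part is the unfolding $(Q,\Gamma)$ of $B$. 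One then checks from the folding relation $b_{[j^\dagger][i]}=\sum_{j'\in[j^\dagger]}b_{j'i}=\delta_{ji}$ that $\widetilde\Sigma=\Sigma(\widetilde Q)$ again carries principal coefficients in the infinite-rank sense: every exchangeable vertex $u\in Q_0$ has exactly one attached frozen vertex $u^\dagger$ with a single arrow $u^\dagger\to u$, and the map $\pi$ of (\ref{pi}) sends $x_u\mapsto x_{[u]}$ and $y_{u^\dagger}\mapsto y_{[u]}$. This step is needed so that the $F$-polynomial of any lifted cluster variable is actually defined.

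Next, given an arbitrary cluster variable $z=\mu_{[j_k]}\cdots\mu_{[j_1]}(x_{[i]})$ of $\mathcal A(\Sigma)$, Theorem \ref{sur} furnishes a lift $\widetilde z=\widetilde\mu_{[j_k]}\cdots\widetilde\mu_{[j_1]}(x_a)$ for $a\in[i]$ with $\pi(\widetilde z)=z$, and by Lemma \ref{laurentpre} this $\widetilde z$ is a genuine cluster variable of $\mathcal A(\widetilde\Sigma)$ reached by finitely many single mutations. Consequently $\widetilde z$ involves only finitely much of the infinite quiver, so it lies inside a finite full sub-seed, which is a finite-rank skew-symmetric cluster algebra with principal coefficients. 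Applying the theorem of Derksen--Weyman--Zelevinsky \cite{DWZ} to that finite sub-seed, the $F$-polynomial $F_{\widetilde z}\in\mathbb Z[\,y_{u^\dagger}\mid u\in Q_0\,]$ is a genuine polynomial whose constant term is $1$.

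Finally I would compare the two $F$-polynomials. Writing $\widetilde z=\widetilde z(x_u,y_{u^\dagger})$, the identity $z=\pi(\widetilde z)=\widetilde z(x_{[u]},y_{[u]})$ shows that specializing all $x_{[i]}=1$ in $z$ coincides with specializing all $x_u=1$ in $\widetilde z$ and then substituting $y_{u^\dagger}\mapsto y_{[u]}$; hence $F_z$ is precisely the image of $F_{\widetilde z}$ under $y_{u^\dagger}\mapsto y_{[u]}$. Since $F_{\widetilde z}$ has only non-negative exponents, the only monomial that can map to the constant monomial under $y_{u^\dagger}\mapsto y_{[u]}$ is the constant monomial itself, so no cancellation occurs at the constant term. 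Therefore the constant term of $F_z$ equals that of $F_{\widetilde z}$, namely $1$, which is the assertion.

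The main obstacle I anticipate is the bookkeeping in the first and third paragraphs rather than any deep new idea: verifying that the unfolding of a principal-coefficient seed is again of principal-coefficient type, with the frozen variables in exact orbit-bijection with the exchangeable ones, and confirming that the $x$-specialization commutes with $\pi$ so that the two $F$-polynomials are related by the clean substitution $y_{u^\dagger}\mapsto y_{[u]}$. Once these compatibilities are established, the non-negativity-of-exponents observation pins the constant term immediately, and the appeal to \cite{DWZ} is legitimate because each individual cluster variable touches only a finite portion of the infinite quiver.
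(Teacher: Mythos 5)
Your proposal is correct and follows essentially the same route as the paper: lift the cluster variable along $\pi$ from Theorem \ref{sur}, observe that $\pi$ carries $F(\widetilde z)$ to $F(z)$ without creating a constant term from a non-constant monomial, and reduce the infinite-rank skew-symmetric case to finite rank where \cite{DWZ} applies. The only point to tighten is your phrase ``lies inside a finite full sub-seed'': as in the paper's Lemma \ref{inf}, one must also freeze the vertices adjacent to the finitely many mutated ones (via the rooted-cluster-subalgebra result of \cite{HLY}) so that the mutation sequence computes the same cluster variable in the finite-rank algebra.
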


In view of Proposition 5.3 in \cite{fz4}, this conjecture means that each $F$-polynomial $F(x)$ has a unique monomial of maximal degree. Moreover, this monomial has coefficient 1, and it is divisible by all of the other occurring monomials. See Conjecture 5.5 of \cite{fz4}.

\begin{Remark}
The seeds defined in \cite{fz4} are skew-symmetrizable with finite rank, it is easy to see that the above definition and conjecture can also apply to cluster algebras with sign-skew-symmetric seeds with finite/infinite rank.
\end{Remark}

\begin{Theorem}\label{con}(\cite{DWZ}, Theorem 1.7)
Conjectures \ref{$F$-pol} holds under the condition that $B$ is skew-symmetric.
\end{Theorem}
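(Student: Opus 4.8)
The plan is to reproduce the representation-theoretic argument of Derksen, Weyman and Zelevinsky, whose whole point is that the skew-symmetric hypothesis is exactly what lets one encode the seed $(X,Y,B)$ by a quiver with potential. First I would attach to the skew-symmetric matrix $B$ the quiver $Q$ with $b_{ij}$ arrows $i\to j$ whenever $b_{ij}>0$; because $B$ is skew-symmetric, $Q$ has no loops and no $2$-cycles, so it carries a \emph{non-degenerate} potential $W$, whose existence over the algebraically closed field $K$ of characteristic $0$ follows from the genericity argument of DWZ. The pair $(Q,W)$, its Jacobian algebra $\mathcal{P}(Q,W)$, and the category of \emph{decorated representations} $\mathcal{M}=(M,V)$ (a $\mathcal{P}(Q,W)$-module $M$ together with a ``decoration'' vector space $V$) are the basic objects.

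Next I would invoke the mutation theory of quivers with potentials and of decorated representations: for each direction $k$ there is an operation $\mu_k$ on right-equivalence classes of $(Q,W)$ and a compatible mutation $\mu_k$ of decorated representations, and the quiver of $\mu_k(Q,W)$ transforms exactly as the matrix mutation $\mu_k(B)$. This is the technical heart of the method — the splitting theorem decomposing $W$ into trivial and reduced parts, the $E$-invariant, and the verification that $\mu_k$ is an involution up to right-equivalence. The key bridge is then the formula
\[
F_{\mathcal{M}}(y)=\sum_{\mathbf{e}}\chi\bigl(\mathrm{Gr}_{\mathbf{e}}(M)\bigr)\,\prod_i y_i^{e_i},
\]
where $\mathrm{Gr}_{\mathbf{e}}(M)$ is the projective variety of subrepresentations of $M$ of dimension vector $\mathbf{e}$ and $\chi$ is the topological Euler characteristic.

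I would then show that, starting from the negative simple decorated representations $\mathcal{S}^-_i=(0,S_i)$ as initial data, the polynomials $F_{\mathcal{M}}$ satisfy precisely the recursion that governs cluster-algebra $F$-polynomials under the mutations $\mu_k$. Since both the initial data (each $F_{\mathcal{S}^-_i}=1$, matching the trivial initial $F$-polynomials) and the recursion coincide, one obtains $F_{\mathcal{M}_{\ell;t}}=F_{\ell;t}$ for every cluster variable $x_{\ell;t}$. Granting this identification, the constant term of $F_{\ell;t}$ is the coefficient of $\mathbf{e}=0$, namely $\chi(\mathrm{Gr}_0(M))$. But $\mathrm{Gr}_0(M)$ consists of the single zero subrepresentation, so its Euler characteristic is $1$, whereas every $\mathbf{e}\neq 0$ contributes a genuinely non-constant monomial $\prod_i y_i^{e_i}$. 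Hence the constant term equals $1$, which is the assertion.

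The main obstacle is the compatibility step: proving that mutation of decorated representations yields $F$-polynomials obeying the cluster recursion. This is where the skew-symmetric hypothesis is indispensable, since it is what permits setting up the quiver-with-potential calculus at all, and it requires its full strength — non-degeneracy of $W$, the behaviour of $\mathrm{Gr}_{\mathbf{e}}$ under $\mu_k$ through the exact triangles relating $M$ and $\mu_k(M)$, and the control of Euler characteristics under these operations. Once that machinery is established, the constant-term statement itself reduces to the elementary geometric observation that $\mathrm{Gr}_0$ is a point.
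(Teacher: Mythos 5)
Your proposal is correct and is essentially the argument the paper relies on: the paper gives no proof of this theorem at all, quoting it verbatim from \cite{DWZ}, and your sketch --- quiver with non-degenerate potential, mutation of quivers with potential and of decorated representations, the formula $F_{\mathcal{M}}=\sum_{\mathbf{e}}\chi\bigl(\mathrm{Gr}_{\mathbf{e}}(M)\bigr)\prod_i y_i^{e_i}$, identification with the cluster $F$-polynomials by matching initial data and recursions, and the observation that the constant term is $\chi\bigl(\mathrm{Gr}_0(M)\bigr)=1$ --- is a faithful outline of exactly that proof. One small repair: the DWZ genericity argument for the existence of a non-degenerate potential requires an uncountable base field, so the representation theory should be carried out over $\mathbb{C}$ rather than an arbitrary algebraically closed $K$ of characteristic $0$; this is harmless, since the asserted identity of $F$-polynomials is a statement about integer polynomials and is independent of the field used.
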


\begin{Lemma}\label{inf}
Conjecture \ref{$F$-pol} holds for all  skew-symmetric cluster algebras of infinite rank with principal coefficients.
\end{Lemma}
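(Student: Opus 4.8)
The plan is to reduce the infinite-rank assertion to the finite-rank case already recorded in Theorem \ref{con}. Fix a skew-symmetric cluster algebra $\mathcal A(\Sigma)$ of infinite rank with principal coefficients, coming from a locally finite (hence row- and column-finite) quiver $Q$ as throughout this paper, and let $x$ be an arbitrary cluster variable. By definition $x=\mu_{i_s}\cdots\mu_{i_1}(x_j)$ for some finite sequence of directions $i_1,\dots,i_s$ and some $j\in Q_0$. Since each exchange relation at a vertex $i$ involves only the arrows incident with $i$, and $Q$ is locally finite, only finitely many coefficient variables $y_i$ occur in the Laurent expansion of $x$; thus $F(x)$ is a genuine polynomial in finitely many variables, and it suffices to show that its constant term is $1$.

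First I would isolate the finite part of $Q$ on which the whole computation takes place. The key point is that mutation is local: $\mu_i$ alters only the cluster variable at $i$ and creates new arrows solely between vertices that are simultaneously adjacent to $i$. Proceeding by induction on $s$, I would choose a finite vertex set $V\subseteq Q_0$ containing $j,i_1,\dots,i_s$ together with every vertex that becomes adjacent to some $i_t$ at any intermediate stage; local finiteness of $Q$ guarantees that such a $V$ can be taken finite. Let $Q'$ be the full subquiver of $Q$ on $V$ and let $\Sigma'$ be the corresponding finite seed with principal coefficients. The induction then shows that performing $\mu_{i_s}\cdots\mu_{i_1}$ inside $\mathcal A(\Sigma')$ produces exactly the same exchange relations, and hence the same Laurent polynomial for $x$, as in $\mathcal A(\Sigma)$: no arrow or coefficient attached to a vertex outside $V$ ever intervenes.

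Consequently the $F$-polynomial of $x$ computed in $\mathcal A(\Sigma)$ coincides with the $F$-polynomial of the same cluster variable computed in the finite-rank skew-symmetric cluster algebra $\mathcal A(\Sigma')$ with principal coefficients. By Theorem \ref{con} (that is, \cite{DWZ}, Theorem 1.7), every $F$-polynomial of $\mathcal A(\Sigma')$ has constant term $1$; therefore so does $F(x)$. Since $x$ was arbitrary, Conjecture \ref{$F$-pol} holds for $\mathcal A(\Sigma)$.

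The hard part will be the locality/compatibility step: one must verify with care that truncating to $Q'$ changes no intermediate mutation. Concretely, the inductive claim is that at each stage the subquiver of $\mu_{i_t}\cdots\mu_{i_1}(Q)$ formed by the arrows incident with any vertex still to be mutated (or supporting a variable we track) agrees with the corresponding subquiver of $\mu_{i_t}\cdots\mu_{i_1}(Q')$. This is exactly the kind of bookkeeping carried out in Lemma \ref{co}, and the same enlarge-the-support argument applies here. Once this locality is established, the reduction to Theorem \ref{con} is immediate.
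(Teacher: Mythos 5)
Your proposal is correct and follows essentially the same route as the paper: both reduce to a finite full subquiver consisting of the mutation vertices together with one layer of their neighbors, observe that the mutation sequence and hence the Laurent expansion of $x$ is unchanged by this truncation, and then invoke Theorem \ref{con}. The only difference is that the paper packages the truncation step as a pair of rooted cluster subalgebra inclusions via Theorem 4.4 of \cite{HLY} (freezing the boundary vertices and deleting the rest), whereas you re-derive the required locality of mutation by the direct bookkeeping induction of Lemma \ref{co}; both justifications are valid.
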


\begin{proof}
Assume that $Q$ is an infinite locally finite quiver and $\widetilde\Sigma=(X,Y,Q)$ is the seed with principal coefficients associated with $Q$, where $X=\{x_i\;|\;i\in Q_0\}$ and $Y=\{y_i\;|\;i\in Q_0\}$. For any cluster variable $x=\mu_{i_s}\cdots\mu_{i_1}(x_{i_0})$ of $\mathcal A(\widetilde\Sigma)$, denote $Q'$ the subquiver of $Q$ generated by the vertices $\{i_0,i_1,\cdots,i_s\}$. Assume that $S$ be the set of vertices in $Q_0$ which are incident with a vertex of $Q'$ but not belong to $Q'_0$. Denote $Q''$ be the subquiver generated by $S\cup Q'_0$. Let $\Sigma'$ be the seed obtained from $\Sigma$ by frozen $x_i,i\in S$ and deleted $x_i,i\not\in Q''_0$ and $y_j,j\not\in Q'_0$. Using the analogue result of Theorem 4.4 of \cite{HLY}, we have $\mathcal A(\Sigma')$ is the rooted cluster subalgebra of $\mathcal A(\widetilde \Sigma)$. Since $\{i_0,i_1,\cdots,i_s\}= Q'_0$, we have $x\in \mathcal A(\Sigma')$. Let $\Sigma''=(X'',Y'',Q'')$ be the seed with principal coefficients associated with $Q''$, by Theorem 4.4 of \cite{HLY}, $\mathcal A(\Sigma')$ is a rooted cluster subalgebra of $\mathcal A(\Sigma'')$, then $x\in \mathcal A(\Sigma'')$. By Theorem \ref{con}, the $F$-polynomial has constant term $1$. The result follows.
\end{proof}

From now on, assume that $B\in Mat_{n\times n}(\mathbb Z)$ is an acyclic matrix and $(Q,\Gamma)$ is an unfolding of $B$. Let $\widetilde{\Sigma}= \Sigma(Q)=(\widetilde X,\widetilde Y, Q)$ and $\Sigma=\Sigma(B)=(X,Y,B)$ be the seeds with principal coefficients and extend matrix be the matrix associated with $Q$ and $B$ respectively, where $\widetilde X=\{x_i\;|\;i\in Q_0\}, \widetilde Y=\{y_i\;|\;i\in Q_0\}$ and $X=\{x_{[i]}\;|\;i=1,\cdots,n\}, Y=\{y_{[i]}\;|\;i=1,\cdots,n\}$.

Combing Theorem \ref{con} and Theorem \ref{sur} in the principal coefficients case, we can deduce that Conjectures \ref{$F$-pol}  hold for all acyclic cluster algebras.

\begin{Theorem}\label{poly}
In an acyclic sign-skew-symmetric cluster algebra with principal coefficients, each $F$-polynomial has constant term $1$.
\end{Theorem}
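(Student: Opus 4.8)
The plan is to transport the known result for skew-symmetric cluster algebras of infinite rank (Lemma \ref{inf}) down to $\mathcal{A}(\Sigma)$ along the surjection $\pi$ of Theorem \ref{sur}, taken in the principal-coefficients setting fixed just before the statement. First I would fix an arbitrary cluster variable $z=\mu_{[j_k]}\cdots\mu_{[j_1]}(x_{[i]})$ of $\mathcal{A}(\Sigma)$ and, choosing any $a\in[i]$, set $\widetilde z=\widetilde\mu_{[j_k]}\cdots\widetilde\mu_{[j_1]}(x_a)$. By Lemma \ref{laurentpre}(3) this $\widetilde z$ is a genuine cluster variable of the skew-symmetric infinite-rank algebra $\mathcal{A}(\widetilde\Sigma)$, and by Theorem \ref{sur} it satisfies $\pi(\widetilde z)=z$.

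Next I would record that, since $\widetilde z$ is reached from $\widetilde\Sigma$ by finitely many mutations, its Laurent expansion in the initial seed involves only finitely many of the $x_i$ and finitely many of the $y_j$; hence its $F$-polynomial $F_{\widetilde z}=\widetilde z|_{x_i=1,\,i\in Q_0}$ is an honest polynomial in the $y_j$. Applying Lemma \ref{inf} to $\mathcal{A}(\widetilde\Sigma)$, the polynomial $F_{\widetilde z}$ has constant term $1$.

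The heart of the argument is to show that $\pi$ intertwines the two $F$-polynomial specializations. Because $\pi$ is the algebra homomorphism with $x_i\mapsto x_{[i]}$ and $y_j\mapsto y_{[j]}$, applying it to the Laurent expansion of $\widetilde z$ produces exactly the Laurent expansion of $z$. Consequently, specializing all $x_{[i]}=1$ in $z=\pi(\widetilde z)$ is the same as first specializing all $x_i=1$ in $\widetilde z$ and then applying the monoid map $\pi_Y\colon y_j\mapsto y_{[j]}$ to the outcome; that is, $F_z=\pi_Y(F_{\widetilde z})$. I would verify this commutation carefully, since it is the single point where the compatibility of $\pi$ with the principal-coefficient bookkeeping (one frozen $y_j$ per mutable $j\in Q_0$, grouped into the very orbits used by $\Sigma$) must be matched against the construction preceding Theorem \ref{sur}.

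Finally, $\pi_Y$ sends the constant monomial $1$ to $1$ and sends every monomial of positive $y$-degree to a monomial of positive $y$-degree. Therefore the constant term of $F_z=\pi_Y(F_{\widetilde z})$ is exactly $\pi_Y$ applied to the constant term of $F_{\widetilde z}$, namely $\pi_Y(1)=1$; as $z$ was arbitrary, every $F$-polynomial of $\mathcal{A}(\Sigma)$ has constant term $1$. I expect the main obstacle to be precisely the bookkeeping in the third step, namely confirming that the $F$-polynomial computed downstairs in $\mathcal{A}(\Sigma)$ genuinely equals the $\pi_Y$-image of the one computed upstairs, and in particular that the collapsing of distinct $y_j$ lying in a common orbit cannot manufacture a spurious constant term, rather than any deep new input.
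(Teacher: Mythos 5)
Your proposal is correct and follows essentially the same route as the paper: lift the cluster variable to $\mathcal A(\widetilde\Sigma)$ via Theorem \ref{sur} and Lemma \ref{laurentpre}(3), invoke Lemma \ref{inf} for the skew-symmetric infinite-rank case, and push the $F$-polynomial down through $\pi$ using $\pi(F(\widetilde x))=F(x)$. Your extra care at the intertwining step (checking that collapsing orbit variables $y_j\mapsto y_{[j]}$ cannot create or destroy the constant term) is exactly the point the paper asserts without elaboration, and your justification of it is sound.
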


\begin{proof}
By Theorem \ref{sur} (1) and Proposition \ref{laurentpre} (3), any cluster variable $x\in \mathcal A(\Sigma)$ can lift to a cluster variable $\widetilde x\in \mathcal A(\widetilde \Sigma)$, that is $\pi(\widetilde x)=x$. Thus, we have $\pi(F(\widetilde{x}))=F(x)$, where $F(\widetilde x)$ ($F(x)$ respectively) means the $F$-polynomial associated with $\widetilde x$ ($x$ respectively). By Lemma \ref{inf}, our result follows.
\end{proof}

\appendix
\section{$\mathcal A=\mathcal U$ for acyclic sign-skew-symmetric cluster algebras}

Let $\Sigma=(X,Y,\widetilde B)$ be a sign-skew-symmetric seed, where $X=\{x_1,\cdots,x_n\}$, $Y=\{y_1,\cdots,y_m\}$.

In \cite{M}, the author considered the case that $\widetilde B$ is skew-symmetrizable and the ground ring is $\mathbb Z[y_1^{\pm 1},\cdots, y_m^{\pm 1}]$. It can be seen that the approach to the result on $\mathcal A=\mathcal U$ for acyclic $\widetilde B$ in \cite{M} can be applied to the totally sign-skew-symmetric case. So, we will cite the result in \cite M as that in the acyclic sign-skew-symmetric case.

In this appendix, we consider the cluster algebra $\mathcal A(\Sigma)$ in the case that $\widetilde B$ is sign-skew-symmetric and the ground ring is $\mathbb Z[y_1,\cdots,y_m]$. We will see here that according to the difference of the ground ring, it is only needed to do little modification in the definitions and proofs in \cite{M}. Let $\widetilde {\mathcal A}(\Sigma)=\mathcal A(\Sigma)[y_1^{-1},\cdots,y_m^{-1}]$, then  $\widetilde{\mathcal A}(\Sigma)$ is a cluster algebra with the ground ring $\mathbb Z[y_1^{\pm 1},\cdots, y_m^{\pm 1}]$.

For the seed $\Sigma=(X,Y,\widetilde B)$ and a subset $S\subseteq X$, the seed $\Sigma_{S,\emptyset}$ means the sub-seed obtained by freezing the variables in $S$. For details, see \cite{HLY}.

\begin{Lemma}\label{Lemma1}(\cite{M}, Lemma 1)
Let $\Sigma=(X,Y,\widetilde B)$ be a seed, $S\subseteq X$. Let $\mathcal U(\Sigma)$ be the upper cluster algebra of $\mathcal A(\Sigma)$ and $\mathcal U(\Sigma_{S,\emptyset})$ be the upper cluster algebra of $\mathcal A(\Sigma_{S,\emptyset})$. Then
$$\mathcal A(\Sigma_{S,\emptyset})[x^{-1}\;|\;x\in S]\subseteq \mathcal A(\Sigma)[x^{-1}\;|\;x\in S]\subseteq \mathcal U(\Sigma)[x^{-1}\;|\;x\in S]\subseteq \mathcal U(\Sigma_{S,\emptyset})[x^{-1}\;|\;x\in S].$$
\end{Lemma}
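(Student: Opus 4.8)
The plan is to prove the chain one inclusion at a time, working throughout inside the common field of fractions $\mathcal F$ in which all four rings sit as subrings, and to use repeatedly the elementary fact that for subrings $A\subseteq B$ of $\mathcal F$ and any $S$ one automatically has $A[x^{-1}\mid x\in S]\subseteq B[x^{-1}\mid x\in S]$. In this way each inclusion reduces to an inclusion of the corresponding unlocalized rings, together with one genuinely localization-theoretic point that occurs only in the last step.

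First I would dispose of the middle inclusion $\mathcal A(\Sigma)[x^{-1}\mid x\in S]\subseteq \mathcal U(\Sigma)[x^{-1}\mid x\in S]$. This is immediate from the containment $\mathcal A(\Sigma)\subseteq\mathcal U(\Sigma)$, which holds because the Laurent phenomenon (\cite{fz1}; available since acyclic sign-skew-symmetric matrices are totally sign-skew-symmetric by Theorem \ref{sign-skew}) places every cluster variable into every Laurent ring $\mathbb Z[y][X'^{\pm1}]$, together with monotonicity of localization. Next, for the left inclusion, I would observe that freezing $S$ forbids exactly the mutations in the directions of $S$ while leaving all exchange-matrix entries unchanged; hence every cluster variable of $\Sigma_{S,\emptyset}$ is reached from $\Sigma$ by a mutation sequence avoiding $S$ and so is a cluster variable of $\Sigma$, and the enlarged ground ring $\mathbb Z[y_1,\dots,y_m,S]$ already lies in $\mathcal A(\Sigma)$ because $S\subseteq X$. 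This gives $\mathcal A(\Sigma_{S,\emptyset})\subseteq\mathcal A(\Sigma)$, and localizing at $S$ yields the left inclusion.

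The substance is the right inclusion $\mathcal U(\Sigma)[x^{-1}\mid x\in S]\subseteq\mathcal U(\Sigma_{S,\emptyset})[x^{-1}\mid x\in S]$. The clusters of $\Sigma_{S,\emptyset}$ are precisely the clusters $X''$ of the mutable part reached without touching $S$, each corresponding to a cluster $X''\cup S$ of $\Sigma$; after inverting $S$ the Laurent ring $M_{X''}:=\mathbb Z[y,S][X''^{\pm1}]$ becomes $M_{X''}[S^{-1}]=\mathbb Z[y][(X''\cup S)^{\pm1}]$, an honest cluster Laurent ring of $\Sigma$. Since the intersection defining $\mathcal U(\Sigma)$ ranges over all clusters of $\Sigma$, a superset of the $X''\cup S$, one has $\mathcal U(\Sigma)\subseteq\bigcap_{X''}M_{X''}[S^{-1}]$. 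Writing an element of $\mathcal U(\Sigma)[S^{-1}]$ as $w/s^{N_0}$ with $w\in\mathcal U(\Sigma)$, I would let $D$ be the $S$-denominator of $w$ in the initial cluster $M_0:=\mathbb Z[y,S][(X\setminus S)^{\pm1}]$, so that $ws^{D}\in M_0$, and then show $ws^{D}\in M_{X''}$ for every $X''$; this gives $ws^{D}\in\mathcal U(\Sigma_{S,\emptyset})$ and hence $w/s^{N_0}\in\mathcal U(\Sigma_{S,\emptyset})[S^{-1}]$.

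The hard part will be exactly this last claim: that the single power $s^{D}$ clears the $S$-denominators of $w$ simultaneously in every cluster, equivalently that $M_0\cap M_{X''}[S^{-1}]\subseteq M_{X''}$ for all $X''$. This is the point where localization must be interchanged with the a priori infinite intersection defining the upper cluster algebra, and it is precisely where, as in \cite{M}, passing from an inverted coefficient ring to the non-inverted ring $\mathbb Z[y_1,\dots,y_m]$ forces a modification. I would prove it by tracking integrality at $S$: the transition between the $(X\setminus S)$-expansion and the $X''$-expansion is governed by the exchange binomials, and since no mutation is performed in the directions of $S$, each such binomial is a sum of two coprime monomials and hence is divisible by no frozen variable in $S$, i.e.\ is coprime to $S$ in $\mathbb Z[y,S]$. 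Consequently no base change among these clusters can manufacture genuine $S$-denominators, so the fixed $D$ computed in the initial cluster suffices everywhere. As $\mathbb Z[y,S]$ is a factorial (in particular normal) domain, this coprimality-and-divisibility bookkeeping is the essential content, and once it is in place the right inclusion, and therefore the entire chain, follows.
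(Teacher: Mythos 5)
Your argument is correct in outline, but note that the paper itself gives no proof of this lemma: it simply cites Lemma~1 of \cite{M} and remarks that, because the ground ring here is $\mathbb Z[y_1,\dots,y_m]$ rather than $\mathbb Z[y_1^{\pm1},\dots,y_m^{\pm1}]$, only a ``little modification'' of Muller's argument is needed. What you have written is essentially that modification carried out. The first two inclusions are formal, exactly as you say: freezing $S$ changes no exchange relation in the surviving directions, so every cluster variable of $\Sigma_{S,\emptyset}$ is one of $\Sigma$, and $\mathcal A\subseteq\mathcal U$ is the Laurent phenomenon (which applies because a seed in this paper is by definition totally sign-skew-symmetric, so the appeal to Theorem~\ref{sign-skew} is not even needed). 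The third inclusion is where the two conventions genuinely diverge: in Muller's convention the frozen variables are already invertible, so $\mathcal U(\Sigma_{S,\emptyset})$ is an intersection over a subfamily of the Laurent rings occurring in the definition of $\mathcal U(\Sigma)$ and the inclusion is immediate, whereas here one must, as you observe, push the localization at $S$ past a possibly infinite intersection, i.e.\ produce a single monomial $s^D$ with $ws^D\in M_{X''}$ (in your notation $M_{X''}=\mathbb Z[y,S][X''^{\pm1}]$) for every cluster $X''$ of $\Sigma_{S,\emptyset}$ simultaneously. Your mechanism for this is the right one: each exchange binomial is a sum of two monomials not both divisible by any $x_s\in S$, hence is not in the prime ideal $(x_s)$ of the ambient Laurent ring. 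To make it airtight you should run this as an induction along the mutation sequence: coprimality of the binomial to $x_s$ gives $M_{\mu_j(X'')}\subseteq (M_{X''})_{(x_s)}$, so the two discrete valuation rings $(M_{X''})_{(x_s)}$ and $(M_{\mu_j(X'')})_{(x_s)}$ dominate one another and therefore coincide; consequently the $x_s$-adic valuation is independent of the cluster, and $ws^D\in M_0$ together with $ws^D\in M_{X''}[x^{-1}\,|\,x\in S]$ forces $ws^D\in M_{X''}$ for all $X''$. The single-binomial observation alone is not yet a proof of your key claim $M_0\cap M_{X''}[x^{-1}\,|\,x\in S]\subseteq M_{X''}$, but with that induction spelled out the proposal is complete.
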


\begin{Definition}(\cite{M}, Definition 1)
Let $\Sigma=(X,Y,\widetilde B)$ is a seed, $S\subseteq X$. We say $\mathcal A(\Sigma_{S,\emptyset})$ to be a {\bf cluster localization} of $\mathcal A(\Sigma)$ if $\mathcal A(\Sigma)[x^{-1}\;|\;x\in S]=\mathcal A(\Sigma_{S,\emptyset})[x^{-1}\;|\;x\in S]$.
\end{Definition}

\begin{Definition}
For a cluster algebra $\mathcal A(\Sigma)$, a set $\{\mathcal A(\Sigma_{S_i,\emptyset})\}$ of cluster localizations of $\mathcal A(\Sigma)$ is called a {\bf cover} of $\mathcal A(\Sigma)$ if, for every prime ideal $P$ in $\mathcal A(\Sigma)$, there exist some $\mathcal A(\Sigma_{S_i,\emptyset})$ ($i\in I$) such that $\mathcal A(\Sigma_{S_i,\emptyset})[x^{- 1}\;|\;x\in S_i]P\subsetneq\mathcal A(\Sigma_{S_i,\emptyset})[x^{-1}\;|\;x\in S_i]$.
\end{Definition}

\begin{Proposition}\label{proposition 2}(\cite{M}, Proposition 2)
If  $\{\mathcal A(\Sigma_{S_i,\emptyset})\}_{i\in I}$  is a cover of $\mathcal A(\Sigma)$, then $$\mathcal A(\Sigma)=\bigcap_{i\in I}\mathcal A(\Sigma_{S_i,\emptyset})[x^{-1}\;|\;x\in S_i].$$
\end{Proposition}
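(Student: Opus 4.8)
The plan is to reduce the assertion to the standard commutative-algebra principle that an integral domain equals the intersection of a family of localizations that jointly "cover" its prime spectrum. First I would record that, by the definition of a cluster localization, each member of the cover satisfies
$$\mathcal A(\Sigma_{S_i,\emptyset})[x^{-1}\;|\;x\in S_i]=\mathcal A(\Sigma)[x^{-1}\;|\;x\in S_i]=W_i^{-1}\mathcal A(\Sigma),$$
where $W_i$ denotes the multiplicative set generated by the finitely many variables in $S_i$. Thus each term on the right of the claimed identity is literally a localization of $\mathcal A:=\mathcal A(\Sigma)$ inside the ambient rational function field $\mathds F$. Since $\mathcal A$ is a subalgebra of a field it is an integral domain, so all these overrings live inside the fraction field of $\mathcal A$ and the intersection is meaningful; Lemma \ref{Lemma1} moreover guarantees the inclusion chain needed to view each $\mathcal A(\Sigma_{S_i,\emptyset})[x^{-1}\;|\;x\in S_i]$ as an overring of $\mathcal A$.

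The inclusion $\mathcal A\subseteq\bigcap_{i\in I}W_i^{-1}\mathcal A$ is immediate because localization only enlarges the ring. For the reverse inclusion I would take an arbitrary $f\in\bigcap_{i\in I}W_i^{-1}\mathcal A$ and form its ideal of denominators
$$\mathfrak a=\{a\in\mathcal A\;|\;af\in\mathcal A\},$$
which is readily checked to be an ideal of $\mathcal A$; the goal becomes $\mathfrak a=\mathcal A$, equivalently $1\in\mathfrak a$, i.e.\ $f\in\mathcal A$. Arguing by contradiction, if $\mathfrak a$ were proper it would lie in some maximal ideal $\mathfrak m$, which is prime. Applying the defining property of a cover to $P=\mathfrak m$ yields an index $i$ with $W_i^{-1}\mathfrak m\subsetneq W_i^{-1}\mathcal A$. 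On the other hand $f\in W_i^{-1}\mathcal A$ lets me write $f=a/w$ with $a\in\mathcal A$ and $w\in W_i$, so that $wf=a\in\mathcal A$ and hence $w\in\mathfrak a\subseteq\mathfrak m$; this puts $w$ in $W_i\cap\mathfrak m$, which I will show is empty, giving the contradiction. Therefore $\mathfrak a=\mathcal A$, whence $f\in\mathcal A$, proving $\bigcap_{i\in I}W_i^{-1}\mathcal A\subseteq\mathcal A$ and thus equality.

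The step requiring the most care is the translation of the cover hypothesis, phrased as "the extended ideal $\mathcal A(\Sigma_{S_i,\emptyset})[x^{-1}\;|\;x\in S_i]\,P$ is a proper ideal", into the avoidance condition $W_i\cap\mathfrak m=\emptyset$. This rests on the elementary but essential fact that for a localization $W^{-1}R$ and an ideal $\mathfrak b\subseteq R$ one has $W^{-1}\mathfrak b=W^{-1}R$ if and only if $\mathfrak b\cap W\neq\emptyset$, combined with the identification $\mathcal A(\Sigma_{S_i,\emptyset})[x^{-1}\;|\;x\in S_i]=W_i^{-1}\mathcal A$ from the first paragraph. The remaining ingredients — that $\mathfrak a$ is an ideal, that a proper ideal is contained in a maximal ideal, and that maximal ideals are prime — are routine, so I expect no genuine obstacle beyond keeping all computations inside the fraction field of the domain $\mathcal A(\Sigma)$ and correctly matching the two formulations of "the extension of $P$ stays proper".
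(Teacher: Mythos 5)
Your proposal is correct: the identification $\mathcal A(\Sigma_{S_i,\emptyset})[x^{-1}\,|\,x\in S_i]=W_i^{-1}\mathcal A(\Sigma)$, the ideal of denominators $\mathfrak a$, the passage to a maximal (hence prime) ideal containing it, and the translation of the cover hypothesis into $W_i\cap\mathfrak m=\emptyset$ all go through (the fact that $\mathcal A(\Sigma)$ is a domain inside $\mathds F$ makes the localization bookkeeping harmless). The paper itself gives no proof here — it cites \cite{M} — and your argument is essentially the same ideal-of-denominators proof used in that reference, so there is nothing to add.
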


\begin{Lemma}\label{lemma 2}(\cite{M}, Lemma 2)
Let $\{\mathcal A(\Sigma_{S_i,\emptyset})\}_{i\in I}$ be a cover of $\mathcal A(\Sigma)$. If $\mathcal A(\Sigma_{S_i,\emptyset})=\mathcal U(\Sigma_{S_i,\emptyset})$ for each $i\in I$, then $\mathcal A(\Sigma)=\mathcal U(\Sigma)$.
\end{Lemma}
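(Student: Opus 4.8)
The plan is to prove the two inclusions $\mathcal A(\Sigma)\subseteq \mathcal U(\Sigma)$ and $\mathcal U(\Sigma)\subseteq \mathcal A(\Sigma)$ separately. The first is the standard containment of a cluster algebra in its upper cluster algebra, which follows directly from the Laurent phenomenon of \cite{fz1} and holds for any seed; so the whole content lies in establishing the reverse inclusion $\mathcal U(\Sigma)\subseteq \mathcal A(\Sigma)$. For this I would run a localization chase, using Lemma \ref{Lemma1} to push an element of $\mathcal U(\Sigma)$ into each frozen-and-localized piece and Proposition \ref{proposition 2} to reassemble the pieces.

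Concretely, first I would fix an arbitrary element $u\in \mathcal U(\Sigma)$ and, for each $i\in I$, regard $u$ as an element of the localization $\mathcal U(\Sigma)[x^{-1}\mid x\in S_i]$. The third inclusion of Lemma \ref{Lemma1} then gives $u\in \mathcal U(\Sigma_{S_i,\emptyset})[x^{-1}\mid x\in S_i]$. Here the hypothesis enters: since $\mathcal A(\Sigma_{S_i,\emptyset})=\mathcal U(\Sigma_{S_i,\emptyset})$ by assumption, inverting the variables of $S_i$ on both sides yields the equality $\mathcal U(\Sigma_{S_i,\emptyset})[x^{-1}\mid x\in S_i]=\mathcal A(\Sigma_{S_i,\emptyset})[x^{-1}\mid x\in S_i]$, and hence $u\in \mathcal A(\Sigma_{S_i,\emptyset})[x^{-1}\mid x\in S_i]$ for every $i\in I$.

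Next I would intersect over $i$. Since $\{\mathcal A(\Sigma_{S_i,\emptyset})\}_{i\in I}$ is a cover of $\mathcal A(\Sigma)$ by hypothesis, Proposition \ref{proposition 2} identifies $\mathcal A(\Sigma)=\bigcap_{i\in I}\mathcal A(\Sigma_{S_i,\emptyset})[x^{-1}\mid x\in S_i]$. The previous step shows that $u$ lies in every member of this intersection, so $u\in \mathcal A(\Sigma)$. As $u$ was arbitrary, this gives $\mathcal U(\Sigma)\subseteq \mathcal A(\Sigma)$, and combined with the trivial inclusion we conclude $\mathcal A(\Sigma)=\mathcal U(\Sigma)$.

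I do not expect a genuine obstacle in the argument itself: once Lemma \ref{Lemma1} and Proposition \ref{proposition 2} are available, the proof is a short inclusion chase, and the only point to watch is that the (harmless) inverting of the frozen variables in $S_i$ is compatible with the equality $\mathcal A(\Sigma_{S_i,\emptyset})=\mathcal U(\Sigma_{S_i,\emptyset})$. The real weight of the statement is carried by Proposition \ref{proposition 2}, whose proof is the cover/prime-ideal localization argument imported from \cite{M}; that is where any subtlety would reside, but it is assumed here as an already established input.
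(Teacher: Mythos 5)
Your proof is correct and follows essentially the same route as the paper: Lemma \ref{Lemma1} to place $\mathcal U(\Sigma)$ inside each $\mathcal U(\Sigma_{S_i,\emptyset})[x^{-1}\mid x\in S_i]$, the hypothesis to replace $\mathcal U$ by $\mathcal A$ there, and Proposition \ref{proposition 2} to identify the intersection with $\mathcal A(\Sigma)$. The only cosmetic difference is that you argue elementwise with a fixed $u$ while the paper writes the same chain as set inclusions.
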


\begin{proof}
Since $\mathcal A(\Sigma_{S_i,\emptyset})=\mathcal U(\Sigma_{S_i,\emptyset})$, we have $\mathcal A(\Sigma_{S_i,\emptyset})[x^{-1}\;|\;x\in S_i]=\mathcal U(\Sigma_{S_i,\emptyset})[x^{-1}\;|\;x\in S_i]$. By Lemma \ref{Lemma1}, $\mathcal U(\Sigma)\subseteq\mathcal U(\Sigma_{S_i,\emptyset})[x^{-1}\;|\;x\in S_i]$ for all $i$. Then $\mathcal U(\Sigma)\subseteq \bigcap_{i\in I}\mathcal U(\Sigma_{S_i,\emptyset})[x^{-1}\;|\;x\in S_i]$. Thus, by Proposition \ref{proposition 2}, $$\mathcal U(\Sigma)\subseteq \bigcap_{i\in I}\mathcal U(\Sigma_{S_i,\emptyset})[x^{-1}\;|\;x\in S_i]=\bigcap_{i\in I}\mathcal A(\Sigma_{S_i,\emptyset})[x^{-1}\;|\;x\in S_i]=\mathcal A(\Sigma)\subseteq \mathcal U(\Sigma).$$
\end{proof}

In \cite{M}, $\mathcal A(\Sigma)$ is called {\bf isolated} if the exchange matrix of $\Sigma$ is zero.

\begin{Proposition}\label{proposition 3}(\cite{M}, Proposition 3)
Let $\mathcal A(\Sigma)$ be an isolated cluster algebra. Then $\mathcal A(\Sigma)=\mathcal U(\Sigma)$.
\end{Proposition}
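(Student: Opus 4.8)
The plan is to make the structure of an isolated cluster algebra completely explicit and then to compare $\mathcal A(\Sigma)$ and $\mathcal U(\Sigma)$ term by term inside a common Laurent ring. Write $\Sigma=(X,Y,\widetilde B)$ with $X=\{x_1,\dots,x_n\}$, $Y=\{y_1,\dots,y_m\}$ and ground ring $\mathbb Z[y_1,\dots,y_m]$, and suppose the exchange matrix $B$ is zero. First I would record that, because the principal part $B$ vanishes, the matrix mutation $\mu_i$ only flips the sign of the $i$-th column of the coefficient part and leaves every other coefficient column unchanged; consequently the $\mu_i$ are commuting involutions, each $x_i$ is altered only by $\mu_i$, and the exchange relation at $x_i$ reads $x_ix_i'=p_i$ with $p_i=\prod_{b_{y_kx_i}>0}y_k^{b_{y_kx_i}}+\prod_{b_{y_kx_i}<0}y_k^{-b_{y_kx_i}}\in\mathbb Z[y]$ a fixed binomial independent of the mutation sequence. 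Hence there are exactly $2^n$ seeds, indexed by subsets $T\subseteq\{1,\dots,n\}$, the cluster of $T$ being $\{x_i:i\notin T\}\cup\{x_i':i\in T\}$.

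Next I would set $L=\mathbb Z[y][x_1^{\pm1},\dots,x_n^{\pm1}]$, the ring attached to the initial cluster $T=\emptyset$, so that $\mathcal U(\Sigma)\subseteq L$ and every $f\in\mathcal U(\Sigma)$ has a unique expansion $f=\sum_{\mathbf k}a_{\mathbf k}\mathbf x^{\mathbf k}$ with $a_{\mathbf k}\in\mathbb Z[y]$. For a subset $T$, substituting $x_i=p_i(x_i')^{-1}$ for $i\in T$ is an invertible monomial change of variables sending $\mathbf x^{\mathbf k}$ to the monomial with exponent $-k_i$ in $x_i'$ (for $i\in T$) and coefficient $a_{\mathbf k}\prod_{i\in T}p_i^{k_i}$; since this substitution is a bijection on monomials there is no collision, and I would conclude that $f$ lies in the $T$-cluster ring precisely when $\prod_{i\in T,\,k_i<0}p_i^{-k_i}\mid a_{\mathbf k}$ in $\mathbb Z[y]$ for every $\mathbf k$. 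Intersecting over all $T$, the strongest constraint comes from $T=\{1,\dots,n\}$, which yields the clean characterization that $f\in\mathcal U(\Sigma)$ if and only if $\prod_{i:\,k_i<0}p_i^{-k_i}\mid a_{\mathbf k}$ for all $\mathbf k$.

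Finally I would describe $\mathcal A(\Sigma)$ in the same coordinates: reducing any product of cluster variables via $x_ix_i'=p_i$ shows that $\mathcal A(\Sigma)$ is the $\mathbb Z[y]$-span of the elements $\prod_{k_i\ge0}x_i^{k_i}\prod_{k_i<0}(x_i')^{-k_i}=\big(\prod_{i:\,k_i<0}p_i^{-k_i}\big)^{-1}\mathbf x^{\mathbf k}$, so that $a_{\mathbf k}\mathbf x^{\mathbf k}\in\mathcal A(\Sigma)$ exactly when $\prod_{i:\,k_i<0}p_i^{-k_i}\mid a_{\mathbf k}$. This is the same divisibility condition obtained for $\mathcal U(\Sigma)$, giving $\mathcal U(\Sigma)\subseteq\mathcal A(\Sigma)$; the reverse inclusion is the Laurent phenomenon, so $\mathcal A(\Sigma)=\mathcal U(\Sigma)$. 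The main point to handle carefully is exactly where this departs from \cite{M}: over the polynomial ground ring $\mathbb Z[y_1,\dots,y_m]$ the binomials $p_i$ are genuine non-units, so all divisibilities must be read in $\mathbb Z[y]$ rather than in the Laurent ring $\mathbb Z[y^{\pm1}]$ of \cite{M}. I would verify that the change-of-variables bookkeeping and the termwise comparison remain valid verbatim under this modification, which is the only nontrivial adjustment.
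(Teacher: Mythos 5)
Your overall strategy is sound and, unlike the paper (which simply cites Muller's Proposition 3 and asserts that passing from the ground ring $\mathbb Z[y^{\pm1}]$ to $\mathbb Z[y]$ needs only minor modification), you give a self-contained termwise comparison. The description of the $2^n$ clusters, the identification of $\mathcal A(\Sigma)$ with the $\mathbb Z[y]$-span of the elements $\bigl(\prod_{i:k_i<0}p_i^{-k_i}\bigr)\mathbf x^{\mathbf k}$, and the final matching of divisibility conditions are all correct. (Minor slip: since $x_i'=p_ix_i^{-1}$, the product $\prod_{k_i\ge0}x_i^{k_i}\prod_{k_i<0}(x_i')^{-k_i}$ equals $\prod_{i:k_i<0}p_i^{-k_i}\cdot\mathbf x^{\mathbf k}$, a \emph{positive} power of the $p_i$ times the monomial, not its inverse as displayed; your subsequent divisibility criterion uses the correct version.)

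There is, however, one step that fails as written: the claim that ``the strongest constraint comes from $T=\{1,\dots,n\}$.'' The constraint imposed by the cluster indexed by $T$ is that $a_{\mathbf k}\prod_{i\in T}p_i^{k_i}\in\mathbb Z[y]$ for every $\mathbf k$, i.e.\ that $\prod_{i\in T,\,k_i<0}p_i^{-k_i}$ divides $a_{\mathbf k}\prod_{i\in T,\,k_i\ge0}p_i^{k_i}$. For $T=\{1,\dots,n\}$ this yields $\prod_{i:k_i<0}p_i^{-k_i}\mid a_{\mathbf k}$ only if the binomials attached to the negative indices are coprime to those attached to the nonnegative ones, and the $p_i$ need not be pairwise coprime: the principal part being zero places no restriction on the coefficient block, so two of its columns may coincide, giving $p_1=p_2$. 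Then for $\mathbf k=(1,-1)$ the $T=\{1,2\}$ constraint is $p_2\mid a_{\mathbf k}p_1$, which is vacuous, whereas the asserted condition $p_2\mid a_{\mathbf k}$ is not; for instance $x_1x_2^{-1}$ passes the $T=\{1,2\}$ test but is excluded from $\mathcal U(\Sigma)$ only by the cluster $T=\{2\}$. Since the ``only if'' direction of your characterization of $\mathcal U(\Sigma)$ is exactly what drives the inclusion $\mathcal U(\Sigma)\subseteq\mathcal A(\Sigma)$, this must be repaired. The fix is immediate: for each fixed $\mathbf k$ use the cluster $T=\{i:k_i<0\}$, whose constraint is precisely $\prod_{i:k_i<0}p_i^{-k_i}\mid a_{\mathbf k}$; the ``if'' direction then holds for every $T$ because $\{i\in T:k_i<0\}\subseteq\{i:k_i<0\}$. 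With this substitution the rest of your argument goes through.
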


\begin{Proposition}\label{proposition 4} (\cite{M}, Proposition 4)
If $\mathcal A(\Sigma)$ is acyclic, then it admits a cover by isolated cluster algebras.
\end{Proposition}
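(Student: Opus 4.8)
The plan is to prove the statement by induction on the number $n$ of exchangeable vertices of $\Sigma$, exploiting that an acyclic matrix always possesses a sink and that the exchange relation at a sink contributes a constant (unit) term. I work over the ground ring $\mathbb{Z}[y_1^{\pm1},\dots,y_m^{\pm1}]$ in which the coefficients are invertible, as in \cite{M}, and I use that an acyclic sign-skew-symmetric matrix is totally sign-skew-symmetric (Theorem \ref{sign-skew}), so that all mutations below are defined and stay sign-skew-symmetric.

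First I would record the reformulation of the covering condition. Since $\mathcal A(\Sigma_{S_i,\emptyset})[x^{-1}:x\in S_i]=\mathcal A(\Sigma)[x^{-1}:x\in S_i]$ for a cluster localization, a prime $P$ survives in the $i$-th piece exactly when $P\cap S_i=\emptyset$; hence $\{\mathcal A(\Sigma_{S_i,\emptyset})\}$ is a cover iff no prime meets every $S_i$, i.e. iff the monomials $m_i=\prod_{x\in S_i}x$ generate the unit ideal of $\mathcal A(\Sigma)$. I also use two facts transferred from \cite{M}: inverting a single cluster variable (in any seed mutation equivalent to $\Sigma$) is a cluster localization, and a cover refined by covers is again a cover, which is immediate from the prime-survival description together with the composability of cluster localizations.

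Now for the induction. If $\Delta(B)$ has no arrows, then $B=0$, the algebra $\mathcal A(\Sigma)$ is already isolated, and $\{\mathcal A(\Sigma)\}$ is the desired cover. Otherwise choose a maximal directed path in $\Delta(B)$; its terminal vertex $v$ is a sink with at least one in-neighbour. Write $i_1,\dots,i_k$ for the exchangeable in-neighbours of $v$ and set $x_v'=\mu_v(x_v)$. Since $v$ is a sink, the product over the negative entries in the exchange relation \eqref{exchangerelation} at $v$ contains no exchangeable variable, so
\[
x_v\,x_v'=c^{+}\prod_{j=1}^{k}x_{i_j}^{\,b_{i_jv}}+c^{-},
\]
with $c^{+},c^{-}$ monomials in the invertible coefficients. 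Thus $c^{-}=x_vx_v'-c^{+}\prod_j x_{i_j}^{b_{i_jv}}$ lies in the ideal $(x_v',x_{i_1},\dots,x_{i_k})$, and since $c^{-}$ is a unit this ideal equals $\mathcal A(\Sigma)$. By the reformulation, the localizations at the singletons $\{x_v'\},\{x_{i_1}\},\dots,\{x_{i_k}\}$ form a cover of $\mathcal A(\Sigma)$, each a cluster localization by the single-variable fact (the piece $\{x_v'\}$ realized inside $\mu_v(\Sigma)$, which presents the same algebra). Each piece is presented by a seed whose exchange matrix is acyclic with exactly $n-1$ exchangeable vertices: freezing $x_{i_j}$ deletes the vertex $i_j$ from $\Delta(B)$, while mutating at the sink $v$ creates no new arrows and then freezing $x_v'$ deletes $v$. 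By the induction hypothesis each piece admits a cover by isolated cluster algebras; refining the cover above by these and invoking that a cover of a cover is a cover produces the required cover of $\mathcal A(\Sigma)$ by isolated cluster algebras.

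The main obstacle is the technical content packaged in the two transferred facts, above all that each singleton freezing genuinely is a cluster localization, i.e. the nontrivial inclusion $\mathcal A(\Sigma)[x^{-1}]\subseteq\mathcal A(\Sigma_{\{x\},\emptyset})[x^{-1}]$; this amounts to showing that every cluster variable obtained by also mutating in the frozen direction already lies in the smaller localized algebra. This is precisely where one must verify that the argument of \cite{M}, written for the skew-symmetrizable case, carries over to acyclic (totally) sign-skew-symmetric $B$; the only inputs are the Laurent phenomenon and the localization inclusions of Lemma \ref{Lemma1}, both available here. Granting this, the inductive skeleton is routine.
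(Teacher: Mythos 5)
The paper offers no proof of this proposition: it is quoted from \cite{M} with the blanket remark that Muller's arguments carry over to the acyclic totally sign-skew-symmetric case, so your proposal is really being measured against the argument of \cite{M}. Its skeleton does match that argument (the ``Banff algorithm''): locate a sink $v$ of $\Delta(B)$, observe that since $v$ has no outgoing arrows to exchangeable vertices the second product in the exchange relation (\ref{exchangerelation}) at $v$ is a coefficient monomial, deduce that $x_v'$ together with the exchangeable in-neighbours of $v$ generate the unit ideal once the frozen variables are inverted, and induct on the number of exchangeable vertices. That part, including the prime-survival reformulation of the covering condition, is correct.

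The genuine gap is your ``fact (a)''. That inverting a single cluster variable is a cluster localization is \emph{not} a theorem of \cite{M} for an arbitrary seed, and it is precisely the nontrivial inclusion $\mathcal A(\Sigma)[x^{-1}]\subseteq\mathcal A(\Sigma_{\{x\},\emptyset})[x^{-1}]$ that your argument needs; you flag it as ``the main obstacle'' and then wave it through as packaged content of \cite{M}, which it is not in the form you state. What \cite{M} actually provides (and what is already available from Lemma \ref{Lemma1} here) is the conditional statement: if $\mathcal A(\Sigma_{S,\emptyset})=\mathcal U(\Sigma_{S,\emptyset})$, then the chain of inclusions of Lemma \ref{Lemma1} collapses and the freezing is a cluster localization. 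Consequently the induction cannot be run on the covering statement alone; it must be run jointly with Corollary \ref{A=U}: one proves simultaneously, by induction on the number of exchangeable vertices, that an acyclic seed admits a cover by isolated cluster algebras \emph{and} satisfies $\mathcal A=\mathcal U$. In the inductive step the frozen seeds $\Sigma_{\{x_{i_j}\},\emptyset}$ and $(\mu_v\Sigma)_{\{x_v'\},\emptyset}$ are acyclic of strictly smaller rank, hence satisfy $\mathcal A=\mathcal U$ by hypothesis, hence are cluster localizations; your unit-ideal computation then gives the cover, Proposition \ref{proposition 3} and the inductive hypothesis cover each piece by isolated algebras, and Lemma \ref{lemma 2} closes the loop for $\mathcal A=\mathcal U$ at rank $n$. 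A secondary point: you work over $\mathbb Z[y_1^{\pm1},\dots,y_m^{\pm1}]$, which is where the unit-term argument lives, whereas the appendix announces the ground ring $\mathbb Z[y_1,\dots,y_m]$ and reconciles the two only through $\widetilde{\mathcal A}(\Sigma)=\mathcal A(\Sigma)[y_1^{-1},\dots,y_m^{-1}]$; a complete write-up should state explicitly at which stage the coefficients are inverted.
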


\begin{Corollary}\label{co2}(\cite{M}, Corollary 2)\label{A=U}
If $\mathcal A(\Sigma)$ is acyclic, then $\mathcal A(\Sigma)=\mathcal U(\Sigma)$.
\end{Corollary}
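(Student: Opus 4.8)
The plan is to obtain Corollary \ref{co2} as an immediate consequence of the three results that precede it, by chaining them through the localization machinery set up in this appendix. The key observation is that the equality $\mathcal A(\Sigma)=\mathcal U(\Sigma)$ can be verified \emph{locally}: if one can cover $\mathcal A(\Sigma)$ by cluster localizations on which $\mathcal A$ and $\mathcal U$ already coincide, then the global equality follows by intersecting the local upper bounds.

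Concretely, first I would invoke Proposition \ref{proposition 4}: since $\mathcal A(\Sigma)$ is acyclic, it admits a cover $\{\mathcal A(\Sigma_{S_i,\emptyset})\}_{i\in I}$ by isolated cluster algebras, that is, cluster localizations whose exchange matrices vanish. Second, I would apply Proposition \ref{proposition 3} to each member of this cover: being isolated, each $\mathcal A(\Sigma_{S_i,\emptyset})$ satisfies $\mathcal A(\Sigma_{S_i,\emptyset})=\mathcal U(\Sigma_{S_i,\emptyset})$. Finally, feeding these two facts into Lemma \ref{lemma 2} — whose hypotheses are exactly that $\{\mathcal A(\Sigma_{S_i,\emptyset})\}_{i\in I}$ is a cover and that $\mathcal A(\Sigma_{S_i,\emptyset})=\mathcal U(\Sigma_{S_i,\emptyset})$ for each $i$ — yields $\mathcal A(\Sigma)=\mathcal U(\Sigma)$, as desired. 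Recall that the proof of Lemma \ref{lemma 2} itself runs through the inclusion chain of Lemma \ref{Lemma1} and the intersection description $\mathcal A(\Sigma)=\bigcap_{i\in I}\mathcal A(\Sigma_{S_i,\emptyset})[x^{-1}\;|\;x\in S_i]$ of Proposition \ref{proposition 2}.

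Since the corollary is a formal assembly of the preceding propositions, there is essentially no obstacle at the level of the corollary itself; the real work has already been carried out upstream. The substance lives in Proposition \ref{proposition 4}, where acyclicity of $\widetilde B$ is used to produce the cover by isolated seeds, and in the verification that the members of this cover are genuine \emph{cluster localizations}, so that inverting the variables in $S_i$ commutes with freezing them and the prime-ideal condition in the definition of a cover is met. I would therefore devote the main care to these two points; once they are in place, the corollary drops out. I would also emphasize that the entire argument transfers verbatim from the skew-symmetrizable setting of \cite{M} to the acyclic sign-skew-symmetric case, because it is acyclicity — rather than skew-symmetrizability — that Propositions \ref{proposition 3} and \ref{proposition 4} actually require.
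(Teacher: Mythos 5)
Your proof is correct and matches the paper's own argument exactly: the paper also deduces the corollary immediately from Proposition \ref{proposition 4} (acyclic implies a cover by isolated cluster algebras), Proposition \ref{proposition 3} ($\mathcal A=\mathcal U$ for isolated cluster algebras), and Lemma \ref{lemma 2} (the local-to-global step via the cover). No further comment is needed.
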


\begin{proof}
It follows immediately by Proposition \ref{proposition 3}, Proposition \ref{proposition 4} and Lemma \ref{lemma 2}.
\end{proof}

\begin{Theorem}
An acyclic sign-skew-symmetric cluster algebra $\mathcal A(\Sigma)$ of
infinite rank equals to its upper cluster algebra $\mathcal U(\Sigma)$.
\end{Theorem}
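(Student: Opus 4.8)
The plan is to reduce the infinite-rank statement to the finite-rank result already recorded as Corollary \ref{A=U}, exactly in the spirit of the reduction used in Lemma \ref{inf} for $F$-polynomials. The inclusion $\mathcal A(\Sigma)\subseteq \mathcal U(\Sigma)$ is immediate from the Laurent phenomenon of \cite{fz1}, which holds verbatim for seeds of infinite rank, so the whole content lies in proving the reverse inclusion $\mathcal U(\Sigma)\subseteq \mathcal A(\Sigma)$.

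First I would fix an arbitrary element $u\in \mathcal U(\Sigma)$ and exploit local finiteness. Writing $\widetilde X=\{x_i\mid i\in Q_0\}$ for the initial extended cluster, $u$ is by definition a Laurent polynomial in $\widetilde X$ over the coefficient ring, hence only finitely many variables $x_i$, say with $i$ in a finite set $F\subseteq Q_0$, occur in this expression. Since the exchange matrix of $\Sigma$ is locally finite and acyclic, the subquiver $Q'$ of $\Delta(B)$ generated by $F$ is finite, and the set $S$ of vertices lying outside $Q'_0$ but incident to a vertex of $Q'_0$ is finite as well. Let $Q''$ be the finite subquiver generated by $S\cup Q'_0$, and let $\Sigma''$ be the finite-rank seed supported on $Q''$ in which the variables indexed by $S$, together with the original frozen part, are frozen. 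Being a full subseed of the acyclic seed $\Sigma$, the exchangeable part of $\Sigma''$ is again acyclic.

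Next I would invoke the infinite-rank analogue of the rooted cluster subalgebra machinery of Theorem 4.4 of \cite{HLY}, exactly as in Lemma \ref{inf}: freezing the variables indexed by $S$ realizes $\mathcal A(\Sigma'')$ as a rooted cluster subalgebra of $\mathcal A(\Sigma)$, so that every cluster of $\Sigma''$ is the restriction of a cluster of $\Sigma$ obtained by mutating only in directions belonging to $Q'_0$. Using this correspondence I would check that $u\in \mathcal U(\Sigma'')$: for any cluster of $\Sigma''$, coming from such a restricted mutation sequence, the hypothesis $u\in \mathcal U(\Sigma)$ guarantees that $u$ is a Laurent polynomial in the corresponding cluster of $\Sigma$, and passing to the frozen finite subseed keeps $u$ a Laurent polynomial in the cluster of $\Sigma''$. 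Then Corollary \ref{A=U}, applicable because $\Sigma''$ is a finite-rank acyclic, hence by Theorem \ref{sign-skew} totally sign-skew-symmetric, seed, yields $\mathcal A(\Sigma'')=\mathcal U(\Sigma'')$, whence $u\in \mathcal A(\Sigma'')\subseteq \mathcal A(\Sigma)$.

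The delicate step, and the one I expect to require the most care, is the verification that $u\in \mathcal U(\Sigma'')$ — that is, matching the clusters of the finite subseed $\Sigma''$ with the appropriate clusters of the infinite seed $\Sigma$ and confirming that the Laurent-polynomial property transfers under freezing the boundary variables $S$. This rests on the compatibility of upper cluster algebras with the restriction and freezing operations in the infinite-rank setting, which is why the argument is organized so that only mutations supported inside the finite region $Q'_0$ ever need to be considered; the local finiteness of $Q$ is precisely what makes the boundary $S$ finite and keeps everything within a genuine finite-rank subseed.
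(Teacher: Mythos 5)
Your overall strategy coincides with the paper's: both arguments pass to a finite acyclic subseed containing the finitely many initial variables that occur in a given element of $\mathcal U(\Sigma)$, show the element lies in the upper cluster algebra of that finite subseed, and then invoke Corollary \ref{A=U}. The gap is at exactly the step you flag as ``delicate'' and then leave unproven: the claim that if $u\in\mathcal U(\Sigma)$ is a Laurent polynomial in an infinite cluster $X'$ of $\Sigma$, then it is automatically a Laurent polynomial in the finite subcluster $\{x'_1,\dots,x'_m\}$ of the corresponding cluster of the finite subseed. This is not a formal consequence of ``compatibility of upper cluster algebras with restriction and freezing''; a priori the monomial denominator of the reduced Laurent expression of $u$ in $X'$ could involve variables of $X'\setminus\{x'_1,\dots,x'_m\}$, and nothing you have said rules this out.

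The paper closes this gap with a concrete two-part argument that your proposal is missing. First, since the finitely many initial variables $x_1,\dots,x_n$ in which $u$ is expressed all lie in $\mathcal A(\Sigma'')$, they are rational functions of $x'_1,\dots,x'_m$, so $u$ itself lies in the subfield $\mathbb Q(x'_1,\dots,x'_m)$ and can be written as a reduced fraction $h/h'$ of coprime polynomials in those variables alone. Second, comparing this with the reduced Laurent expression $g'/\bigl({x'_1}^{b_1}\cdots{x'_m}^{b_m}\mathbf{x}^{\mathbf b}\bigr)$ of $u$ in the full cluster $X'$ and using unique factorization in the polynomial ring forces $h'={x'_1}^{b_1}\cdots{x'_m}^{b_m}\mathbf{x}^{\mathbf b}$ up to a scalar, hence $\mathbf b=0$; only then does one conclude $u\in\mathbb Q[{x'_1}^{\pm1},\dots,{x'_m}^{\pm1}]$ and so $u\in\mathcal U(\Sigma'')$. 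Without this coprimality step (or an equivalent substitute), your reduction to the finite-rank case does not go through.
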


\begin{proof}
It is clear that $\mathcal A(\Sigma)\subseteq \mathcal U(\Sigma)$ by the definition of upper cluster algebra;

On the other side of the inclusion, for any $f\in \mathcal U(\Sigma)\subseteq \mathbb Q[X^{\pm 1}]$, there exists a finite subset $\{x_1,\cdots,x_n\}\subseteq X$ such that $f=\frac{g(x_1,\cdots,x_n)}{x_1^{a_1}\cdots x_n^{a_n}}\in \mathbb Q[x_1^{\pm 1}, \cdots, x_n^{\pm 1}]$ for some non-negative integer $a_i$ and a polynomial $g$. Let $\mathcal A (\Sigma')$ be the rooted cluster subalgebra of finite rank of $\mathcal A(\Sigma)$ which contains $x_1,\cdots,x_n$ as initial cluster variables.

Now we show that $f\in \mathcal U(\Sigma')$. For any cluster $\{x'_1,\cdots,x'_m\}$  of $\mathcal A(\Sigma')$, there exists a cluster $X'$ of $\mathcal A(\Sigma)$ such that $\{x'_1,\cdots,x'_m\}\subseteq X'$. Since $f\in \mathcal U(\Sigma)\subseteq \mathbb Q[X'^{\pm 1}]$, $f$ can be written in the form $f=\frac{g'(x'_1,\cdots,x'_m,\mathbf{x})}{{x'_1}^{b_1}\cdots{x'_m}^{b_m}\mathbf{x}^{\mathbf b}}$ for some non-negative integer $b_i$, a non-negative integer vector $\mathbf b$, $\mathbf{x}=X'\backslash \{x'_1,\cdots,x'_m\}$ and a polynomial $g'$, where we may assume that $g'$ and ${x'_1}^{b_1}\cdots{x'_m}^{b_m}\mathbf{x}^{\mathbf b}$ are coprime. Since $x_1,\cdots,x_n\in\mathcal A(\Sigma')$, they can be represented as polynomials of $x'_1,\cdots,x'_m$, then we have $\frac{g(x_1,\cdots,x_n)}{x_1^{a_1}\cdots x_n^{a_n}}=\frac{h(x'_1,\cdots,x'_m)}{h'(x'_1,\cdots,x'_m)}$ for two coprime polynomials $h$ and $h'$. Thus, we have $\frac{g'(x'_1,\cdots,x'_m,\mathbf{x})}{{x'_1}^{b_1}\cdots{x'_m}^{b_m}\mathbf{x}^{\mathbf b}}=\frac{h(x'_1,\cdots,x'_m)}{h'(x'_1,\cdots,x'_m)}$. Since  $(g',{x'_1}^{b_1}\cdots{x'_m}^{b_m}\mathbf{x}^{\mathbf b})=1$ and $(h,h')=1$, we obtain that $g'=h$ and ${x'_1}^{b_1}\cdots{x'_m}^{b_m}\mathbf{x}^{\mathbf b}=h'$ up to multiple with a non-zero element in $\mathbb Q$. It follows that $\mathbf b=0$. Therefore, we get $f=\frac{h(x'_1,\cdots,x'_m)}{{x'_1}^{b_1}\cdots{x'_m}^{b_m}}$. Then $f\in \mathbb Q[{x'_1}^{\pm 1},\cdots,{x'_m}^{\pm 1}]$. Thus, we obtain $f\in \mathcal U(\Sigma')$ since the cluster $\{x'_1,\cdots,x'_m\}$ is arbitrary.

Since $\mathcal A(\Sigma)$ is acyclic, $\mathcal A(\Sigma')$ is also acyclic. By Corollary \ref{co2}, we get $f\in \mathcal U(\Sigma')=\mathcal A(\Sigma')\subseteq \mathcal A(\Sigma)$. Therefore, we have $\mathcal U(\Sigma)\subseteq \mathcal A(\Sigma)$.
\end{proof}

{\bf Acknowledgements:}\; This project is supported by the National Natural Science Foundation of China (No. 11671350 and No.11571173)  and the Zhejiang Provincial Natural Science Foundation of China (No.LZ13A010001).

\end{document}